\def\final{1}
\renewcommand{\thechapter}{\Roman{chapter}}
\titleformat{\subsection}[runin]
{\normalfont\normalsize\bfseries\filcenter}{\thesubsection.}{1 ex}{}
\newcommand{\mynote}[1]{\marginpar{\tiny\sf #1}}
\newcommand{\mynote}[1]{}
\newcommand{\figref}[1]{Figure \ref{fig:#1}}
\newcommand{\lemref}[1]{Lemma \ref{lemma:#1}}
\newcommand{\lemrefX}[1]{\ref{lemma:#1}}
\newcommand{\propref}[1]{Proposition \ref{prop:#1}}
\newcommand{\theoref}[1]{Theorem \ref{theo:#1}}
\newcommand{\secref}[1]{Section \ref{sec:#1}}
\newcommand{\secrefX}[1]{\ref{sec:#1}}
\newcommand{\chapref}[1]{Part \ref{chap:#1}}
\newcommand{\lemlab}[1]{\label{lemma:#1}}
\newcommand{\proplab}[1]{\label{prop:#1}}
\newcommand{\theolab}[1]{\label{theo:#1}}
\newcommand{\seclab}[1]{\label{sec:#1}}
\newcommand{\chaplab}[1]{\label{chap:#1}}
\renewcommand{\vec}[1]{\mathbf{#1}}
\newcommand{\iprod}[2]{\left\langle {#1},{#2}\right\rangle}
\newcommand{\R}{\mathbb{R}}
\newcommand{\Z}{\mathbb{Z}}
\newcommand{\Rep}{\operatorname{Rep}}
\newcommand{\Rad}{\operatorname{Rad}}
\newcommand{\Gal}{(\tilde{G},\varphi,\tilde{\bm{\ell}})}
\newcommand{\Gad}{(\tilde{G},\varphi,\tilde{\vec d})}
\newcommand{\Gtilde}{\tilde{G}}
\newcommand{\elltilde}{\tilde{\bm{\ell}}}
\newcommand{\bgamma}{\bm{\gamma}}
\newcommand{\Gpa}{G(\vec p,\Phi)}
\newcommand{\Vtilde}{\tilde{V}}
\newcommand{\Etilde}{\tilde{E}}
\newcommand{\Euc}{\operatorname{Euc}}
\newcommand{\into}{\hookrightarrow}
\newcommand{\Trans}{\Lambda}
\newcommand{\Teich}{\operatorname{Teich}}
\newcommand{\JMat}[1]{ \left( \begin{array}{rr} #1 \end{array} \right)}
\newcommand{\Id}{\operatorname{Id}}
\newcommand{\rep}{\operatorname{rep}}
\newcommand{\teich}{\operatorname{teich}}
\newcommand{\GammaTT}{$\Gamma$-$(2,2)$ }
\newcommand{\GammaOO}{$\Gamma$-$(1,1)$ }
\newcommand{\GammaCL}{$\Gamma$-colored-Laman }
\newcommand{\cent}{\operatorname{cent}}
\newcommand{\Cent}{\operatorname{Cent}}
\theoremstyle{remark}
\newcommand{\eop}{\hfill$\qed$}
\begin{document}
\title{Generic rigidity of frameworks with orientation-preserving crystallographic symmetry}
\author{Justin Malestein\thanks{Temple University, \url{justmale@temple.edu}}
\and Louis Theran\thanks{Freie Universität Berlin, \url{theran@math.fu-berlin.edu}}}
\date{}
\maketitle
\begin{abstract}
We extend our generic rigidity theory for periodic frameworks in the plane to
frameworks with a broader class of crystallographic symmetry.
Along the way we introduce a new class of
combinatorial matroids and associated linear representation results
that may be interesting in their own right.  The same techniques immediately yield a
Maxwell-Laman-type combinatorial characterization for frameworks embedded in 2-dimensional cones
that arise as quotients of the plane by a finite order rotation.
\end{abstract}

\section{Introduction} \seclab{intro}

A \emph{crystallographic framework} is an \emph{infinite} planar structure, \emph{symmetric} with respect to a crystallographic
group, made of fixed-length bars connected by universal joints with full rotational freedom.  The allowed continuous
motions preserve the lengths and connectivity of the bars (as in the finite framework case) and (this is the new addition)
\emph{symmetry with respect to the group $\Gamma$}.  However, the representation of $\Gamma$ is \emph{not} fixed and
may change. Figures \ref{fig:gam2graphtoperiodic} and \ref{fig:gam4graphtoperiodic} show examples.
\begin{figure}[htbp]
\centering
\subfigure[]{\includegraphics[width=.55\textwidth]{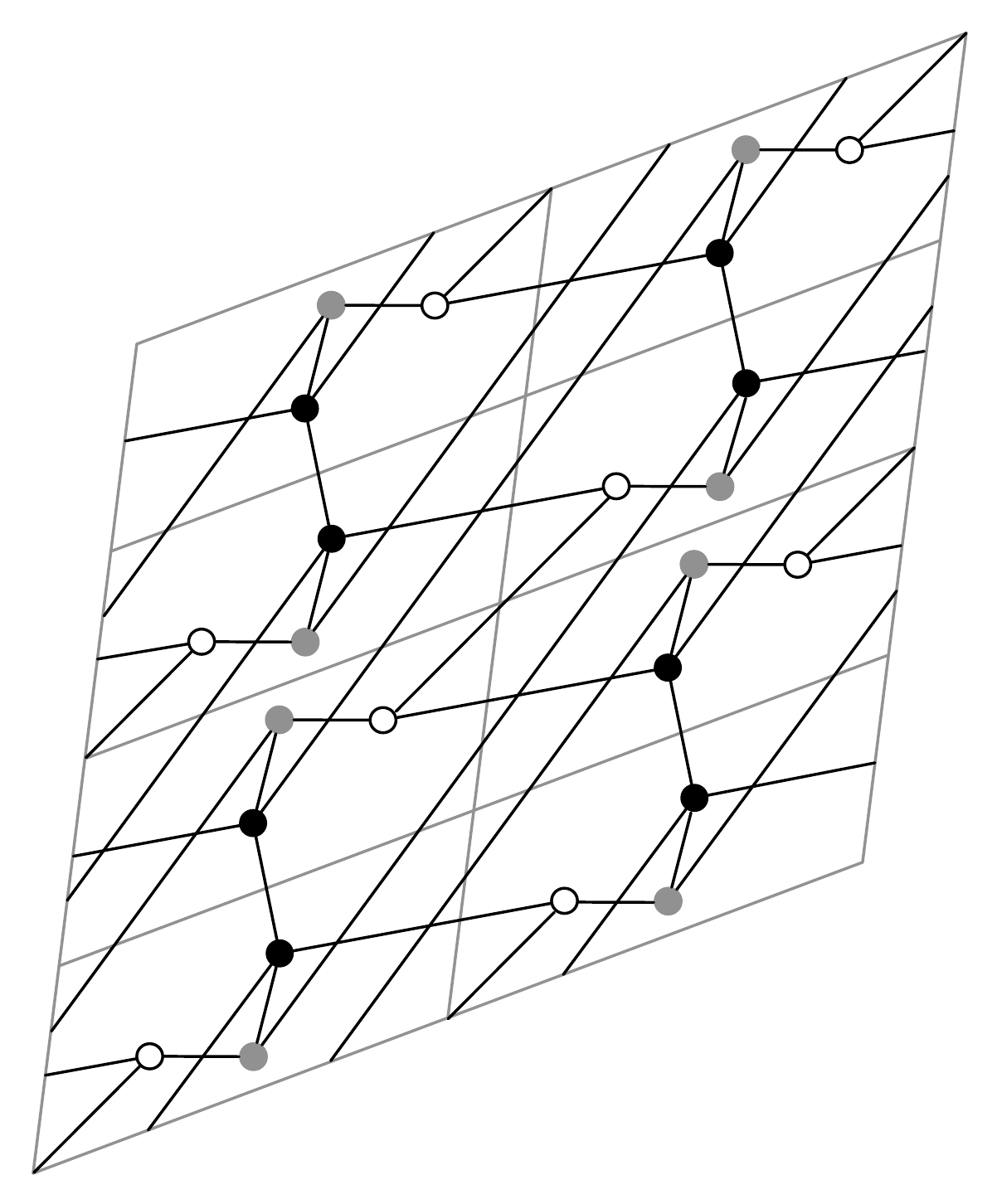}}
\subfigure[]{\includegraphics[width=.35\textwidth]{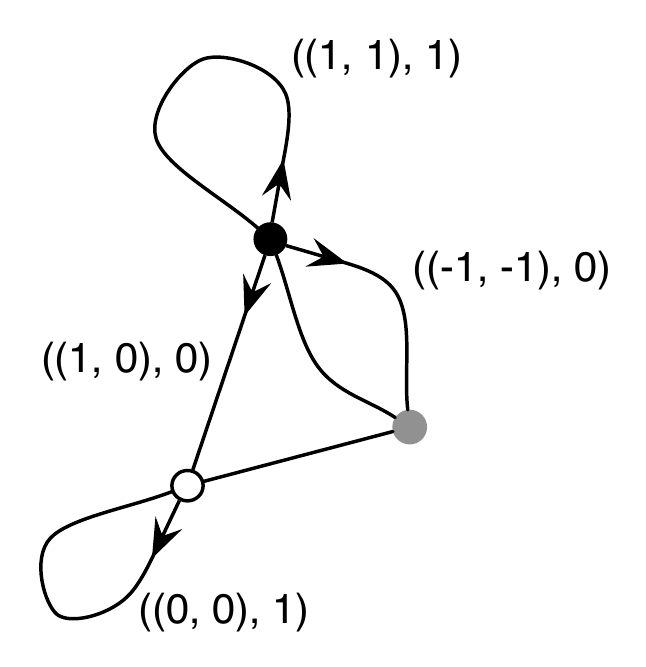}}
\caption{A $\Gamma_2$-crystallographic framework:
(a) A piece of an infinite crystallographic framework with $\Gamma_2$ symmetry.  The group
$\Gamma_2$ is generated by an order $2$ rotation and translations.  The origin, which is
a rotation center, is at the center of the diagram.  Each quadrilateral (with gray edges) is a
fundamental domain of the $\Gamma_4$-action on $\R^2$.
(b) The associated \emph{colored graph} capturing the underlying combinatorics.  Edges that
are not marked and oriented are colored with the identity element of $\Gamma_2$.  The vertices
in (b) are colored coded to show the fibers over each of them in (a).}
\label{fig:gam2graphtoperiodic}
\end{figure}
A crystallographic framework is
\emph{rigid} when the only allowed motions (that, additionally, must act on the
representation of $\Gamma$) are Euclidean isometries  and \emph{flexible} otherwise.

The topic of this paper is the following question: \emph{Which crystallographic frameworks are rigid
and which are flexible?}
In its most general form, this question doesn't seem computationally tractable: even for finite frameworks,
the best known algorithms rely on exponential-time Gröbner basis computations.  However, \emph{generically}---and almost
all crystallographic frameworks are generic---we  can say more with \theoref{main} (stated below in \secref{mainstatement}):
generic rigidity and flexibility depend on the \emph{combinatorial type} of the framework, given by a
\emph{colored graph}, which is a finite, directed graph with elements of a group on the edges.  Moreover,
\theoref{main} is a ``good characterization'' in that a polynomial time combinatorial algorithm can decide
whether a colored graph corresponds to generically rigid crystallographic frameworks.
\begin{figure}[htbp]
\centering
\subfigure[]{\includegraphics[width=0.55\textwidth]{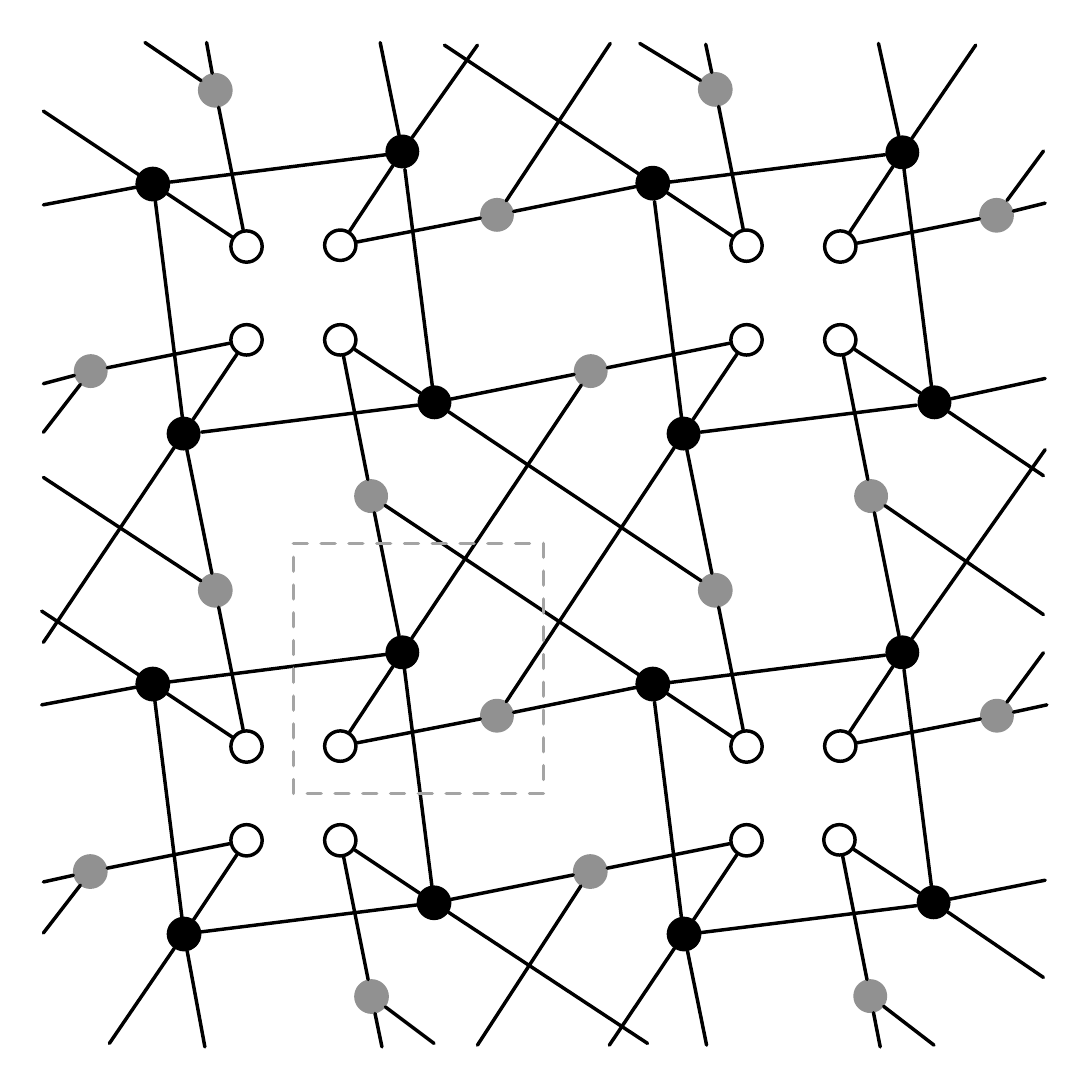}}
\subfigure[]{\includegraphics[width=0.35\textwidth]{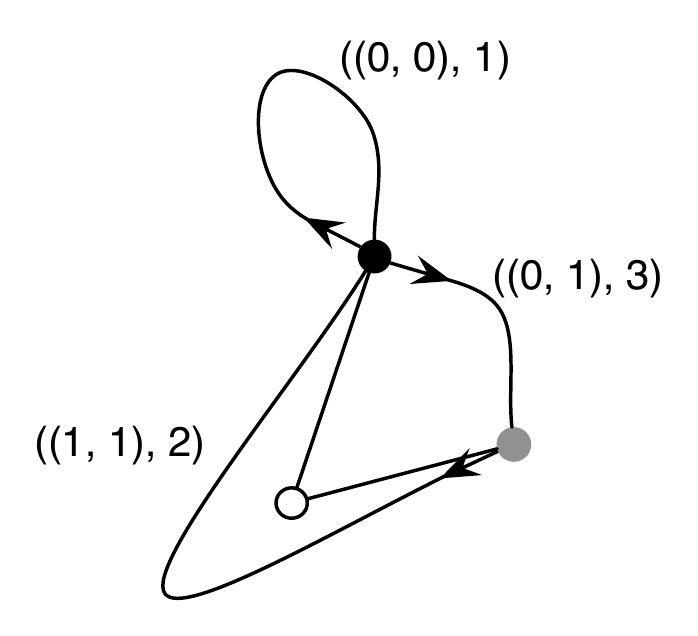}}
\caption{A $\Gamma_4$-crystallographic framework:
(a) A piece of an infinite crystallographic framework with $\Gamma_4$ symmetry.  The group
$\Gamma_4$ is generated by an order $4$ rotation and translations.  The fundamental domain of the
$\Gamma_4$-action on $\R^2$ is shown as a dashed box.
(b) The associated colored graph capturing the underlying combinatorics.  The color coding
conventions are as in \figref{gam2graphtoperiodic}.
}
\label{fig:gam4graphtoperiodic}
\end{figure}

Thus, \theoref{main} is a true analog of the landmark Maxwell-Laman Theorem \cite{M64,L70} from rigidity theory,
which characterizes generic rigidity and flexibility of finite frameworks in the plane.  We stress that
the genericity hypotheses made by \theoref{main} are \emph{on the geometry of the framework only}, which is
the same as genericity assumptions from the theory of finite frameworks.

\subsection{Algebraic definition of rigidity and flexibility}
A $\Gamma$-crystallographic framework is given by the data $\Gal$,
where $\Gtilde=(\Vtilde,\Etilde)$ is an infinite graph, $\Gamma$ is a crystallographic group,
$\varphi$ is a free $\Gamma$-action with finite quotient on $\Gtilde$, and $\elltilde$ is an
assignment of positive lengths to each edge $ij\in \Etilde$.  To keep the terminology in this framework manageable,
we will refer simply to \emph{frameworks} when the context is clear, with the understanding that the frameworks appearing in
the paper are crystallographic.

A \emph{realization} $\Gpa$ of the abstract framework $\Gal$ is defined to be an assignment
$\vec p=\left(\vec p_{i}\right)_{i\in\Vtilde}$ of points to the vertices of $\Gtilde$ and
a representation $\Phi$ of $\Gamma \into \Euc(2)$ by Euclidean isometries
acting discretely and co-compactly, such that
\begin{align}
||\vec p_i - \vec p_j||  =  \elltilde_{ij} & \text{\qquad for all edges $ij\in \Etilde$} \label{lengths} \\
\Phi(\gamma)\cdot \vec p_i  =  \vec p_{\gamma(i)} & \text{\qquad for all group elements $\gamma\in \Gamma$ and vertices $i\in\Vtilde$} \label{equivariant}
\end{align}
The condition \eqref{lengths} says that a realization respects the given edge lengths, which appears in the theory
of finite frameworks.  Equation \eqref{equivariant} says that, if we hold $\Phi$ fixed, regarded as a map
$\vec p : \Vtilde\to \R^2$, $\vec p$ is equivariant.  However, $\Phi$ is, in general, \emph{not} fixed.  This is a very important
feature of the model: the motions  available to the framework include those that deform the representation $\Phi$ of $\Gamma$,
provided this happens in a way compatible with the abstract $\Gamma$-action $\varphi$.

\subsection{Rigidity via realization and configuration spaces}
The \emph{realization space} $\mathcal{R}\Gal$  (shortly $\mathcal{R}$)
of an abstract framework is defined as the set of its realizations.  Motions of the framework are, then,
continuous paths in the realization space.  To factor out trivial motions, we define the \emph{configuration
space} $\mathcal{C}$ to be $\mathcal{C}=\mathcal{R}/\Euc(2)$.  With this definition, we can formally
define rigidity: a realization $\tilde{G}(\vec p, \Phi)$ is \emph{rigid} if it is isolated in $\mathcal{C}$; otherwise
the realization of the framework is \emph{flexible}, and there is a continuous path in $\mathcal{C}$ through $(\vec p,\Phi)$
giving a \emph{motion} of the framework.
(See \secref{continuous} for a detailed treatment of these spaces.)

We remark that the definition makes it clear that we are interested in what is sometimes called ``\emph{local rigidity}''
in the literature: the configuration space may have multiple connected components, each with a different dimension.  We
are not concerned with the stronger notion of ``\emph{global rigidity}'', which requires that $\mathcal{C}$ be a single
point.

\subsection{Main result: Crystallographic Maxwell-Laman}\seclab{mainstatement}
Our main result is the following ``Maxwell-Laman-type''
theorem for  crystallographic frameworks where the symmetry group is
generated by translations and a finite order rotation.
The ``\emph{$\Gamma$-colored-Laman graphs}'' appearing in the statement are defined in \secref{gamma-laman};
genericity is defined in detail in \secref{infinitesimal}, but the term is used in the standard sense of
algebraic geometry: generic frameworks are the (open, dense) complement of a proper algebraic
subset of the configuration space.
\begin{theorem}[\mainthm]\theolab{main}
Let $\Gamma$ be a crystallographic group generated by translations and rotations.
A generic $\Gamma$-crystallographic framework $(\tilde{G}, \varphi, \tilde{\bm{\ell}})$
is minimally rigid if and only if its colored quotient graph is $\Gamma$-colored-Laman.
\end{theorem}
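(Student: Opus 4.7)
The plan is to follow the classical Maxwell-Laman strategy, adapted to the setting where the group representation $\Phi$ is itself a variable. The first step is to pass from rigidity in $\mathcal{C}$ to \emph{infinitesimal rigidity}, by linearizing the length equations \eqref{lengths} together with the equivariance relations \eqref{equivariant}. Concretely, I would parameterize the representation space of $\Gamma \hookrightarrow \Euc(2)$ (translation lattice plus rotation centers), quotient by the action of $\Euc(2)$ by conjugation to get a finite-dimensional moduli space, and differentiate. This produces a finite rigidity matrix $R(\vec p,\Phi)$ whose rows are indexed by the orbits of edges (equivalently, edges of the colored quotient graph) and whose columns are indexed by orbits of vertices plus the representation parameters. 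A standard semi-algebraic argument, as in \secref{continuous}, shows that at a generic realization $(\vec p,\Phi)$, rigidity is equivalent to the row-rank of $R(\vec p,\Phi)$ equaling the dimension of the configuration space $\mathcal{C}$.

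Next, I would verify that the row-rank function of $R$ at a generic point defines a matroid $\mathcal{M}_{\rep}$ on the edges of the colored quotient graph $G/\Gamma$, and give a Maxwell-style upper bound. By counting free parameters per vertex orbit (two) plus the dimension of the moduli space of representations (a small constant depending on the rotation order in $\Gamma$) and subtracting trivial Euclidean motions, one obtains the precise hereditary sparsity counts that define the $\Gamma$-colored-Laman matroid $\mathcal{M}_{\text{cL}}$ of \secref{gamma-laman}. This gives the necessity direction: a generically minimally rigid framework must have colored quotient that is $\Gamma$-colored-Laman. Subtleties arise from edge orbits whose color is central in $\Gamma$, since the associated constraint can become degenerate; these are handled by a case split on whether an edge's color is the identity, a nontrivial rotation, or a translation.

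The main obstacle, as always in Maxwell-Laman-type theorems, is the sufficiency direction: showing that every $\Gamma$-colored-Laman graph is the colored quotient of some generically rigid framework. This is exactly where the ``new class of combinatorial matroids and associated linear representation results'' promised in the abstract enters. My plan would be to prove a linear representation theorem $\mathcal{M}_{\text{cL}} = \mathcal{M}_{\rep}$ by exhibiting, for each $\Gamma$-colored-Laman graph, a single realization at which the rigidity matrix attains full row-rank; genericity of this rank is then automatic. The construction of such a ``witness'' realization is the technical heart of the paper. I would attempt it via a decomposition of $\mathcal{M}_{\text{cL}}$ as a matroid union of simpler matroids (tree-like pieces together with a matroid tracking the interaction of cycles with the rotation center), then realize each summand geometrically using the freedom to choose the representation $\Phi$. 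A complementary, and possibly easier, route is an inductive Henneberg-type construction on $\Gamma$-colored-Laman graphs, reducing to small base cases that one can check by direct computation.

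Finally, the Cone-$(2,2)$ corollary mentioned in the abstract should follow almost for free, since a framework in the cone $\R^2/\langle\text{rotation}\rangle$ lifts to a $\Gamma$-crystallographic framework with a pure rotation subgroup, and the corresponding combinatorial matroid is a specialization of $\mathcal{M}_{\text{cL}}$. I expect the specification of the linear representation, and the careful bookkeeping of the representation moduli for each crystallographic $\Gamma$ generated by translations and rotations, to be where most of the actual work lies.
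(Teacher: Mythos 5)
Your first step---reduction to infinitesimal rigidity via a finite rigidity matrix on the colored quotient, with genericity handled by a semi-algebraic argument---is exactly what the paper does in Sections \secrefX{continuous}--\secrefX{infinitesimal}. After that, however, your plan is missing the central mechanism of the paper: the \emph{direction network method}. For sufficiency you correctly identify that one must exhibit a single witness realization at which the rigidity matrix attains rank $h(G)$, but you give no workable way to produce one. The paper's answer is to first prove \theoref{direction}: a generic direction network (constraints of the form $\iprod{\Phi(\gamma_{ij})\vec p_j-\vec p_i}{\vec d_{ij}^\perp}=0$, which are \emph{linear} in the unknowns $\vec p$ and $\Phi$) on a \GammaCL graph has a faithful realization, unique up to translation and scaling. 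The witness for the quadratic rigidity problem is then obtained by choosing directions $\vec d$ so that both $\vec d$ and $\vec d^\perp$ are generic, solving the direction network faithfully, and observing that substituting the realized edge vectors into \eqref{colored-crystal-directions} yields exactly the infinitesimal rigidity system \eqref{eq:infinitesimal}. The matroid-union decomposition you propose does appear in the paper (\GammaTT{} graphs decompose into two spanning \GammaOO{} graphs), but it is exploited at the level of direction networks---assigning carefully chosen directions to each \GammaOO{} piece to force all realizations to collapse (\propref{crystal-collapse})---not at the level of the length equations. A Henneberg-type induction for these colored sparsity counts is not established and would have to track the group-theoretic data through every move; nothing in the paper supports that route.

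The necessity direction also cannot be settled by the parameter count plus the case split you describe. The sparsity function $h(G')=2n'+\rep_{\Gamma_k}(G')-1-\sum_i T(G'_i)$ depends on the subgroups $\rho(\pi_1(G'_i,b_i))$ generated by the $\rho$-images of closed paths in each connected component of each subgraph---not on whether individual edge colors are identities, rotations, or translations. The paper proves the Maxwell bound by constructing, for the auxiliary direction network whose directions are the perpendiculars of the realized edges, an explicit $\bigl(\rep_{\Gamma_k}(\Trans(\Gamma_k))-\rep_{\Gamma_k}(G)+\sum_i T(G_i)\bigr)$-dimensional space of collapsed solutions (\lemref{collapsed-dimensions}), built from representations $\Phi$ that are trivial on $\Trans(G,b)$ and from placements of base vertices on rotation centers. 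That construction, and the subgroup/radical machinery of \chapref{groups} needed to make $\rep_{\Gamma_k}(\cdot)$ and $T(\cdot)$ into a submodular count, is the content your outline does not supply.
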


\subsection{The Main Theorem for orbifolds} An alternative interpretation of
\theoref{main} is that it characterizes rigidity of finite frameworks in Euclidean
orbifolds with geodesic bars.  The orbifold is obtained by taking
the quotient $\R^2/\Gamma$, where $\Gamma$ is generated by translations and rotations.  This is
what is meant elsewhere in the literature when ``torus'' \cite{R09,R11} or ``cone'' \cite{W88}
frameworks are discussed. Since we don't work in this formalism, we leave the issue of an intrinsic
\theoref{main} aside.

\subsection{Cone frameworks}
A particularly interesting simplification---that we will see as a ``warm up'' for  \theoref{main}---is
when the symmetry is given by a rotation around the origin through angle $2\pi/k$.\footnote{The proof tells us more, namely
that the same theorems about cone frameworks are true for \emph{any} order $k$ rotation, but for simplicity, we
restrict ourselves to the case arising as part of the crystallographic setting.}
In this case, the quotient is a flat cone with opening angle $2\pi/k$, so we call such frameworks \emph{cone frameworks}.
For the purposes of cone frameworks, we will identify $\Z/k\Z$ with this subgroup of $SO(2)$.

The formalism is very similar to that for crystallographic frameworks, except everything is
finite. A cone framework is given by $(\Gtilde, \varphi, \elltilde)$,
where $\Gtilde=(\Vtilde,\Etilde)$ is a finite graph,  $\varphi$ is a free $\Z/k\Z$-action, and
$\elltilde$ is an assignment of positive lengths to each edge $ij\in \Etilde$.  Realizations
$\tilde{G}(\vec p)$ of the abstract framework $(\Gtilde, \varphi, k, \elltilde)$ are point sets
$\vec p=\left(\vec p_{i}\right)_{i\in\Vtilde}$ satisfying
\begin{align}
||\vec p_j - \vec p_i||  =  \elltilde_{ij} & \text{\qquad for all edges $ij\in \Etilde$} \label{cone-lengths} \\
\gamma \cdot \vec p_i  =  \vec p_{\gamma(i)} &
\text{\qquad for all group elements $\gamma\in \Z/k\Z$ and vertices $i\in\Vtilde$}
\label{cone-equivariant}
\end{align}
and the definitions of the realization and configurations spaces, and well as rigidity and flexibility are
similar to the crystallographic case.

We prove the following theorem in \secref{cone-rigidity}; \emph{cone-Laman} graphs are defined in
\secref{cone-sparse}.
\begin{theorem}[\conethm]\theolab{cone}
A  generic cone framework is minimally rigid if and
only if the associated colored graph $(G,\bgamma)$ is cone-Laman.
\end{theorem}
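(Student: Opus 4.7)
\emph{Setup.} Since the $\Z/k\Z$-action $\varphi$ on $\Gtilde$ is free, a realization of $(\Gtilde,\varphi,\elltilde)$ is determined by a choice of positions $\vec p\in(\R^2)^V$ of one vertex in each orbit of the colored quotient $(G,\bgamma)$. An edge $ij\in E$ labelled by $\gamma\in\Z/k\Z$ contributes the scalar constraint $\|\gamma\cdot\vec p_j-\vec p_i\|=\elltilde_{ij}$, and differentiating yields an equivariant rigidity matrix $M(G,\bgamma,\vec p)$ of size $|E|\times 2|V|$. Using the machinery of \secref{infinitesimal}, a generic realization is minimally rigid iff $M$ attains rank $2|V|-1$ at $\vec p$, the one-dimensional cokernel corresponding to infinitesimal rotations about the origin---the only trivial motions commuting with a nontrivial rotation.

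\emph{Maxwell (necessity) direction.} For the forward implication, I would bound the rank of submatrices of $M$ row-indexed by subgraphs $(V',E')\subseteq(V,E)$. The trivial kernel of the restricted rigidity matrix depends on the subgroup of $\Z/k\Z$ generated by the colors on $E'$: if this subgroup is trivial, the restricted constraints are classical planar distance equations on $V'$, and any motion of $V\setminus V'$ together with a $3$-dimensional space of classical trivial motions of $V'$ lies in $\ker M_{E'}$, forcing $|E'|\le 2|V'|-3$; if the color subgroup is nontrivial, only the global rotational kernel survives, forcing $|E'|\le 2|V'|-1$. These are precisely the cone-sparse bounds of \secref{cone-sparse}, so independent edge sets of $M$ are cone-sparse, proving necessity.

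\emph{Laman (sufficiency) direction.} For the reverse implication I would exhibit, for every cone-Laman $(G,\bgamma)$, a single realization $\vec p$ at which $M(G,\bgamma,\vec p)$ attains rank $2|V|-1$; lower semi-continuity of rank and irreducibility of the realization space then propagate this to a generic $\vec p$. The abstract's promise of new ``combinatorial matroids and associated linear representation results'' suggests the matroidal route: prove that cone-Laman edge sets are the bases of a matroid $\mcc$ on $E$, construct an explicit symbolic linear representation of $\mcc$, and identify it with the rigidity matroid of $M$---either by matching rank functions on circuits or by producing a Henneberg-type inductive construction of cone-Laman graphs, each of whose moves is easily seen to preserve generic infinitesimal rigidity.

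\emph{Main obstacle.} The substantive difficulty is the Laman direction, exactly as in the classical theorem. The matrix $M$ entangles geometric data $\vec p$ with algebraic data $\bgamma$, so the usual planar pinning arguments do not apply verbatim. I expect the cleanest route is to diagonalize the $\Z/k\Z$-action using its complex characters, block-decomposing $M$ into isotypic components and reducing the generic rank computation on each subframework to a tractable symbolic determinant; this decomposition also transparently explains why only one trivial motion is lost when the color group is nontrivial, and dovetails with identifying $\mcc$ as a matroid union of a graphic part on $G$ with a ``group-connectivity'' part built from $\bgamma$.
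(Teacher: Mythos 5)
Your setup and your Maxwell (necessity) direction are sound and essentially match the paper's: the quotient rigidity matrix has a trivial cokernel of dimension $3$ or $1$ on a connected subgraph according to whether the color subgroup is trivial or not, and this yields exactly the cone-Laman sparsity counts (the paper packages the same count as $2n'-1-\sum_i T(G_i)$ and derives it via \lemref{collapsed-dimensions} applied to the perpendicular direction network, but the content is the same).

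The genuine gap is the Laman (sufficiency) direction, which you correctly identify as the substantive part but then only strategize about rather than prove. Phrases like ``prove that cone-Laman edge sets are the bases of a matroid, construct an explicit symbolic linear representation, and identify it with the rigidity matroid'' restate the goal; they do not supply the argument, and the two concrete mechanisms you float are both problematic. A Henneberg-type induction for cone-Laman graphs is not established anywhere and would be a substantial combinatorial project in its own right. The character-theoretic block decomposition of the lifted rigidity matrix is aimed at \emph{incidental} symmetry (where one decomposes the full space of infinitesimal motions into isotypic components); in the forced-symmetry setting the quotient matrix already \emph{is} the relevant object, and diagonalizing the $\Z/k\Z$-action does not by itself produce the needed generic witness. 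The paper's actual route is different and is the point of the whole construction: it proves the Cone Direction Network \theoref{cone-direction} first, by decomposing a cone-$(2,2)$ graph into two spanning cone-$(1,1)$ map-graphs (\lemref{cone-22-decomp}), assigning each connected component directions that force its vertex fibers onto a pencil of lines through the origin (\lemref{cone11-directions}), and using the overlap graph together with a genericity condition on products of projection scale factors (\propref{cone-genericity}) to force total collapse (\propref{cone-collapse}); sufficiency for rigidity then follows by choosing directions $\vec d$ with both $\vec d$ and $\vec d^\perp$ generic, realizing the direction network faithfully via an edge-doubling argument, and observing that the rigidity matrix at that realization coincides with the direction-network matrix for $\vec d^\perp$ (\secref{cone-proof}, \secref{main-proof}). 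Without this (or an equivalent substitute), your proof of the hard direction is missing.
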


\subsection{Crystallographic direction networks}
In order to prove the rigidity Theorems \ref{theo:main} and \ref{theo:cone},
we will use crystallographic direction networks.
A \emph{$\Gamma$-crystallographic direction network} $\Gad$ consists of an infinite graph $\Gtilde$
with a free $\Gamma$-action $\varphi$ on the edges and vertices, and an assignment of a \emph{direction}
$\tilde{\vec d}_{ij}$ to each edge $ij\in \tilde{E}$.

We define a realization $\Gpa$ of $\Gad$ to be a mapping of $\Vtilde$ to a point set $\vec p$ and a
representation $\Phi$ of $\Gamma$ by Euclidean isometries such that
\begin{align}
\iprod{\vec p_i - \vec p_j}{\tilde{\vec d}_{ij}^\perp}  =  0 & \text{\qquad for all edges $ij\in \Etilde$} \label{directions} \\
\Phi(\gamma)\cdot\vec p_i  =  \vec p_{\gamma(i)} & \text{\qquad for all group elements $\gamma\in \Gamma$ and vertices $i\in\Vtilde$} \label{equivariantD}
\end{align}
Since setting all the $\vec p_i$ equal and $\Phi$ to be trivial produces a realization, the realization space is never empty.
For our purpose, though, such realizations are degenerate.  We define a realization of a crystallographic
direction network to be \emph{faithful} if none of the edges of $G$ are realized with coincident endpoints.

\subsection{Crystallographic Direction Network Theorem}
Our second main result is an exact characterization of when a generic direction network admits a faithful
realization, in the spirit of Whiteley's Parallel Redrawing Theorem \cite[Section 4]{W96}.
\begin{theorem}[\directionthm]\theolab{direction}
Let $\Gamma$ be a crystallographic group generated by translations and rotations.  A generic realization of a
$\Gamma$-crystallographic direction network $\Gad$ has a
faithful realization if and only if its associated colored graph is $\Gamma$-colored-Laman.
This realization is unique up to translation and scaling.
\end{theorem}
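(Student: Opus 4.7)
My plan is to convert the existence question about faithful realizations into a rank computation on a linear matrix, and then to invoke the matroidal/linear-representation machinery developed earlier in the paper to match this rank to the $\Gamma$-colored-Laman combinatorics.

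First, I would parametrize the realization space. Fix the colored quotient graph $(G,\bgamma)$ of $\Gad$. The unknowns are the positions $\vec p_i$ for $i\in V(G)$ together with a linear parametrization of the representation $\Phi$ (the translation lattice vectors, and, when $\Gamma$ contains a nontrivial rotation, the rotation center). Each oriented edge $ij\in E(G)$ with color $\gamma$ contributes the single linear equation
\[
\iprod{\vec p_i - \Phi(\gamma)\cdot\vec p_j}{\tilde{\vec d}_{ij}^{\perp}}=0,
\]
which, after expanding the action of $\Phi(\gamma)$ in the chosen parameters, is linear in all the unknowns. Collecting these equations yields a direction matrix $M(\tilde{\vec d})$, and realizations are exactly the kernel of $M(\tilde{\vec d})$.

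Second, I would identify a canonical $3$-dimensional subspace $T\subseteq \ker M(\tilde{\vec d})$ corresponding to simultaneous translation of all $\vec p_i$ and uniform scaling (with the corresponding scaling of $\Phi$). The theorem's uniqueness claim is then equivalent to $\ker M(\tilde{\vec d})=T$, i.e.\ $M(\tilde{\vec d})$ achieves its maximum possible rank $2|V(G)|+\dim(\text{rep space})-3$. By standard genericity, this ``maximum possible'' is achieved on a Zariski-open subset of direction assignments, and exists if and only if some choice of $\tilde{\vec d}$ attains it, so we may work generically.

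Third, and this is the heart of the argument, I would invoke the linear representation theorem for the $\Gamma$-colored-Laman matroid (the new matroids announced in the abstract). The generic row-rank of $M(\tilde{\vec d})$ is a matroid rank function on $E(G)$, and one shows it coincides with the $\Gamma$-colored-Laman rank function. Given that identification, ``$(G,\bgamma)$ is $\Gamma$-colored-Laman'' is precisely the statement that $M(\tilde{\vec d})$ has maximum rank with independent rows, which gives the existence and uniqueness (up to $T$) of the realization. The converse follows in the same way: if the colored graph is not Laman, either some set of edges is dependent (overdetermined system, no generic realization) or the edge count is too small ($\ker M$ is larger than $T$, so uniqueness fails).

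Finally, faithfulness must be established: no edge $ij$ is realized with $\vec p_i=\Phi(\gamma)\vec p_j$. Each such coincidence is a polynomial condition on $\tilde{\vec d}$ in the unique realization, so the set of ``bad'' direction assignments is a union of algebraic subvarieties of $(\R^2)^{E(G)}$. To conclude they are proper, it suffices to exhibit, for every edge of every $\Gamma$-colored-Laman graph, a single direction assignment yielding a non-collapsed realization on that edge; the linear representation theorem provides such witnesses because one can build explicit generic realizations by a Henneberg-style inductive construction on the Laman graph. The main obstacle throughout is the third step: the crystallographic setting forces the rotation-center and lattice parameters to appear in $M(\tilde{\vec d})$ with a nontrivial coupling between equations along each closed walk in the gain graph, so matching the generic rank of $M(\tilde{\vec d})$ to the combinatorial rank function of the $\Gamma$-colored-Laman matroid requires the new matroidal tools introduced in this paper rather than a direct pigeonhole count.
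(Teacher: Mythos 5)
Your first two steps (linearizing into a direction matrix and reducing to a generic rank statement modulo the trivial translation/scaling kernel) match the paper's setup. But the proposal has a genuine gap at exactly the point you flag as ``the heart of the argument'': you \emph{invoke} a linear representation theorem identifying the generic row-rank of $M(\tilde{\vec d})$ with the $\Gamma$-colored-Laman rank function, but no such theorem is available to cite --- proving it \emph{is} the content of the theorem. The paper explicitly notes that the determinantal/Laplace-expansion route used in the periodic case does not work here because the variables for the points and for $\Phi$ do not separate. Its actual argument is geometric: it proves the representation statement for \GammaTT graphs (\propref{crystal-collapse}) by decomposing into two spanning \GammaOO graphs, assigning directions componentwise so that every realization of each piece is confined to a prescribed family of lines (\lemref{cone11-directions} and its crystallographic analogues), and then using the overlap graph together with the projections $T(\vec v,\vec w,R)$, whose composed scale factor is generically $\neq 1$ (\propref{cone-genericity}), to force all base vertices to rotation centers and hence all points to collapse. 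Without this (or some substitute), your step three is an assertion, not a proof.

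Your treatment of faithfulness and of the converse also would not go through as written. For faithfulness you propose to exhibit witness realizations via ``a Henneberg-style inductive construction on the Laman graph,'' but no inductive construction of $\Gamma$-colored-Laman graphs is established in the paper (or known to be easy), and saying the representation theorem ``provides such witnesses'' is circular, since a faithful realization is precisely what is being constructed. The paper instead uses the edge-doubling trick: if edge $ij$ were collapsed in the one-dimensional realization space, adding a second differently-directed copy of $ij$ would not change that space, yet the doubled graph is \GammaTT, so by \propref{crystal-collapse} the space would be zero-dimensional --- a contradiction. For the converse, note that the system is homogeneous, so a dependent edge set never yields ``no realization''; the realization space is never empty. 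One must instead show that on a $\Gamma$-colored-Laman circuit the entire generic solution space consists of realizations with all edges collapsed, which requires the dimension count of \lemref{collapsed-dimensions} comparing the solution space to the explicitly constructed space of collapsed solutions. Your dichotomy ``overdetermined, hence no generic realization'' skips this step.
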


\subsection{Proof strategy for \theoref{main}}
The deduction of the rigidity \theoref{main} from \theoref{direction} uses the natural
extension of our \emph{periodic direction network method} from \cite{MT10}.  Briefly, the steps are:
\begin{itemize}
\item We reduce the problem of rigidity, as is standard in the field, to a linearization called
\emph{infinitesimal rigidity}.  (This is defined in \secref{infinitesimal}.)
\item We then show that minimal infinitesimal rigidity of a colored graph $(G,\bgamma)$
coincides with generic direction networks on $(G,\bgamma)$ having a faithful realization
up to translation and scaling. (This is done in \secref{main-proof}.)
\item \theoref{main} is then immediate from \theoref{direction}.
\end{itemize}

Although the steps in Sections \ref{sec:continuous}--\ref{sec:main-proof}
are, in light of \cite{ST10,MT10} somewhat routine, we remark at this point
that the translation between infinitesimal rigidity and faithful direction network
realizability \emph{does not} go through when the symmetry group contains reflections.
Thus, this additional hypothesis is forced by our proof method.  While, with some additional
effort, we might be able to extend the Direction Network \theoref{direction} to all
two-dimensional  crystallographic groups, this improvement would not, by itself, give a more
general rigidity theorem.

\subsection{Roadmap}
Most of the work in this paper is in the proof of \theoref{direction}, which proceeds
in three parts:
\begin{itemize}
\item \chapref{groups} studies the crystallographic groups $\Gamma_k$ for $k=2,3,4,6$,
giving convenient coordinates to their representation spaces (Sections \secrefX{repspace}--\secrefX{radical})
and developing a matroid on the $\Gamma_k$ (\propref{Mgammarank}).
\item \chapref{graphs} contains the combinatorial part of the proof of \theoref{direction},
developing $\Gamma$-graded sparse graphs (definitions are given in
Sections \secrefX{gamma22} and \secrefX{gamma-laman})
in terms of matroidal (\propref{gamma11}) and decomposition (\propref{gamma22-decomp})
properties.
\item \chapref{direction-networks} then develops the theory of direction networks
and links the combinatorics of colored graphs defined by sparsity conditions to the
geometry of direction networks.  The main result of \chapref{direction-networks} is
\theoref{direction}, which is deduced from \propref{crystal-collapse}.
\end{itemize}
Readers familiar with \cite{MT10} will notice that the broad strokes of the proof plan is
similar, but that there is no ``natural representation'' step, in which dependence and
independence in colored graph matroids are related to determinantal formulas.  The reason
for this is that, in the crystallographic case, the variables arising from direction network
realization problems do not separate out as cleanly.  Thus, an alternative viewpoint of
\chapref{direction-networks} is that it introduces new techniques for proving linear
representability of sparsity matroids.

\subsection{Related work}
The results of this paper are a direct extension of the theory we introduced in
\cite{MT10}, and they stand on a similar foundation.  Our paper \cite{MT10}
contains a detailed discussion from several historical perspectives.

The general area of rigidity with symmetry has been somewhat active in the past few years,
but the results here are independent of much of it.  For completeness, we review some work along
similar lines. A specialization of our \cite[Theorem A]{MT10} is
due to Ross \cite{R09,R11}. Schulze \cite{S10a,S10b} and Schulze and Whiteley \cite{SW10} studied the
question of when ``incidental'' symmetry induces non-generic behaviors in finite frameworks, which is
a different setting than the ``forced'' symmetry we consider here and in \cite{MT10}.
Ross, Schulze, and Whiteley \cite{RSW10} have studied the
present problems, but they do not give any combinatorial characterizations.  Borcea and Streinu
\cite{BS11} have proposed a kind of ``doubly generic'' periodic rigidity, where the
combinatorial model does not include the colors on the quotient graph.

\subsection{Acknowledgements} We thank Igor Rivin for encouraging us to take on this project and
many productive discussions on the topic.  This work is part of a larger effort to understand the
rigidity and flexibility of hypothetical zeolites, which is supported by CDI-I grant DMR 0835586 to
Rivin and M. M. J. Treacy.  LT's final preparation of this paper was funded by the European
Research Council under the European Union's Seventh Framework Programme (FP7/2007-2013) /
ERC grant agreement no 247029.

\chapter{Groups}\chaplab{groups}
\section{Crystallographic group preliminaries}\seclab{crystal-prelim}
In this section, we review some basic facts about crystallographic groups generated by translations and
rotations.

\subsection{Facts about the Euclidean group}
The Euclidean isometry group $\Euc(d)$ in any dimension
$d$ admits the following short exact sequence:
\[
1 \to \R^d \to \Euc(d) \to O(d) \to 1
\]
where $O(d)$ is the orthogonal group. The subgroup $\R^d < \Euc(d)$ is the subgroup of \emph{translations} and $\Euc(d) \to O(d)$
is the map that associates to an isometry $\psi$ its derivative at the origin $D\psi_0$.
This short exact sequence splits, since $O(d)$ is naturally isomorphic
to the subgroup of $\Euc(d)$ consisting of isometries fixing the origin.

Consequently, $\Euc(d)$ is isomorphic to the semidirect product $\R^d \rtimes O(d)$ with group operation:
\[
(v, r) \cdot (v', r') = (v + r \cdot v', r r')
\]
Since our setting is $2$-dimensional, from now on, we are interested in $\Euc(2)$.  In the two dimensional case,
we have the following simple lemma, which we state without proof.
\begin{lemma} \lemlab{TorR}
Any nontrivial orientation-preserving isometry of the Euclidean plane is either a rotation around
a point or a translation.
\end{lemma}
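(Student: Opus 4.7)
The plan is to exploit the semidirect product decomposition $\Euc(2) \cong \R^2 \rtimes O(2)$ stated just above the lemma, and to do a case analysis on the rotational part. Write a nontrivial orientation-preserving isometry as $\psi(x) = Rx + v$, where $R = D\psi_0 \in SO(2)$ is a rotation matrix through some angle $\theta$ and $v \in \R^2$.

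First I would dispatch the easy case $R = \Id$. Here $\psi(x) = x + v$, and because $\psi$ is nontrivial, $v \neq 0$, so $\psi$ is a nontrivial translation.

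For the main case $R \neq \Id$, I need to produce a fixed point $x_0$ and then check that $\psi$ is the rotation of the plane around $x_0$ by the angle $\theta$. A fixed point satisfies $(\Id - R)x_0 = v$, so the key step is to observe that $\Id - R$ is invertible. This follows because the eigenvalues of a nontrivial element $R \in SO(2)$ are $e^{\pm i\theta}$ with $\theta \notin 2\pi\Z$, so $1$ is not an eigenvalue of $R$, i.e.\ $\Id - R$ has trivial kernel. Hence $x_0 := (\Id - R)^{-1} v$ is well-defined. Conjugating $\psi$ by the translation $T_{x_0}: x \mapsto x + x_0$ gives $T_{x_0}^{-1} \circ \psi \circ T_{x_0}(x) = Rx + (Rx_0 + v - x_0) = Rx$, which is rotation by $\theta$ around the origin, so $\psi$ itself is rotation by $\theta$ around $x_0$.

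There is essentially no obstacle here; the only substantive point is invertibility of $\Id - R$, which follows from the trivial spectral computation above. The lemma is stated without proof in the paper, presumably because it is considered standard, so the write-up should be brief — two short paragraphs covering the split into $R = \Id$ and $R \neq \Id$ and the fixed-point construction.
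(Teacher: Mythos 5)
Your proof is correct. The paper states this lemma without proof (it is declared standard), so there is no argument to compare against; your decomposition $\psi(x)=Rx+v$ via $\Euc(2)\cong\R^2\rtimes O(2)$, the observation that $\Id-R$ is invertible when $R\neq\Id$ because $1$ is not an eigenvalue of a nontrivial element of $SO(2)$, and the conjugation by $T_{x_0}$ to exhibit $\psi$ as a rotation about the fixed point $x_0=(\Id-R)^{-1}v$ together constitute exactly the standard argument the authors had in mind.
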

Thus, when we refer to orientation-preserving elements of $\Euc(2)$ we call them simply \emph{``rotations''}
or \emph{``translations''}.  We denote the counterclockwise rotation around the origin through angle $2\pi/k$
by $R_k$.

\subsection{Crystallographic groups}
A {\it $2$-dimensional crystallographic group} $\Gamma$ is a group admitting a discrete cocompact
faithful representation $\Gamma \to \Euc(2)$.  We will denote by $\Phi$ discrete faithful representations
of $\Gamma$.  In this paper we are interested in the case where all the group elements are represented
by rotations and translations (i.e., we disallow reflections and glides).

Bieberbach's Theorems \cite{B11,B12} classify all crystallographic groups,
and there are precisely five $2$-dimensional crystallographic groups containing only translations
and rotations.  The first group which we denote by $\Gamma_1$ is $\Z^2$.  The rest
are all semidirect products of $\Z^2$ with a cyclic group.  Namely, for $k = 2, 3, 4, 6$, we have
$\Gamma_k = \Z^2 \rtimes \Z/k\Z$.  The action on $\Z^2$ by the generator
of $\Z/k\Z$ is given by the following table.

\begin{center}
\begin{tabular}{|c|c|c|c|c|}
\hline  $k$ & $2$ & $3$ & $4$ & $6$ \\
\hline matrix & $\JMat{-1 & 0 \\ 0 & -1}$  & $\JMat{ 0 & -1 \\ 1 & -1} $ & $\JMat{0 & -1 \\ 1 & 0} $
&  $\JMat{0 & -1 \\ 1 & 1} $ \\
\hline
\end{tabular}
\end{center}

\noindent
We define the $\Z^2$ subgroup of $\Gamma_k$ to be the {\em translation subgroup} of $\Gamma_k$ and denote it by $\Trans(\Gamma_k)$.
We denote $\gamma\in \Gamma_k$, $k=2,3,4,6$ as $\gamma=(t,r)$ with $t\in \Z^2$ and $r\in \Z/k\Z$.

\subsection{Remark on groups considered}
Since we are only interested in crystallographic groups of this form, the rest of the paper will consider $\Gamma_k$ only (and not
more general crystallographic groups).  Moreover, since the main objective of this paper is \cite[Theorem A]{MT10} when $k=1$,
we will treat only $k=2,3,4,6$ in what follows.  However, the theory presented here specializes to $\Gamma_1$.

\subsection{Finitely generated subgroups} If $\gamma_1,\ldots, \gamma_t$ are element of $\Gamma_k$, we denote the subgroup
generated by the $\gamma_i$ as $\langle \gamma_1,\ldots, \gamma_t\rangle$.   If $\Gamma^1, \ldots, \Gamma^t$
are a sequence of finitely generated subgroups then $\langle \Gamma^1, \Gamma^2,\ldots, \Gamma^t \rangle$
denotes the subgroup generated by the union of some choice of generators for each $\Gamma^i$.

\section{Representation space}\seclab{repspace}

$\Gamma$-crystallographic frameworks and direction networks are required to be symmetric
with respect to the group $\Gamma$.  However, the representation is allowed to flex.  In this
section, we formalize this flexing.

\subsection{The representation space}
Let $\Gamma$ be a crystallographic group.  We define the \emph{representation space} $\Rep(\Gamma)$ of $\Gamma$ to be
\[
\Rep(\Gamma) = \{ \Phi : \Gamma \to \R^2 \rtimes O(2) \; | \; \Phi \text{ is a discrete faithful representation} \}
\]

\subsection{Motions in representation space}
For our purposes a $1$-parameter family of representations is a continuous motion if it is pointwise continuous.
More precisely, identify $\Euc(2) \cong \R^2 \times O(2)$ as topological spaces.  Suppose
$\Phi_t: \Gamma \to \Euc(2)$ is a family of representations defined for $t \in (-\epsilon, \epsilon)$ for some $\epsilon > 0$.
Then, $\Phi_t$ is a continuous motion through $\Phi_0$
if $\Phi_t(\gamma)$ is a continuous path in $\Euc(2)$ for all $\gamma \in \Gamma$.

\subsection{Coordinates for representations}
We now show how to give convenient coordinates for the representation space
for each $\Gamma_k$ for $k=2,3,4,6$; by the classification of
$2$-dimensional crystallographic groups, these are the only cases we need to check.
This next lemma follows readily from Bieberbach's Theorems, but we give a proof in in
\secref{repspace-proof} for completeness.
\begin{lemma}\lemlab{repspace}
The representation spaces of each of the $\Gamma_k$ can be given coordinates as follows:
\begin{itemize}
\item $\Rep(\Gamma_2)\cong \{v_1,v_2, w \in \R^2 : \text{$v_1$ and $v_2$ are linearly independent}\}$
\item $\Rep(\Gamma_k) \cong \{ v_1, w, \varepsilon \; | \; v_1 \neq 0, \varepsilon = \pm 1, v_1, w \in \R^2 \}$ for $k=3,4,6$
\end{itemize}
\end{lemma}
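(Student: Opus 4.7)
The plan is to parameterize $\Rep(\Gamma_k)$ by the images of a fixed set of generators and then cut out the locus respecting the defining relations together with discreteness and faithfulness. First I fix generators $e_1, e_2$ for $\Trans(\Gamma_k)$ and $\rho$ for the $\Z/k\Z$ quotient, with relations $[e_1, e_2] = 1$, $\rho^k = 1$, and $\rho e_i \rho^{-1} = e_1^{a_i} e_2^{b_i}$, where $(a_i,b_i)$ is the $i$-th column of the matrix $M_k$ tabulated in \secref{crystal-prelim}. A homomorphism $\Phi$ is then determined by the triple $(\Phi(e_1), \Phi(e_2), \Phi(\rho))$, and the lemma amounts to cutting this space down by the relations and by the discreteness/faithfulness conditions.

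The preliminary reduction is that any discrete faithful $\Phi$ must send $\Trans(\Gamma_k)$ into the translation subgroup of $\Euc(2)$. Indeed, $\Phi$ is faithful on $\Z^2$, which is torsion-free, while any nontrivial rotation inside a discrete subgroup of $\Euc(2)$ has finite order (its powers form a discrete subgroup of $SO(2)$ fixing its center, hence a finite cyclic group). Thus I may write $\Phi(e_i) = (v_i, I)$ with $v_i \in \R^2$, and $\Phi(\rho) = (w, r)$ with $r \in SO(2)$. A direct computation in the semidirect product yields $\Phi(\rho)(v_i, I)\Phi(\rho)^{-1} = (r v_i, I)$, so the conjugation relation becomes the linear equation $r v_i = a_i v_1 + b_i v_2$, while $\rho^k = 1$ combined with faithfulness on $\langle \rho \rangle$ forces $r$ to have exact order $k$.

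The conclusion now follows by inspection of each $M_k$. For $k = 2$, $M_2 = -I$ forces $r = -I$ (the unique element of order two in $SO(2)$), the conjugation equations $r v_i = -v_i$ are automatic, $w$ is free, and discreteness plus cocompactness is equivalent to $v_1, v_2$ being linearly independent. For $k = 3, 4, 6$, the first column of $M_k$ yields $r v_1 = v_2$, so $v_2$ is determined by $v_1$ and $r$; the second-column equation is automatic because $r v_2 = r^2 v_1$ and the minimal polynomial of $R_k$ in each case ($x^2 + 1$, $x^2 + x + 1$, $x^2 - x + 1$) matches precisely the relation that the second column of $M_k$ imposes on $r^2$. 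The two choices $r \in \{R_k, R_k^{-1}\}$ give the $\varepsilon = \pm 1$ parameter; $v_1$ and $r v_1$ are independent iff $v_1 \neq 0$ (yielding discreteness and cocompactness of $\Phi(\Z^2)$); and $w$ is again unconstrained.

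The main subtlety I expect is the torsion-free reduction at the start, which is where the discreteness hypothesis has real bite and which must be cited or proved with care so that the decomposition $\Phi(e_i) = (v_i, I)$ is even well-defined. After that hurdle, the rest is linear bookkeeping against the tables in \secref{crystal-prelim}.
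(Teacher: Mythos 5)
Your proof is correct and follows essentially the same route as the paper's: parameterize $\Phi$ by the images of generators, use discreteness together with the infinite order of the translation generators to force $\Phi(e_i)$ into the translation subgroup, identify the order-$k$ rotations as $(w,R_k^{\pm 1})$, and read off the remaining constraints; your explicit minimal-polynomial verification of the conjugation relations is exactly the ``straightforward to check'' step the paper leaves implicit when it sets $\Phi(t_2)=(R_k^{\varepsilon}v_1,\Id)$. One trivial slip: your parenthetical list of minimal polynomials is transposed for $k=3$ and $k=4$ ($R_3$ satisfies $x^2+x+1$ and $R_4$ satisfies $x^2+1$), though the surrounding argument clearly uses the correct ones.
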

The vectors specify the ``$\R^2$-part'' of the image of a generator in $\Euc(2) \cong \R^2 \rtimes O(2)$.  The $v_i$ will be
the $\R^2$-part of translational generators, and $w$ the $\R^2$-part of a rotational generator.  The vector $w$ determines
the rotation center, but is \emph{not} the rotation center itself.

\subsection{Coordinates for finite-order rotations}
The following lemma describes the coordinates of an order $k$ rotation in $\Euc(2)$, and it makes the meaning
of the vector $w$ appearing in the statement of \lemref{repspace} precise: it determines how an order $k$ rotation
acts on the origin.
\begin{lemma}\lemlab{orderk}
Let $\psi$ be an orientation-preserving element of $\Euc(2)$.  Then $\psi$ has order
$k=2,3,4,6$ if and only if it is of the form $(w,R_k^{\pm 1})$, where $R_k$ is the order $k$
counterclockwise rotation through angle $2\pi/k$.
\end{lemma}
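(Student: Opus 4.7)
The plan is to chase definitions using the already-established structure of $\Euc(2)$ as $\R^2\rtimes O(2)$ together with \lemref{TorR}. The backward direction is a direct computation showing that $(w,R_k^{\pm 1})$ has order exactly $k$; the forward direction reduces via \lemref{TorR} to analyzing the possible angles of a finite-order rotation.

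First I would dispense with the forward direction. Suppose $\psi\in\Euc(2)$ is orientation-preserving and has order $k\in\{2,3,4,6\}$. By \lemref{TorR}, $\psi$ is either a rotation or a translation, and since a nontrivial translation has infinite order, $\psi$ must be a rotation about some center $p\in\R^2$ through some angle $\theta$. Writing this rotation in semidirect product coordinates, the formula $x\mapsto R_\theta(x-p)+p=R_\theta x+(I-R_\theta)p$ shows $\psi=((I-R_\theta)p,\,R_\theta)$, which already has the form $(w,R_\theta)$ with $w=(I-R_\theta)p$. It remains to identify $R_\theta$ with $R_k^{\pm 1}$. The order of the rotation equals the order of $R_\theta\in SO(2)$, so $\theta$ is a primitive $k$-th multiple of $2\pi/k$. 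For the specific values $k=2,3,4,6$, the only elements of $SO(2)$ of order exactly $k$ are $R_k$ and $R_k^{-1}$ (these coincide when $k=2$), because the primitive $k$-th roots of unity in $SO(2)$ are $\{R_k^{j}\,:\,\gcd(j,k)=1\}$, and for $k\in\{2,3,4,6\}$ this set is precisely $\{R_k^{\pm 1}\}$. This identifies $\psi$ as $(w,R_k^{\pm 1})$.

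For the backward direction, I would compute iterates of $(w,r)$ with $r=R_k^{\pm 1}$ using the semidirect product law $(v,r)(v',r')=(v+rv',rr')$, which gives by induction
\[
(w,r)^n = \bigl((I+r+r^2+\cdots+r^{n-1})w,\;r^n\bigr).
\]
Since $r$ has order $k$ in $O(2)$, taking $n=k$ reduces the rotational coordinate to the identity, and it suffices to show the translational coordinate vanishes, i.e.\ that $S:=I+r+\cdots+r^{k-1}=0$. This follows from the identity $(I-r)S=I-r^k=0$ together with the fact that $I-r$ is invertible (its determinant is $2-2\cos(2\pi/k)\neq 0$ for $k\geq 2$). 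Hence $(w,r)^k$ is the identity, and no smaller positive power works because the rotational coordinate $r^n$ already has order $k$, so $(w,r)$ has order exactly $k$.

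The only place where genuine care is needed is in checking that, among the finite orders realized in $SO(2)$, exactly the values $k\in\{2,3,4,6\}$ have their primitive $k$-th roots exhausted by $R_k^{\pm 1}$; this is the small number-theoretic point $\varphi(k)\in\{1,2\}$, which is why these four values of $k$ are singled out. Everything else is formal manipulation in the semidirect product and the geometric series identity above, so I do not anticipate any real obstacle.
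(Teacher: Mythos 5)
Your proof is correct and follows essentially the same route as the paper's: both directions reduce to computing $\psi^k$ in the semidirect product and to the observation that for $k=2,3,4,6$ the only $j$ with $R_k^j$ of order $k$ are $j=\pm 1$. The only difference is cosmetic: where you verify $I+r+\cdots+r^{k-1}=0$ algebraically via invertibility of $I-r$, the paper argues geometrically that the translational coordinate traces the boundary of a regular $k$-gon; both are fine, and your version of the forward direction is if anything slightly more explicit in invoking \lemref{TorR} to rule out translations.
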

\begin{proof}
If $\psi$ has the required form, then $\psi^k$ is $(w+R_k\cdot w+ \cdots + R_k^{\pm (k-1)}\cdot w,R_k^{\pm k})$.
The first coordinate corresponds to walking along the boundary of a regular $k$-gon, so it is the identity, and the second
evidently is as well.  On the other hand, if $\psi$ has order $k$ then an arbitrary point is either fixed or its
iterated images under $\psi$ are the vertices of a regular polygon, but not necessarily visited in cyclic order.  More
specifically, a rotation though angle $j\frac{2\pi}{k}$ has order $k$ if and only if $j$ has order $k$ in $\Z/k\Z$.
For $k=2,3,4,6$, however, $1$ and $-1$ are the only such $j$.
\end{proof}

\subsection{Generators for $\Gamma_k$}
We also need a description of the generating sets for each of the $\Gamma_k$, which follows
from their descriptions as semi-direct products of $\Z^2\rtimes \Z/k\Z$.
\begin{lemma}\lemlab{gensets}
The following are generating sets for each of the $\Gamma_k$:
\begin{itemize}
\item $\Gamma_2$ is generated by the set $\{((1,0),0), ((0,1),0), ((0,0), 1)\}$.
\item $\Gamma_k$ is generated by the set $\{((1,0),0), ((0,0), 1)\}$ for $k=3,4,6$.
\end{itemize}
\end{lemma}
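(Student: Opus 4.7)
The plan is to unpack the semidirect-product description $\Gamma_k = \Z^2 \rtimes \Z/k\Z$ and verify, in each case, that every element $(t,r)$ with $t\in\Z^2$ and $r\in\Z/k\Z$ lies in the subgroup generated by the listed elements. Since every element of $\Gamma_k$ has the unique form $(t,r)$, it suffices to show (i) the generators produce the entire translation subgroup $\Z^2\times\{0\}$, and (ii) they produce the rotational generator $((0,0),1)$, because then an arbitrary $(t,r)$ is a product of one element of each type.

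For $\Gamma_2$ (the case $k=2$), the argument is immediate: the two translations $((1,0),0)$ and $((0,1),0)$ obviously generate $\Z^2\times\{0\}$, and $((0,0),1)$ is one of the listed generators, so we are done. The reason an additional translation generator is required here, and not for $k=3,4,6$, is that the action of $1\in\Z/2\Z$ on $\Z^2$ is by $-\mathrm{Id}$, so conjugating $((1,0),0)$ by $((0,0),1)$ gives back only $((-1,0),0)$.

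For $k=3,4,6$ the key step is to exhibit $((0,1),0)$ as a word in the two given generators. Reading the first column of the rotation matrices in the table of Section~\ref{sec:crystal-prelim} shows that in each of the cases $k=3,4,6$ the generator of $\Z/k\Z$ sends $(1,0)$ to $(0,1)$. Using the semidirect-product multiplication rule $(t_1,r_1)\cdot(t_2,r_2) = (t_1 + r_1\cdot t_2,\, r_1+r_2)$, a direct computation then gives
\[
((0,0),1)\cdot((1,0),0)\cdot((0,0),1)^{-1} = ((0,1),0).
\]
Once $((0,1),0)$ is in hand, together with $((1,0),0)$ it generates $\Z^2\times\{0\}$, and with $((0,0),1)$ we generate all of $\Gamma_k$ as in the $\Gamma_2$ case.

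There is no serious obstacle; the main thing to be careful about is bookkeeping in the semidirect product. One could streamline the writeup by doing the three cases $k=3,4,6$ simultaneously, since only the fact $R_k\cdot(1,0)=(0,1)$ (visible in the table) is used, with the specific form of the rest of the matrix playing no role.
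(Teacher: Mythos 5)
Your proof is correct and matches the argument the paper intends: the paper states \lemref{gensets} without proof, saying only that it ``follows from their descriptions as semi-direct products,'' and your key computation $((0,0),1)\cdot((1,0),0)\cdot((0,0),1)^{-1}=((0,1),0)$ is exactly the fact the authors use later (in the proof of Lemma \ref{lemma:trrep}, where they note $\Trans(\Gamma_k)$ is generated by $t_1$ and $r_k t_1 r_k^{-1}$). Nothing is missing.
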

For convenience, we set the notation $r_k=((0,0),1)$, $t_1=((1,0),0)$, and $t_2=((0,1),0)$.
We now have the pieces in place to prove \lemref{repspace}.

\subsection{Proof of \lemref{repspace}}\seclab{repspace-proof}
We let $\Phi \in \Rep(\Gamma_k)$ be a discrete, faithful representation.  Thus $\Phi$ is determined by the images of the generators,
so \lemref{gensets} tells us we need only to check $t_1$, $t_2$, and $r_k$.

The generators $t_i$ must always be mapped to translations: since they are infinite order and $\Phi$ is faithful,
the only other possibility is an infinite order rotation.  This would contradict $\Phi$ being discrete.  Thus:
\begin{itemize}
\item For $k= 2$, $t_1$ and $t_2$ are mapped to translations $(v_1, \Id)$ and $(v_2, \Id)$.
\item For $k=3,4,6$, $t_1$ is mapped to a translation $(v_1, \Id)$.
\end{itemize}
Moreover, faithfulness and discreteness force:
\begin{itemize}
\item All the images $v_i$ to be non-zero.
\item The images $v_1$ and $v_2$ to be linearly independent for $k=1,2$.
\end{itemize}
By \lemref{orderk} we must have
$\Phi(r_k) = (w, R_k^{\varepsilon})$ for some $w \in \R^2$ and $\varepsilon \in \{-1, 1\}$.  Since $R_2$ is order $2$,
we have $\Phi(r_2) = (w, R_2)$ and $\varepsilon$ is unnecessary for $\Gamma_2$.

In the other direction, given the data described in the statement of the lemma, we simply define
$\Phi(t_i)$ and $\Phi(r_k)$ as above. When $k = 3, 4, 6$, we set $\Phi(t_2) = (R_k^{\varepsilon} v_1, \Id)$.
For arbitrary elements of $\Gamma$, we define $\Phi( (m_1, m_2), m_3) = \Phi(t_1)^{m_1} \Phi(t_2)^{m_2} \Phi(r_k)^{m_3}$.
It is straightforward to check $\Phi$ as defined is a homomorphism and is discrete and faithful. \eop

\subsection{Degenerate representations}
When we are dealing with  ``collapsed realizations'' of direction networks in \chapref{direction-networks}, we
will need to work with certain degenerate representations of $\Gamma_k$.  The space
$$\overline{\Rep}(\Gamma_k)$$
is defined to be representations of $\Gamma_k$ where we allow the $v_i$ to be any vectors.
Topologically this is the closure of  $\Rep(\Gamma_k)$ in the space of all (not necessarily discrete or faithful) representations
$\Gamma_k \to \Euc(2)$.

\subsection{Rotations and translations in crystallographic groups}
As we have defined them, $2$-dimensional crystallographic groups are abstract groups admitting a discrete
faithful representation to $\Euc(2)$.  However, as we
saw in the proof of \lemref{repspace}, all group elements in $\Trans(\Gamma_k)$ must be mapped to translations,
and all group elements outside $\Trans(\Gamma_k)$ must be mapped to rotations.  Consequently, we will henceforth call
elements of $\Trans(\Gamma)$ \emph{``translations''} and elements outside of $\Trans(\Gamma_k)$ \emph{``rotations''}
(even though technically they are elements of an abstract group).

\section{Subgroup structure}\seclab{subgroups}
This short section contains some useful structural lemmas about subgroups of $\Gamma_k$.

\subsection{The translation subgroup}
For a subgroup $\Gamma' < \Gamma_k$, we define its \emph{translation subgroup} $\Trans(\Gamma')$ to be $\Gamma' \cap \Trans(\Gamma_k)$.
(Recall that $\Trans(\Gamma_k)$ is the subgroup $\Z^2$ coming from the semidirect product decomposition of $\Gamma_k$.)

\subsection{Facts about subgroups}
With all the definitions in place, we state several lemmas about subgroups of $\Gamma_k$ that we need later.
\begin{lemma} \label{lemma:subgrpgen}
Let $\Gamma' < \Gamma_k$ be a subgroup of $\Gamma_k$, and
suppose $\Gamma' \neq \Trans(\Gamma')$.  Then $\Gamma'$ is generated by one rotation and $\Trans(\Gamma')$.
\end{lemma}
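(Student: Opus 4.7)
The plan is to exploit the semidirect product structure $\Gamma_k = \Z^2 \rtimes \Z/k\Z$ together with the fact that $\Z/k\Z$ is cyclic, so every subgroup is generated by a single element.

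First I would introduce the projection $\pi : \Gamma_k \to \Z/k\Z$ coming from the semidirect product decomposition, so that $\ker \pi = \Trans(\Gamma_k)$ and consequently $\Gamma' \cap \ker \pi = \Trans(\Gamma')$. The image $\pi(\Gamma')$ is a subgroup of the cyclic group $\Z/k\Z$, hence cyclic; let $\bar r$ be a generator of $\pi(\Gamma')$. Since $\Gamma' \neq \Trans(\Gamma')$, the image $\pi(\Gamma')$ is nontrivial, so $\bar r \neq 0$. By the remark after \lemref{repspace} (identifying elements outside $\Trans(\Gamma_k)$ as ``rotations''), any lift $g = (t, \bar r) \in \Gamma'$ of $\bar r$ is a rotation.

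Next I would verify that $g$ together with $\Trans(\Gamma')$ generates $\Gamma'$. Given any $g' \in \Gamma'$, write $\pi(g') = n \bar r$ for some integer $n$. Using the semidirect product multiplication $(t_1, r_1)(t_2, r_2) = (t_1 + r_1 \cdot t_2, r_1 r_2)$, one checks that $\pi(g^n) = n\bar r$, so $g^{-n} g' \in \ker \pi \cap \Gamma' = \Trans(\Gamma')$. Hence $g' = g^n \cdot (g^{-n} g')$ lies in the subgroup generated by $g$ and $\Trans(\Gamma')$, which gives the claim.

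There is no real obstacle here; the only thing to keep straight is that the projection $\pi$ is a group homomorphism (immediate from the semidirect product) and that conjugating translations by $g$ keeps them inside $\Trans(\Gamma')$ (which is automatic since $\Trans(\Gamma')$ is defined as the intersection with the normal subgroup $\Trans(\Gamma_k)$, and $\Gamma'$ is closed under conjugation by its own elements). The essential content is just that a cyclic quotient can be generated by a single lift.
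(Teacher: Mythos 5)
Your proof is correct and follows essentially the same route as the paper, which simply observes that $\Gamma_k/\Trans(\Gamma_k)$ is finite cyclic and contains the image of $\Gamma'$ as a (cyclic) subgroup; you have just written out the lifting argument that the paper leaves implicit.
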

\begin{proof}
We need only observe that $\Gamma_k/\Trans(\Gamma_k)$ is finite cyclic
and contains $\Gamma'_k/\Trans(\Gamma'_k)$ as a subgroup.
\end{proof}

This next lemma is straightforward, but useful.  We omit the proof.
\begin{lemma} \label{lemma:rotscomm}
Let $r_1, r_2 \in \Gamma_k$ be rotations.  Then $\langle r_1, r_2 \rangle$ is a finite cyclic subgroup consisting of
rotations if and only if some nontrivial powers $r_1^p$ and $r_2^q$ commute.
\end{lemma}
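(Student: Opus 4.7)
The plan is to pass to a discrete faithful representation $\Phi\colon \Gamma_k\to \Euc(2)$ and use the classical fact that two nontrivial planar rotations commute if and only if they share a rotation center. Both directions will then follow easily.

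\emph{Forward direction.} If $\langle r_1,r_2\rangle$ is a finite cyclic subgroup, it is in particular abelian, so $r_1$ and $r_2$ themselves commute; the choice $p=q=1$ gives the required nontrivial powers.

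\emph{Backward direction.} Suppose $r_1^p$ and $r_2^q$ are nontrivial rotation powers that commute. Under the faithful $\Phi$, their images are commuting nontrivial rotations of $\R^2$, so $\Phi(r_1^p)$ and $\Phi(r_2^q)$ must share a common fixed point $c\in\R^2$. A nonzero integer power of a rotation is again a rotation about the same center, so $\Phi(r_1)$ must fix $c$, and likewise $\Phi(r_2)$. Therefore $\Phi(\langle r_1,r_2\rangle)$ lies in the orientation-preserving stabilizer of $c$ in $\Euc(2)$, i.e.\ the group of rotations about $c$, which is isomorphic to $SO(2)$. As a subgroup of the discrete group $\Phi(\Gamma_k)$ it is itself discrete, and a discrete subgroup of $SO(2)$ is finite cyclic. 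Faithfulness of $\Phi$ transfers this structure back to $\langle r_1,r_2\rangle$, and every nonidentity element, being an orientation-preserving isometry fixing $c$, is a rotation.

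\emph{Anticipated obstacle.} The only real subtlety is the bookkeeping around the word ``nontrivial'': we need $r_1^p$ and $r_2^q$ to be rotations, not merely nonidentity elements, since an element of $\Gamma_k$ whose rotational part in $\Z/k\Z$ is zero is a translation and would commute with every other translation for free. Once this reading is in force, everything reduces to the classical planar commuting-rotation fact together with discreteness of $\Phi(\Gamma_k)$, and no further work is needed.
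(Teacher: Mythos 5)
Your proof is correct. The paper explicitly omits the proof of this lemma (``straightforward, but useful''), and your argument---pass to a discrete faithful representation, use that commuting nontrivial planar rotations share a center, and that a discrete subgroup of $SO(2)$ is finite cyclic---is exactly the natural argument the authors had in mind; note also that your ``anticipated obstacle'' resolves itself, since a nonidentity power of a planar rotation is automatically a rotation about the same center and can never be a nontrivial translation.
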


\begin{lemma} \label{lemma:Gam2nice}
Let $r' \in \Gamma_2$ be a rotation and $\Gamma' < \Trans(\Gamma_2)$ a subgroup of the translation
subgroup of $\Gamma_2$.
Then $\Trans( \langle r', \Gamma' \rangle) = \Gamma'$; i.e., after adding the rotation $r'$,
the translation subgroup of the group generated by $r'$ and $\Gamma'$ is again $\Gamma'$.
\end{lemma}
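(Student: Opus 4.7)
The plan is to use the concrete semidirect product structure $\Gamma_2 = \Z^2 \rtimes \Z/2\Z$ to put every element of $\langle r', \Gamma'\rangle$ into a normal form whose translation/rotation type can be read off directly. The inclusion $\Gamma' \subseteq \Trans(\langle r', \Gamma'\rangle)$ is trivial, so all the work is in the reverse direction.

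First, I would write $r' = (t, 1)$ for some $t \in \Z^2$ and exploit the fact---special to $\Gamma_2$---that the nontrivial element of $\Z/2\Z$ acts on $\Z^2$ by negation. Then a direct computation in the semidirect product gives $r'^2 = (t + (-t), 0) = \Id$, and for any $s \in \Z^2$, $r'(s, 0)r'^{-1} = (-s, 0)$.

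Since $\Gamma' < \Trans(\Gamma_2)$ is a subgroup of $\Z^2$, it is closed under negation, so $r' \Gamma' r'^{-1} = \Gamma'$; that is, $\Gamma'$ is normalized by $r'$. Combined with $r'^2 = \Id$, this yields the set-theoretic decomposition $\langle r', \Gamma'\rangle = \Gamma' \cup r'\Gamma'$. Finally, any element of $r'\Gamma'$ has the form $(t, 1)(s, 0) = (t - s, 1)$, whose rotational component is $1 \neq 0$; by the preceding subsection's convention such an element is a rotation, hence lies outside $\Trans(\Gamma_2)$. The translations in $\langle r', \Gamma'\rangle$ are therefore precisely those in $\Gamma'$.

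There is no serious obstacle; the only delicate point worth flagging is that the argument crucially uses $r'^2 = \Id$, which holds only because we are in $\Gamma_2$. For $k = 3, 4, 6$ a nontrivial power of a rotation can produce a translation lying outside $\Gamma'$, which is why the natural analogue for the other $\Gamma_k$ is more subtle and is not claimed here.
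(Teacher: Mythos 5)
Your proof is correct and follows essentially the same route as the paper: the paper's (much terser) proof likewise invokes normality of translation subgroups of $\Gamma_2$ to write $\langle r', \Gamma'\rangle$ as $\Gamma' \cup r'\Gamma'$ and observes that the nontrivial coset consists of rotations. Your version simply makes explicit the computations ($r'^2 = \Id$, conjugation acting by negation) that the paper leaves implicit.
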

\begin{proof}
All translation subgroups of $\Gamma_2$ are normal, and so the set
$\{ gh \;\; | \;\; g = r', \Id \;\; h \in \Gamma'\}$ is a subgroup and is equal to $\langle r', \Gamma' \rangle$.
Clearly, the only translations are those elements of $\Gamma'$.
\end{proof}

\section{The restricted representation space and its dimension}\seclab{radical}
To define our degree of freedom heuristics in \chapref{graphs},
we need to understand how representations of $\Gamma_k$ restrict to subgroups $\Gamma' < \Gamma_k$, or
equivalently, which representations of $\Gamma'$ extend to $\Gamma_k$.  For $\Gamma'<\Gamma_k$,
the \emph{restricted representation space} of $\Gamma'$ is the image of the restriction map
from $\Gamma_k$ to $\Gamma$, i.e.,
\[
\Rep_{\Gamma_k}(\Gamma') = \{ \Phi: \Gamma' \to \Euc(2) \; | \; \Phi
\text{ extends to a discrete faithful representation of $\Gamma_k$} \}
\]
We define the notation $\rep_{\Gamma_k}(\Gamma') := \dim \Rep_{\Gamma_k}(\Gamma')$, since the dimension of $\Rep_{\Gamma_k}(\Gamma')$ is
an important quantity in what follows.  Since it will be useful later, we also define:
\[
T(\Gamma') := \left\{ \begin{array}{rl} 0 & \text{$\Gamma'$ has a rotation} \\ 2 & \text{$\Gamma'$ has no rotations} \end{array} \right.
\]
Equivalently, we may define $T(\Gamma')$ as the dimension of the space of translations commuting
with $\Gamma'$.  In \secref{crystal-collapse}, we will show that $T(\Gamma')$ is the dimension of the space
of collapsed solutions of a direction network for a connected graph $G'$ satisfying $\rho(\pi_1(G')) = \Gamma'$.

The dimension $\rep_{\Gamma_k}(\Gamma')$
of the restricted representation space
$\Rep_{\Gamma_k}(\Gamma')$ is an important quantity for counting the degrees of freedom
in a direction network.  We now develop some properties of $\rep_{\Gamma_k}(\cdot)$ and
how it changes as new generators are added to a finitely generated subgroup.

\subsection{Translation subgroups}
For translation subgroups $\Gamma'< \Gamma$, we are interested in the dimension of
$\Rep_{\Gamma}(\Gamma')$.  The following lemma gives a characterization for translation subgroups
in terms of the rank of $\Gamma'$.
\begin{lemma} \label{lemma:trrep}
Let $\Gamma' < \Gamma_k$ be a {\em nontrivial} subgroup of translations.
\begin{itemize}
\item If $k = 3, 4, 6$, then $\rep_{\Gamma_k}(\Gamma') = 2$.
\item  If $k =1, 2$, 	then $\rep_{\Gamma_k}(\Gamma') = 2 \cdot r$, where $r$ is the
minimal number of generators of $\Gamma'$.
\end{itemize}
In particular, $\rep_{\Gamma_k}(\Gamma')$ is even.
\end{lemma}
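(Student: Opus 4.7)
The plan is to parameterize $\Rep(\Gamma_k)$ by the coordinates from \lemref{repspace} and then compute $\rep_{\Gamma_k}(\Gamma')$ as the rank of a linear map describing the restriction of representations to $\Gamma'$. For $k=3,4,6$, a representation is given by $(v_1,w,\varepsilon)$ with $v_1\neq 0$, and the proof of \lemref{repspace} forces the image of a translation $(m_1,m_2)\in \Trans(\Gamma_k)=\Z^2$ to be the translation by $(m_1\,\Id + m_2\,R_k^{\varepsilon})v_1$, independent of $w$. Choosing generators $s_1=(m_{1,1},m_{1,2}),\dots,s_r=(m_{r,1},m_{r,2})$ of $\Gamma'$ and fixing $\varepsilon$, the restriction map becomes the $\R$-linear map
\[
v_1\longmapsto \bigl((m_{i,1}\,\Id + m_{i,2}\,R_k^{\varepsilon})v_1\bigr)_{i=1}^{r}\;\in\;(\R^2)^{r}.
\]
Because $R_k$ for $k=3,4,6$ has minimal polynomial an irreducible quadratic over $\R$, the operator $a\,\Id + b\,R_k^{\varepsilon}$ is invertible whenever $(a,b)\neq (0,0)$. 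Since $\Gamma'$ is nontrivial, some $s_i$ is nonzero, so the displayed map is injective and its image is $2$-dimensional. The discrete parameter $\varepsilon\in\{\pm 1\}$ and the free vector $w$ do not affect the dimension, so $\rep_{\Gamma_k}(\Gamma')=2$.

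For $k=1,2$, a representation is given by linearly independent vectors $v_1,v_2\in\R^2$ (and, for $k=2$, an additional $w$), and the image of $(m_1,m_2)\in\Z^2$ is the translation by $m_1 v_1 + m_2 v_2$. Picking a minimal generating set $s_1,\dots,s_r$ of $\Gamma'\le \Z^2$ and writing these generators as rows of an $r\times 2$ integer matrix $M$ of full row rank $r$ (by Smith normal form, $r$ equals the $\Z$-rank of $\Gamma'$), the restriction map on the parameter $(v_1,v_2)\in\R^4$ is the Kronecker product $M\otimes \Id:\R^4\to\R^{2r}$, which has rank $2r$. The open condition of linear independence on $(v_1,v_2)$ does not affect the dimension count, giving $\rep_{\Gamma_k}(\Gamma')=2r$.

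The main subtlety is the injectivity claim in the $k=3,4,6$ case, i.e., invertibility of $a\,\Id + b\,R_k^{\varepsilon}$ for $(a,b)\neq(0,0)$; concretely this is the assertion that the complex eigenvalues $a+b\zeta$ and $a+b\overline{\zeta}$ of this operator, with $\zeta=e^{2\pi i\varepsilon/k}$ a primitive $k$-th root of unity, cannot vanish simultaneously unless $a=b=0$. This is exactly what fails for $k=2$, where $R_2=-\Id$ has real eigenvalues, and it is the reason that case needs the separate two-generator coordinate system. Evenness of $\rep_{\Gamma_k}(\Gamma')$ is immediate from both computations.
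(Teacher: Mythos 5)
Your proof is correct and takes essentially the same route as the paper's: both parameterize $\Rep(\Gamma_k)$ via \lemref{repspace}, use the formula $\Phi\bigl((m_1,m_2)\bigr)=\bigl((m_1\Id+m_2R_k^{\varepsilon})v_1,\Id\bigr)$ to see that the restriction for $k=3,4,6$ depends only on $(v_1,\varepsilon)$, and reduce the $k=1,2$ case to representations of a free abelian group of rank $r$. The only substantive difference is that you make explicit the invertibility of $m_1\Id+m_2R_k^{\varepsilon}$ for $(m_1,m_2)\neq(0,0)$ (which supplies the lower bound $\rep_{\Gamma_k}(\Gamma')\ge 2$), a point the paper's proof leaves implicit.
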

\begin{proof}
Suppose $k = 3, 4,$ or $6$.  By Lemma \ref{lemma:repspace}, the space of representations of $\Gamma_k$
is $4$-dimensional and is uniquely determined by the
parameters $v_1, w$ and the sign $\varepsilon$.  The group $\Trans(\Gamma_k) \cong \Z^2$ is generated by $t_1$
and $r_k t_1 r_k^{-1}$, and so any $\gamma \in \Trans(\Gamma_k)$ can be written uniquely as
$t_1^{m_1} r_k t_2^{m_2} r_k^{-1}$ for integers $m_1, m_2$.
Thus, since $\Phi(\gamma)$ is a translation,
\begin{align*}
\Phi(\gamma) =  & \Phi(t_1)^{m_1} \Phi(r_k) \Phi(t_2)^{m_2} \Phi(r_k)^{-1} \\
& = \left(m_1 v_1, \Id\right)  \left(w, R_k^{\varepsilon}\right) \left(m_2 v_1, \Id \right)\left(w, R_k^{-\varepsilon} \right) \\
& = \left(m_1 v_1 + m_2 R_k^\varepsilon v_1, \Id\right)
\end{align*}
Hence, regardless of $w$, any representation with the same $v_1, \varepsilon$ parameters restricts to the
same representation on $\Trans(\Gamma_k)$ and thus also on $\Gamma'$.

Suppose $k = 1, 2$.  In this case by the proof of Lemma \ref{lemma:repspace}, any discrete faithful
representation $\Trans(\Gamma_k) \to \Euc(2)$ extends to a discrete faithful representation of $\Gamma_k$.
Since $\Trans(\Gamma_k) \cong \Z^2$, any discrete faithful representation of its subgroups to $\R^n$ extends to
$\Trans(\Gamma_k)$ and hence $\Gamma_k$.  Hence $\rep_{\Gamma_k}(\Gamma')$ is equal to the dimension of
representations $\Gamma' \to \R^2$ which is twice the size of a minimal generating set of $\Gamma'$, since it
is a free abelian group.
\end{proof}

\subsection{The radical of a subgroup}
In \secref{group-matroid}, we will introduce a matroid on the elements of a crystallographic group.  To prove
the required properties, we need to know how the translation subgroup
$\Trans(\cdot)$ changes as generators are added to a
subgroup of $\Gamma_k$.  The {\it radical} of a subgroup $\Gamma' < \Gamma$, which we now define and
develop, is the key tool for doing this.

We define the \emph{radical, $\Rad(\Gamma')$, of $\Gamma'$} to be the largest subgroup containing $\Gamma'$ such that
\begin{eqnarray}
\rep_\Gamma(\Trans(\Gamma')) = \rep_\Gamma(\Trans(\Rad(\Gamma'))) & \text{and} & T(\Gamma') = T(\Rad(\Gamma'))
\end{eqnarray}
It is called the radical since it contains at least all the roots of nontrivial elements of
$\Gamma'$, by \lemref{rad-roots} below.

\subsection{Properties of the radical}
The following sequence of lemmas enumerates the properties of the radical that we will use in the sequel.

\begin{lemma}\lemlab{rad-welldefined}
Let $\Gamma'<\Gamma_k$ be a subgroup of $\Gamma_k$.  Then the
radical $\Rad(\Gamma')$ is well-defined.
\end{lemma}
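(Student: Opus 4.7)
The plan is to reformulate well-definedness of $\Rad(\Gamma')$ as uniqueness of a maximum in the poset
\[
\mathcal{S}(\Gamma') := \{H \leq \Gamma_k : H \supseteq \Gamma',\ T(H) = T(\Gamma'),\ \rep_{\Gamma_k}(\Trans(H)) = \rep_{\Gamma_k}(\Trans(\Gamma'))\}.
\]
Since $\Gamma' \in \mathcal{S}(\Gamma')$, this set is nonempty. Moreover, because $\Gamma_k$ is an extension of the finite cyclic group $\Z/k\Z$ by $\Z^2$, it is polycyclic and so satisfies the ascending chain condition on subgroups. Consequently $\mathcal{S}(\Gamma')$ contains maximal elements.

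The central step is to show that $\mathcal{S}(\Gamma')$ is closed under pairwise joins: given $H_1, H_2 \in \mathcal{S}(\Gamma')$, the subgroup $H := \langle H_1, H_2 \rangle$ again lies in $\mathcal{S}(\Gamma')$. Preservation of $T$ is immediate: if $\Gamma'$ contains a rotation, so does $H$; if $\Gamma'$ contains none, then $T(H_i) = 2$ forces each $H_i$ to lie in $\Trans(\Gamma_k)$, and hence so does $H$.

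For the $\rep$ condition I would split on whether $\Gamma'$ contains a rotation. If not, then $\Trans(H) = H = H_1 + H_2$ as a subgroup of $\Z^2$; for $k = 3, 4, 6$ the conclusion is immediate from \lemref{trrep} (any nontrivial translation subgroup has $\rep = 2$), and for $k = 2$ one argues via saturation that $H_1$ and $H_2$ both lie in the rational span of $\Gamma'$ inside $\Z^2$, so the rank, and therefore $\rep_{\Gamma_k}$, agrees with that of $\Gamma'$. If $\Gamma'$ does contain a rotation $r$, then \lemref{subgrpgen} lets me write $H_i = \langle r_i, \Trans(H_i)\rangle$, so $H = \langle r_1, r_2, \Trans(H_1), \Trans(H_2)\rangle$ and $\Trans(H)$ contains $\Trans(H_1) + \Trans(H_2)$ together with any ``new'' translations arising from products of $r_1$ and $r_2$. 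To control these, I would use \lemref{rotscomm} to analyze when such products are forced to be translations, combined with the conjugation-invariance of $\Trans(H_i)$ under $r_i$, and then appeal to \lemref{trrep} case-by-case on $k \in \{2, 3, 4, 6\}$ to show the $\rep$-dimension cannot strictly exceed $\rep_{\Gamma_k}(\Trans(\Gamma'))$. This last verification is where I expect the real work: showing that the combinatorial constraints forced by $H_1, H_2 \in \mathcal{S}(\Gamma')$ actually prevent the emergence of a rank-raising translation in the join.

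Once join-closure and existence of maximal elements are in hand, uniqueness is automatic: if $H^*, H^{**} \in \mathcal{S}(\Gamma')$ were both maximal, then $\langle H^*, H^{**}\rangle \in \mathcal{S}(\Gamma')$ would contain both, forcing $H^* = H^{**}$. This unique maximum is $\Rad(\Gamma')$.
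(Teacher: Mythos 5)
Your reformulation---well-definedness of $\Rad(\Gamma')$ is the statement that the family $\mathcal{S}(\Gamma')$ of invariant-preserving oversubgroups has a unique maximum, obtained from join-closure plus the maximal condition on subgroups of the polycyclic group $\Gamma_k$---is a genuinely different route from the paper's. The paper instead \emph{exhibits} the radical in each case ($k=2$, translations only: the saturation of $\Gamma'$ in $\Trans(\Gamma_2)$; $k=2$ with a rotation: $\langle r',\Rad(\Trans(\Gamma'))\rangle$; $k=3,4,6$: one of four canonical subgroups) and checks maximality directly. Your version is cleaner as a pure existence/uniqueness argument, but it does not produce the explicit structural description recorded as \propref{rad-struct}, which the paper extracts from its constructive proof and then uses repeatedly (\lemref{rad-monotone}, \lemref{rad-roots}, \lemref{rad-push-trans}); even with your proof in place that description would still need to be derived separately.

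As written, though, the proposal has a gap exactly where you flag ``the real work'': join-closure when $\Gamma'$ contains a rotation, in the case $k=2$. The tool you name, \lemref{rotscomm}, is the right one only for the $k=3,4,6$ finite-cyclic case (there, a common nontrivial rotation of $\Gamma'$ is a commuting power of generators of $H_1$ and $H_2$, so $\langle H_1,H_2\rangle$ is finite cyclic); in $\Gamma_2$ the product of two distinct order-$2$ rotations is a nontrivial translation and the generated group is never finite, so that lemma gives you nothing. The step does go through, but by a different mechanism: fix a rotation $r'\in\Gamma'\le H_1\cap H_2$. For $k=2$ every rotation generates $H_i/\Trans(H_i)\cong\Z/2\Z$, so \lemref{subgrpgen} gives $H_i=\langle r',\Trans(H_i)\rangle$ with the \emph{same} $r'$ for both $i$; hence $H=\langle r',\Trans(H_1),\Trans(H_2)\rangle$ and \lemref{Gam2nice} yields $\Trans(H)=\langle\Trans(H_1),\Trans(H_2)\rangle$, which lies in the common rational span of the $\Trans(H_i)$ and so has the same rank as $\Trans(\Gamma')$. (Concretely, any rotation $r_i\in H_i$ is $\tau_i r'$ with $\tau_i\in\Trans(H_i)$, and $r_1r_2=\tau_1\tau_2^{-1}$, so no rank-raising translation can appear.) Note also that for $k=3,4,6$ with $\Trans(\Gamma')$ nontrivial there is nothing to check, since by \lemref{trrep} every oversubgroup of $\Gamma'$ already lies in $\mathcal{S}(\Gamma')$. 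With these substitutions your argument closes correctly.
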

\begin{proof}
First let $k=2$.  There are two cases.
If $\Gamma'$ contains only translations, we set
\[
\Rad(\Gamma') = \{t\in \Trans(\Gamma_2) : \text{$t^i\in \Gamma'$ for some power $i$ of $t$} \}
\]
Any subgroup $\Gamma''<\Gamma_2$ containing $\Gamma'$ with $T(\Gamma')=T(\Gamma'')$ and
$\rep_\Gamma(\Gamma')=\rep(\Gamma'')$ must be a	translation group of the same rank as $\Gamma'$ and by
definition of $\Rad(\Gamma')$ is the largest such subgroup.  Also, note that $\Rad(\Gamma')$ and $\Gamma'$
necessarily have the same rank.

Otherwise $\Gamma'$ contains a rotation $r'$.  In this case, we set
\[
\Rad(\Gamma') = \langle r', \Rad(\Trans(\Gamma'))\rangle
\]
By \lemref{Gam2nice}, for $\Rad(\Gamma')$ defined this way, the translation subgroup
$\Trans(\Rad(\Gamma'))$ is just $\Rad(\Trans(\Gamma'))$
which by the previous paragraph is the largest translation subgroup containing $\Trans(\Gamma')$ and having the same rank.
Any subgroup $\Gamma''<\Gamma_2$ containing $\Gamma$ must
be of the form $\Gamma'' = \langle r',\Trans(\Gamma'')\rangle$ with $\Trans(\Gamma') < \Trans(\Gamma'')$.
If additionally $\rep_\Gamma(\Trans(\Gamma')) = \rep_\Gamma(\Trans(\Gamma''))$, then
$\Trans(\Gamma'') < \Rad(\Trans(\Gamma'))$ and $\Gamma'' < \Rad(\Gamma')$.

Now we suppose that $k=3,4,6$.  There are four possibilities for $\Gamma'$:
\begin{itemize}
\item If $\Gamma'$ is trivial, then we define $\Rad(\Gamma')$ to be trivial, and this choice is clearly canonical.
\item If $\Gamma'$ is a cyclic group of rotations, then \lemref{rotscomm} guarantees that there is a unique
largest cyclic subgroup containing it, and we define this to be $\Rad(\Gamma')$.
\item If $\Gamma'$ has only translations, then we define $\Rad(\Gamma') = \Trans(\Gamma_k)$.
\item If $\Gamma'$ has translations and rotations, then some power
of both standard generators for $\Gamma_k$ from \lemref{gensets} lies in $\Gamma'$.  It follows that
that defining $\Rad(\Gamma')=\Gamma_k$	is the canonical choice.
\end{itemize}
\end{proof}
The construction used to prove \lemref{rad-welldefined} gives us the following structural description
of the radical.
\begin{prop}\proplab{rad-struct}
Let $\Gamma'<\Gamma_k$ be a subgroup of $\Gamma_k$ for $k=2,3,4,6$. Then if $k=2$,
\begin{itemize}
\item If $\Gamma'$ is a translation subgroup, then $\Rad(\Gamma')$ is the subgroup of translations with
a non-trivial power in $\Gamma'$.
\item If $\Gamma'$ has translations and rotations, then $\Rad(\Gamma') = \langle r', \Rad(\Trans(\Gamma'))\rangle$.
\end{itemize}
If $k=2,3,4,6$, then there are four possibilities for the radical:
\begin{itemize}
\item If $\Gamma'$ is trivial, the radical is trivial.
\item If $\Gamma'$ is cyclic, the radical is a cyclic subgroup of order $k$.
\item If $\Gamma'$ is a translation subgroup, the radical is the translation subgroup of $\Gamma_k$.
\item If $\Gamma'$ has translations and rotations, the radical is all of $\Gamma_k$.
\end{itemize}
\end{prop}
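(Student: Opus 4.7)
The plan is to read the structural classification straight off the construction used in the proof of \lemref{rad-welldefined}, checking that in each case the explicit subgroup built there is the unique maximal one respecting the two invariants $\rep_\Gamma(\Trans(\cdot))$ and $T(\cdot)$.

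For $k=2$, I would handle the two cases separately. If $\Gamma'$ consists only of translations, then \lemref{trrep} says $\rep_{\Gamma_2}(\Trans(\Gamma'))$ equals twice the rank of $\Gamma'$, so any extension preserving this dimension must have the same rank. The largest such translation subgroup is precisely the set of $t\in\Trans(\Gamma_2)$ having some nontrivial power in $\Gamma'$, matching the construction in \lemref{rad-welldefined}. If $\Gamma'$ contains a rotation $r'$, \lemref{subgrpgen} gives $\Gamma'=\langle r',\Trans(\Gamma')\rangle$, and by \lemref{Gam2nice} adjoining $r'$ to any enlargement of $\Trans(\Gamma')$ does not produce new translations. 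Thus the radical is forced to be $\langle r',\Rad(\Trans(\Gamma'))\rangle$, as claimed.

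For $k=3,4,6$, I would check the four cases in turn. The trivial case is immediate. For $\Gamma'$ cyclic of rotations, \lemref{rotscomm} shows that there is a unique largest cyclic subgroup of rotations containing it; this subgroup consists entirely of rotations, so $T$ is preserved, and its translation subgroup is trivial as in $\Gamma'$, so $\rep_{\Gamma_k}(\Trans)$ is preserved. Its order must be one of $2,3,4,6$, and maximality combined with \lemref{orderk} forces it to have order $k$ (i.e.\ to be the full stabilizer in $\Gamma_k$ of some rotation center). For $\Gamma'$ a nontrivial translation subgroup, \lemref{trrep} gives $\rep_{\Gamma_k}(\Trans(\Gamma'))=2=\rep_{\Gamma_k}(\Trans(\Gamma_k))$, and $T$ equals $2$ for both, so the radical may (and therefore must, by maximality) be $\Trans(\Gamma_k)$; adjoining a rotation would drop $T$ from $2$ to $0$, so the radical stops at the full translation subgroup. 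Finally, if $\Gamma'$ has both translations and rotations, then some power of the generator $t_1$ and some power of $r_k$ both lie in $\Gamma'$, forcing $\Rad(\Trans(\Gamma'))=\Trans(\Gamma_k)$ by the previous case and $\Rad$ of the rotation part to be the full order-$k$ cyclic subgroup; together these generate $\Gamma_k$.

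The only subtle step is the cyclic-rotation case for $k=3,4,6$, where one must verify that the ambient $\Gamma_k$ really contains a canonical maximal cyclic subgroup of rotations containing any given cyclic rotation subgroup; this is where \lemref{rotscomm} does the real work, since without it there could \emph{a priori} be several distinct maximal cyclic extensions and the radical would not be well-defined. Every other clause is essentially a bookkeeping check against the construction and the dimension formulas of \lemref{trrep}.
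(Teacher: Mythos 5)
Your proof takes essentially the same route as the paper: the paper's entire argument is the remark that the proposition is read off from the construction in the proof of Lemma \lemrefX{rad-welldefined}, and your case-by-case verification is exactly that reading, with the maximality checks made explicit via Lemmas \lemrefX{trrep}, \lemrefX{subgrpgen}, \lemrefX{Gam2nice}, and \lemrefX{rotscomm}. The one clause where your justification is thinner than the rest (that the maximal cyclic rotation subgroup has order exactly $k$, which Lemma \lemrefX{orderk} does not by itself deliver) is inherited from the paper's own statement rather than being a divergence in method.
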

Another immediate corollary of \lemref{rad-welldefined} is that we may ``pass to radicals'' if we are interested
in $\rep_{\Gamma_k}(\cdot)$ and $T(\cdot)$.
\begin{prop}\proplab{rad-rep-T-invariant}
Let $\Gamma'$ be a subgroup of $\Gamma_k$.  Then
\begin{eqnarray*}
\rep_{\Gamma_k}(\Gamma') & = & \rep_{\Gamma_k}(\Rad(\Gamma')) \\
T(\Gamma') & = & T(\Rad(\Gamma'))
\end{eqnarray*}
\end{prop}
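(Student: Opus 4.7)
The second equality $T(\Gamma') = T(\Rad(\Gamma'))$ is part of the defining property of $\Rad(\Gamma')$, so all the work is in the first equality. My plan is to establish the single formula
\[
\rep_{\Gamma_k}(\Gamma') \;=\; \rep_{\Gamma_k}(\Trans(\Gamma')) + \bigl(2 - T(\Gamma')\bigr),
\]
which expresses $\rep_{\Gamma_k}(\Gamma')$ purely in terms of the two quantities that $\Rad(\cdot)$ preserves by definition. Applying the formula to both $\Gamma'$ and $\Rad(\Gamma')$ and invoking the definition of the radical then yields $\rep_{\Gamma_k}(\Gamma') = \rep_{\Gamma_k}(\Rad(\Gamma'))$ immediately.

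When $\Gamma'$ has no rotations, $\Gamma' = \Trans(\Gamma')$ and $T(\Gamma') = 2$, and the formula degenerates to the tautology $\rep_{\Gamma_k}(\Gamma') = \rep_{\Gamma_k}(\Gamma')$. The substantive case is when $\Gamma'$ contains a rotation $r'$; then $T(\Gamma') = 0$ and, by \lemref{subgrpgen}, $\Gamma' = \langle r', \Trans(\Gamma')\rangle$, so the target becomes $\rep_{\Gamma_k}(\Gamma') = \rep_{\Gamma_k}(\Trans(\Gamma')) + 2$. Because $\Gamma'$ is generated by $r'$ and $\Trans(\Gamma')$, a representation $\Phi|_{\Gamma'}$ is determined by the pair $\bigl(\Phi(r'),\, \Phi|_{\Trans(\Gamma')}\bigr)$, so the restriction map $\Rep(\Gamma_k) \to \Rep(\Gamma')$ factors injectively into $\Rep(\langle r'\rangle) \times \Rep(\Trans(\Gamma'))$. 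I would then show that the image is a product of dimensions $2$ and $\rep_{\Gamma_k}(\Trans(\Gamma'))$.

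In the coordinates of \lemref{repspace}, $\Phi|_{\Trans(\Gamma')}$ depends only on the translation data $v_i$ (and the sign $\varepsilon$ when $k = 3,4,6$), and its image has dimension $\rep_{\Gamma_k}(\Trans(\Gamma'))$ by definition of $\rep_{\Gamma_k}(\Trans(\Gamma'))$. For the rotation factor, writing $r' = t \cdot r_k^{j}$ with $\Phi(t) = (u, \Id)$ and $0 < j < k$, a direct computation in $\R^2 \rtimes O(2)$ gives
\[
\Phi(r') \;=\; \bigl(u + M_{j,\varepsilon}\, w,\; R_k^{j\varepsilon}\bigr), \qquad M_{j,\varepsilon} := \sum_{i=0}^{j-1} R_k^{i\varepsilon}
\]
(with $M_{1,\varepsilon} = I$ in the $k=2$ case). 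Varying $w$ while holding the translation coordinates fixed moves $\Phi(r')$ through a $2$-dimensional set, while leaving $\Phi|_{\Trans(\Gamma')}$ unchanged. This yields the product decomposition and hence the formula.

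The delicate point is showing that the $w$-variation is genuinely $2$-dimensional, i.e., that $M_{j,\varepsilon}$ is invertible. This follows from the telescoping identity $(I - R_k^\varepsilon)\, M_{j,\varepsilon} = I - R_k^{j\varepsilon}$ together with $R_k^{j\varepsilon} \neq I$ for $0 < j < k$ and $\varepsilon \in \{-1,+1\}$. Once the formula is in hand, combining it for $\Gamma'$ and $\Rad(\Gamma')$ with the equalities $\rep_{\Gamma_k}(\Trans(\Gamma')) = \rep_{\Gamma_k}(\Trans(\Rad(\Gamma')))$ and $T(\Gamma') = T(\Rad(\Gamma'))$ built into $\Rad$ completes the proof.
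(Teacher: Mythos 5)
Your proposal is correct, and it supplies an argument where the paper gives essentially none: \propref{rad-rep-T-invariant} is presented as an ``immediate corollary'' of \lemref{rad-welldefined}, the implicit reasoning being a case check against the structural description of the radical in \propref{rad-struct} together with \lemref{trrep}. The subtle point --- which you correctly identify --- is that the radical is \emph{defined} to preserve $\rep_{\Gamma_k}(\Trans(\cdot))$ and $T(\cdot)$, not $\rep_{\Gamma_k}(\cdot)$ itself, so the first equality genuinely requires relating $\rep_{\Gamma_k}(\Gamma')$ to those two invariants. Your uniform formula $\rep_{\Gamma_k}(\Gamma') = \rep_{\Gamma_k}(\Trans(\Gamma')) + 2 - T(\Gamma')$ does exactly this and makes the deduction mechanical; the paper's route would instead run through the four (resp.\ two) cases of \propref{rad-struct} and verify the dimension in each. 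The substance of your argument --- that by \lemref{subgrpgen} a rotation $r'$ contributes exactly the two rotation-center dimensions, via the invertibility of $M_{j,\varepsilon}=(I-R_k^{\varepsilon})^{-1}(I-R_k^{j\varepsilon})$ acting on the coordinate $w$ of \lemref{repspace} while $\Phi|_{\Trans(\Gamma')}$ depends only on the remaining coordinates --- is sound and is the same computation the paper relies on implicitly (compare the proof of \lemref{trrep}, where the irrelevance of $w$ to the translation part is established). One small caveat: the image of the restriction map is not literally a product, since the rotational part $R_k^{j\varepsilon}$ of $\Phi(r')$ is tied to the discrete datum $\varepsilon$ that also enters $\Phi|_{\Trans(\Gamma')}$ when $k=3,4,6$; but this is a finite union of pieces, each fibered with $2$-dimensional fibers over the $\rep_{\Gamma_k}(\Trans(\Gamma'))$-dimensional base, so the dimension count, and hence your formula, is unaffected.
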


The radical also has a monotonicity property.
\begin{lemma}\lemlab{rad-monotone}
Let $\Gamma'<\Gamma_k$ be a finitely-generated subgroup of $\Gamma_k$,
and let $\Gamma''< \Gamma'$ be a subgroup of $\Gamma'$.  Then $\Rad(\Gamma'') < \Rad(\Gamma')$.
\end{lemma}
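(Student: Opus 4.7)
The plan is to prove this via case analysis driven by the structural classification of radicals given in \propref{rad-struct}. Since \propref{rad-struct} enumerates the possible shapes of $\Rad(\Gamma')$ in terms of whether $\Gamma'$ is trivial, a cyclic rotation subgroup, a translation subgroup, or contains both translations and rotations, I would split into cases according to the type of $\Gamma''$ and observe that any containing subgroup $\Gamma'$ must fall into a ``larger or equal'' type.

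For $k=3,4,6$, the argument is essentially immediate from \propref{rad-struct} except in the pure-rotation case. If $\Gamma''$ is trivial, then $\Rad(\Gamma'')$ is trivial and there is nothing to check. If $\Gamma''$ is a translation subgroup, then $\Rad(\Gamma'')=\Trans(\Gamma_k)$; since $\Gamma'\supset \Gamma''$ contains translations, $\Rad(\Gamma')$ is either $\Trans(\Gamma_k)$ or all of $\Gamma_k$, and either way contains $\Rad(\Gamma'')$. If $\Gamma''$ has both translations and rotations, then so does $\Gamma'$, and both radicals equal $\Gamma_k$. The delicate case is when $\Gamma''$ is a nontrivial finite cyclic subgroup of rotations: here I would use \lemref{rotscomm} to argue that all rotations of $\Gamma''$ have a common fixed point, and that any larger $\Gamma'$ containing $\Gamma''$---if itself purely cyclic rotations---must share that fixed point, so the maximal cyclic extensions nest appropriately; if $\Gamma'$ instead contains translations, then $\Rad(\Gamma')=\Gamma_k\supset \Rad(\Gamma'')$.

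For $k=2$, I would again split on whether $\Gamma''$ contains a rotation. If $\Gamma''<\Trans(\Gamma_2)$, then $\Rad(\Gamma'')=\{t\in\Trans(\Gamma_2):t^i\in \Gamma''\text{ for some }i\}$, and membership in this set trivially implies membership in the analogous set for $\Gamma'$, so $\Rad(\Gamma'')<\Rad(\Trans(\Gamma'))<\Rad(\Gamma')$. If $\Gamma''$ contains a rotation $r''$, then $\Gamma'$ contains a rotation as well, and using the formula $\Rad(\Gamma'')=\langle r'',\Rad(\Trans(\Gamma''))\rangle$ and similarly for $\Gamma'$, the work reduces to (i) the translation case already handled and (ii) showing $r''\in \Rad(\Gamma')$. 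For (ii), I would pick any rotation $r'\in \Gamma'$ and write $r''=r'\cdot(r'r'')$; since every rotation of $\Gamma_2$ has order $2$, the element $r'r''$ is a translation lying in $\Gamma'$, hence in $\Rad(\Trans(\Gamma'))$, so $r''\in\langle r',\Rad(\Trans(\Gamma'))\rangle=\Rad(\Gamma')$, invoking \lemref{Gam2nice} to identify the translation subgroup of $\Rad(\Gamma')$ with $\Rad(\Trans(\Gamma'))$.

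The main obstacle I anticipate is the cyclic rotation case for $k=3,4,6$: one has to be careful that the ``maximal cyclic subgroup containing $\Gamma''$'' really does sit inside the corresponding object for $\Gamma'$, which is not automatic from the abstract group structure but follows from \lemref{rotscomm} once one tracks fixed points. The rest of the argument is bookkeeping ensuring that the type of the radical cannot ``decrease'' when passing from $\Gamma''$ to a containing subgroup $\Gamma'$.
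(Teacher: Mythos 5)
Your proof is correct and takes essentially the same approach as the paper: the paper's own argument is just the two-sentence remark that one should analyze the cases of \propref{rad-struct} and observe that the radical's ``type'' cannot decrease when passing to a larger subgroup. You have simply fleshed out the details the paper leaves implicit, notably the $\Gamma_2$ observation that $r'r''$ is a translation in $\Trans(\Gamma')$ and the nesting (in fact equality) of maximal cyclic rotation subgroups via \lemref{rotscomm}.
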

\begin{proof}
Pick a generating set of $\Gamma''$ that extends to a generating set of $\Gamma'$.
Analyzing the cases in \propref{rad-struct} shows that
the radical cannot become smaller after adding generators.
\end{proof}

As mentioned above, this next lemma provides some justification for the terminology ``radical''.
\begin{lemma}\lemlab{rad-roots}
Let $\Gamma'<\Gamma_k$ be a subgroup of $\Gamma_k$.  If some power $\gamma^i$ of $\gamma$ is not the identity and
$\gamma^i\in \Gamma'$, then $\gamma\in \Rad(\Gamma')$.
\end{lemma}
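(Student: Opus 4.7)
The plan is to prove \lemref{rad-roots} by case analysis on the type of $\Gamma'$, using the structural description in \propref{rad-struct} together with a few elementary facts about orders in $\Gamma_k$. The starting observation is that in $\Gamma_k$ every rotation has finite order dividing $k$, so powers of a rotation are either rotations fixing the same center or the identity, while powers of a translation are translations. Consequently $\gamma^i \neq \Id$ forces $\gamma$ to have the same type (translation versus rotation) as $\gamma^i$, and if $\gamma^i$ is a nontrivial rotation then $\gamma$ is a rotation sharing the same rotation center. The case $\Gamma' = \{\Id\}$ makes the hypothesis vacuous, so it can be dispatched immediately.

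I would then run through the remaining three cases from \propref{rad-struct}. If $\Gamma'$ consists only of translations, then $\gamma$ must be a translation; for $k = 3, 4, 6$ the radical is all of $\Trans(\Gamma_k)$, and for $k = 2$ the radical is, by the construction in the proof of \lemref{rad-welldefined}, exactly the set of translations with a nontrivial power in $\Gamma'$, so $\gamma$ belongs to it by hypothesis. If $\Gamma'$ is a nontrivial cyclic subgroup of rotations generated by $r$, then $\gamma^i$ is a nontrivial power of $r$; by the preliminary observation, $\gamma$ is a rotation about the same center as $r$, so $\langle \gamma, r \rangle$ is a finite cyclic group of rotations containing $\Gamma'$, and the maximality built into the definition of $\Rad(\Gamma')$ gives $\gamma \in \Rad(\Gamma')$. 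If $\Gamma'$ has both translations and rotations, then for $k = 3, 4, 6$ the radical is all of $\Gamma_k$ and there is nothing to check; for $k = 2$, write $\Rad(\Gamma') = \langle r', \Rad(\Trans(\Gamma')) \rangle$ with $r'$ a rotation in $\Gamma'$, and split on $\gamma$: if $\gamma$ is a translation, $\gamma^i \in \Trans(\Gamma')$ and the translation-only subcase places $\gamma$ inside $\Rad(\Trans(\Gamma')) \subseteq \Rad(\Gamma')$; if $\gamma$ is a rotation it has order $2$, so $i$ is odd and $\gamma = \gamma^i \in \Gamma' \subseteq \Rad(\Gamma')$.

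The only substantive step is the cyclic rotation case; everything else is an almost immediate readout from \propref{rad-struct} and the definition of $\Rad$. The geometric input needed there is that a nontrivial rotation $\gamma^i$ shares its center with $\gamma$, which is what allows $\langle \gamma, r \rangle$ to be identified as a finite cyclic group of rotations containing $\Gamma'$ and then absorbed into $\Rad(\Gamma')$ by maximality.
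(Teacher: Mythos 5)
Your proof is correct and follows essentially the same route as the paper's: the translation case is read off from \propref{rad-struct}, and the rotation case reduces to observing that $\gamma$ and a generator of the rotations in $\Gamma'$ generate a finite cyclic group (\lemref{rotscomm}), which is absorbed into the radical by maximality. The only difference is organizational — you case-split on the type of $\Gamma'$ where the paper splits on the type of $\gamma$ — and you spell out the $k=2$ rotation subcase ($i$ odd forces $\gamma=\gamma^i\in\Gamma'$) slightly more carefully than the paper does.
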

\begin{proof}
If $\gamma$ is a translation, this is clear by \propref{rad-struct}.  Now let $\gamma$ be a
rotation with $\Id \neq \gamma^\ell\in \Gamma'$ and $\ell \neq 1$.  Together these hypotheses imply that $k$ is $3,4$ or $6$,
and so we see that $\Rad(\Gamma')$ is either all of $\Gamma_k$ or finite and cyclic or order $k$.
In the first case, we are clearly done, and the second follows from \lemref{rotscomm} and the
fact that $\Gamma'$ itself is finite and cyclic.
\end{proof}

\begin{lemma}\lemlab{rad-conj}
Let $\Gamma'<\Gamma_k$ be a translation subgroup of $\Gamma_k$, and let $\gamma\in \Gamma_k$.  Then
$\Rad(\gamma\Gamma'\gamma^{-1}) = \Rad(\Gamma')$; i.e., the radical of translation subgroups is
fixed under conjugation.
\end{lemma}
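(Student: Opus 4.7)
The plan is to reduce the lemma to a direct application of the explicit description of $\Rad(\Gamma')$ for translation subgroups given in \propref{rad-struct}, combined with the observation that conjugation in $\Gamma_k$ preserves the property of being a translation subgroup. First I would note that since $\Trans(\Gamma_k)$ is normal in $\Gamma_k$, the conjugate $\gamma \Gamma' \gamma^{-1}$ is again a subgroup of $\Trans(\Gamma_k)$, so both sides of the claimed equality are well-defined radicals of translation subgroups. If $\Gamma' = \{\Id\}$, then its conjugate is trivial and both radicals are trivial by \propref{rad-struct}, so we may assume $\Gamma'$ is nontrivial.

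For $k = 3, 4, 6$, \propref{rad-struct} says that the radical of \emph{any} nontrivial translation subgroup of $\Gamma_k$ is the full translation subgroup $\Trans(\Gamma_k)$. Since $\gamma \Gamma' \gamma^{-1}$ is a nontrivial translation subgroup (conjugation is a group isomorphism and thus preserves being trivial), we get $\Rad(\gamma \Gamma' \gamma^{-1}) = \Trans(\Gamma_k) = \Rad(\Gamma')$, completing these cases without any further computation.

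For $k = 2$, the strategy is to show the stronger statement $\gamma \Gamma' \gamma^{-1} = \Gamma'$ as subgroups, which immediately yields equality of radicals. If $\gamma \in \Trans(\Gamma_2)$, this is immediate since $\Trans(\Gamma_2)$ is abelian. If $\gamma$ is a rotation, then writing $\gamma = (t, 1)$ in the semidirect product coordinates $\Gamma_2 = \Z^2 \rtimes \Z/2\Z$ and using the table from \secref{crystal-prelim} (the action of the generator of $\Z/2\Z$ is $-\Id$), a short computation gives $\gamma(t', 0)\gamma^{-1} = (-t', 0)$ for every translation $(t', 0)$. Since $\Gamma'$ is a subgroup, it is closed under inversion, so $\gamma \Gamma' \gamma^{-1} = -\Gamma' = \Gamma'$.

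There is no real obstacle here: the lemma is essentially a bookkeeping consequence of \propref{rad-struct} and the explicit semidirect-product structure of $\Gamma_k$. The only place where a small calculation is needed is the $k = 2$ case with $\gamma$ a rotation, and even there it suffices to observe that the conjugation action on translations is $-\Id$ and that translation subgroups are closed under negation.
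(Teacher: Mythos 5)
Your proof is correct and follows essentially the same route as the paper: for $k=3,4,6$ the radical of any (nontrivial) translation subgroup is all of $\Trans(\Gamma_k)$ by the structural description, and for $k=2$ the paper simply invokes normality of translation subgroups, which is exactly the fact you verify explicitly via the $-\Id$ conjugation action. Your version just spells out the computation the paper leaves implicit.
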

\begin{proof}
For $k=2$ this follows from the fact that all translation subgroups are normal.  For $k=3,4,6$ it
is immediate from the definitions.
\end{proof}

\begin{lemma}\lemlab{rad-push-trans}
Let $\Gamma'<\Gamma_k$ be a subgroup of $\Gamma_k$, and let $\Gamma''<\Trans(\Gamma_k)$ be a translation subgroup
of $\Gamma_k$.  Then $\Rad( \langle \Trans(\Gamma'), \Gamma''\rangle) = \Rad(\Trans(\langle \Gamma', \Gamma'' \rangle))$.
\end{lemma}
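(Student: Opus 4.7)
The plan is to reduce everything to the explicit structural description of the radical in \propref{rad-struct} and argue by cases on whether $\Gamma'$ contains a rotation and on the value of $k$. Set
\[
A := \langle \Trans(\Gamma'), \Gamma''\rangle, \qquad B := \Trans(\langle \Gamma', \Gamma''\rangle).
\]
Observe that $A$ is a translation subgroup (both generating pieces are translations) and that every element of $A$ is a translation lying in $\langle \Gamma', \Gamma''\rangle$, so $A \subseteq B$. By \lemref{rad-monotone} we get $\Rad(A) \subseteq \Rad(B)$ for free, so the task is to prove the reverse inclusion.

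If $\Gamma'$ contains no rotation, then $\Gamma' = \Trans(\Gamma')$ and $\langle \Gamma', \Gamma''\rangle = A$ consists entirely of translations, hence $B = A$ and the two radicals literally coincide. So assume $\Gamma'$ contains a rotation $r'$, and apply \lemref{subgrpgen} to write $\Gamma' = \langle r', \Trans(\Gamma')\rangle$, so that $\langle \Gamma', \Gamma''\rangle = \langle r', A\rangle$.

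For $k = 2$, I would invoke \lemref{Gam2nice} with the translation subgroup $A$ and the rotation $r'$ to conclude $B = \Trans(\langle r', A\rangle) = A$, which again gives equality before any radical is taken. For $k = 3, 4, 6$ there are two sub-cases. If $\Gamma''$ is trivial, then $A = \Trans(\Gamma')$ and $\langle \Gamma', \Gamma''\rangle = \Gamma'$, so $B = \Trans(\Gamma') = A$. If $\Gamma''$ is nontrivial, then $A$ is a nontrivial translation subgroup and $B \supseteq \Gamma''$ is as well, so by the $k = 3,4,6$ part of \propref{rad-struct} both $\Rad(A)$ and $\Rad(B)$ equal the full translation subgroup $\Trans(\Gamma_k)$.

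The only step with any subtlety is the $k=2$ sub-case, because there \propref{rad-struct} does \emph{not} collapse every translation subgroup to $\Trans(\Gamma_k)$; this is exactly where \lemref{Gam2nice} (the normality of translation subgroups in $\Gamma_2$) does the work of aligning $B$ with $A$ on the nose, so that no comparison of radicals of different subgroups is required. Aside from that, the argument is a straightforward bookkeeping exercise in the four-case structural description from \propref{rad-struct}.
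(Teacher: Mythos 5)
Your proof is correct and follows essentially the same route as the paper: the paper likewise splits on $k$, disposes of $k=3,4,6$ via the structural description of the radical (\propref{rad-struct}) according to whether $\Gamma''$ is trivial, and for $k=2$ reduces to \lemref{Gam2nice} to show $\Trans(\langle\Gamma',\Gamma''\rangle)=\langle\Trans(\Gamma'),\Gamma''\rangle$ on the nose. Your extra observations (the inclusion $A\subseteq B$ and the appeal to \lemref{rad-monotone}) are harmless but not needed, since in every case you end up proving the two subgroups or their radicals coincide directly.
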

\begin{proof}
The proof is in cases based on $k$.  For $k=3,4,6$, either $\Gamma''$ is trivial or both sides of the desired equation are
$\Trans(\Gamma_k)$, by \propref{rad-struct}.  Either way, the lemma follows at once.

Now suppose that $k=2$.  If $\Gamma'$ is a translation subgroup, then the lemma follows immediately.  Otherwise,
we know that $\Gamma'$ is generated by a rotation $r'$ and the translation subgroup $\Trans(\Gamma')$.  Applying
\lemref{Gam2nice}, we see that
\[
\Trans(\langle\Gamma', \Gamma'' \rangle) =
\Trans(\langle r', \Trans(\Gamma'), \Gamma'' \rangle) = \langle \Trans(\Gamma'), \Gamma''\rangle
\]
from which the lemma follows.
\end{proof}

\subsection{The quantity $\rep_{\Gamma_k}(\Gamma') - T(\Gamma')$}
The following statement plays a key role in the matroidal construction of \secref{group-matroid}.

\begin{prop}\proplab{rad-rep-minus-T}
Let $\Gamma'<\Gamma_k$ be a subgroup of $\Gamma_k$, and let $\gamma\in \Gamma_k$ be an element of $\Gamma_k$.  Then,
\[
\rep_{\Gamma_k}(\Trans(\langle \Gamma', \gamma \rangle)) -
T(\langle \Gamma', \gamma \rangle) - (\rep_{\Gamma_k}(\Trans(\Gamma')) - T(\Gamma'))
=
\left\{ \begin{array}{rl} 2 & \text{ if } \gamma \notin \Rad(\Gamma') \\
0 & \text{ otherwise} \end{array} \right.
\]
i.e., the quantity $\rep_{\Gamma_k}(\cdot)-T(\cdot)$ increases by two after adding
$\gamma$ to $\Gamma'$ if and only if $\gamma\notin\Rad(\Gamma')$ and otherwise the
increase is zero.
\end{prop}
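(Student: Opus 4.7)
The plan rests on two reductions. First, by the definition of $\Rad(\cdot)$, the quantity $\rep_{\Gamma_k}(\Trans(\Gamma'))-T(\Gamma')$ is an invariant of $\Rad(\Gamma')$; second, \propref{rad-struct} classifies radicals into a finite list of types, so the proposition reduces to a finite table check on radical transitions. Using \propref{rad-struct} and \lemref{trrep}, I would tabulate $\rep_{\Gamma_k}(\Trans(\Rad(\Gamma')))-T(\Rad(\Gamma'))$: for $k\in\{3,4,6\}$ the four radical types (trivial, cyclic of order $k$, $\Trans(\Gamma_k)$, $\Gamma_k$) give values $-2,0,0,2$; for $k=2$ the types are parametrized by whether a rotation is present and by the rank $r\in\{0,1,2\}$ of the translation subgroup, giving $2r-2$ without and $2r$ with. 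In particular the quantity is always even.

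For $\gamma\in\Rad(\Gamma')$, the plan is to argue the difference is $0$ by a monotonicity sandwich: $\Gamma'\subseteq\langle\Gamma',\gamma\rangle\subseteq\Rad(\Gamma')$ gives, after applying $\Trans(\cdot)$, a chain of translation subgroups, and by \lemref{trrep} $\rep_{\Gamma_k}(\cdot)$ is monotone along such chains (it records twice the rank for $k=2$ and equals $2$ on any nontrivial translation subgroup for $k=3,4,6$). Meanwhile $T(\cdot)$ is monotone in the opposite direction under inclusion, since adjoining generators can introduce a rotation but never destroy one. Consequently $\rep_{\Gamma_k}(\Trans(\langle\Gamma',\gamma\rangle))-T(\langle\Gamma',\gamma\rangle)$ is sandwiched between the same quantity on $\Gamma'$ and on $\Rad(\Gamma')$, and those outer values agree by the defining property of $\Rad(\Gamma')$.

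For $\gamma\notin\Rad(\Gamma')$, I would case-analyze on the radical type of $\Gamma'$ together with whether $\gamma$ is a translation or a rotation. For $k\in\{3,4,6\}$, the possible transitions are few: escape of the trivial radical by either kind of $\gamma$; escape of a cyclic radical by a translation or by a rotation at a different center (both of which force the new radical to be all of $\Gamma_k$, since two non-commuting rotations produce a translation); and escape of the translation radical by a rotation. In each case \propref{rad-struct} identifies the new radical and \lemref{trrep} shows the jump in $\rep-T$ is exactly $+2$. For $k=2$, the analogous case analysis has one sub-case that needs actual computation: $\Gamma'$ contains a rotation $r'$ and $\gamma=r't$ is another rotation. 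Using that $r'$ has order $2$ and that $r'xr'^{-1}=x^{-1}$ for translations, one checks (essentially via \lemref{Gam2nice}) that $\Trans(\langle\Gamma',\gamma\rangle)=\langle\Trans(\Gamma'),t\rangle$, and by \propref{rad-struct} the condition $\gamma\notin\Rad(\Gamma')$ translates cleanly to $t\notin\Rad(\Trans(\Gamma'))$; so the translation rank rises by one, $T$ is unchanged, and $\rep-T$ jumps by exactly $2$. This last sub-case is the main obstacle, because it is the only situation where adjoining a non-translation $\gamma$ enlarges the translation subgroup; everywhere else either $T$ flips from $2$ to $0$ or a translation generator is added transparently.
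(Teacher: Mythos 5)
Your proposal is correct and follows essentially the same route as the paper's proof: both cases rest on the structural classification of radicals (\propref{rad-struct}), the parity/monotonicity facts from \lemref{trrep}, and the same key $\Gamma_2$ computation that $\Trans(\langle\Gamma',\gamma\rangle)=\langle r'\gamma,\Trans(\Gamma')\rangle$ via \lemref{Gam2nice} when both $\Gamma'$ and $\gamma$ carry rotations. The only difference is organizational — you tabulate the values of $\rep-T$ by radical type and compute the jump directly in each case, whereas the paper argues ``at least $2$'' abstractly and ``at most $2$'' by the same case analysis — so the substance matches.
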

\begin{proof}
If $\gamma \in \Rad(\Gamma')$, this follows at once from the definition, since the
quantity $\rep_{\Gamma}(\Gamma')-T(\Gamma')$ depends only on the radical.

Now suppose that $\gamma \notin \Rad(\Gamma')$.  Since the radical is defined in terms of $\rep_{\Gamma_k}(\cdot)$
and $T(\cdot)$, \lemref{rad-monotone} implies that
at least one of $\rep_{\Gamma_k}(\cdot)$ or $-T(\cdot)$ increases, it is easy to see from the definition
that either type of increase is by at least $2$.  We will show that the increase is at most $2$, from which
the lemma follows.  The rest of the proof is in three cases, depending on $k$.

Now we let $k=3, 4, 6$.  The only way for the increase to be larger than $2$ is for
$\Gamma'$ to be trivial and $\Rad(\langle \gamma \rangle) = \Gamma_k$.  This is clearly
impossible given the description from the proof of \lemref{rad-welldefined}.

To finish, we address the case $k = 2$.
Suppose $\gamma$ is a translation.  Then $T(\langle \gamma, \Gamma' \rangle) = T(\langle \Gamma'\rangle)$, since
adding $\gamma$ as a generator doesn't give us a new rotation if one wasn't already present in $\Gamma'$.
Lemmas \ref{lemma:subgrpgen} and \ref{lemma:Gam2nice} imply that
$\Trans(\langle \gamma, \Gamma' \rangle) = \langle \gamma, \Trans(\Gamma') \rangle$.
Hence, the rank of the translation subgroup increases by at most $1$, and so, by \lemref{trrep},
$\rep_{\Gamma_k}(\cdot)$ by at most $2$.

Now suppose that $\gamma$ is a rotation.  If $\Gamma'$ has no rotations,
then Lemma \ref{lemma:Gam2nice} implies $\Trans(\langle \gamma, \Gamma' \rangle) = \Gamma'$,
and so $T(\cdot)$ decreases and $\rep_{\Gamma_2}(\cdot)$ is unchanged.  If
$\Gamma'$ has rotations, then $\Gamma' = \langle r', \Trans(\Gamma')\rangle$ for some rotation $r'\in \Gamma'$.
Since $k=2$, $r' \gamma$ is a translation and so
\[\Trans(\langle \gamma, \Gamma' \rangle) = \Trans(\langle \gamma, r', \Trans(\Gamma') \rangle) =
\Trans(\langle r', r' \gamma, \Trans(\Gamma') \rangle) = \langle r' \gamma, \Trans(\Gamma') \rangle\]
Thus, in this case, the the number of generators of the translation subgroup increases by at most one and
$T(\cdot)$ is unchanged.  By \lemref{trrep}, the proof is complete.
\end{proof}

\section{Teichmüller space and the centralizer}\seclab{teich}
The representation spaces defined in the previous two sections are closely related to the degrees
of freedom in the crystallographic direction networks we study in the sequel.  In this section, we
develop the \emph{Teichmüller space} and \emph{centralizer}, which play the same role for frameworks.

\subsection{Teichmüller space}
The \emph{Teichm\"uller space} of $\Gamma_k$ is defined to be
the space of discrete faithful representations
modulo conjugation by $\Euc(2)$, i.e. $\Teich(\Gamma_k) = \Rep(\Gamma_k)/\Euc(2)$.  For a subgroup $\Gamma' < \Gamma$,
we define its \emph{restricted Teichm\"uller space} to
\[
\Teich_{\Gamma_k}(\Gamma') = \Rep_{\Gamma_k}(\Gamma')/\Euc(2)
\]
Correspondingly, we define $\teich_{\Gamma_k}(\Gamma') = \dim(\Teich_\Gamma(\Gamma'))$.

\subsection{The centralizer}
For a subgroup $\Gamma' \leq \Gamma_k$ and a discrete faithful representation $\Phi: \Gamma \to \Euc(2)$,
the \emph{centralizer of $\Phi(\Gamma')$} which we denote $\Cent_{\Euc(2)}(\Phi(\Gamma'))$ is the set of
elements commuting with all elements in $\Phi(\Gamma')$.
We define $\cent_{\Gamma_k}(\Gamma')$ to be the dimension of the centralizer
$\Cent_{\Euc(2)}(\Phi(\Gamma'))$.  The quantity $\cent_{\Gamma_k}(\Gamma')$ is independent of $\Phi$,
and we can compute it.  Since we don't depend on \lemref{centtable} or \propref{reptoteich} for any of our main results,
we skip the proofs in the interest of space.
\begin{lemma} \lemlab{centtable}
Let notation be as above.  The dimension $\cent_{\Gamma_k}(\Gamma')$
of $\Cent_{\Euc(2)}(\Phi(\Gamma'))$ is independent of the representation $\Phi$.
Furthermore, $\cent(\Gamma') \geq T(\Gamma')$, and in particular,
\[ \cent(\Gamma') =
\begin{cases}
0 & \text{ if $\Gamma'$ contains rotations and translations} \\
1 & \text{ if  $\Gamma'$ contains only rotations} \\
2 & \text{ if  $\Gamma'$ contains only translations} \\
3 & \text{ if  $\Gamma'$ is trivial}
\end{cases}
\]
\end{lemma}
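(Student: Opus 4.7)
The plan is to compute $\Cent_{\Euc(2)}(\Phi(\Gamma'))$ directly in each of the four cases using the semidirect product coordinates $\Euc(2) = \R^2 \rtimes O(2)$, with multiplication $(v,r)(v',r') = (v + rv', rr')$. Independence of $\Phi$ will come first: by the analysis in the proof of \lemref{repspace}, any discrete faithful $\Phi$ sends elements of $\Trans(\Gamma_k) \cap \Gamma'$ to translations in $\Euc(2)$ and every other element of $\Gamma'$ to a rotation (by \lemref{TorR}, these are the only orientation-preserving possibilities, and discreteness plus finite order for non-translations forces the dichotomy). Hence the ``type'' of $\Phi(\Gamma')$ (trivial, only translations, only rotations, both) depends only on $\Gamma'$, not on $\Phi$, and it suffices to compute the centralizer dimension for some representative.

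For the casework I would argue as follows. If $\Gamma'$ is trivial, the centralizer is all of $\Euc(2)$, of dimension $3$. If $\Phi(\Gamma')$ contains a nontrivial translation $(v,\Id)$ with $v \neq 0$, then for any $(w,R) \in \Euc(2)$ the commutator relation $(v,\Id)(w,R) = (w,R)(v,\Id)$ reduces to $Rv = v$, which forces $R = \Id$. Thus the centralizer sits inside the translation subgroup $\R^2 < \Euc(2)$, and conversely every translation centralizes every other translation, giving dimension exactly $2$. If $\Phi(\Gamma')$ contains only rotations, then by \lemref{rotscomm} $\Gamma'$ is finite cyclic, so $\Phi(\Gamma')$ is generated by a single rotation around some point $p$; the centralizer of such a rotation consists of all rotations around $p$, a $1$-parameter family. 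Finally, if $\Phi(\Gamma')$ contains both translations and rotations, the centralizer must lie simultaneously in both of the previous centralizers, whose intersection in $\Euc(2)$ is trivial, giving dimension $0$.

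The inequality $\cent(\Gamma') \geq T(\Gamma')$ then follows by inspection: if $\Gamma'$ has a rotation then $T(\Gamma') = 0$, which is automatic, and otherwise $T(\Gamma') = 2$ and $\cent(\Gamma') \in \{2,3\}$ by the above.

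The only mildly nontrivial step is the ``only rotations'' case, where one needs that the images of the generators of $\Gamma'$ share a common fixed point in $\R^2$; this is precisely where \lemref{rotscomm} is essential, since it collapses this case to a single cyclic rotation group whose fixed point set in $\R^2$ is a single point $p$. Everything else is a one-line semidirect-product calculation.
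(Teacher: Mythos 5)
The paper explicitly skips the proof of this lemma (``Since we don't depend on \lemref{centtable} or \propref{reptoteich} for any of our main results, we skip the proofs in the interest of space''), so there is no official argument to compare against; judged on its own, your proof is essentially correct and is the natural direct computation. One imprecision worth fixing: the paper's $\Euc(2)$ is $\R^2\rtimes O(2)$ with the \emph{full} orthogonal group, so the equation $Rv=v$ with $v\neq 0$ does \emph{not} force $R=\Id$ --- it also admits the reflection fixing the line $\R v$. Consequently the centralizer of a single translation is $\R^2\rtimes\{\Id,S_v\}$ rather than the translation subgroup, the centralizer of an order-$2$ rotation about $p$ is all of $O(2)_p$ rather than just $SO(2)_p$, and the intersection in your last case can contain one nontrivial reflection. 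None of this changes any of the four dimensions (the extra pieces are components of the same dimension, or finite), so the stated conclusions, the independence of $\Phi$, and the inequality $\cent(\Gamma')\geq T(\Gamma')$ all survive; but as written, phrases like ``forces $R=\Id$'' and ``whose intersection is trivial'' are literally false and should be restated as dimension counts. Your appeal to \lemref{rotscomm} in the rotations-only case is fine, though it is cleaner to note directly that the commutator of two rotations with distinct centers is a nontrivial translation, so a rotation-only subgroup has a common fixed point.
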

As a corollary, we get the following proposition relating $\rep_{\Gamma_k}(\cdot)$ and
$T(\cdot)$ to $\teich_{\Gamma_k}(\cdot)$ and $\cent(\cdot)$.
\begin{prop} \proplab{reptoteich}
Let $\Gamma' < \Gamma_k$.  Then:
\begin{itemize}
\item[\textbf{(A)}] If $\Gamma'$ contains a translation, then $T(\Gamma') = \cent_{\Gamma_k}(\Gamma')$.  Otherwise,
$T(\Gamma') = \cent_{\Gamma_k}(\Gamma') - 1$.
\item[\textbf{(B)}] If $\Gamma'$ is a non-trivial translation subgroup, then
$\teich_{\Gamma_k}(\Gamma') = \rep_{\Gamma_k}(\Gamma') -1$.
\item[\textbf{(C)}] If $\Gamma'$ is trivial, then $\teich_\Gamma(\Gamma') = \rep_\Gamma(\Gamma') = 0$.
\item[\textbf{(D)}] For any $\Gamma'<\Gamma_k$, $\rep_{\Gamma_k}(\Trans(\Gamma')) - T(\Gamma') =
\teich_{\Gamma_k}(\Trans(\Gamma')) - \cent_{\Gamma_k}(\Gamma')$.
\end{itemize}
\end{prop}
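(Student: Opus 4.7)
The backbone of the proof is the orbit–stabilizer relation for the conjugation action of $\Euc(2)$ on representation spaces: the stabilizer of any point $\Phi \in \Rep_{\Gamma_k}(\Gamma'')$ is precisely $\Cent_{\Euc(2)}(\Phi(\Gamma''))$, and by \lemref{centtable} its dimension $\cent_{\Gamma_k}(\Gamma'')$ is independent of $\Phi$. Consequently, for any non-trivial $\Gamma''$ one gets the master identity
\[
\teich_{\Gamma_k}(\Gamma'') = \rep_{\Gamma_k}(\Gamma'') + \cent_{\Gamma_k}(\Gamma'') - 3.
\]
Parts (B) and (C) then fall out immediately: for a non-trivial translation subgroup \lemref{centtable} gives $\cent_{\Gamma_k}(\Gamma') = 2$, yielding (B); and for the trivial subgroup the only representation is the trivial homomorphism, so both $\rep$ and $\teich$ are zero.

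For part (A), my approach is a direct check against the four rows of the table in \lemref{centtable} combined with the two-branch definition of $T$. The pattern is clean: when $\Gamma'$ contains a translation (i.e.\ $\Gamma'$ is purely translational or contains both types) one reads off $T(\Gamma') = \cent_{\Gamma_k}(\Gamma')$ from the two matching pairs $(2,2)$ and $(0,0)$, while when $\Gamma'$ has no translations (trivial or purely rotational) the pairs $(2,3)$ and $(0,1)$ give $T(\Gamma') = \cent_{\Gamma_k}(\Gamma') - 1$.

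Part (D) is then an algebraic combination of (A), (B), (C), organized by whether $\Trans(\Gamma')$ is trivial or not. If $\Trans(\Gamma')$ is trivial, (C) kills the two $\Rep/\Teich$ terms and the identity reduces to comparing $T(\Gamma')$ with $\cent_{\Gamma_k}(\Gamma')$, which is the second branch of (A). If $\Trans(\Gamma')$ is non-trivial, (B) converts $\teich_{\Gamma_k}(\Trans(\Gamma'))$ into $\rep_{\Gamma_k}(\Trans(\Gamma')) - 1$, the $\rep$ terms cancel, and what remains is again controlled by (A). The reason (A) has to enter explicitly, rather than simply applying the master identity to $\Trans(\Gamma')$, is that the right-hand side involves $\cent_{\Gamma_k}(\Gamma')$ rather than $\cent_{\Gamma_k}(\Trans(\Gamma'))$, so one needs to convert between centralizers of $\Gamma'$ and of $\Trans(\Gamma')$.

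The main obstacle is not any single step but rather the bookkeeping in (D): one has to keep the ``$\Trans(\Gamma')$ trivial vs.\ not'' dichotomy (which selects between (B) and (C)) in lockstep with the ``$\Gamma'$ contains a translation vs.\ not'' dichotomy (which selects the branch of (A)), and verify that the arithmetic balances in every combination. Once (A) is nailed down, this is short and mechanical, but a tidy write-up requires committing to one organizing dichotomy and tracking the four resulting cases.
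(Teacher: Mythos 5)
The paper itself omits the proof of this proposition (it explicitly says that \lemref{centtable} and \propref{reptoteich} are not needed for the main results and skips both proofs), so there is no argument of the authors' to compare yours against. Your treatment of parts (A)--(C) is sound: (A) really is a four-row check of the definition of $T$ against the table in \lemref{centtable}, and the orbit--stabilizer identity $\teich_{\Gamma_k}(\Gamma'') = \rep_{\Gamma_k}(\Gamma'') + \cent_{\Gamma_k}(\Gamma'') - 3$ (legitimate here because \lemref{centtable} guarantees every orbit of the conjugation action has dimension $3 - \cent_{\Gamma_k}(\Gamma'')$) gives (B) immediately, with (C) trivial.

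The gap is in (D): the bookkeeping you describe does \emph{not} balance, and carrying out the substitutions you outline shows the two sides of (D) differ by exactly $1$ in every case. When $\Trans(\Gamma')$ is trivial, (C) reduces (D) to $T(\Gamma') = \cent_{\Gamma_k}(\Gamma')$, whereas the second branch of (A) gives $T(\Gamma') = \cent_{\Gamma_k}(\Gamma') - 1$; when $\Trans(\Gamma')$ is non-trivial, substituting (B) reduces (D) to $T(\Gamma') = \cent_{\Gamma_k}(\Gamma') + 1$, whereas the first branch of (A) gives $T(\Gamma') = \cent_{\Gamma_k}(\Gamma')$. A direct numerical check confirms the mismatch: for $\Gamma'$ trivial the left side of (D) is $0 - 2 = -2$ while the right side is $0 - 3 = -3$; for $\Gamma' = \Trans(\Gamma_2)$ the left side is $4 - 2 = 2$ while the right side is $3 - 2 = 1$. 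So (D) as printed is inconsistent with (A)--(C) and with the definitions; the identity that actually holds (and is what is needed to reconcile the sparsity functions $h = f-1$ and $h'$ in \secref{gamma-laman}) is $\rep_{\Gamma_k}(\Trans(\Gamma')) - T(\Gamma') - 1 = \teich_{\Gamma_k}(\Trans(\Gamma')) - \cent_{\Gamma_k}(\Gamma')$. Your write-up asserts that all four cases check out; they do not, and a correct treatment must either prove this corrected identity or explicitly flag the off-by-one in the stated one.
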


\section{Matroid preliminaries}\seclab{matroid-prelim}
The concepts of matroids and their linear representability play a key role in the
results of this paper.  In \secref{group-matroid}, we will define a matroidal structure
on, essentially, $\Gamma_k$.  In this section, we review the parts of matroid theory
we will need in the sequel.

\subsection{Matroids given by bases}\seclab{base-axioms}
A \emph{matroid} $M$ is a combinatorial structure that captures some essential features of linear
dependence and independence over a \emph{ground set} $E$.  Matroids have many equivalent definitions
(see, e.g., the monograph \cite{O06}),
but a convenient one for graph-theoretic matroids is by the \emph{bases} $\mathcal{B}(M)\subset 2^E$,
which must satisfy the following axioms:
\begin{itemize}
\item [\textbf{Non-triviality}] $\mathcal{B}(M)\neq \emptyset$.
\item [\textbf{Equal size}] If $A$ and $B$ are in $\mathcal{B}(M)$, then $|A| = |B|$.
\item [\textbf{Base exchange}] If $A$ and $B$ are in $\mathcal{B}(M)$, then there
are elements $a\in A\setminus B$ and $b\in B\setminus A$
such that $A+b-a\in \mathcal{B}(M)$.
\end{itemize}
The size of bases is defined to be the \emph{rank} of the matroid.

For readers new to matroids, we note that basis exchange corresponds to Steinitz exchange between
bases of a finite-dimensional vector space.  The canonical example of a matroid has the ground set the
edges of the complete graph $K_n$ on $n$ vertices and the bases the
spanning trees; this is usually called the \emph{graphic matroid} in the literature.
All the  axioms are readily verified in this case.

\subsection{Matroids given by rank functions}\seclab{rank-axioms}
Let $E$ be a set and $f$ a non-negative, integer-valued function defined on subsets of $E$.  We define $f$ to be \emph{monotone},
if for all $A\subset B\subset E$, $f(A)\le f(B)$.  We define $f$ to be \emph{submodular}, if for any subsets $A$ and $B$ of $E$:
\[
f(A\cup B) + f(A\cap B) \le f(A) + f(B)
\]
which is called the \emph{submodular inequality}.  Submodular functions are an important class in optimization theory, since they
capture a kind of ``combinatorial concavity''.  A more ``local'' characterization of submodularity, which will be easier
for us to work with is along these lines.  Let $A\subset B\subset E$, and let $e\in E\setminus B$.  Then, $f$ is
submodular if and only if for all such $A$, $B$ and $x$
\begin{equation}\label{local-submodular}
f(A\cup \{x\}) - f(A) \ge f(B\cup \{x\}) - f(B)
\end{equation}
An alternative characterization of a matroid $M$ on a ground set $E$
is by its \emph{rank function}, which we denote $f_M$.  An integer-valued set function $f_M$ on $E$
is the \emph{rank function of a matroid $M$} if:
\begin{itemize}
\item [\textbf{Non-negativity}]$f_M$ is non-negative and zero on $\emptyset$
\item [\textbf{Monotonicity}]$f_M$ is monotone
\item [\textbf{Submodularity}] $f_M$ is submodular
\item [\textbf{Normalization}] For all $A\subset E$ and $e\in E\setminus A$, $f_M$ increases by zero or one when $e$ is added to $A$.
\end{itemize}
For example, the rank function of the graphic matroid is given as follows: the rank of a subset $E'$ of
edges of $K_n$ spanning $n'$ vertices and $c'$ connected components is simply $n'-c'$.

\subsection{Connection between rank functions and bases}
The conversion between the characterization by rank function and bases is as follows.
Given the rank function $f_M$ of a matroid $M$, the bases are
\[
\mathcal{B}(M) =
\left\{
A\subset E : \text{$f_M(A) = |A|$ and $f_M(A)=f_M(E)$}
\right\}
\]
Given the bases $\mathcal{B}(M)$, the rank function is given by
\[
f_M(A) = \max_{B\in \mathcal{B}(M)} |A\cap B|
\]

\subsection{Infinite ground sets}
Readers familiar with matroids will notice that we have not required the ground set to be finite.
This is intentional: since we will be working with ground sets involving $\Gamma_k$, which is infinite, our ground
set will be as well. Because all the matroids we deal with are finite rank (depending on $n$ and $\Gamma$),
all the required theory goes through.

\subsection{Matroids from submodular functions}
A fundamental theorem of matroid theory, due to Edmonds and Rota \cite{ER66}, and extended to
the case where $E$ may be infinite by Pym and Perfect \cite{PP70}, in matroid theory gives a
recipe for moving from submodular functions to matroids.
\begin{theorem}[\edmonds][\cite{ER66}]\theolab{edmonds1}
Let $E$ be a set and $f$ be a non-negative, monotone, finite, integer-valued function on subsets of $E$.  Then
the collection of subsets
\[
\left\{
A\subset E : \text{$f(A) = |A|$ and for all subsets $A'\subset A$, $|A'|\le f(A')$}
\right\}
\]
gives the bases of a matroid.
\end{theorem}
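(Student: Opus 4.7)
The plan is to prove this via the standard Edmonds--Rota strategy, routing through the matroid independent set axioms. Introduce the auxiliary family
\[
\mathcal{I} = \{A \subset E : |A'| \le f(A') \text{ for every finite } A' \subseteq A\},
\]
so that the collection $\mathcal{B}$ in the statement is exactly $\{A \in \mathcal{I} : f(A) = |A|\}$. I would first show $\mathcal{I}$ is the independent-set family of a matroid and then identify $\mathcal{B}$ as its family of bases.

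Non-triviality ($\emptyset \in \mathcal{I}$) and heredity (subsets of elements of $\mathcal{I}$ lie in $\mathcal{I}$) are immediate from the definition. The essential step is the \emph{augmentation axiom}: given $A, B \in \mathcal{I}$ with $|A| < |B|$, some $x \in B \setminus A$ satisfies $A \cup \{x\} \in \mathcal{I}$. Submodularity of $f$ --- an implicit companion hypothesis in the Edmonds--Rota setup, matching the rank-function axioms of \secref{rank-axioms} --- is essential here. I would argue contrapositively: if no such $x$ exists, then for each $x \in B \setminus A$ there is some $A_x \subseteq A$ with $|A_x| + 1 > f(A_x \cup \{x\})$; combined with $A_x \in \mathcal{I}$ and monotonicity of $f$, this pins down the equalities $f(A_x) = f(A_x \cup \{x\}) = |A_x|$. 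The local submodular inequality \eqref{local-submodular}, applied with $A_x \subseteq A$, then yields $f(A \cup \{x\}) \le f(A)$, and monotonicity upgrades this to equality. Iterating this closure-style step across the elements of $B \setminus A$ one at a time gives $f(A \cup B) = f(A)$, and then $|B| \le f(B) \le f(A \cup B) = f(A) \le |A|$ (the last inequality using normalization of $f$, equivalently the defining condition $f(A) = |A|$ when $A$ is taken to be a basis in the sense of the statement) contradicts $|A| < |B|$.

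Once augmentation is in hand, the remaining base axioms for $\mathcal{B}$ are routine. Equal size of elements of $\mathcal{B}$ follows by symmetric application of augmentation to any two members. Base exchange --- given $A, B \in \mathcal{B}$ and $a \in A \setminus B$, producing $b \in B \setminus A$ with $A - a + b \in \mathcal{B}$ --- follows by applying augmentation to $A \setminus \{a\} \in \mathcal{I}$ together with $B$ to locate a candidate $b$, and then verifying via the submodular closure argument that the augmented set satisfies $f(A - a + b) = |A - a + b|$. The main obstacle is the telescoping argument in augmentation, which pivots on the local submodular inequality; the rest is bookkeeping, and the infinite-$E$ case is handled by the observation that every independent set is finite of size at most $f(E)$, which is finite by hypothesis.
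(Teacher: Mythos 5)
The paper does not actually prove this statement---it is quoted as a black-box citation to Edmonds--Rota \cite{ER66} (and Pym--Perfect \cite{PP70} for infinite ground sets)---so there is no internal proof to compare against; your proposal has to stand on its own. You correctly observe that submodularity is an unstated but indispensable hypothesis (without it the claim is false), and your overall strategy---verify the independent-set axioms for $\mathcal{I}$, with augmentation as the crux---is the standard one.

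However, the closing step of your augmentation argument has a genuine gap. Your chain $|B| \le f(B) \le f(A\cup B) = f(A) \le |A|$ needs $f(A)\le |A|$ for an \emph{arbitrary} independent set $A$, and you justify this by appeal to ``normalization'' and to the basis condition $f(A)=|A|$. Neither is available: normalization is not among the theorem's hypotheses (take $f\equiv 5$, which satisfies every stated hypothesis plus submodularity and yields the uniform matroid of rank $5$; a $3$-element independent set has $f(A)=5>|A|$), and the augmentation axiom must be proved for all of $\mathcal{I}$, not only for members of $\mathcal{B}$. The standard repair does not compare $f(B)$ to $f(A)$ at all: for each non-augmenting $x\in B\setminus A$ one extracts a tight set $A_x\subseteq A$ with $f(A_x)=f(A_x\cup\{x\})=|A_x|$, shows via submodularity that the union $C$ of all tight subsets of $A$ is again tight and absorbs every such $x$ ($f(C\cup\{x\})=f(C)=|C|$), and then exhibits the subset $B''=(C\cap B)\cup(B\setminus A)$ of $B$ with $|B''|>|C|\ge f(B'')$ (using $|B\setminus A|>|A\setminus B|\ge |C\setminus B|$), contradicting $B\in\mathcal{I}$. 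A second, smaller issue: as literally written, the collection in the statement is $\{A\in\mathcal{I} : f(A)=|A|\}$, which in general contains non-maximal independent sets of differing sizes (e.g.\ a single edge of $K_3$ for the graphic rank function), so your claim that equal size ``follows by symmetric application of augmentation'' does not go through for that literal family; the bases are the \emph{maximal} independent sets, equivalently those with $f(A)=|A|=f(E)$, consistent with how the paper states the rank-function-to-bases conversion in \secref{rank-axioms} and how it uses the theorem for the functions $g$ and $g_1$ (which do satisfy normalization and vanish on $\emptyset$, so your shortcut would be harmless there, but not for the theorem as stated).
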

We define the matroid arising from \theoref{edmonds1} to be $M_f$.

\subsection{Matroid union}
We will use as an essential tool, the following construction.
\begin{theorem}[\edmondss][\cite{ER66}]\theolab{edmonds2}
Let $M_1$ and $M_2$ be matroids on a common ground set $E$, and let $f_1$ and $f_2$ be submodular functions
such that $M_i$ is $M_{f_i}$ as in \theoref{edmonds1}.  Then the matroid $M_{f_1+f_2}$, obtained from
\theoref{edmonds1} has, as its bases, the subsets of $E$
\[
\left\{
A\subset E : \text{$A=A_1\cup A_2$, with $A_1\cap A_2 = \emptyset$ and $A_i\in \mathcal{B}(M_i)$}
\right\}
\]
\end{theorem}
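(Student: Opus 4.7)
The plan is a two-direction proof establishing that $\mathcal{B}(M_{f_1+f_2})$ coincides with the set of disjoint unions $A_1 \sqcup A_2$ with $A_i \in \mathcal{B}(M_i)$.

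For the easy direction, I would take $A = A_1 \sqcup A_2$ with each $A_i$ a basis of $M_i$ and verify the two defining conditions from \theoref{edmonds1} applied to $f_1 + f_2$. To check $(f_1+f_2)$-sparsity, for any $A' \subseteq A$ I decompose $A' = (A' \cap A_1) \sqcup (A' \cap A_2)$; since $A_i$ is $f_i$-sparse, so is the subset $A' \cap A_i$, and by monotonicity $|A' \cap A_i| \le f_i(A' \cap A_i) \le f_i(A')$. Summing over $i = 1, 2$ yields $|A'| \le (f_1+f_2)(A')$. To check the tightness $(f_1+f_2)(A) = |A|$, I combine the sparsity bound $|A| \le (f_1+f_2)(A)$ with the equality $|A_i| = f_i(A_i) = f_i(A)$, the last step following from the maximality of $A_i$ as an $f_i$-sparse set in $E$.

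For the harder direction, I would take a basis $A$ of $M_{f_1+f_2}$ and invoke the matroid partition theorem of Edmonds and Rota \cite{ER66}, extended to infinite ground sets by Pym and Perfect \cite{PP70}, which decomposes $A$ as $A_1 \sqcup A_2$ with $A_i \in \mathcal{I}(M_i)$ provided the rank inequality $|X| \le r_{M_1}(X) + r_{M_2}(X)$ holds for every $X \subseteq A$. I would derive this inequality from the $(f_1+f_2)$-sparsity together with the maximality of $A$ in $M_{f_1+f_2}$, and then use the tightness $(f_1+f_2)(A) = |A|$ to conclude that each $A_i$ in the resulting partition is forced to be maximum $f_i$-sparse, hence a basis of $M_i$.

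The main obstacle is the penultimate step above, bridging the submodular sparsity bound $|X| \le f_1(X) + f_2(X)$ to the matroid rank bound $|X| \le r_{M_1}(X) + r_{M_2}(X)$. Since in general $r_{M_i} \le f_i$ may be strict, the sparsity inequality is a priori weaker than the rank inequality, so this upgrade is not purely formal; it reflects the core exchange argument that lives inside the matroid union theorem. My cleanest route is to treat the matroid partition theorem as a black box and concentrate the remaining work on verifying its hypothesis from the interplay of sparsity, tightness, and basis-maximality of $A$ in $M_{f_1+f_2}$.
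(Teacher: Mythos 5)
First, a point of reference: the paper does not prove \theoref{edmonds2} at all --- it is imported from Edmonds--Rota \cite{ER66} (with Pym--Perfect \cite{PP70} for infinite ground sets) as a black box, so there is no in-paper argument to compare against; your proposal has to stand on its own. Your easy direction is essentially complete: the sparsity check is correct, and the identity $f_i(A_i)=f_i(A)$ does hold, but your one-line justification should say that it uses submodularity in addition to maximality (if $A_i$ is a maximal $f_i$-sparse set and $x\notin A_i$, the subset of $A_i+x$ witnessing failure of sparsity forces the marginal value of $x$ there to be zero, submodularity propagates this to $f_i(A_i+x)=f_i(A_i)$, and one iterates up to $A$).

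The genuine gap is exactly the step you flag and then defer: verifying $|X|\le r_{M_1}(X)+r_{M_2}(X)$ for all $X\subseteq A$ before invoking the matroid partition theorem. As written the proposal never closes this, and it is the heart of the hard direction, so the proof is incomplete. It is, however, fillable in a few lines, and it needs only sparsity of $A$, not its maximality: for a matroid induced by a non-negative monotone submodular $f_i$ one has $r_{M_i}(X)=\min_{Y\subseteq X}\bigl(|X\setminus Y|+f_i(Y)\bigr)$. Let $Y_1,Y_2$ attain the two minima. Then
\[
r_{M_1}(X)+r_{M_2}(X)=|X\setminus Y_1|+|X\setminus Y_2|+f_1(Y_1)+f_2(Y_2)
\ge |X\setminus (Y_1\cap Y_2)|+f_1(Y_1\cap Y_2)+f_2(Y_1\cap Y_2)\ge |X|,
\]
using $(X\setminus Y_1)\cup(X\setminus Y_2)=X\setminus(Y_1\cap Y_2)$, monotonicity of the $f_i$, and $(f_1+f_2)$-sparsity of $Y_1\cap Y_2\subseteq A$. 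With that in hand, your concluding step is fine and can be made explicit: the chain $|A_1|+|A_2|=|A|=f_1(A)+f_2(A)\ge f_1(A_1)+f_2(A_2)\ge |A_1|+|A_2|$ forces equality throughout, so each $A_i$ is tight and $f_i$-sparse, hence a basis of $M_i$ in the sense of \theoref{edmonds1}.
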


\section{A matroid on crystallographic groups}\seclab{group-matroid}
We now define and study a matroid $M_{\Gamma_k,n}$ for $k=2,3,4,6$.

\subsection{Preview of $\Gamma$-$(1,1)$ graphs and $M_{\Gamma_k,n}$}
In \secref{sparse}, we will relate
$M_{\Gamma_k,n}$ to ``$\Gamma$-$(1,1)$ graphs'', which are defined in \secref{gamma11-def}.  The
results here, roughly speaking, are the group theoretic part of the proof of \propref{gamma11}
in \secref{sparse}.

To briefly motivative the definitions given next, $\Gamma$-$(1,1)$ graphs need
not be connected, and
each connected component is associated with a finitely generated subgroup of $\Gamma_k$.  The ground
set of $M_{\Gamma_k, n}$ and the $A_i$ defined below capture this situation.  The operations of
conjugating and fusing, defined here in Sections \secrefX{Mkn-conj} and \secrefX{Mkn-fuse} will
be interpreted graph theoretically in \secref{sparse}.

\subsection{The ground set} For the definition of the ground set, we fix $\Gamma$ and a natural number $n\ge 1$.
The ground set $E_{\Gamma_k,n}$ is defined to be:
\[
E_{\Gamma_k,n} =
\left\{
(\gamma,i) : 1\le i\le n
\right\}
\]
In other words the ground set is $n$ labeled copies of $\Gamma_k$.

Let $A\subset E_{\Gamma_k,n}$.  We define some notations:
\begin{itemize}
\item $A_i = \{\gamma : (\gamma,i)\in A\}$; i.e., $A_i$ is the group elements from copy $i$ of $\Gamma_k$ in $A$.  Some of the
$A_i$ may be empty and $A_i$ can be a multi-set.  $A$ may equivalently defined by the $A_i$.
\item $\Gamma_{A,i} = \langle \gamma : \gamma\in A_i\rangle$; i.e., the subgroup generated by the elements in $A_i$.
\item $\Trans(A) = \langle \Trans(\Gamma_{A,1}), \Trans(\Gamma_{A,2}), \dots, \Trans(\Gamma_{A,n}), \rangle$; the translation subgroup
generated by the translations in each of the $\Gamma_{A,i}$.
\item $c(A)$ is the number of $A_i$ that are not empty.
\end{itemize}

\subsection{The rank function}
We now define the function $g_1(A)$ for $A\subset E_{\Gamma_k,n}$ to be
\[
g_1(A) = n + \frac{1}{2}\rep_{\Gamma_k}(\Trans(A)) - \frac{1}{2}\sum_{i=1}^n T(\Gamma_{A,i})
\]

The meaning of the terms in $g_1(A)$ are as follows:
\begin{itemize}
\item The second term is a global adjustment for the representation space of the group generated by
the translations in each of the $\Gamma_{A,i}$.  We note that this is \emph{not} the same as
the translation group $\Trans(\langle\gamma : \gamma\in \cup_{i=1}^n A_i)$, which includes
translations arising as products of rotations in different $A_i$.
\item The quantity $n - \frac{1}{2} \sum_{i=1}^n T(\Gamma_{A, i}) = \sum_{i=1}^n (1 - \frac{1}{2} T(\Gamma_{A, i}))$
is a local adjustment based on whether $\Gamma_{A,i}$ contains a rotation:
each term in the latter sum is one if $\Gamma_{A,i}$ contains a rotation and otherwise it contributes
nothing.
\end{itemize}

\subsection{An analogy to uniform linear matroids}
To give some intuition about why the construction above might be matroidal, we observe that
\propref{rad-rep-minus-T}, interpreted in matroidal language gives us:
\begin{prop}\proplab{gammak-matroid}
Let $A$ be a finite subset of $\Gamma_k$ generating a subgroup $\Gamma_A$.  Then the function
\[
r(A) = \frac{1}{2}\left(\rep_{\Gamma_k}(\Gamma_A) - T(\Gamma_A)\right)
\]
is the rank function of a matroid on the ground set $\Gamma_k$.
\end{prop}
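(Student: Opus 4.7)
The plan is to verify the four matroid rank axioms of Section~\ref{sec:rank-axioms}, with \propref{rad-rep-minus-T} supplying the normalization axiom and \lemref{rad-monotone} supplying submodularity.

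First I check normalization: $r(A \cup \{\gamma\}) - r(A) \in \{0, 1\}$ for every finite $A \subset \Gamma_k$ and every $\gamma \in \Gamma_k$.  This is essentially a restatement of \propref{rad-rep-minus-T} after halving.  A short case analysis reconciles the quantity $\rep_{\Gamma_k}(\Gamma_A) - T(\Gamma_A)$ defining $r$ with the quantity $\rep_{\Gamma_k}(\Trans(\Gamma_A)) - T(\Gamma_A)$ that \propref{rad-rep-minus-T} directly controls --- the two differ by $2 - T(\Gamma_A)$, and one splits on whether $\gamma$ is a translation or rotation and on whether $\Gamma_A$ already contained a rotation to see that the $\{0,2\}$ step size supplied by \propref{rad-rep-minus-T} transports cleanly to the $\{0,1\}$ step size required of $r$.

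Next I verify submodularity using the local characterization~(\ref{local-submodular}).  Given $A \subseteq B$ and $\gamma \in \Gamma_k$, both marginal increments $r(A \cup \{\gamma\}) - r(A)$ and $r(B \cup \{\gamma\}) - r(B)$ lie in $\{0,1\}$ by the previous step, so the inequality can fail only if the $B$-marginal is $1$ while the $A$-marginal is $0$.  But the $B$-marginal being $1$ forces $\gamma \notin \Rad(\Gamma_B)$, and monotonicity of the radical (\lemref{rad-monotone}) gives $\Rad(\Gamma_A) \subseteq \Rad(\Gamma_B)$, so $\gamma \notin \Rad(\Gamma_A)$, which forces the $A$-marginal also to be $1$ --- a contradiction.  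Monotonicity of $r$ and non-negativity (after the standard convention $r(\emptyset) = 0$) then follow by a straightforward induction from the $\{0,1\}$ step size.

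I expect no real obstacle: the structural content has already been proved in Section~\ref{sec:radical}, and what remains is the organizational task of packaging \propref{rad-rep-minus-T} and \lemref{rad-monotone} as a rank function.  The only delicate point is the bookkeeping in the first step to pass between $\rep_{\Gamma_k}(\Gamma_A)$ and $\rep_{\Gamma_k}(\Trans(\Gamma_A))$; everything else is formal.
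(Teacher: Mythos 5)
Your overall architecture---\propref{rad-rep-minus-T} for the unit-increment axiom, \lemref{rad-monotone} for submodularity via the local characterization \eqref{local-submodular}---is exactly the engine the paper runs, but it runs it for the function $g_1$ in the proof of \propref{Mgammarank}; \propref{gammak-matroid} itself is stated only as an intuition-building observation and is never proved or used. The step you flag as ``the only delicate point'' is where your argument actually breaks, because the function $r$ as literally written is not a rank function. Your identity $\rep_{\Gamma_k}(\Gamma_A) - \rep_{\Gamma_k}(\Trans(\Gamma_A)) = 2 - T(\Gamma_A)$ is correct, but substituting it gives $r(A) = \frac{1}{2}\bigl(\rep_{\Gamma_k}(\Trans(\Gamma_A)) - T(\Gamma_A)\bigr) + 1 - \frac{1}{2}T(\Gamma_A)$. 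The first summand is the quantity \propref{rad-rep-minus-T} controls and has marginal increments in $\{0,1\}$; the correction term $-\frac{1}{2}T(\Gamma_A)$ jumps by $+1$ at the moment the first rotation enters $\Gamma_A$. These two contributions add rather than cancel: if $\Gamma_A$ is rotation-free and $\gamma$ is a rotation, then $\gamma\notin\Rad(\Gamma_A)$ (radicals of rotation-free subgroups are rotation-free by \propref{rad-struct}), so the first summand increases by $1$ while $T$ simultaneously drops from $2$ to $0$, for a total increment of $2$. Concretely, in $\Gamma_2$ take $A=\{t_1\}$ and $\gamma=r_2$: then $r(A)=\frac{1}{2}(2-2)=0$ while $r(A\cup\{\gamma\})=\frac{1}{2}(4-0)=2$. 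Even before that, $r(\emptyset)=\frac{1}{2}(0-2)=-1$, so non-negativity already fails; your parenthetical ``standard convention $r(\emptyset)=0$'' is not a convention you are free to impose, since the formula determines the value. So the claimed ``clean transport'' of the $\{0,2\}$ step to a $\{0,1\}$ step is false in precisely the rotation case your case analysis waves at.

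The repair is to prove the statement the paper actually needs, namely the $n=1$ specialization of $g_1$: the function $1 + \frac{1}{2}\bigl(\rep_{\Gamma_k}(\Trans(\Gamma_A)) - T(\Gamma_A)\bigr)$, with $\Trans(\Gamma_A)$ in place of $\Gamma_A$ and an additive constant restoring $r(\emptyset)=0$. For that function your two steps go through verbatim: normalization is exactly \propref{rad-rep-minus-T} after halving, with no reconciliation needed, and your submodularity argument via \lemref{rad-monotone} is the one the paper gives for $g_1$. As written, however, the proposal asserts a normalization claim that is false for the stated $r$.
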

The matroid in the conclusion of \propref{gammak-matroid} is a kind of uniform linear matroid, with
$\Gamma_k$ playing the role of a vector space and $r$ the role of dimension of the linear span.  Since
the function $g_1$, defined above, builds on $r$, one might expect that it inherit a matroidal
structure.  We verify this next.

\subsection{$M_{\Gamma_k,n}$ is a matroid}
The following proposition is the main result of \chapref{groups}.
\begin{prop}\proplab{Mgammarank}
The function $g_1$ is the rank function of a matroid $M_{\Gamma_k,n}$.
\end{prop}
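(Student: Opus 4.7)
The plan is to verify directly the four axioms of a matroid rank function stated in \secref{rank-axioms}: non-negativity (with $g_1(\emptyset) = 0$), monotonicity, normalization, and submodularity. Non-negativity and vanishing at the empty set are immediate, since each $T(\Gamma') \in \{0, 2\}$ (so $\sum_{i=1}^n T(\Gamma_{A,i}) \leq 2n$) and $\rep_{\Gamma_k}(\Trans(A)) \geq 0$, while at $A = \emptyset$ every $\Gamma_{A,i}$ is trivial and $\Trans(A)$ is trivial, giving $g_1(\emptyset) = n - n + 0 = 0$. Monotonicity is also routine: enlarging $A$ enlarges each $\Gamma_{A,i}$ and hence $\Trans(A)$, so $\rep_{\Gamma_k}(\Trans(\cdot))$ only grows (by monotonicity of the radical, \lemref{rad-monotone}) and each $T(\Gamma_{A,i})$ only decreases from $2$ to $0$ when a rotation is adjoined to the $i$-th copy.

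For normalization, fix a single element $e = (\gamma, j)$ and write the increment as $\Delta g_1 = \frac{1}{2}[\Delta \rep - \Delta T]$, where $\Delta \rep$ is the change in $\rep_{\Gamma_k}(\Trans(\cdot))$ and $\Delta T$ the change in $T(\Gamma_{A,j})$. I case-split on the type of $\gamma$. When $\gamma$ is a translation, $\Delta T = 0$ and $\Trans(A \cup \{e\}) = \langle \Trans(A), \gamma\rangle$, so \propref{rad-rep-minus-T} applied with $\Gamma' = \Trans(A)$ gives $\Delta \rep \in \{0, 2\}$. When $\gamma$ is a rotation, \lemref{Gam2nice} (for $k = 2$) pins $\Trans(\Gamma_{A \cup \{e\}, j})$ either to $\Trans(\Gamma_{A,j})$ or to $\langle \Trans(\Gamma_{A,j}), r'\gamma\rangle$ for some rotation $r' \in \Gamma_{A,j}$, while for $k = 3, 4, 6$ the coarse classification in \propref{rad-struct} shows that $\rep_{\Gamma_k}(\Trans(\cdot))$ takes only the values $0$ and $2$. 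Combining with \propref{rad-rep-minus-T}, a short case analysis yields $\Delta \rep - \Delta T \in \{0, 2\}$ in every case.

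For submodularity, I use the local characterization \eqref{local-submodular}: one shows $\Delta g_1(A; e) \geq \Delta g_1(B; e)$ for all $A \subset B$ and $e \notin B$. The principal lever is monotonicity of the radical (\lemref{rad-monotone}): since $\Rad(\Trans(A)) \leq \Rad(\Trans(B))$, whenever \propref{rad-rep-minus-T} predicts a rep-gain at $B$ it also predicts the same gain at $A$. A parallel observation controls the $T$-contribution: if $\Gamma_{B,j}$ has no rotation then neither does $\Gamma_{A,j}$, so any drop of $T$ at $B$ also occurs at $A$. These two monotonicity principles cover most configurations of $\gamma$ and of $\Gamma_{A,j}, \Gamma_{B,j}$ directly.

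The main obstacle is the remaining case, $k = 2$ with $\gamma$ a rotation, $\Gamma_{A,j}$ containing no rotation but $\Gamma_{B,j}$ containing a rotation $r'$. Here the contributions swap: at $A$, \lemref{Gam2nice} forces $\Trans(\Gamma_{A \cup \{e\}, j}) = \Trans(\Gamma_{A,j})$, so $\Delta \rep = 0$ while $-\Delta T/2 = 1$; at $B$, $\Delta T = 0$ but $\Delta \rep$ may gain via the newly created translation $r' \gamma \in \Trans(\Gamma_{B \cup \{e\}, j})$. Fortunately $\Delta g_1(A; e) = 1$ already saturates the normalization bound just verified, so $\Delta g_1(A; e) \geq \Delta g_1(B; e)$ holds automatically, and an analogous (simpler) analysis for $k = 3, 4, 6$, which uses that $\rep_{\Gamma_k}(\Trans(\cdot)) \in \{0, 2\}$, completes the verification.
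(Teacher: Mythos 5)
Your overall strategy---verifying the four rank-function axioms directly, with a case split on whether $\gamma$ is a translation or a rotation and on the rotation content of the affected component---is sound, and your normalization argument is essentially a re-derivation of \propref{rad-rep-minus-T} in the product setting. It differs from the paper's route: the paper introduces the auxiliary groups $\Gamma_{A,\ell}' = \langle \Gamma_{A,\ell},\Trans(A)\rangle$ (Lemmas \lemrefX{reduce1} and \lemrefX{reduce2}) and shows that the increment of $g_1$ under adding $(\gamma,\ell)$ equals the increment of $\tfrac12\left(\rep_{\Gamma_k}(\Trans(\cdot))-T(\cdot)\right)$ on $\Gamma_{A,\ell}'$; then normalization is a single application of \propref{rad-rep-minus-T}, and submodularity follows in one line from $\Rad(\Gamma_{A,\ell}')\le\Rad(\Gamma_{B,\ell}')$. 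That reduction is designed precisely to absorb the interaction between the local component $\Gamma_{\cdot,j}$ and the global group $\Trans(\cdot)$, which is where your hands-on analysis has to work hardest. (A small point in your normalization step: $\Trans(A\cup\{e\})$ need not literally equal $\langle\Trans(A),\gamma\rangle$ as a group when $\Gamma_{A,j}$ contains rotations of order $3,4,6$; you should pass to radicals via \lemref{rad-push-trans} before invoking \propref{rad-rep-minus-T}.)

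The one place where your submodularity argument is genuinely incomplete is the subcase where $\gamma$ is a rotation and \emph{both} $\Gamma_{A,j}$ and $\Gamma_{B,j}$ already contain rotations. There $\Delta T=0$ on both sides, so you must show that a $\rep$-gain at $B$ forces one at $A$; but the new translation created at $A$ is $r_A'\gamma$ for a rotation $r_A'\in\Gamma_{A,j}$, while at $B$ it is $r_B'\gamma$ for a possibly different $r_B'\in\Gamma_{B,j}$, so your first monotonicity principle (which compares the fate of a \emph{fixed} element against $\Rad(\Trans(A))\le\Rad(\Trans(B))$) does not apply as stated, and the saturation argument from your swap case is unavailable since $\Delta g_1(A;e)$ may well be $0$ here. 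The gap is repairable: $r_A'\gamma\cdot(r_B'\gamma)^{-1}=r_A'(r_B')^{-1}$ is a translation lying in $\Trans(\Gamma_{B,j})\le\Trans(B)\le\Rad(\Trans(B))$, so $r_A'\gamma$ and $r_B'\gamma$ have the same membership status in $\Rad(\Trans(B))$, and then radical monotonicity applies to the single element $r_A'\gamma$. You should add this observation (or adopt the paper's auxiliary-group reduction, which makes it unnecessary).
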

The proof depends on  Lemmas \lemrefX{reduce1} and \lemrefX{reduce2} below, so we defer it for the moment.
The strategy is based on the observation that when $n=1$, the ground set is essentially $\Gamma_k$.  In this case,
submodularity and normalization of $g_1$ (the most difficult properties to establish) follow
immediately from \propref{rad-rep-minus-T}.  The motivation of Lemmas \ref{lemma:reduce1} and \ref{lemma:reduce2} is
to reduce, as much as possible, the proof of the general case to $n=1$.

\begin{lemma}\lemlab{reduce1}
Let $A \subset E_{\Gamma_k, n}$, and set $\Gamma_{A, \ell}' = \langle \Gamma_{A, \ell}, \Trans(A) \rangle$.
Then, for all $1 \leq \ell \leq n$,
\begin{itemize}
\item[\textbf{(A)}] $\Rad(\Trans(A))  =  \Rad(\Trans(\Gamma_{A, \ell}'))$
\item[\textbf{(B)}] $T(\Gamma_{A, \ell})   =  T(\Gamma_{A, \ell}') $
\end{itemize}
\end{lemma}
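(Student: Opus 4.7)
The plan is to dispatch parts (A) and (B) separately, each as a short application of results from Section \secrefX{radical}. The lemma should be purely mechanical: its purpose is to say that after enlarging $\Gamma_{A,\ell}$ by the global translation subgroup $\Trans(A)$, the two quantities entering the rank function $g_1$ (namely $\Rad \circ \Trans$ and $T$) do not change. There is no substantial obstacle; the only point to identify is that part (A) is exactly the content of \lemref{rad-push-trans}.

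For part (A), I would apply \lemref{rad-push-trans} with $\Gamma' = \Gamma_{A,\ell}$ and $\Gamma'' = \Trans(A)$, which is a translation subgroup by definition. That lemma yields
\[
\Rad\bigl(\langle \Trans(\Gamma_{A,\ell}), \Trans(A)\rangle\bigr)
= \Rad\bigl(\Trans(\langle \Gamma_{A,\ell}, \Trans(A)\rangle)\bigr)
= \Rad\bigl(\Trans(\Gamma_{A,\ell}')\bigr).
\]
Since $\Trans(A)$ is generated by the $\Trans(\Gamma_{A,i})$, it already contains $\Trans(\Gamma_{A,\ell})$, so the group on the far left simplifies to $\Trans(A)$ itself. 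This gives (A).

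For part (B), I would argue by cases on whether $\Gamma_{A,\ell}$ contains a rotation. If it does, then the larger group $\Gamma_{A,\ell}' \supset \Gamma_{A,\ell}$ also contains a rotation, so both sides equal $0$ by definition of $T(\cdot)$. Otherwise $\Gamma_{A,\ell}$ consists only of translations, and $\Trans(A)$ is a translation subgroup by construction; hence every generator of $\Gamma_{A,\ell}' = \langle \Gamma_{A,\ell}, \Trans(A)\rangle$ is a translation, so $\Gamma_{A,\ell}' < \Trans(\Gamma_k)$ has no rotation and both sides equal $2$. This completes the proof outline; no step requires more than invoking the previously established structural lemmas.
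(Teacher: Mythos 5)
Your proof is correct and follows essentially the same route as the paper: part \textbf{(A)} is exactly the intended application of \lemref{rad-push-trans} (with the observation that $\Trans(A) \geq \Trans(\Gamma_{A,\ell})$ collapses the left-hand side), and part \textbf{(B)} is the paper's one-line argument that adjoining the translation subgroup $\Trans(A)$ cannot introduce a rotation. Your write-up just makes the substitutions explicit.
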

\begin{proof}
The statement \textbf{(A)} is immediate from \lemref{rad-push-trans}. \textbf{(B)} follows from
the fact that $\Trans(A)$ is a translation subgroup of $\Gamma_k$, so $\Gamma_{A, \ell}'$ has a rotation
if and only if $\Gamma_{A,\ell}$ does.
\end{proof}
\begin{lemma}\lemlab{reduce2}
Let $A \subset E_{\Gamma_k, n}$, and set $\Gamma_{A, \ell}' = \langle \Gamma_{A, \ell}, \Trans(A) \rangle$.
If $B = A + (\gamma, \ell)$ and $\Gamma_{B, \ell}' = \langle \Gamma_{B, \ell}, \Trans(B) \rangle$,  Then,
\[
\Gamma_{B, \ell}' = \langle \gamma, \Gamma_{A, \ell}' \rangle
\]
\end{lemma}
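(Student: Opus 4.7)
The plan is to unfold the definitions on both sides and verify the two containments by direct manipulation, exploiting the fact that the only change in passing from $A$ to $B$ happens in coordinate $\ell$.

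First I would record what adding $(\gamma,\ell)$ does componentwise. For $i \neq \ell$, $\Gamma_{B,i} = \Gamma_{A,i}$, and hence $\Trans(\Gamma_{B,i}) = \Trans(\Gamma_{A,i})$. In coordinate $\ell$, $\Gamma_{B,\ell} = \langle \Gamma_{A,\ell}, \gamma\rangle$, which contains $\Gamma_{A,\ell}$, so $\Trans(\Gamma_{B,\ell}) \supseteq \Trans(\Gamma_{A,\ell})$. Plugging these into the definition of $\Trans(B)$ as the subgroup generated by all the $\Trans(\Gamma_{B,i})$, we get
\[
\Trans(B) \;=\; \langle \Trans(A),\, \Trans(\Gamma_{B,\ell}) \rangle,
\]
since the $i \neq \ell$ generators are already included in $\Trans(A)$, and the $\ell$-th piece is replaced by the larger $\Trans(\Gamma_{B,\ell})$ (which still absorbs $\Trans(\Gamma_{A,\ell})$).

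Next I would substitute this into the defining expression for $\Gamma_{B,\ell}'$:
\[
\Gamma_{B,\ell}' \;=\; \langle \Gamma_{B,\ell}, \Trans(B) \rangle \;=\; \langle \Gamma_{A,\ell},\, \gamma,\, \Trans(A),\, \Trans(\Gamma_{B,\ell})\rangle.
\]
The key observation is that $\Trans(\Gamma_{B,\ell}) = \Gamma_{B,\ell} \cap \Trans(\Gamma_k) \subseteq \Gamma_{B,\ell} = \langle \Gamma_{A,\ell}, \gamma\rangle$, so the $\Trans(\Gamma_{B,\ell})$ generator is redundant. This collapses the expression to $\langle \Gamma_{A,\ell}, \gamma, \Trans(A)\rangle$, which is exactly $\langle \gamma, \Gamma_{A,\ell}'\rangle$ by the definition of $\Gamma_{A,\ell}'$.

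For the reverse containment, $\langle \gamma, \Gamma_{A,\ell}'\rangle = \langle \gamma, \Gamma_{A,\ell}, \Trans(A)\rangle$ is obviously contained in $\langle \Gamma_{B,\ell}, \Trans(B)\rangle = \Gamma_{B,\ell}'$, since $\gamma, \Gamma_{A,\ell} \subseteq \Gamma_{B,\ell}$ and $\Trans(A) \subseteq \Trans(B)$. There is no real obstacle here; the proof is a short verification, and the only thing to be careful about is correctly tracking that $\Trans(B)$ differs from $\Trans(A)$ only through its $\ell$-th component and that the enlarged translation piece is already absorbed by $\Gamma_{B,\ell}$.
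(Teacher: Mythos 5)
Your proof is correct and follows essentially the same route as the paper's: both arguments reduce to the observation that $\Trans(B) = \langle \Trans(A), \Trans(\langle\gamma,\Gamma_{A,\ell}\rangle)\rangle$ and that the new translation piece is absorbed into $\langle\gamma,\Gamma_{A,\ell}\rangle$, making it a redundant generator. Your write-up just spells out both containments slightly more explicitly.
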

\begin{proof}
First we observe that
\[
\Gamma_{B, \ell}' =
\langle \gamma, \Gamma_{A, \ell}, \Trans(B) \rangle \geq
\langle \gamma, \Gamma_{A, \ell}, \Trans(A) \rangle
\]
so to finish the proof we just have to show that
\[
\Trans(B) \leq \langle \gamma, \Gamma_{A, \ell}, \Trans(A) \rangle
\]
Since $\Gamma_{B, i} = \Gamma_{A, i}$ for all $i\neq \ell$, it follows that
\[
\Trans(B) = \langle \Trans(A), \Trans( \langle \gamma, \Gamma_{A, \ell} \rangle) \rangle \leq
\langle \gamma, \Gamma_{A, \ell}, \Trans(A) \rangle
\]
\end{proof}

\subsection{Proof of \propref{Mgammarank}}
We check the rank function axioms (from \secref{rank-axioms}).

\textbf{Non-negativity:} This follows from the fact that $\rep_{\Gamma_k}(\cdot)$ is non-negative, and the
sum of the $\frac{1}{2}T(\cdot)$ terms cannot exceed $n$.

\textbf{Monotonicity:} Immediate from \lemref{rad-monotone}.

\textbf{Normalization:} To prove that $g_1$ is normalized, let $A\subset E_{\Gamma_k,n}$ and $B = A + (\gamma,\ell)$.  Since all the
$T(\Gamma'_{\cdot,i})$ terms cancel except for the ones with $i=\ell$, the increase is given by
\[
g_1(B) - g_1(A) =
\frac{1}{2} \left(
\rep_{\Gamma_k}(\Trans(B)) - \rep_{\Gamma_k}(\Trans(A)) - T(\Gamma_{B, \ell}) + T(\Gamma_{A, \ell})
\right)
\]
Because the r.h.s. is an invariant of the radical by \propref{rad-rep-T-invariant}, we pass to radicals and apply
\lemref{reduce1} to see that the r.h.s. is equal to
\[
\frac{1}{2}
\left(
\rep_{\Gamma_k}(\Trans(\Gamma_{B, \ell}')) - \rep_{\Gamma_k}(\Trans(\Gamma_{A, \ell}'))  -
T(\Gamma_{B, \ell}') + T(\Gamma_{A, \ell}')
\right)
\]
Using \lemref{reduce2} then tells us that this can be simplified further to
\[
\frac{1}{2}
\left(
\rep_{\Gamma_k}(\Trans(\langle \gamma, \Gamma_{A, \ell}'\rangle)) - \rep_{\Gamma_k}(\Trans(\Gamma_{A, \ell}'))  -
T(\langle \gamma, \Gamma_{A, \ell}'\rangle) + T(\Gamma_{A, \ell}')
\right)
\]
at which point \propref{rad-rep-minus-T} applies, and we conclude that the increase is either zero or one.

\textbf{Submodularity:} Inspecting the argument for normalization and using \lemref{rad-monotone} one more time
gives submodularity, since, if $A'\subset A$ and $\gamma\notin\Rad(\Gamma_{A,\ell})$, then
$\gamma \notin \Rad(\Gamma_{A',\ell})$.  This gives us the the submodular inequality \eqref{local-submodular}.
\eop

\subsection{The bases and independent sets}\seclab{group-matroid-sets}
With the rank function of $M_{\Gamma,n}$ determined, we can give a structural characterization of its bases
and independent sets.  Let $A\subset E_{\Gamma_k,n}$.  We define $A$ to be \emph{independent} if
\[
|A| = g_1(A)
\]
If $A$ is independent and, in addition
\[
|A| = c(A) + \frac{1}{2} \rep_{\Gamma_k}(\Trans(\Gamma_k))
\]
we define $A$ to be \emph{tight}.  A (not-necessarily independent) set $A$ with $c(A)$ parts that contains a tight
subset on $c(A)$ is defined to be \emph{spanning}.

We define the classes
\[
\mathcal{B}(M_{\Gamma,n}) =
\left\{
B\subset E_{\Gamma,n} : \text{$B$ is independent and $|B| = n + \rep(\Gamma)$}
\right\}
\]

\[
\mathcal{I}(M_{\Gamma,n}) =
\left\{
B\subset E_{\Gamma,n} : \text{$B$ is independent}
\right\}
\]
It is now immediate from \propref{Mgammarank} that
\begin{lemma}\lemlab{Mgammasets}
The classes $\mathcal{I}(M_{\Gamma_k,n})$ and $\mathcal{B}(M_{\Gamma_k,n})$ are the independent sets and bases of
the matroid $M_{\Gamma_k,n}$.
\end{lemma}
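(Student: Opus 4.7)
The plan is to derive the lemma as an immediate consequence of \propref{Mgammarank} together with the standard dictionary, recalled in \secref{base-axioms}, that converts between the rank function of a matroid and its classes of independent sets and bases. Since \propref{Mgammarank} asserts that $g_1$ is the rank function of the matroid $M_{\Gamma_k,n}$, the combinatorial content has already been done; what remains is to check that the two classes $\mathcal{I}(M_{\Gamma_k,n})$ and $\mathcal{B}(M_{\Gamma_k,n})$, as defined just above the lemma statement, match the ones produced by that dictionary.

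For the class $\mathcal{I}(M_{\Gamma_k,n})$, the dictionary tells us that $A\subset E_{\Gamma_k,n}$ is independent in $M_{\Gamma_k,n}$ if and only if $g_1(A)=|A|$. This is word-for-word the definition of $\mathcal{I}(M_{\Gamma_k,n})$, so nothing further is required.

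For $\mathcal{B}(M_{\Gamma_k,n})$, the dictionary says a basis is an independent set $B$ with $|B|=g_1(E_{\Gamma_k,n})$. To match the stated size condition $|B|=n+\rep(\Gamma)$, I would simply evaluate $g_1$ at the full ground set: when $A=E_{\Gamma_k,n}$, every slot satisfies $\Gamma_{A,i}=\Gamma_k$, so each $\Gamma_{A,i}$ contains rotations, forcing $T(\Gamma_{A,i})=0$, and $\Trans(A)=\Trans(\Gamma_k)$. Plugging into the formula for $g_1$ gives
\[
g_1(E_{\Gamma_k,n}) \;=\; n + \tfrac12\,\rep_{\Gamma_k}(\Trans(\Gamma_k)),
\]
which is the intended value of $n+\rep(\Gamma)$ under the shorthand used in the definition of $\mathcal{B}(M_{\Gamma_k,n})$.

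There is no real obstacle at this stage, since the hard work---verifying submodularity and normalization of $g_1$---was done in \propref{Mgammarank} via the radical machinery of \secref{radical}. The only delicate point is a notational one: one should verify that the symbol ``$\rep(\Gamma)$'' appearing in the definition of $\mathcal{B}(M_{\Gamma_k,n})$ is read consistently with the evaluation of $g_1$ above, and analogously check that the auxiliary ``tight'' and ``spanning'' notions introduced just before the lemma are compatible with the standard basis/spanning-set vocabulary, accounting for the number $c(A)$ of non-empty slots. This is a direct comparison of formulas rather than a substantive proof step.
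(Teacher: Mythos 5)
Your proposal is correct and takes essentially the same route as the paper, which likewise presents \lemref{Mgammasets} as an immediate consequence of \propref{Mgammarank} together with the standard dictionary between rank functions and independent sets/bases, with no further argument given. Your explicit evaluation $g_1(E_{\Gamma_k,n}) = n + \tfrac{1}{2}\rep_{\Gamma_k}(\Trans(\Gamma_k))$ and your note that the size condition written as ``$n+\rep(\Gamma)$'' must be read consistently with this value are sensible (the paper's notation there is indeed loose), but they do not constitute a different proof.
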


\subsection{Structure of tight sets}
We also have a structural characterization of the tight independent sets in $M_{\Gamma,n}$.
\begin{lemma}\lemlab{Mgammatight}
An independent set $A\in \mathcal{I}(M_{\Gamma,n})$ is tight if and only if it is one of two types:
\begin{itemize}
\item [\textbf{(A)}]  Each of the non-empty $A_i$ contains a rotation.
One exceptional non-empty $A_i$
contains $\frac{1}{2}\rep_{\Gamma_k}(\Trans(\Gamma_k))$ additional elements,
and $\rep_{\Gamma_k}(\Trans(\Gamma_{A,i}))=\rep_{\Gamma_k}(\Trans(\Gamma_k))$,
and all the rest of the $A_i$ contain a single rotation only.
\item [\textbf{(B)}] Each of the $c(A)$ contains a rotation.  Two exceptional non-empty $A_i$
(w.l.o.g., $A_1$ and $A_2$)	contain, between them, $\frac{1}{2}\rep(\Trans(\Gamma_k))$ additional
elements and
$\rep_\Gamma(\langle\Trans(\Gamma_{A,1}), \Trans(\Gamma_{A,2})\rangle)=\rep_\Gamma(\Trans(\Gamma))$.
\end{itemize}
Type \textbf{(B)} is only possible when $\Gamma_k=\Gamma_2$.
\end{lemma}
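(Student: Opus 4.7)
The plan is to reformulate the independence and tightness conditions as two linear equations in $\rep_{\Gamma_k}(\Trans(A))$ and derived combinatorial quantities, then exploit the inequality $\rep_{\Gamma_k}(\Trans(A)) \leq \rep_{\Gamma_k}(\Trans(\Gamma_k))$ (which holds since every translation subgroup of $\Gamma_k$ has rank at most two, cf.\ \lemref{trrep}).

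First I would sort the parts $A_i$ into three types: empty ($n - c(A)$ of them, with $T(\Gamma_{A,i}) = 2$ on the trivial group), non-empty with only translations (call these $r_t$ of them, also contributing $T = 2$), and non-empty containing a rotation (contributing $T = 0$). Substituting into the definition of $g_1$ gives
\[
g_1(A) = c(A) + \tfrac{1}{2}\rep_{\Gamma_k}(\Trans(A)) - r_t.
\]
Setting this equal to $c(A) + \tfrac{1}{2}\rep_{\Gamma_k}(\Trans(\Gamma_k))$ (via independence $|A| = g_1(A)$ combined with tightness) yields
\[
\rep_{\Gamma_k}(\Trans(A)) = \rep_{\Gamma_k}(\Trans(\Gamma_k)) + 2r_t,
\]
which together with the inequality above forces $r_t = 0$ and $\rep_{\Gamma_k}(\Trans(A)) = \rep_{\Gamma_k}(\Trans(\Gamma_k))$. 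In particular, every non-empty $A_i$ must contain a rotation.

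Next I would analyze the distribution of ``extra'' elements beyond one rotation per non-empty part. Tightness gives $|A| - c(A) = \tfrac{1}{2}\rep_{\Gamma_k}(\Trans(\Gamma_k))$, which equals $1$ for $k = 3, 4, 6$ and $2$ for $k = 2$. A part containing only a single rotation generates a finite cyclic rotation group whose translation subgroup is trivial, so all of the rank of $\Trans(A)$ must come from the parts carrying extras. For $k = 3, 4, 6$ the unique extra lies in one exceptional part, and any non-trivial translation subgroup there achieves the required $\rep_{\Gamma_k} = 2$ by \lemref{trrep}, giving type \textbf{(A)}. For $k = 2$, the two extras either both lie in one exceptional part---type \textbf{(A)} with the condition $\rep_{\Gamma_k}(\Trans(\Gamma_{A,i})) = 4$---or one each lies in two exceptional parts---type \textbf{(B)} with the joint-span condition $\rep_{\Gamma_k}(\langle \Trans(\Gamma_{A,1}),\Trans(\Gamma_{A,2})\rangle) = 4$. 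This simultaneously explains why type \textbf{(B)} is exclusive to $k = 2$: in the other cases there is only one extra to place.

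The converse---that a set of type \textbf{(A)} or \textbf{(B)} is tight and independent---is a direct substitution back into the rank formula, since the non-exceptional parts contribute $T = 0$ and no translations, so the hypotheses on the exceptional parts force $\rep_{\Gamma_k}(\Trans(A)) = \rep_{\Gamma_k}(\Trans(\Gamma_k))$ and the counting matches on the nose. The main obstacle I anticipate is simply the bookkeeping of the $k = 2$ case analysis, specifically ruling out extras being spread across three or more parts; but since each non-exceptional part carries zero extras and there are at most two extras total, at most two parts can be exceptional, so types \textbf{(A)} and \textbf{(B)} exhaust the possibilities.
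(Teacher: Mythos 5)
Your proposal is correct and follows essentially the same counting argument as the paper's (much terser) proof: both directions reduce to substituting the part-by-part values of $T(\Gamma_{A,i})$ into $g_1$ and comparing $|A|$, $c(A)$, and $\frac{1}{2}\rep_{\Gamma_k}(\Trans(\Gamma_k))$. Your explicit derivation that $r_t=0$ via the bound $\rep_{\Gamma_k}(\Trans(A))\le\rep_{\Gamma_k}(\Trans(\Gamma_k))$ just fills in a step the paper leaves implicit.
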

\begin{proof}
One direction is straightforward: A set $A\subset E_{\Gamma,n}$ of either type
\textbf{(A)} or \textbf{(B)} satisfies, by hypothesis, $|A|=c(A)+\frac{1}{2}\rep_\Gamma(\Trans(\Gamma))$;
by construction $T(\Gamma_{A,i})$ is zero for all the non-empty
$A_i$ and $\rep_\Gamma(\Trans(A)) = \rep_\Gamma(\Trans(\Gamma))$.

On the otherhand, assuming that $A$ is tight, we see that each non-empty part has to contain a rotation,
and, since $A$ is independent there are only one (for $k=3,4,6$) or
two ($k=2$) additional elements in $A$.  Thus, the $A_i$ containing these extra elements need
to generate the translation subgroup of $\Gamma_k$.
\end{proof}

\subsection{Conjugation of independent sets}\seclab{Mkn-conj}
Let $A\in \mathcal{I}(M_{\Gamma_k})$ be an independent set, and suppose, w.l.o.g.,
that $A_1, A_2,\ldots, A_{c(A)}$ are the non-empty parts of $A$.  Let $\gamma_1,\gamma_2,\ldots,
\gamma_{c(A)}$ be elements of $\Gamma_k$.  The
\emph{conjugation of $A$ by $\gamma_1,\gamma_2,\ldots, \gamma_{c(A)}$} is defined to be
\[
\left\{(\gamma_i^{-1}A_i\gamma_i, i) : 1\le i\le c(A) \right\}
\]
Conjugation preserves independence in $M_{\Gamma_k,n}$.
\begin{lemma}\lemlab{Mgammaconj}
Let $A\in \mathcal{I}(M_{\Gamma_k})$ be an independent set.  Then the conjugation of $A$ by $c(A)$
elements $\gamma_1,\ldots, \gamma_{c(A)}$ is also independent.
\end{lemma}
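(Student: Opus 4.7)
The plan is to show that the rank-function value $g_1(A)$ is unchanged when $A$ is replaced by the conjugated set $A^c = \{(\gamma_i^{-1}A_i\gamma_i, i) : 1 \le i \le c(A)\}$. Since each map $\gamma \mapsto \gamma_i^{-1}\gamma\gamma_i$ is a bijection on $\Gamma_k$, we have $|A^c| = |A|$ and $c(A^c) = c(A)$; so once I establish $g_1(A^c) = g_1(A)$, the independence of $A$ gives $|A^c| = g_1(A^c)$, proving $A^c \in \mathcal{I}(M_{\Gamma_k,n})$. The three ingredients of $g_1$ are the constant $n$, the local contributions $T(\Gamma_{A,i})$, and the global term $\rep_{\Gamma_k}(\Trans(A))$, so I need to check invariance of the latter two.

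For the $T$-terms, I would use that $\Trans(\Gamma_k)$ is a normal subgroup of $\Gamma_k$, so conjugation by any element sends translations to translations and rotations to rotations. Hence $\Gamma_{A^c,i} = \gamma_i^{-1}\Gamma_{A,i}\gamma_i$ contains a rotation if and only if $\Gamma_{A,i}$ does, and therefore $T(\Gamma_{A^c,i}) = T(\Gamma_{A,i})$ for every $i$.

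The main work is for the global translation-representation term, where I would split into the two cases of \lemref{trrep}. When $k = 3, 4, 6$, that lemma says $\rep_{\Gamma_k}(\Trans(\cdot))$ equals $0$ on the trivial subgroup and $2$ on any nontrivial translation subgroup of $\Gamma_k$. Since conjugation is a bijection, each $\Trans(\Gamma_{A^c,i}) = \gamma_i^{-1}\Trans(\Gamma_{A,i})\gamma_i$ is trivial exactly when $\Trans(\Gamma_{A,i})$ is; hence $\Trans(A^c)$ is trivial iff $\Trans(A)$ is, and the two $\rep$-values agree. When $k = 2$, the rotation generator acts on $\Trans(\Gamma_2)$ by inversion and translations commute with each other, so every subgroup of $\Trans(\Gamma_2)$ is normal in $\Gamma_2$; therefore $\Trans(\Gamma_{A^c,i}) = \Trans(\Gamma_{A,i})$ for all $i$, and hence $\Trans(A^c) = \Trans(A)$ as subgroups. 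Alternatively, in either case one may invoke \lemref{rad-conj} together with \propref{rad-rep-T-invariant}: the radicals of the conjugated translation subgroups coincide with the originals, and $\rep_{\Gamma_k}$ depends only on the radical.

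Combining these observations yields $g_1(A^c) = g_1(A)$, and hence $A^c$ is independent. There is no serious obstacle; the only delicate point is the small case split on $k$ needed for the translation-representation term, and that is handled cleanly by the earlier structural lemmas on $\Trans(\cdot)$ and its behavior under conjugation.
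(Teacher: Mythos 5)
Your proof is correct and follows essentially the same route as the paper: the paper's (very short) argument is exactly your ``alternatively'' clause, namely that conjugation preserves whether each $\Gamma_{A,i}$ contains a rotation and, by \lemref{rad-conj}, the radical of each $\Trans(\Gamma_{A,i})$, and $g_1$ depends only on these data. Your more hands-on case split on $k$ for the global $\rep_{\Gamma_k}(\Trans(\cdot))$ term is a fine (and slightly more explicit) way of handling the fact that the parts are conjugated by different elements, but it is not a genuinely different method.
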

\begin{proof}
\lemref{rad-conj} implies that the radical of translation subgroups is preserved under conjugation, and
whether or not $A_i$ contains a rotation is as well. Since the rank function $g_1$ is determined by these two
properties of the $A_i$, we are done.
\end{proof}

\subsection{Separating and fusing independent sets}\seclab{Mkn-fuse}
Let $A\in \mathcal{I}(M_{\Gamma_k})$ be an independent set.  A \emph{separation of $A$} is defined to be the following operation:
\begin{itemize}
\item Select $i$ and $j$ such that $A_j$ is empty.
\item Select a (potentially empty) subset $A'_i\subset A_i$ of $A_i$.
\item Replace elements $(\gamma,i)\in A'_i$ with $(\gamma,j)$.
\end{itemize}
Separation preserves independence in $M_{\Gamma_k,n}$.
\begin{lemma}\lemlab{Mgammasep}
Let $A\in \mathcal{I}(M_{\Gamma_k})$ be an independent set.  Then any separation of $A$ is also an independent set.
\end{lemma}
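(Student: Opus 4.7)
The plan is to show $g_1(B) = g_1(A)$; since $|B| = |A| = g_1(A)$ and $g_1(B) \le |B|$ always holds (as $g_1$ is a rank function), it will suffice to prove $g_1(B) \ge g_1(A)$, which forces equality and hence independence of $B$. Only parts $i$ and $j$ change under the separation, and $A_j$ is empty so $T(\Gamma_{A,j}) = 2$. Thus $g_1(B) - g_1(A)$ reduces to $\tfrac{1}{2}(\rep_{\Gamma_k}(\Trans(B)) - \rep_{\Gamma_k}(\Trans(A))) - \tfrac{1}{2}(T(\Gamma_{B,i}) + T(\Gamma_{B,j}) - T(\Gamma_{A,i}) - 2)$, and the remainder of the argument is a case analysis on which of $A_i$, $A'_i$, and $A_i \setminus A'_i$ contain rotations.

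If $A_i$ has no rotations, then everything is translations: all three $T$-values equal $2$ and cancel, while $\Trans(\Gamma_{A,i}) = \langle \Trans(\Gamma_{B,i}), \Trans(\Gamma_{B,j})\rangle$ gives $\Trans(A) = \Trans(B)$, so $g_1(B) = g_1(A)$ immediately. If $A_i$ has rotations but $A'_i$ does not, then $T(\Gamma_{B,i}) = 0$, $T(\Gamma_{B,j}) = 2$, and the $T$-contributions again cancel. For the $\rep$-term I would write $\Trans(\Gamma_{A,i})$ as $\Trans(\Gamma_{B,i})$ together with the normal closure of the translation subgroup $\Gamma_{B,j}$ in $\Gamma_{A,i}$; applying \lemref{rad-conj} to each conjugate $h \Gamma_{B,j} h^{-1}$ (for $h \in \Gamma_{B,i}$) together with monotonicity (\lemref{rad-monotone}) places $\Trans(A)$ inside $\Rad(\Trans(B))$. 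Combined with the obvious $\Trans(B) \le \Trans(A)$, idempotence of $\Rad$, and \propref{rad-rep-T-invariant}, this yields $\rep_{\Gamma_k}(\Trans(A)) = \rep_{\Gamma_k}(\Trans(B))$.

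The remaining case, where both $A'_i$ and $A_i \setminus A'_i$ contain rotations, is the main obstacle: the $T$-difference is $-2$, so I must exhibit the bound $\rep_{\Gamma_k}(\Trans(B)) \ge \rep_{\Gamma_k}(\Trans(A)) - 2$. For $k = 3,4,6$ this is immediate because \propref{rad-struct} forces $\rep_{\Gamma_k}$ of translation subgroups to take only the values $0$ and $2$. For $k = 2$ a direct calculation is needed, and this is the crux of the proof. I would fix rotations $r \in A_i \setminus A'_i$ and $r' \in A'_i$; since $\Gamma_{A,i}/\Trans(\Gamma_{A,i}) \cong \Z/2\Z$ is represented by any rotation, every rotation $r'' \in A_i$ satisfies $r''r \in \Trans(\Gamma_{A,i})$. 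A short computation using $r^2 = 1$ shows that these cross-products all agree with $r'r$ modulo $\langle \Trans(\Gamma_{B,i}), \Trans(\Gamma_{B,j})\rangle$: if $r'' \in A_i \setminus A'_i$ then $r''r \in \Trans(\Gamma_{B,i})$, and if $r'' \in A'_i$ then $(r''r)(r'r)^{-1} = r''r' \in \Trans(\Gamma_{B,j})$. Hence $\Trans(\Gamma_{A,i})$ is generated over $\langle \Trans(\Gamma_{B,i}), \Trans(\Gamma_{B,j})\rangle$ by the single translation $r'r$, so the rank of $\Trans(A)$ exceeds that of $\Trans(B)$ by at most one, and \lemref{trrep} delivers the required bound on $\rep_{\Gamma_k}$.
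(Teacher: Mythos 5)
Your proof is correct, but it takes a genuinely different route from the paper's. The paper's proof first observes that, since every independent set sits inside a tight set, \lemref{Mgammatight} forces one of the two new parts $B_i$, $B_j$ to be a singleton; it then deletes that single element to obtain a subset $C\subset A$ (hence independent) and re-adds it to the empty part, checking via the radical and \propref{rad-rep-minus-T} that the rank increases by exactly one. You instead compare $g_1(B)$ with $g_1(A)$ head-on, splitting on where the rotations of $A_i$ land, and in the only delicate case ($k=2$ with rotations on both sides of the split) you exhibit the explicit element $r'r$ witnessing $\Trans(A)=\langle \Trans(B), r'r\rangle$, so the rank of the translation lattice drops by at most one across the separation---which exactly offsets the gain of $1$ coming from the $-\tfrac{1}{2}\sum T$ term. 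The trade-off: the paper's argument is shorter because it recycles \lemref{Mgammatight} and the one-element machinery, but it leans on the tight-set classification; yours is independent of \lemref{Mgammatight}, works uniformly for any independent set, and makes the group-theoretic mechanism visible, at the cost of more computation. Two small remarks: in your middle case you could replace the normal-closure/\lemref{rad-conj} argument with a direct appeal to \lemref{rad-push-trans}, which states precisely that $\Rad(\langle \Trans(\Gamma_{B,i}),\Gamma_{B,j}\rangle)=\Rad(\Trans(\langle \Gamma_{B,i},\Gamma_{B,j}\rangle))$ when $\Gamma_{B,j}$ is a translation subgroup; and you should record the symmetric subcase in which $A_i'$ rather than $A_i\setminus A_i'$ carries the rotations, though it is handled identically.
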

\begin{proof}
Let $B$ be a separation of $A$.  If the subset $A'_i$ in the definition of a separation is
empty, then $B$ is the same as $A$, and there is nothing to prove.

An independent set is either tight or a subset of a tight set.  (Bases in particular are tight.)
Consequently, by \lemref{Mgammatight}, either $B_i$ or $B_j$ consists of a single element.
Assume w.l.o.g., it is $B_j$.  Define $C \subset E_{\Gamma, n}$ as
$C_k = B_k$ for $k \neq j$ and $C_j$ empty; i.e. $C$ is $B$ with the single element in $B_j$ dropped.
Then $C$ is a subset of $A$ and hence independent.  If $B_j$ consists of a rotation, then adding it to $C$
clearly preserves independence.   If $B_j$ consists of a translation $\gamma$, then since $A$ is independent
we must have $\gamma \notin \Rad(\Trans(C))$.  Consequently $B = C + (\gamma, j)$ is independent
since $\Rad(\Trans(B)) > \Rad(\Trans(C))$ and hence $\rep_{\Gamma_k}(\Trans(B)) > \rep_{\Gamma_k}(\Trans(C))$.
\end{proof}

The reverse of separation is \emph{fusing a set $A$ on $A_i$ and $A_j$}.  This operation replaces $A_i$ with $A_i\cup A_j$ and
makes $A_j$ empty.  Fusing does not, in general, preserve independence, but it takes tight sets to spanning ones.
\begin{lemma}\lemlab{Mgammafuse}
Let $A$ be a tight independent set, and suppose that $A_i$ and $A_j$ are non-empty.  Then, after fusing $A$ on $A_i$ and $A_j$,
the result is a spanning set (with one less part).
\end{lemma}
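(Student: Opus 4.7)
The plan is to reduce the lemma, via the classification of tight sets in \lemref{Mgammatight}, to exhibiting a single element of the fused set $B$ whose removal produces a tight subset. Since fusing preserves the total number of elements while decreasing the number of non-empty parts by one, $|B| = |A|$ and $c(B) = c(A) - 1$; tightness on $c(A) - 1$ parts requires size $(c(A) - 1) + \frac{1}{2}\rep_{\Gamma_k}(\Trans(\Gamma_k)) = |A| - 1$, so it is enough to drop one element of $B$.

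The strategy for choosing that element is, in every case, to drop one of the rotations contributed by $A_i$ or $A_j$ to the fused part. In type (A), both $A_i$ and $A_j$ contain a rotation, and one ``exceptional'' part (say $A_1$) carries $\frac{1}{2}\rep_{\Gamma_k}(\Trans(\Gamma_k))$ additional elements generating $\Trans(\Gamma_k)$ up to radical. If neither $i$ nor $j$ equals $1$, dropping either of the fused rotations leaves $A_1$ untouched and yields a type (A) tight set on $c(A) - 1$ parts. If $i = 1$, dropping $A_j$'s rotation leaves the exceptional structure of $A_1$ intact inside the fused part, again giving a type (A) tight set. In type (B), which forces $\Gamma_k = \Gamma_2$ and supplies two exceptional parts $A_1, A_2$ whose combined translation subgroup has full radical, the same idea works: if $\{i, j\}$ meets $\{1, 2\}$ in zero or one indices, dropping a rotation from the fused part preserves the two (or still the surviving) exceptional parts, giving a type (B) tight set; if $\{i, j\} = \{1, 2\}$, fusing combines both exceptional parts into a single part with two rotations and two extra translations generating $\Trans(\Gamma_2)$, and dropping one rotation converts it into the single exceptional part of a type (A) tight set.

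The only verification needed in each sub-case is that the dropped element preserves the invariants demanded by \lemref{Mgammatight}---a rotation remains in every non-empty part and the translation subgroups of the exceptional part(s) retain their full radical. Both are automatic because we only ever drop a rotation (and never the last one in a part), leaving all translations and at least one rotation per non-empty part untouched. Consequently, the main hurdle is simply organizing the case split cleanly; no new group-theoretic content is required beyond the tight-set classification of \lemref{Mgammatight} and \propref{rad-rep-T-invariant}.
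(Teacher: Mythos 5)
Your overall strategy---exhibit a tight subset of the fused set $B$ on $c(A)-1$ parts by deleting a single well-chosen element, organized via the classification of \lemref{Mgammatight}---is different from the paper's. The paper instead computes $g_1(B)$ directly: every non-empty $B_\ell$ still contains a rotation, and $\Trans(A)\le\Trans(B)$ so $\Rad(\Trans(B))=\Trans(\Gamma_k)$ by \lemref{rad-monotone}, whence $g_1(B)=c(B)+\tfrac{1}{2}\rep_{\Gamma_k}(\Trans(\Gamma_k))$ and the matroid structure of $M_{\Gamma_k,n}$ supplies a tight basis touching all $c(B)$ parts. Your explicit route can be made to work, but as written it has a gap.

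The gap is in your final verification, where you assert that tightness of the pruned set is ``automatic because we only ever drop a rotation \dots leaving all translations \dots untouched.'' This conflates the translation \emph{elements} of the multiset with the translation \emph{subgroup} $\Trans(\Gamma_{A,\ell})$ of the generated group. In $\Gamma_2$ the product of two rotations is a translation, so the ``additional elements'' of an exceptional part in \lemref{Mgammatight} may themselves be rotations, and deleting a rotation can strictly shrink the translation subgroup. Concretely, in type \textbf{(B)} with $\{i,j\}=\{1,2\}$, take $A_1=\{r,r''\}$ and $A_2=\{r',r'''\}$ with $r=((0,0),1)$, $r''=r'=((1,0),1)$, $r'''=((0,1),1)$: then $\Trans(\Gamma_{A,1})=\langle rr''\rangle$ and $\Trans(\Gamma_{A,2})=\langle r'r'''\rangle$ jointly generate $\Z^2$, so $A$ is tight of type \textbf{(B)}, yet after fusing, deleting $r'''$ leaves a part generating only $\langle r,r''\rangle$, whose translation subgroup has rank one---the result is neither independent nor of full radical, so it is not tight. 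A correct element to delete always exists, but identifying it requires an argument (essentially the rank computation the paper performs), not the observation that a rotation survives in each part. The sub-cases where the deleted rotation is the sole element of a non-exceptional part are genuinely fine, since there the pruned set is literally a subset of $A$ and inherits independence; it is the case of two fused exceptional parts (and, more generally, any exceptional part whose extra elements are rotations) that your justification does not cover.
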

\begin{proof}
Let $B$ be the set resulting from fusing $A$ on $A_i$ and $A_j$. By hypothesis, all the non-empty $A_\ell$ contain a rotation,
so this is true of the non-empty $B_\ell$ as well.
The lemma then follows by noting that $\Trans(A) \le \Trans(B)$, so the same is true of the radicals by \lemref{rad-monotone}.
Thus, $g_1(B)=c(B)+\rep(\Trans(\Gamma_k))$, and this implies $B$ is spanning.
\end{proof}

\chapter{Sparse graphs}\chaplab{graphs}
\section{Colored graphs and the map $\rho$}\seclab{colored-graphs}
We will use \emph{colored graphs}\footnote{This terminology comes from Igor Rivin \cite{R06}, and is consistent with \cite{MT10}.},
which are also known as ``gain graphs'' (e.g., \cite{R11}) or ``voltage graphs'' \cite{Z98} as the
combinatorial model for crystallographic frameworks and direction networks.  In this section we give the
definitions and explain the relationship between colored graphs and graphs with a free $\Gamma_k$-action.

\subsection{Colored graphs}
Let $G=(V,E)$ be a finite, directed graph, with $n$ vertices
and $m$ edges.  We allow multiple edges and self-loops, which are treated the same as other edges.
A \emph{$\Gamma_k$-colored-graph} (shortly, \emph{colored graph}) $(G,\bgamma)$ is a finite, directed multigraph $G$ and an assignment
$\bgamma=(\gamma_{ij})_{ij\in E(G)}$ of a group element $\gamma_{ij}\in \Gamma_k$ (the ``color'') to each edge $ij\in E(G)$.

\subsection{The covering map} Although we work with colored graphs because they are technically easier, crystallographic
frameworks were defined in terms of infinite graphs $\Gtilde$ with a free $\Gamma$-action $\varphi$ with finite
quotient.  In fact, the formalisms are equivalent.  The following is a straightforward specialization of covering space
theory (see, e.g., \cite[Section 1.3]{H02}), but we provide the dictionary for the convenience of the reader.

Let $(G,\bgamma)$ be a colored graph, we define its \emph{lift} $\Gtilde=(\Vtilde,\Etilde)$ by the following construction:
\begin{itemize}
\item For each vertex $i\in V(G)$, there is a subset of vertices
$\{i_{\gamma}\}_{\gamma\in \Gamma}\subset V(\Gtilde)$ (the fiber over $i$).
\item For each (directed) edge $ij\in E(G)$ with color $\gamma_{ij}$, and for each $\gamma\in \Gamma_k$,
there is an edge $i_{\gamma}j_{\gamma\cdot\gamma_{ij}}$
in $E(\Gtilde)$ (the fiber over $ij$).
\item The $\Gamma$-action on vertices is $\gamma \cdot i_{\gamma'} = i_{\gamma \gamma'}$.
The action on edges is that induced by the vertex action.
\end{itemize}

Now let $(\Gtilde,\varphi)$ be an infinite graph with a free $\Gamma_k$-action that has finite quotient.
We associate a colored graph $(G,\bgamma)$ to $(\Gtilde,\varphi)$ by the following construction, which we define to be
the \emph{colored quotient}:
\begin{itemize}
\item Let $G=\Gtilde/\Gamma$ be the quotient of $\Gtilde$ by $\Gamma$, and fix an (arbitrary) orientation
of the edges of $G$ to make it a directed graph.  By hypothesis, the vertices of $G$ correspond to the vertex orbits in $\tilde{G}$
and the edges to the edge orbits in $\tilde{G}$
\item For each vertex orbit under $\Gamma$ in $\Gtilde$, select a representative $\tilde{i}$.
\item For each edge orbit $\tilde{ij}$ in $\Gtilde$ there is a unique edge that has the representative
$\tilde{i}$ as its tail.  There is also a unique element $\gamma_{ij}\in \Gamma$ such that the
head of $\tilde{ij}$ is $\gamma_{ij}(\tilde{j})$.  We define this $\gamma_{ij}$ to be the color
on the edge $ij\in G$.
\end{itemize}
The \emph{projection map} from $(\Gtilde,\varphi)$ to its colored quotient is the function that sends
a vertex $\tilde{i}\in V(\Gtilde)$ its representative $i\in V(G)$.  Figures
\ref{fig:gam2graphtoperiodic} and \ref{fig:gam4graphtoperiodic} both show examples; the
color coding of the vertices in the infinite developments indicated the fibers over
vertices in the colored quotient.

The following lemma is straightforward:
\begin{lemma}\lemlab{lifts}
Let $(G,\bgamma)$ be a $\Gamma_k$-colored graph.  Then its lift is well defined, and is an infinite
graph with a free $\Gamma_k$-action.  If $(\Gtilde,\varphi)$ is an infinite graph with a free
$\Gamma_k$-action, then it is the lift of its colored quotient, and the projection map is well-defined
and a covering map.
\end{lemma}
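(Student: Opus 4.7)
The plan is to treat this as a direct verification of the standard correspondence between graphs with a free group action and colored quotient graphs, specialized to the combinatorial setting here. Since the lemma bundles three claims (the lift is well-defined, it carries a free $\Gamma_k$-action, and a given $(\Gtilde,\varphi)$ recovers from its colored quotient via the projection), I will handle the first two by inspecting the construction, and then build a single $\Gamma_k$-equivariant isomorphism between $(\Gtilde,\varphi)$ and the lift of its colored quotient from which the third claim follows.

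For the forward direction, the vertex set $\Vtilde=\{i_\gamma : i \in V(G),\ \gamma\in\Gamma_k\}$ and the edges $i_\gamma j_{\gamma\gamma_{ij}}$ prescribed by the construction are well-defined as a disjoint union and its fiber edges. The rule $\gamma\cdot i_{\gamma'} = i_{\gamma\gamma'}$ is a free action by left-cancellation in $\Gamma_k$: if $\gamma\gamma' = \gamma'$ then $\gamma$ is the identity. The induced action on edges carries $i_{\gamma'}j_{\gamma'\gamma_{ij}}$ to $i_{\gamma\gamma'}j_{\gamma\gamma'\gamma_{ij}}$, which is again in $\Etilde$ by construction, so the action is a graph automorphism.

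For the reverse direction, I would fix the orbit representatives $\tilde{i}$ chosen in the definition of the colored quotient. Because $\varphi$ is free, every vertex of $\Gtilde$ is uniquely of the form $\gamma\cdot\tilde{i}$, so $\Psi(\gamma\cdot\tilde{i}) := i_\gamma$ is a well-defined $\Gamma_k$-equivariant bijection on vertices. For edges, the colored quotient is set up so that in each edge orbit the unique edge with tail $\tilde{i}$ has head $\gamma_{ij}\cdot\tilde{j}$; translating by $\gamma$ and applying $\Psi$ matches exactly the edge $i_\gamma j_{\gamma\gamma_{ij}}$ prescribed by the lift construction. Equivariance of $\Psi$ on edges is then built into the formulas, so $\Psi$ is the required graph isomorphism.

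Finally, the projection $\pi:\gamma\cdot\tilde{i}\mapsto i$ has fibers equal to the $\Gamma_k$-orbits, so it is surjective on vertices and edges. That $\pi$ is a covering map reduces to the local bijection property: for any $\tilde{v}\in\pi^{-1}(i)$, each edge of $G$ incident to $i$ lifts, by the lift construction above, to a unique edge of $\Etilde$ incident to $\tilde{v}$. The main ``obstacle''—really just a bookkeeping point—is keeping orbits, chosen representatives, and fibers distinct in notation; once $\Psi$ is set up cleanly the three claims of the lemma fall out together.
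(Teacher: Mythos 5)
Your proposal is correct and is exactly the routine verification the paper has in mind: the paper states the constructions and declares the lemma straightforward without writing out a proof, and your equivariant isomorphism $\Psi(\gamma\cdot\tilde{i})=i_\gamma$ together with freeness of the action is the standard way to fill in the details. The only point worth tightening is the local-bijection step for self-loops, where an edge of $G$ incident to $i$ should be counted by its edge-ends (a loop at $i$ contributes two ends at $i$ and correspondingly two lifted edges meeting $i_\gamma$ when $\gamma_{ii}\neq\Id$), but this is the bookkeeping you already flagged and does not affect the argument.
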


\subsection{The map $\rho$} \seclab{rho}
Let $(G,\bgamma)$ be a colored graph, and let $P=\{e_1, e_2,\ldots, e_t\}$ be any \emph{closed path}
in $G$; i.e., $P$ is a not necessarily simple walk in $G$ that starts and ends at the same vertex crossing
the edges $e_i$ in order.  If we select a vertex $b$ as a \emph{base point}, then the closed paths are elements of the
\emph{fundamental group} $\pi_1(G,b)$.

We define the map
$\rho$ as:
\[
\rho(P) = \gamma_{e_1}^{\epsilon_1}\cdots \gamma_{e_t}^{\epsilon_t}
\]
where $\epsilon_i$ is $1$ if $P$ crosses $e_i$ in the forward direction (from tail to head) and $-1$ otherwise.  For a connected graph
$G$ and choice of base vertex $i$, the map $\rho$ induces a well-defined homomorphism $\rho: \pi_1(G, i) \to \Gamma$.

\subsection{Cyclic groups} The preceding development of colored graphs is in terms of a crystallographic group
$\Gamma$, but the construction is quite general, and it also works for any group such as e.g. $\Z/k\Z$.  Since $\Z/k\Z$
is abelian, it is easy to check that $\rho$ depends on its image on cycles in $G$ only, which makes the theory
simpler.  The following is \lemref{lifts} adapted for $\Z/k\Z$-colored graphs.
\begin{lemma}\lemlab{cone-lifts}
Let $(G,\bgamma)$ be a $\Z/k\Z$-colored graph.  Then its lift is well defined, and is a finite
graph with a free $\Z/k\Z$-action.  If $(\Gtilde,\varphi)$ is a finite graph with a free
$\Z/k\Z$-action, then it is the lift of its colored quotient, and the projection map is well-defined
and a covering map.
\end{lemma}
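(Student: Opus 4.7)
The plan is to observe that \lemref{cone-lifts} is structurally identical to \lemref{lifts}, with the only substantive change being that $\Z/k\Z$ is finite rather than infinite, and that its abelian structure slightly simplifies the argument. Since the author has already pointed out that the whole colored-graph construction is an instance of general covering space theory (as in \cite[Section 1.3]{H02}), I would proceed by instantiating that framework for $\Z/k\Z$ and then tracking finiteness.

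First I would verify the forward direction: given a $\Z/k\Z$-colored graph $(G,\bgamma)$, its lift $\Gtilde=(\Vtilde,\Etilde)$ as defined in \secref{colored-graphs} has vertex set $V(G)\times \Z/k\Z$ and edge set $E(G)\times \Z/k\Z$ (with the head/tail prescription using the color), so it is manifestly finite because $|V(G)|$, $|E(G)|$ and $|\Z/k\Z|$ are all finite. The formula $\gamma\cdot i_{\gamma'} = i_{\gamma\gamma'}$ defines a left action of $\Z/k\Z$ on $\Vtilde$, which respects the edge incidence by construction of the lift. Freeness is immediate: if $\gamma\cdot i_{\gamma'} = i_{\gamma'}$, then $\gamma\gamma' = \gamma'$ in $\Z/k\Z$, so $\gamma$ is the identity. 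This shows well-definedness and freeness of the $\Z/k\Z$-action on the lift.

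For the reverse direction, let $(\Gtilde,\varphi)$ be a finite graph with a free $\Z/k\Z$-action, and form the colored quotient $(G,\bgamma)$ by choosing an orientation, picking an orbit representative $\tilde{\imath}$ for each vertex $i\in V(G)$, and assigning to each edge $ij\in E(G)$ the unique $\gamma_{ij}\in \Z/k\Z$ sending $\tilde{\jmath}$ to the head of the lifted edge with tail $\tilde{\imath}$. Such a $\gamma_{ij}$ exists and is unique precisely because the action is free and transitive on each orbit. I would then build an explicit isomorphism between the lift of $(G,\bgamma)$ and $(\Gtilde,\varphi)$ by sending $i_{\gamma}\mapsto \gamma\cdot \tilde{\imath}$ on vertices and extending to edges using the color; equivariance and bijectivity follow line-by-line from freeness of $\varphi$. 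The projection map defined in \secref{colored-graphs} then coincides with the quotient map $\Gtilde\to \Gtilde/\Z/k\Z=G$, which is a covering map in the standard sense because the $\Z/k\Z$-action is free and properly discontinuous (automatic for a finite group action).

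I do not expect any real obstacle here, since everything is a transcription of the $\Gamma_k$ argument to a finite abelian setting; the only mild subtlety is checking that the choices involved (orientations and orbit representatives) produce colored graphs that are canonically identified up to the natural gauge equivalence of $\bgamma$, but this is not asserted by the statement, which only claims that a covering-quotient round trip returns the original object up to isomorphism. Thus the proof reduces to the two mechanical verifications above.
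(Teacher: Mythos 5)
Your proposal is correct and is exactly the verification the paper has in mind: the paper omits the proof of both \lemref{lifts} and \lemref{cone-lifts}, declaring them straightforward specializations of covering space theory, and your two mechanical checks (finiteness and freeness of the lifted action; the explicit isomorphism $i_\gamma \mapsto \gamma\cdot\tilde{\imath}$ recovering $(\Gtilde,\varphi)$ from its colored quotient) are precisely the details being elided. No gaps.
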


\section{The subgroup of a $\Gamma_k$-colored graph}\seclab{closed-paths}
The map $\rho$, defined in the previous section, is fundamental to the results of this paper.  In
this section, we develop properties of the $\rho$-image of a colored graph $(G,\bgamma)$ and
connect it with the matroid $M_{\Gamma_k,n}$ which was defined in \secref{group-matroid}.

\subsection{Colored graphs with base vertices}
Let $(G,\bgamma)$ be a colored graph with $n$ vertices and $c$ connected components $G_1,G_2,\ldots, G_c$.
We select a \emph{base vertex} $b_i$ in each connected component $G_i$, and denote the set of
base vertices by $B$.  The triple $(G,\bgamma,B)$ is then defined to be a \emph{marked colored graph}.

If $(G,\bgamma,B)$ is a marked colored graph then $\rho$ induces a homomorphism from $\pi_1(G_i,b_i)$
to $\Gamma_k$.  In the rest of this section, we show how to use these homomorphisms to
define a map from $(G,\bgamma)$ to $E_{\Gamma_k,n}$, the ground set of the matroid $M_{\Gamma_k,n}$.

\subsection{Fundamental closed paths generate the $\rho$-image}
Let $(G,\bgamma,B)$ be a marked colored graph with $n$ vertices and $c$ connected components.  Select
and fix a maximal forest $F$ of $G$, with connected components $T_1,T_2,\ldots, T_c$.  The $T_i$
are spanning trees of the connected components $G_i$ of $G$, with the convention that when a connected
component $G_i$ has no edges.

With this data, we define, for each edge $ij\in E(G)-E(F)$ the \emph{fundamental closed path of $ij$} to be the path that:
\begin{itemize}
\item Starts at the base vertex $b_\ell$ in the same connected component $G_\ell$ as $i$ and $j$.
\item Travels the unique path in $T_\ell$ to $i$.
\item Crosses $ij$.
\item Travels the unique path in $T_\ell$ back to $v_\ell$.
\end{itemize}
Fundamental closed paths with respect to $F$ in $G_i$ generate $\pi_1(G_i, b_i)$ \cite[Proposition 1A.2]{H02}.

\subsection{From colored graphs to sets in $E_{\Gamma_k,n}$}
We now let $(G,\bgamma,B)$ be a marked colored graph and fix a choice of spanning forest $F$.  We associate
with $(G,\bgamma,B,F)$ a subset $A(G,B,F)$ of $E_{\Gamma_k,n}$ (defined in \secref{group-matroid}) as follows:
\begin{itemize}
\item For each edge $ij\in E(G_\ell)-E(T_\ell)$, let $P_{ij}$ be the fundamental closed
path with respect to $T_i$ and $b_i$ of $ij$.
\item Add an element $(\rho(P_{ij}),\ell)$ to $A(G,B,F)$.
\end{itemize}
The following is immediate from the previous discussion.
\begin{lemma}\lemlab{GBF-eq-GammaA}
Adopting the notation from \secref{group-matroid}, $\Gamma_{A(G,F,B), \ell} = \rho(\pi_1(G_\ell, v_\ell))$.
\end{lemma}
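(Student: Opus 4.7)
My plan is that this lemma is essentially a matter of unwinding the definitions and invoking a standard fact from algebraic topology, namely that the fundamental closed paths with respect to a spanning tree generate the fundamental group of a connected graph (which the excerpt already cites as \cite[Proposition 1A.2]{H02} in the paragraph on fundamental closed paths). The proof will be short and direct; I do not anticipate any real obstacle, only bookkeeping.

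First, I will unwind the definition of $\Gamma_{A(G,F,B),\ell}$. By definition in \secref{group-matroid}, this subgroup of $\Gamma_k$ is generated by those $\gamma \in \Gamma_k$ such that $(\gamma,\ell) \in A(G,B,F)$. By the construction of $A(G,B,F)$ in the paragraph preceding the lemma, these generators are exactly the elements $\rho(P_{ij})$ as $ij$ ranges over the non-tree edges $E(G_\ell)-E(T_\ell)$ of the $\ell$-th connected component, where $P_{ij}$ is the fundamental closed path of $ij$ with respect to $T_\ell$ and $b_\ell$. Hence
\[
\Gamma_{A(G,F,B),\ell} = \langle \rho(P_{ij}) \;:\; ij \in E(G_\ell)-E(T_\ell)\rangle.
\]

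Second, I will compute $\rho(\pi_1(G_\ell,b_\ell))$. The fundamental closed paths $\{P_{ij} : ij\in E(G_\ell)-E(T_\ell)\}$ are a (free) generating set for $\pi_1(G_\ell,b_\ell)$ by the standard fact recalled above. Since $\rho$ restricted to $\pi_1(G_\ell,b_\ell)$ is a group homomorphism into $\Gamma_k$ (as established in \secref{rho}), its image is generated by the images of any generating set of the domain. Applying this to the generating set of fundamental closed paths gives
\[
\rho(\pi_1(G_\ell,b_\ell)) = \langle \rho(P_{ij}) \;:\; ij\in E(G_\ell)-E(T_\ell)\rangle.
\]

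Comparing the two displayed expressions yields the lemma. The only minor point to mention is that when a connected component $G_\ell$ consists of a single vertex and no edges, both sides are the trivial subgroup, so the equality still holds (under the convention that $T_\ell$ has no edges and there are no fundamental closed paths).
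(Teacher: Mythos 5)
Your proof is correct and matches the paper's intent exactly: the paper states the lemma is ``immediate from the previous discussion,'' and your argument is precisely that unwinding --- the generators of $\Gamma_{A(G,F,B),\ell}$ are by construction the $\rho$-images of the fundamental closed paths, which generate $\pi_1(G_\ell,b_\ell)$, so the homomorphism $\rho$ carries one generating set onto the other. No issues.
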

Since we will show, in \secref{gamma22}, that the invariants we need are independent of $B$ and $F$,
we frequently suppress them from the notation when the context is clear.

\section{Map-graph preliminaries}\seclab{sparse-prelim}
The families of colored graphs we define in the next sections have, as their underlying (uncolored, undirected) multi-graphs,
a \emph{map-graph} structure.  In this short section, we define map graphs and review the properties we need.

\subsection{Map-graphs and sparsity}
A \emph{map-graph} is a graph in which every connected component has exactly one cycle.  In this definition, self-loops
correspond to cycles.
A \emph{$2$-map-graph} is a graph that is the edge-disjoint union of two spanning
map-graphs.  See \figref{map-graph-example} for an example; observe that map-graphs
do \emph{not} need to be connected.
\begin{figure}[htbp]
\centering
\includegraphics[width=.45\textwidth]{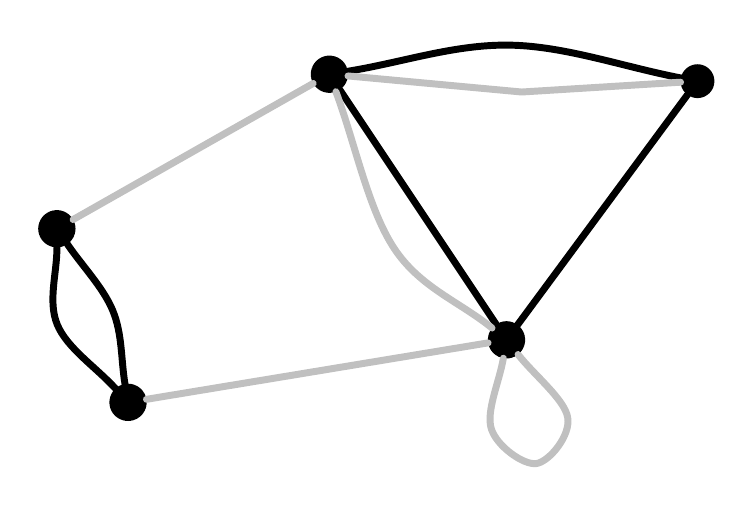}
\caption{A $2$-map-graph with its certifying decomposition into map-graphs indicated by edge color.}
\label{fig:map-graph-example}
\end{figure}

\subsection{The overlap graph}
Let $G$ be a $2$-map-graph and fix a decomposition
into two spanning map-graphs $X$ and $Y$.  Let  $X_i$ and $Y_i$
be the connected components of $X$ and $Y$, respectively.
Also select a base vertex $x_i$ and $y_i$ for each connected component of $X$ and $Y$,
with all base vertices on the cycle of their component. Denote the collection of base vertices by $B$.

We define the \emph{overlap graph} of $(G,X,Y,B)$ to be the directed graph with:
\begin{itemize}
\item Vertex set $B$.
\item A directed edge from $x_i$ to $y_i$ if $y_i$ is a vertex in $X_i$.
\item A directed edge from $y_i$ to $x_i$ if $x_i$ is a vertex in $Y_i$
\end{itemize}
\figref{overlap-example} gives an example.
\begin{figure}[htbp]
\centering
\subfigure[]{\includegraphics[width=.45\textwidth]{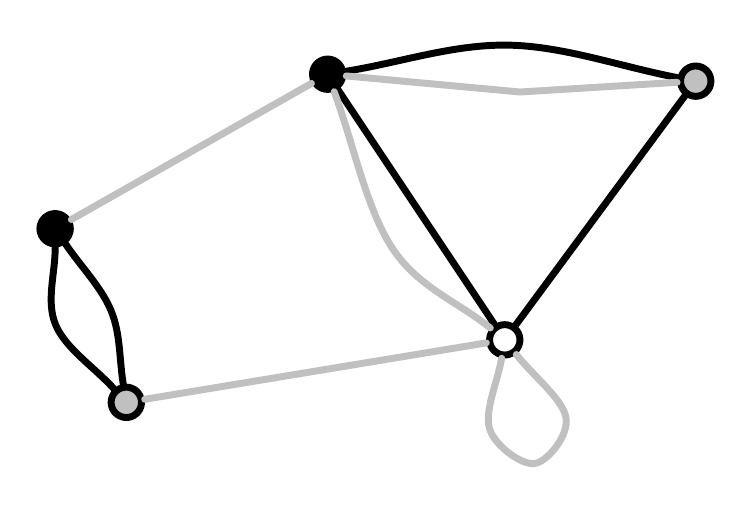}}
\subfigure[]{\includegraphics[width=.35\textwidth]{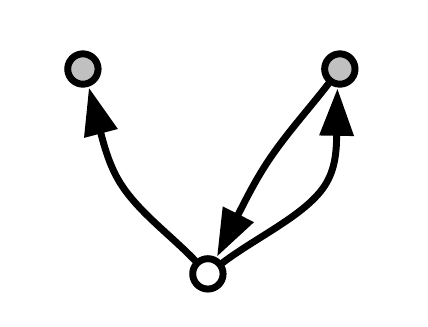}}
\caption{Example of the overlap graph: (a) a $2$-map-graph with a fixed decomposition and
base vertices (gray for black connected components and white for the gray connected component); (b)
the associated overlap graph.}
\label{fig:overlap-example}
\end{figure}
The property of the overlap graph we need is:
\begin{prop}\proplab{overlap-cycle}
Let $G$ be a $2$-map-graph with fixed decomposition and choice of base vertices.  The overlap graph of $(G,Y,R,B)$
has a directed cycle in each connected component.
\end{prop}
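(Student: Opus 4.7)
The plan is to reduce \propref{overlap-cycle} to the standard observation that a finite directed graph in which every vertex has out-degree at least one must contain a directed cycle in every weakly connected component.

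The main structural fact to establish is that every vertex of the overlap graph has out-degree exactly one. Since $X$ and $Y$ are both \emph{spanning} map-graphs of $G$, their connected components $\{X_i\}$ and $\{Y_j\}$ each partition $V(G)$. Consequently, each base vertex $x_i \in V(G)$ lies in exactly one $Y$-component $Y_{g(i)}$, so the construction produces exactly one out-edge from $x_i$, namely to $y_{g(i)}$; by symmetry the same holds at each $y_j$. (If the convention is read with the arrows reversed, so that each vertex instead has in-degree exactly one, the remainder of the argument carries through verbatim using in-edges in place of out-edges.)

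With the functional-graph property in hand, the rest is standard. Fix a weakly connected component $C$ of the overlap graph, pick any $v \in C$, and consider the sequence $v, v_1, v_2, \ldots$ obtained by iterating the unique out-neighbor map. All these vertices lie in $C$, and since $B$ is finite, some $v_s = v_t$ with $s < t$; the subpath $v_s \to v_{s+1} \to \cdots \to v_t$ is a directed cycle lying inside $C$. Applying this in each weakly connected component of the overlap graph yields the claim.

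The only real content is the bookkeeping in the first step, which is immediate from the fact that $X$ and $Y$ are spanning; once this is set up, no genuine obstacle remains. I would not expect to invoke any deeper combinatorial input about the cycle structure of map-graphs beyond the partition property of their vertex sets.
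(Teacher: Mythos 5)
Your proof is correct and takes essentially the same route as the paper's: both rest on the observation that, because $X$ and $Y$ are spanning, their components partition $V(G)$ and hence every vertex of the overlap graph has degree exactly one in a fixed orientation (with the paper's conventions it is the \emph{in}-degree that equals one, which is the case covered by your parenthetical), after which a directed cycle in each component follows by finiteness. The paper phrases the final step by observing that the overlap graph is itself a map-graph, whereas you iterate the unique-neighbor map directly, but the content is identical.
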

\begin{proof}
Every vertex has exactly one incoming edge, since each vertex is in exactly one connected component of each of $X$ and $Y$.
Thus, as an undirected graph, the overlap graph is a map-graph (see, e.g., \cite{ST09}).
\end{proof}

\section{$\Gamma$-$(2,2)$ graphs} \seclab{gamma22}
In this section we define \emph{$\Gamma$-$(2,2)$ graphs} which are the first of two key
families of colored graphs introduced in this paper (the second is \emph{$\Gamma$-colored-Laman
graphs}, defined in \secref{gamma-laman}).  We also state the main combinatorial results on
$\Gamma$-$(2,2)$ graphs, but defer the proof of a key technical result, \propref{gamma11}
to \secref{sparse}.

\subsection{The translation subgroup of a colored graph}\seclab{translation-subgroup-colored-graph}
Let $(G,\bgamma,B)$ be a marked colored graph, as in \secref{closed-paths}, with connected components
$G_1,G_2,\ldots, G_c$ and base vertices $b_1,b_2,\ldots, b_c$.  Recall from \secref{rho}
that, with this data, there is a homomorphism
\[
\rho : \pi_{1}(G_i,b_i)\to \Gamma_k
\]
We define $\Trans(G,B)$ to be
\[
\Trans(G,B) = \langle \Trans(\rho(G_i,b_i)) : i = 1,2,\ldots, c \rangle
\]
We define $\rep_{\Gamma_k}(G)=\rep_{\Gamma_k}(\Trans(G,B))$.  As the notation suggests,
$\rep_{\Gamma_k}(G)$ is independent of the choice of base vertices $B$.
\begin{lemma}\lemlab{rep-of-trans-G}
Let $(G,\bgamma,B)$ be a marked colored graph.
The quantity  $\rep_{\Gamma_k}(G)$ is independent of the choice of base vertices, and so is
a property of the underlying colored graph $(G,\bgamma)$.
\end{lemma}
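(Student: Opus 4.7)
The plan is to exploit the conjugation-invariance of the radical of translation subgroups (\lemref{rad-conj}) together with the fact that $\rep_{\Gamma_k}(\cdot)$ depends only on the radical (\propref{rad-rep-T-invariant}). First I would reduce, by iterating, to the case where only one component's base vertex changes, say from $b_i$ to $b_i'$ in $G_i$. Standard covering-space theory supplies an isomorphism $\pi_1(G_i,b_i')\cong\pi_1(G_i,b_i)$, namely conjugation by the class of any walk $p$ from $b_i$ to $b_i'$ in $G_i$; hence $\rho(\pi_1(G_i,b_i'))=g_i^{-1}\,\rho(\pi_1(G_i,b_i))\,g_i$ for $g_i:=\rho(p)\in\Gamma_k$. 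Since $\Trans(\Gamma_k)$ is normal in $\Gamma_k$, taking the translation subgroup commutes with conjugation by an element of $\Gamma_k$, so $H_i':=\Trans(\rho(\pi_1(G_i,b_i')))=g_i^{-1}H_ig_i$, where $H_i:=\Trans(\rho(\pi_1(G_i,b_i)))$.

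With $H_j'=H_j$ for $j\neq i$ in hand, I would then show $\Rad(\Trans(G,B))=\Rad(\Trans(G,B'))$ by a short case split in $k$, after which \propref{rad-rep-T-invariant} immediately gives the lemma. For $k=2$, every subgroup of $\Trans(\Gamma_2)\cong\Z^2$ is normal in $\Gamma_2$ (the rotation acts as $-1$, which preserves every subgroup of $\Z^2$), a fact already invoked in the proof of \lemref{Gam2nice}; hence $H_i'=H_i$ on the nose and $\Trans(G,B')=\Trans(G,B)$. For $k=3,4,6$, \propref{rad-struct} shows that the radical of a translation subgroup of $\Gamma_k$ is either trivial (if the subgroup is trivial) or all of $\Trans(\Gamma_k)$; since conjugation is a group automorphism it preserves triviality, so all $H_j$ are trivial iff all $H_j'$ are, and in either case the radicals of $\langle H_1,\ldots,H_c\rangle$ and $\langle H_1',\ldots,H_c'\rangle$ coincide.

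The core of the argument is the covering-space fact that a change of base vertex acts on $\rho$-images by conjugation in $\Gamma_k$; after that, everything reduces to the structural machinery already developed in \secref{radical}. The only mildly delicate point, and the main place one needs to be careful, is the step from ``each $H_i$ has the same radical as $H_i'$'' to ``the subgroups they generate have the same radical,'' which is exactly where the case analysis in $k$ (and the explicit description of radicals of translation subgroups in \propref{rad-struct}) is used.
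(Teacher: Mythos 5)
Your proof is correct and follows essentially the same route as the paper's: a change of base vertices conjugates the $\rho$-images, and the radical of the resulting translation subgroup (hence $\rep_{\Gamma_k}(\cdot)$, by \propref{rad-rep-T-invariant}) is unchanged under conjugation. The paper compresses your final case split into a single citation of \lemref{rad-conj}, whose proof is exactly your normality argument for $k=2$ and your appeal to \propref{rad-struct} for $k=3,4,6$; your expanded treatment of passing from the individual $H_i$ to the subgroup they generate fills in a detail the paper leaves implicit.
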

\begin{proof}
Changing base vertices corresponds to conjugation.  \lemref{rad-conj} implies that
the radical of $\Trans(G,B)$ is preserved under conjugation.  Since $\rep_{\Gamma_k}(\cdot)$
depends only on the radical, the lemma follows.
\end{proof}

\subsection{The quantity $T$ for a colored graph}
Let $(G,\bgamma,B)$ be a marked colored graph, with $G$ connected (and so a single
base vertex $b$).  We define $T(G)$ to be $T(\rho(\pi_1(G,b)))$.  The proof of the
following lemma is entirely similar to that of \lemref{rep-of-trans-G}.
\begin{lemma}\lemlab{T-of-G}
Let $(G,\bgamma,B)$ be a marked colored graph.
The quantity  $T(G)$ is independent of the choice of base vertices, and so is
a property of the underlying colored graph $(G,\bgamma)$.
\end{lemma}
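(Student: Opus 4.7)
The plan is to mimic the proof of \lemref{rep-of-trans-G}, which reduced the desired invariance to the fact that $\rep_{\Gamma_k}(\cdot)$ depends on a subgroup only through its radical, and radicals of translation subgroups are conjugation-invariant. Here I want the analogous reduction: any change of base vertex should conjugate $\rho(\pi_1(G,b))$ by an element of $\Gamma_k$, and $T(\cdot)$ should be conjugation-invariant.

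First I would write down the change-of-basepoint comparison. Suppose $b$ and $b'$ are two base vertices of the connected graph $G$, and let $\alpha$ be a path in $G$ from $b'$ to $b$. The standard isomorphism $\pi_1(G, b') \to \pi_1(G, b)$ sending $[\sigma] \mapsto [\alpha \cdot \sigma \cdot \alpha^{-1}]$ fits into a commutative diagram with $\rho$, so on the level of images in $\Gamma_k$ we have
\[
\rho(\pi_1(G, b)) \;=\; \rho(\alpha)\, \rho(\pi_1(G, b'))\, \rho(\alpha)^{-1}.
\]
Thus the two subgroups of $\Gamma_k$ whose $T$-values we want to compare are conjugate in $\Gamma_k$.

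Next I would verify that $T(\cdot)$ is invariant under conjugation in $\Gamma_k$. By definition $T(\Gamma')$ is $0$ if $\Gamma'$ contains a rotation and $2$ otherwise, so it suffices to show that the property ``contains a rotation'' is conjugation-invariant. This is immediate from the fact that $\Trans(\Gamma_k)$ is normal in $\Gamma_k$ (it is the kernel of the projection $\Gamma_k \to \Z/k\Z$): conjugation sends translations to translations, hence it sends rotations (i.e., elements outside $\Trans(\Gamma_k)$) to rotations. Consequently $\Gamma'$ contains a rotation if and only if any conjugate $\gamma \Gamma' \gamma^{-1}$ does, and so $T(\gamma \Gamma' \gamma^{-1}) = T(\Gamma')$.

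Combining these two observations gives $T(\rho(\pi_1(G,b))) = T(\rho(\pi_1(G,b')))$, so $T(G)$ is well-defined independently of $B$. I do not anticipate any real obstacle: the only thing to be careful about is confirming that the base-point isomorphism really does descend to conjugation by $\rho(\alpha)$ in $\Gamma_k$, which is an elementary unwinding of the definition of $\rho$ on closed paths.
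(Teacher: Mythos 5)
Your proof is correct and follows the same route as the paper, which simply notes that the argument is ``entirely similar'' to that of \lemref{rep-of-trans-G}: changing the base vertex conjugates $\rho(\pi_1(G,b))$ by $\rho$ of the connecting path, and $T(\cdot)$ is conjugation-invariant because ``contains a rotation'' is preserved under conjugation (normality of $\Trans(\Gamma_k)$). Your direct verification of the conjugation-invariance of $T$ is exactly the intended adaptation.
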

\begin{figure}[htbp]
\centering
\includegraphics[width=.8\textwidth]{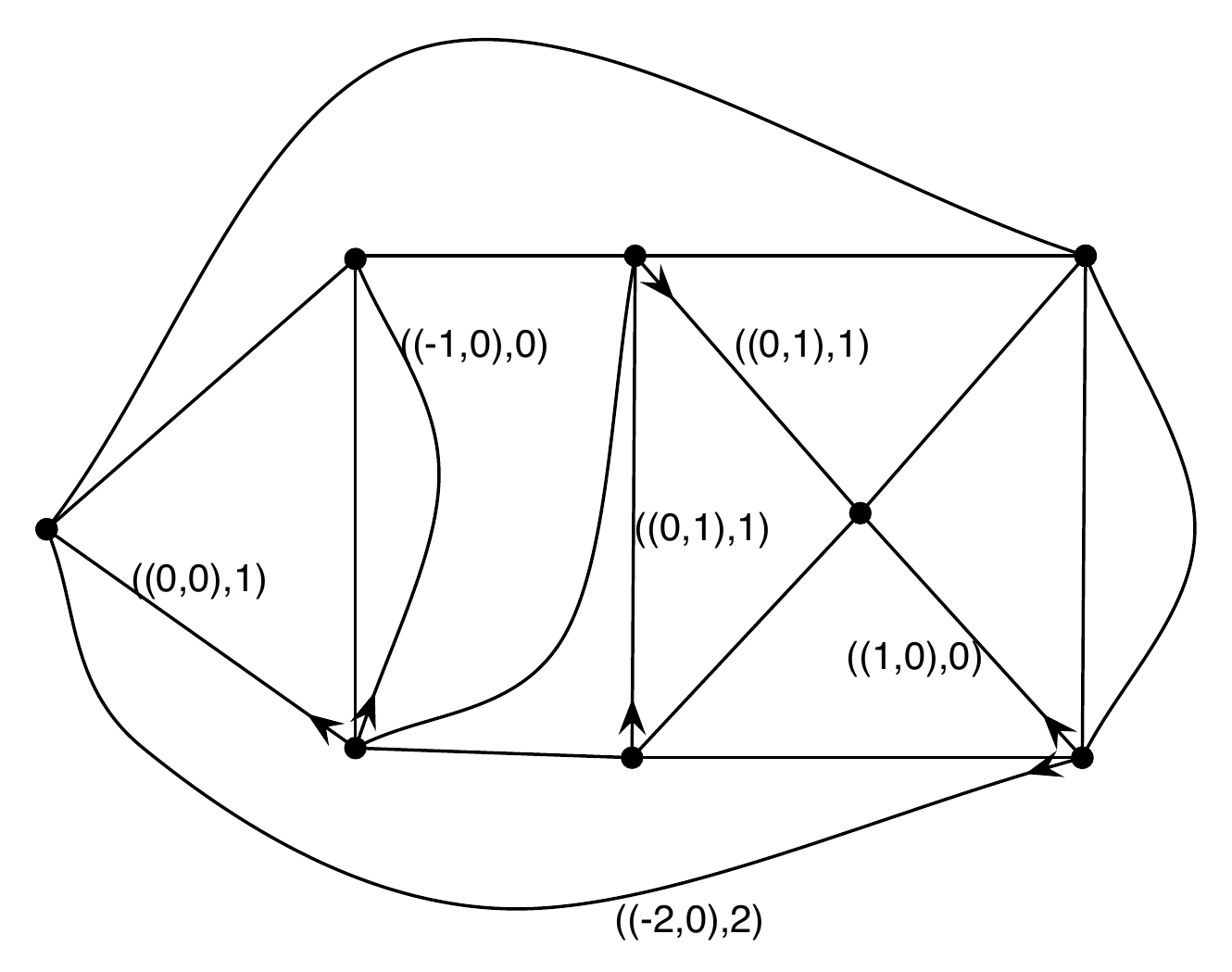}
\caption{An example of a \GammaTT graph when $\Gamma = \Gamma_3$.}
\label{fig:gam22}
\end{figure}
\subsection{$\Gamma$-$(2,2)$ graphs}
We are now ready to define $\Gamma$-$(2,2)$ graphs.  Let $(G,\bgamma)$ be a colored graph
with $n$ vertices and $c$ connected components $G_i$.  We define the function $f$ to be
\[
f(G) = 2n + \rep_{\Gamma_k}(G) - \sum_{i=1}^c T(G_i)
\]
A colored graph $(G,\bgamma)$ on $n$ vertices and $m$ edges is defined to be a \emph{$\Gamma$-$(2,2)$ graph} if:
\begin{itemize}
\item The number of edges $m$ is $2n+\rep(\Trans(\Gamma_k))$ (i.e., it is the maximum possible value for $f$).
\item For every subgraph $G'$ of $G$, with $m'$ edges, $m' \le f(G')$.
\end{itemize}
We note that it is essential that the definition is made
over \emph{all} subgraphs, and not just vertex-induced or connected ones.
\figref{gam22} shows an example of a \GammaTT-graph.

\subsection{Direction network derivation}
Before continuing with the development of the combinatorial theory, we quickly motivate
the definition of \GammaTT-graphs.  Readers who are not familiar with rigidity and
direction networks may want to either skip to \secref{gamma11-def} and revisit this,
purely informative, section after reading the definitions in
\chapref{direction-networks}.

\propref{crystal-collapse}, in \secref{cdns} below,
says that a generic direction network on a $\Gamma$-colored graph $(G,\bgamma)$ has
only \emph{collapsed} realizations (with all the points on top of each other and a trivial representation for the
$\Trans(\Gamma)$), if and only if $(G,\bgamma)$ is \GammaTT.

The definition of the function $f$ comes from analyzing the degree of freedom of collapsed realizations.
For any realization $G(\vec p, \Phi)$, we can translate it (which preserves directions),
so that $\Phi(r_k)$ has the origin as its rotation center.  Then, restricted to a subgraph $G'$ of $G$:
\begin{itemize}
\item The total number of variables involved in the equations giving the edge directions is $2n'+\rep_{\Gamma_k}(G')$.
Since we fix $\Phi(r_k)$ to rotate at the origin (see \secref{cdns} for an
explanation why we can do this), the only variability
left in $\Phi$ is $\Phi(\Trans(\Gamma_k))$.  Since $\rep_{\Gamma_k}(G')$
measures how much of $\Trans(\Gamma)$ is ``seen'' by $G'$, this is the term we add.
\item Each connected component $G_i$ has a $T(G'_i)$-dimensional space of collapsed realizations.  If $G_i'$ has a
rotation, then a collapsed realization of the lift $\tilde G_i'$ must lie on the corresponding rotation center
since a solution must be rotationally symmetric.  When $G_i'$ has no rotation, no such restriction exists, and
there are $2$-dimensions worth of places to put the collapsed $\tilde G_i'$.
Each collapsed connected component is independent of the others, so this term is additive
over connected components.
\end{itemize}
The heuristic above coincides with the definition of the function $f$.

\subsection{$\Gamma$-$(1,1)$ graphs}\seclab{gamma11-def}
We will characterize $\Gamma$-$(2,2)$ graphs in terms of decompositions into simpler
\emph{$\Gamma$-$(1,1)$ graphs}\footnote{The terminology of ``$(2,2)$'' and ``$(1,1)$'' comes from
the fact that spanning trees of finite graphs are ``$(1,1)$-tight'' in the sense of \cite{LS08}.  The
$\Gamma$-$(1,1)$ graphs defined here are, in a sense made more precise in \cite[Section 5.2]{MT10}, analogous to spanning
trees.  We don't go into details here in the interest of space, since the analogy isn't necessary for any of the proofs.}, which
we now define.

Let $(G,\bgamma)$ be a colored graph and select a base vertex $b_i$ for each connected component $G_i$ of $G$.
We define $(G,\bgamma)$ to be a \emph{$\Gamma$-$(1,1)$ graph} if:
\begin{itemize}
\item $G$ is a map-graph plus $\frac{1}{2}\rep_{\Gamma_k}(\Trans(\Gamma_k))$ additional edges.
\item For each connected component $G_i$ of $G$, $\rho(\pi_1(G_i,b_i))$ contains a rotation.
\item We have $\rep_{\Gamma_k}(G)=\rep_{\Trans(\Gamma_k)}(\Gamma_k)$, i.e., $\Rad(\Trans(G,B)) = \Trans(\Gamma_k)$.
\end{itemize}
Although we do not define \GammaOO graphs via sparsity counts, there is an alternative
characterization in these terms.  We define the function $g(G)$ to be
\[
g(G) = n + \frac{1}{2}\rep_{\Gamma_k}(G) - \frac{1}{2}\sum_{i=1}^c T(G_i)
\]
where $(G,\bgamma)$ is a colored graph and $n$ and $c$ are the number of vertices and
connected components.  Notice that $g=\frac{1}{2}f$.  In \secref{sparse} we will show:
\begin{prop}[\gammaOOmatroid] \proplab{gamma11}
The family of $\Gamma$-$(1,1)$ graphs gives the bases of a matroid, and the rank of
the $\Gamma$-$(1,1)$ matroid is given by the function:
\[
g(G) = n + \frac{1}{2}\rep_{\Gamma_k}(G) - \frac{1}{2}\sum_{i=1}^c T(G_i)
\]
In particular, this implies that $g$ is non-negative, submodular, and monotone.
\end{prop}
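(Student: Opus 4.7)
Plan: My strategy is to verify that $g$ satisfies the rank-function axioms from \secref{rank-axioms} (non-negativity, monotonicity, normalization, submodularity) and then identify the resulting bases with the $\Gamma$-$(1,1)$ graphs. The main leverage is \propref{Mgammarank}, which already establishes a matroid $M_{\Gamma_k,n}$ on $E_{\Gamma_k,n}$ with rank function $g_1$, so the goal is to transfer that result to edge sets of $(G,\bgamma)$.

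The bridge between the graph-theoretic and group-theoretic sides is the fundamental-cycle correspondence from \secref{closed-paths}. For a subgraph $G'$ on edge set $E'$ with $n'$ non-isolated vertices and $c'$ non-trivial components, choose a spanning forest $F'$ and base vertices $B'$ of $G'$; then each non-forest edge sends to an element of $E_{\Gamma_k,c'}$, yielding a set $A(G',B',F')$. Using \lemref{GBF-eq-GammaA}, I would verify the decomposition
\[
g(E') \;=\; (n'-c') \;+\; g_1(A(G',B',F')),
\]
where the first summand is the graphic matroid rank. Lemmas \lemrefX{rep-of-trans-G} and \lemrefX{T-of-G}, together with \lemref{Mgammaconj} for the dependence on base vertices, ensure the second summand is well-defined independent of $F'$ and $B'$. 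Isolated vertices and tree components drop out because $T$ equals $2$ and $\rep_{\Gamma_k}$ vanishes there, so the formulas really do record only the non-trivial data.

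With this decomposition each rank-function axiom for $g$ reduces to a combination of the graphic matroid axioms and the corresponding axiom for $g_1$ from \propref{Mgammarank}, handled by case analysis on whether the added edge $e$ stays within a single component of $G'_{E'}$ or bridges two components. In the same-component case, $e$ contributes exactly one new fundamental cycle to a fixed part of $E_{\Gamma_k,c'}$, so normalization and the local submodularity inequality \eqref{local-submodular} follow directly from \propref{rad-rep-minus-T} applied part-wise. In the bridging case, the graphic rank jumps by one while two parts of $E_{\Gamma_k,c'}$ fuse in the sense of \lemref{Mgammafuse}; I would verify that the change in $g_1$ from fusing is compensated by the $+1$ from the graphic contribution. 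Monotonicity across the cases is immediate from \lemref{rad-monotone}, and the inequality \eqref{local-submodular} across $E'\subseteq E''$ follows by applying \lemref{rad-monotone} to compare the radicals at the two levels.

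Finally, I would identify the bases. The maximum of $g$ is $n+\tfrac{1}{2}\rep_{\Gamma_k}(\Trans(\Gamma_k))$, attained exactly when every vertex is non-isolated, every component contains a rotation (forcing $T(G_i)=0$), and $\Rad(\Trans(G,B))=\Trans(\Gamma_k)$. A basis $B$ of size $g(B)=|B|$ is then a map-graph (one cycle per component coming from the $n-c$ forest edges together with one non-forest edge per component, each required to give a rotation) together with $\tfrac{1}{2}\rep_{\Gamma_k}(\Trans(\Gamma_k))$ additional edges realizing the full translation subgroup in the radical, which is exactly the definition of a $\Gamma$-$(1,1)$ graph from \secref{gamma11-def}. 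The main obstacle is the bridging-components case in the normalization check: merging two rotation-containing components via a new edge introduces a conjugation factor from the bridging path, and \lemref{Mgammaconj} is needed to see that the fused group-theoretic data still lies in $E_{\Gamma_k,c'}$ as an independent set up to conjugation, so the accounting matches between the graphic and group-theoretic contributions.
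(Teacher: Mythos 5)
Your route is genuinely different from the paper's. You propose to verify the rank-function axioms for $g$ directly (via \theoref{edmonds1}) using the decomposition $g(E') = (n'-c') + g_1(A(G'_{E'}))$, and then read off the bases. The paper instead verifies the \emph{basis} axioms for the family of \GammaOO{} graphs directly --- non-triviality, equal size, and basis exchange, the last via \lemref{rebase}, \lemref{add-edge-AG} and the characterization of \GammaOO{} graphs as the tight independent sets of $M_{\Gamma_k,n}$ (\lemref{gamma11-tight}) --- and only afterwards computes the rank function by a greedy construction (\lemref{gamma11-rank}), so that submodularity of $g$ comes out as a \emph{consequence} of $g$ being a matroid rank function rather than being checked by hand. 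Both routes rest on the same bridge, namely the map $(G,\bgamma,B,F)\mapsto A(G,B,F)$ and the identity $g(G) = n - c + g_1(A(G))$ (the paper's \lemref{g-and-g1}), and your identification of the maximizers of $g$ with \GammaOO{} graphs is essentially \lemref{Mgammatight} translated back to graphs, which is fine.

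The gap is in your submodularity step. Your plan is to reduce each axiom for $g$ to the corresponding axiom for the graphic rank and for $g_1$, ``handled by case analysis on whether the added edge stays within a component or bridges two components.'' But the decomposition $g = (\text{graphic rank}) + g_1(A(\cdot))$ is \emph{not} a sum of two monotone submodular functions on the edge ground set: when an edge $e$ merges two components whose $\rho$-images both contain rotations, the term $g_1(A(\cdot))$ can strictly \emph{decrease} (the $\sum T$ bookkeeping loses a part with a rotation while $\rep_{\Gamma_k}(\Trans(\cdot))$ need not grow, e.g.\ when the two fused rotations generate a finite cyclic group), and this decrease is compensated by the $+1$ from the graphic term. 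Consequently submodularity of $g$ does not follow summand-by-summand, and the local inequality \eqref{local-submodular} for $E'\subset E''$ must be checked in the mixed case where $e$ bridges two components of $G_{E'}$ but lies inside a single component of $G_{E''}$: there the two increments are computed by structurally different operations (a fusing versus adding one element to a part), and invoking \lemref{rad-monotone} ``to compare the radicals at the two levels'' does not by itself settle the comparison. This cross-case analysis is exactly what the paper's basis-exchange argument avoids, and it is the piece you would need to supply in full --- including the verification, via \lemref{rad-push-trans} and \lemref{rad-conj}, that the increase of $\rep_{\Gamma_k}(\Trans(\cdot))$ under fusing is at most $2$ and is dominated by the corresponding increase at the smaller set --- before your proof is complete.
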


\subsection{Decomposition characterization of $\Gamma$-$(2,2)$ graphs}
The key combinatorial result about $\Gamma$-$(2,2)$ graphs, that is used in an essential way to
prove the ``collapsing lemma'' \propref{crystal-collapse}, is the following.
\begin{prop}\proplab{gamma22-decomp}
Let $(G,\bgamma)$ be a colored graph.  Then $(G,\bgamma)$ is a $\Gamma$-$(2,2)$ graph if and only
if it is the edge-disjoint union of two spanning $\Gamma$-$(1,1)$ graphs.
\end{prop}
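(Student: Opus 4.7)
The plan is to observe that $f = 2g$ and recognize the desired equivalence as an instance of matroid union applied to the \GammaOO matroid furnished by \propref{gamma11}. Specifically, I will realize the \GammaTT graphs as the bases of the matroid $M_f$ coming from the Edmonds--Rota construction (\theoref{edmonds1}), and independently realize the edge-disjoint unions of two spanning \GammaOO graphs as bases of the same matroid via matroid union (\theoref{edmonds2}). Equality of the two families then gives the proposition.

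For the first step, \propref{gamma11} says that $g$ is the rank function of the \GammaOO matroid $M_g$, whose bases are exactly the \GammaOO graphs. In particular $g$ is non-negative, integer-valued, monotone, and submodular (integrality uses \lemref{trrep} together with $T(\cdot) \in \{0,2\}$). Applying \theoref{edmonds2} with $f_1 = f_2 = g$ then produces $M_{2g} = M_f$, whose bases are precisely the sets $A_1 \cup A_2$ with $A_1 \cap A_2 = \emptyset$ and $A_i \in \mathcal{B}(M_g)$; that is, edge-disjoint unions of two spanning \GammaOO graphs.

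For the second step, \theoref{edmonds1} characterizes $A$ as a basis of $M_f$ exactly when $|A| = f(A)$ and every subset $A' \subseteq A$ satisfies $|A'| \le f(A')$, which recovers the sparsity condition in the \GammaTT definition. To match the edge-count condition, I need to pin down the rank of $M_f$: a \GammaOO graph has $n + \tfrac{1}{2}\rep_{\Gamma_k}(\Trans(\Gamma_k))$ edges (a spanning map-graph on $n$ vertices contributes exactly $n$ edges, and the definition adds $\tfrac{1}{2}\rep_{\Gamma_k}(\Trans(\Gamma_k))$ more), so
\[
\rk(M_f) = \rk(M_{2g}) = 2\rk(M_g) = 2n + \rep_{\Gamma_k}(\Trans(\Gamma_k)),
\]
which is exactly the edge count demanded by the \GammaTT definition. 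Thus the two characterizations of $\mathcal{B}(M_f)$ coincide, and the proposition follows.

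The real combinatorial content of the decomposition statement lies in \propref{gamma11}; conditional on that result, the argument above is essentially matroid-theoretic bookkeeping, so the main obstacle has already been absorbed into earlier work.
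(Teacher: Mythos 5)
Your proposal is correct and follows essentially the same route as the paper: both observe $f=2g$, use \propref{gamma11} to place $g$ under the hypotheses of the Edmonds--Rota construction, and then invoke matroid union (\theoref{edmonds2}) with $f_1=f_2=g$ to identify the bases of $M_f$ with edge-disjoint unions of two spanning \GammaOO{} graphs. The paper's proof is a two-sentence version of this; your extra bookkeeping on the rank of $M_f$ only makes explicit what the paper leaves implicit.
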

\begin{proof}
Since $f=2g$, and \propref{gamma11} implies that $g$ meets the hypothesis of \theoref{edmonds1}, we conclude
that the $\Gamma$-$(2,2)$ graphs are also the bases of a matroid.  \theoref{edmonds2} then says that the
$\Gamma$-$(2,2)$ matroid must coincide with the class of colored graphs defined by the desired decomposition.
\end{proof}

\section{$\Gamma$-colored Laman graphs}\seclab{gamma-laman}
We are now ready to define \emph{$\Gamma$-colored-Laman graphs}, which are the
colored graphs characterizing minimally rigid generic frameworks in \theoref{main}.
Just as for \GammaTT graphs, we define them via sparsity counts.
\begin{figure}[htbp]
\centering
\subfigure[]{\includegraphics[width=.45\textwidth]{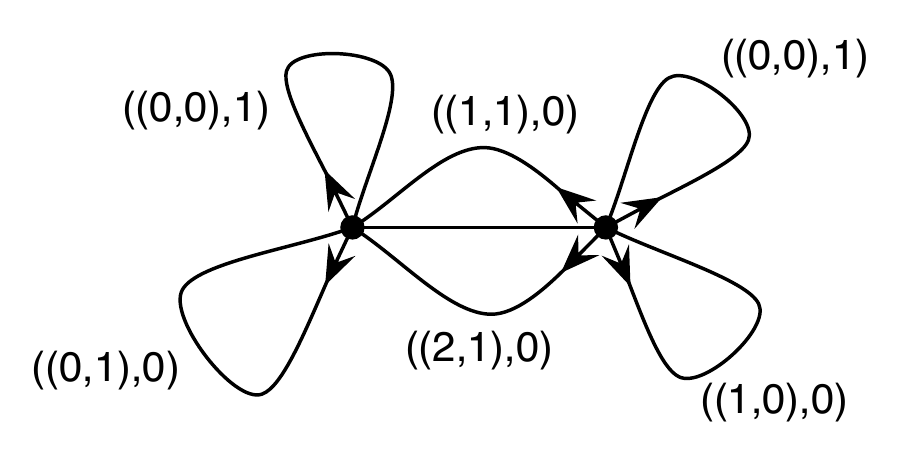}}
\subfigure[]{\includegraphics[width=.35\textwidth]{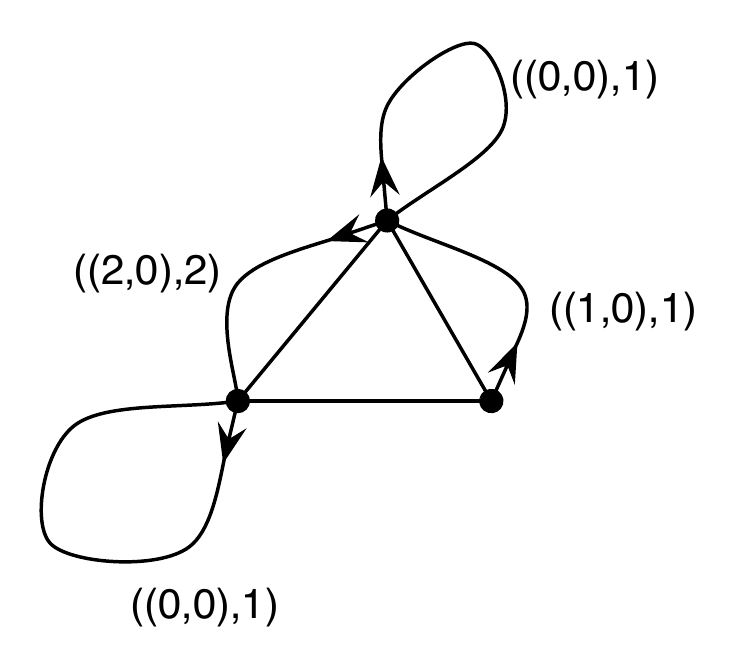}}
\caption{Examples of \GammaCL graphs:
(a) a $\Gamma_2$-colored-Laman graph;
(b) a $\Gamma_3$-colored-Laman graph
}
\label{fig:gamlaman}
\end{figure}
\subsection{Definition of $\Gamma$-colored-Laman graphs}
Let $(G,\bgamma)$ be a colored graph, and let $f$ be the sparsity function defined in \secref{gamma22}.
The most direct definition of the sparsity function $h$ for $\Gamma$-colored-Laman graphs is:
\[
h(G) = f(G) -1
\]

A colored graph $(G,\bgamma)$ is defined to be \emph{$\Gamma$-colored-Laman} if:
\begin{itemize}
\item $G$ has $n$ vertices and $m=2n + \rep_{\Gamma_k}(\Trans(\Gamma_k)) - T(\Gamma_k) - 1$ edges.
\item For all subgraphs $G'$ spanning $m'$ edges, $m'\le h(G')$
\end{itemize}
\figref{gamlaman} shows some examples of $\Gamma$-colored-Laman graphs.
If a colored graph is a subgraph of a $\Gamma$-colored-Laman graph, then it is defined to be
\emph{$\Gamma$-colored-Laman sparse}.  Equivalently, $(G,\bgamma)$ is $\Gamma$-colored-Laman sparse
if only the ``$m'\le h(G')$'' condition above holds.

\subsection{Alternate formulation of $\Gamma$-colored-Laman graphs}
While the definition of $h$ is all that is needed to prove
\theoref{main}, it does not give any motivation in terms of a degree-of-freedom count.  We now
give an alternate formulation of \GammaCL{} via the Teichmüller space and the centralizer, which
were defined in \secref{teich}, that will let us do this.

Let $(G,\bgamma,B)$ be a marked colored graph with connected components $G_1, \dots, G_c$ and $n$ vertices,
and let $\Trans(G,B)$ be its translation subgroup as defined in \secref{translation-subgroup-colored-graph}.
We define
\[
\teich_{\Gamma_k}(G) = \teich_{\Gamma_k}(\Trans(G, B))
\]
which, by a proof nearly identical to that of \lemref{rep-of-trans-G} is well-defined and independent of the
choice of base vertices.

For a component $G_\ell$ with base vertex $i_\ell$, we set
$\cent_{\Gamma_k}(G_\ell) = \cent_{\Gamma_k}(\rho(\pi_1(G_\ell, i_\ell)))$.  For similar reasons,
$\cent_{\Gamma_k}(G_\ell)$ is also independent of the base vertex.

We can now define a ``more natural'' sparsity function
\[
h'(G) = 2n + \teich_{\Gamma_k}(G) -
\left(
\sum_{i=1}^c \cent_{\Gamma_k}(G_i))
\right)
\]
The class of colored graphs defined by $h'$ is the same as that arising from $h$, giving a second definition
of $\Gamma$-colored-Laman graphs.  Since \lemref{gamma-cl-equiv} is not used to prove any further results, we
omit the proof.
\begin{lemma}\lemlab{gamma-cl-equiv}
A colored graph $(G,\bgamma)$ is \GammaCL if and only if:
\begin{itemize}
\item $G$ has $n$ vertices and $m=2n+\teich(\Gamma)-\cent(\Gamma)$ edges.
\item For all subgraphs $G'$ spanning $m'$ edges, $m'\le h'(G')$
\end{itemize}
\end{lemma}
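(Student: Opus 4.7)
My plan is to compare the two definitions of \GammaCL term-by-term using \propref{reptoteich}. There are two things to verify: that the global edge count $m = 2n + \rep_{\Gamma_k}(\Trans(\Gamma_k)) - T(\Gamma_k) - 1$ agrees with $2n + \teich_{\Gamma_k}(\Gamma) - \cent_{\Gamma_k}(\Gamma)$, and that the subgraph inequalities $m' \le h(G')$ and $m' \le h'(G')$ are equivalent when imposed for every $G' \subseteq G$. The first is a one-line calculation: since $\Gamma_k$ contains both rotations and translations, parts \textbf{(A)} and \textbf{(B)} of \propref{reptoteich} give $T(\Gamma_k) = \cent_{\Gamma_k}(\Gamma_k) = 0$ and $\teich_{\Gamma_k}(\Trans(\Gamma_k)) = \rep_{\Gamma_k}(\Trans(\Gamma_k)) - 1$, so the two formulas agree.

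The core claim is that $h(G') = h'(G')$ whenever $G'$ is \emph{connected}. Setting $\Gamma' = \rho(\pi_1(G', b'))$, I would check the two cases allowed by \propref{reptoteich}. If $\Gamma'$ contains a translation, then \textbf{(A)} gives $T(\Gamma') = \cent_{\Gamma_k}(\Gamma')$ and \textbf{(B)} gives $\teich_{\Gamma_k}(\Trans(\Gamma')) = \rep_{\Gamma_k}(\Trans(\Gamma')) - 1$. If $\Gamma'$ contains no translation, then $\Trans(\Gamma')$ is trivial; \textbf{(C)} applied to that trivial subgroup gives $\rep_{\Gamma_k}(\Trans(\Gamma')) = \teich_{\Gamma_k}(\Trans(\Gamma')) = 0$, while \textbf{(A)} gives $\cent_{\Gamma_k}(\Gamma') = T(\Gamma') + 1$. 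Either way, $\rep_{\Gamma_k}(G') - T(G') - 1 = \teich_{\Gamma_k}(G') - \cent_{\Gamma_k}(G')$, and substituting into the definitions of $h$ and $h'$ yields $h(G') = h'(G')$.

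For a disconnected subgraph $G'$ with components $G'_1,\dots,G'_c$, partition them into $G^T$, the union of components whose $\rho$-image contains a translation, and $G^R$, the union of the rest. Since every component of $G^R$ contributes nothing to the joint translation subgroup, $\Trans(G',B') = \Trans(G^T, B^T)$, and hence $\teich_{\Gamma_k}(G') = \teich_{\Gamma_k}(G^T)$. Expanding the definition of $h'$ and using $\teich_{\Gamma_k}(G'_j) = 0$ for each $G'_j \subseteq G^R$ then gives the key additivity
\[
h'(G') = h'(G^T) + \sum_{G'_j \subseteq G^R} h'(G'_j).
\]
Applying the connected identity from the previous paragraph to each component of $G^R$ gives $h(G'_j) = h'(G'_j)$; an analogous short calculation for $G^T$, using that every component of $G^T$ contains a translation (so \textbf{(A)} forces $\sum T = \sum \cent_{\Gamma_k}$ over those components while \textbf{(B)} applies to the joint translation subgroup), yields $h(G^T) = h'(G^T)$.

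Finally, the equivalence of sparsity conditions follows. A direct comparison shows $h'(G'') \le h(G'')$ for every subgraph $G''$, so $h'$-sparsity trivially implies $h$-sparsity. Conversely, assume $G$ is $h$-sparse and let $G' \subseteq G$ be any subgraph. Apply $h$-sparsity to $G^T$ as a subgraph of $G$ to get $m^T \le h(G^T) = h'(G^T)$, and to each component $G'_j$ of $G^R$ to get $m_j \le h(G'_j) = h'(G'_j)$. Summing and invoking the additivity formula yields $m' \le h'(G')$. The main obstacle is the additivity formula; this is the one place where the proof genuinely goes beyond quoting \propref{reptoteich}, and rests on the observation that components without translations do not enlarge the joint translation subgroup.
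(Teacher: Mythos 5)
The paper gives no proof of this lemma --- it explicitly says the proof is omitted because the statement is not used elsewhere --- so there is nothing to compare against; your argument supplies the missing proof and it is correct. The two substantive points are both handled properly: (i) the identity $\rep_{\Gamma_k}(\Trans(\Gamma')) - T(\Gamma') - 1 = \teich_{\Gamma_k}(\Trans(\Gamma')) - \cent_{\Gamma_k}(\Gamma')$ for each connected piece, which you correctly derive from parts \textbf{(A)}--\textbf{(C)} of \propref{reptoteich}; and (ii) the observation that $h$ and $h'$ do \emph{not} agree on disconnected subgraphs (they differ by the number of components whose $\rho$-image has no translation), so the equivalence of the two sparsity families is not termwise and genuinely requires your $G^T$/$G^R$ decomposition together with an application of $h$-sparsity to $G^T$ and to each component of $G^R$ separately. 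Two small remarks. First, you were right not to invoke part \textbf{(D)} of \propref{reptoteich}: as printed it is off by one (check it against \lemref{centtable} and \lemref{trrep} in any of the four cases), and taking it at face value would have given $h = h' - 1$ on connected subgraphs and derailed the proof; your case analysis recovers the correct version. Second, your final step quietly assumes $G^T \neq \emptyset$ when it applies $h$-sparsity to $G^T$ (the empty graph has $h = -1$ but $h' = 0$); when $G^T$ is empty the additivity formula reduces to $h'(G') = \sum_j h'(G'_j)$ and the conclusion follows from the component bounds alone, so this is a one-line fix rather than a gap. You also implicitly read $\teich(\Gamma)$ in the edge-count bullet as $\teich_{\Gamma_k}(\Trans(\Gamma_k))$; these coincide numerically for all $k$, so nothing is lost.
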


\subsection{Degree of freedom heuristic}
The function $h'$ is amenable to an interpretation that allows us, by \lemref{gamma-cl-equiv}, to
give a  rigidity-theoretic ``degree of freedom'' derivation of $\Gamma$-colored-Laman graphs.  This
section is expository, and readers unfamiliar with rigidity theory may skip to \secref{doubling}
and return here after reading \chapref{rigidity}.

Given a framework with underlying colored graph $(G,\bgamma)$, with
the graph $G$ having $n$ vertices and $c$ connected components $G_1,G_2,\ldots, G_c$, we find that:
\begin{itemize}
\item We have $2n$ degrees of freedom from the points.  From the representation
$\Phi: \Gamma_k \to \Euc(2)$,
there are $\rep(\Gamma_k)$ degrees of freedom,
but if we mod out by trivial motions from $\Euc(2)$, we have $\teich_{\Gamma_k}(\Gamma)$ degrees of
freedom left.  	However, we have only $\teich_{\Gamma_k}(G)$ degrees of freedom that apply to $G$.
\item Each connected component has $\cent_{\Gamma_k}(G_i)$ trivial degrees of freedom.  Since elements in the
centralizer for $G_i$ commute with those in $\rho(\pi_1(G_i))$,
we may ``push the vertices of $G_i$ around'' with the centralizer
elements while preserving symmetry.  Since these motions always exist, they are trivial.
\end{itemize}
This heuristic corresponds to the function $h'$.

\subsection{Edge-doubling characterization of $\Gamma$-colored-Laman graphs}\seclab{doubling}
The main combinatorial fact about $\Gamma$-colored-Laman graphs we need is the following simple
characterization by edge-doubling (cf. \cite{LY82,R84}).

\begin{prop}\proplab{doubleedge}
Let $\Gamma=\Gamma_k$ for $k=2,3,4,6$ be a crystallographic group and let $(G,\bgamma)$ be a
$\Gamma$-colored graph.  Then $(G,\bgamma)$ is $\Gamma$-colored-Laman if and only if for any
edge $ij\in E(G)$, the colored graph $(G',\bgamma')$ obtained by adding a copy of $ij$ to $G$
with the same color results in  a $\Gamma$-$(2,2)$ graph.
\end{prop}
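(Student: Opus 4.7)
The plan is to prove both directions by sparsity counting, exploiting the fact that doubling an edge with the same color preserves the function $f$ while increasing the edge count by one.

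The key observation to verify first: if $(H',\bgamma')$ is obtained from $(H,\bgamma)$ by adding a second copy of an edge $ij$ with the same color $\gamma_{ij}$, then $f(H') = f(H)$. Indeed, $H$ and $H'$ have the same vertex set and, since $i,j$ already lie in a common component of $H$, the same connected components. The additional generator of $\pi_1$ introduced by the second copy is a fundamental cycle crossing the new edge forward and the old one backward, whose $\rho$-image is $\gamma_{ij}\gamma_{ij}^{-1} = 1$. Thus $\Trans(H',B) = \Trans(H,B)$ and $T(H'_i) = T(H_i)$ for every component, so every term in $f$ is unchanged. Self-loops are handled identically, since doubling a loop with the same color contributes a generator already present in $\rho(\pi_1)$. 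Note also that the $\Gamma$-$(2,2)$ edge count $2n + \rep_{\Gamma_k}(\Trans(\Gamma_k))$ is exactly one more than the $\Gamma$-colored-Laman count $2n + \rep_{\Gamma_k}(\Trans(\Gamma_k)) - T(\Gamma_k) - 1$, because $T(\Gamma_k) = 0$ for $k = 2,3,4,6$.

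For the forward direction, assume $(G,\bgamma)$ is $\Gamma$-colored-Laman and let $(G',\bgamma')$ double an edge $ij$. The edge count for $G'$ matches the $\Gamma$-$(2,2)$ requirement by the observation above. For any subgraph $H' \subseteq G'$, I split cases on how many copies of $ij$ lie in $H'$. If at most one copy is present, then $H'$ is (isomorphic to) a subgraph of $G$, so $m_{H'} \le h(H') = f(H') - 1 \le f(H')$. If both copies are present, let $H \subseteq G$ be $H'$ with one copy removed; the $\Gamma$-colored-Laman sparsity of $G$ and the invariance of $f$ yield $m_{H'} = m_H + 1 \le h(H) + 1 = f(H) = f(H')$. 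Either way the $\Gamma$-$(2,2)$ bound holds.

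For the backward direction, assume that for every edge $ij\in E(G)$ the doubled graph $(G^+_{ij}, \bgamma^+_{ij})$ is $\Gamma$-$(2,2)$. The edge count for $G$ follows from that of $G^+_{ij}$ by subtracting one. For any subgraph $H \subseteq G$ containing at least one edge, fix $ij \in E(H)$ and form $H^+$ by doubling $ij$ inside $H$. Then $H^+$ sits inside the $\Gamma$-$(2,2)$ graph $G^+_{ij}$, so $m_{H^+} \le f(H^+) = f(H)$ by invariance, which rearranges to $m_H \le f(H) - 1 = h(H)$. The main obstacle is just the invariance $f(H') = f(H)$ under same-color doubling; everything else is straightforward arithmetic bookkeeping.
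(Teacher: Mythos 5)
Your proof is correct and follows essentially the same route as the paper, which simply notes that $\Gamma$-colored-Laman sparsity is equivalent to no subgraph $G'$ with $m'$ edges attaining $m'=f(G')$ and calls the rest straightforward. You have supplied the details the paper leaves implicit --- in particular the key invariance $f(H')=f(H)$ under same-color edge doubling and the case split on how many copies of the doubled edge a subgraph contains --- so this is a valid expansion of the intended argument rather than a different one.
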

\begin{proof}
This is straightforward to check once we notice that $(G,\bgamma)$ is
$\Gamma$-colored-Laman if and only if no subgraph $G'$ with $m'$ edges has $m'=f(G')$.
\end{proof}

\subsection{$\Gamma$-colored-Laman circuits}
Let $(G,\bgamma)$ be a colored graph.  We define $(G,\bgamma)$ to be a \emph{$\Gamma$-colored-Laman circuit}
if it is edge-wise minimal with the property of being not $\Gamma$-colored-Laman sparse.  More formally,
$(G,\bgamma)$ is a $\Gamma$-colored-Laman circuit if:
\begin{itemize}
\item $(G,\bgamma)$ is not $\Gamma$-colored-Laman sparse
\item For all colored edges $ij\in E(G)$, $(G-ij,\bgamma)$ is $\Gamma$-colored-Laman sparse
\end{itemize}
As the terminology suggests, $\Gamma$-colored-Laman circuits are the circuits of the matroid that has, as its bases,
$\Gamma$-colored-Laman graphs.  The following lemmas are immediate from the definition.
\begin{lemma}\lemlab{not-gamma-laman-sparse-implies-circuit}
Let $(G,\bgamma)$ be a colored graph.  If $(G,\bgamma)$ is not $\Gamma$-colored-Laman sparse, then
it contains a $\Gamma$-colored-Laman circuit as a subgraph.
\end{lemma}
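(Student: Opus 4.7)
The proof plan is to use the standard matroid-theoretic observation that every dependent set contains a circuit, adapted to the sparsity setting. Since $G$ is finite, the family of subgraphs of $G$ that fail to be $\Gamma$-colored-Laman sparse is a finite nonempty collection (it contains $G$ itself by hypothesis), so we may pick a subgraph $H \subseteq G$ that is edge-wise minimal among those that are not $\Gamma$-colored-Laman sparse.

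I then claim that $H$ is a $\Gamma$-colored-Laman circuit. The first defining condition holds by the choice of $H$. For the second, let $ij \in E(H)$ be any edge. The subgraph $H - ij$ is a proper edge-subgraph of $H$, so by minimality it is $\Gamma$-colored-Laman sparse; that is, every subgraph $H'$ of $H - ij$ satisfies $|E(H')| \le h(H')$. But any subgraph of $H - ij$ is in particular a subgraph of $G$, so this verifies the sparsity condition for all subgraphs of $H - ij$, giving exactly the second bullet in the definition of a $\Gamma$-colored-Laman circuit.

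No step here is a real obstacle: the whole argument is an instance of the general principle that minimal counterexamples to a hereditary property are the circuits of the matroid whose independent sets satisfy the property. The only thing one needs to check is that $\Gamma$-colored-Laman sparsity is indeed hereditary under passing to subgraphs, which is immediate from its definition (the condition is universally quantified over subgraphs, so a subgraph inherits it). Thus the plan is simply to formalize the minimal-counterexample argument and read off the two bullets of the circuit definition.
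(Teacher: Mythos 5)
Your proof is correct and is exactly the minimal-counterexample argument the paper has in mind (the paper states this lemma as ``immediate from the definition'' and gives no further detail); the hereditary nature of the sparsity condition, which you rightly flag as the only point needing a check, is indeed the whole content.
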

\begin{lemma}\lemlab{gamma-laman-circuit-sparsity}
Let $(G,\bgamma)$ be a colored graph with $n$ vertices and $m$ edges.
Then $(G,\bgamma)$ is a $\Gamma$-colored-Laman circuit if and only if:
\begin{itemize}
\item The number of edges $m = f(G)$
\item For all subgraphs $G'$ of $G$, on $m'$ edges, $m' < f(G')$
\end{itemize}
\end{lemma}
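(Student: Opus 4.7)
The plan is to unpack the definition of a $\Gamma$-colored-Laman circuit (edge-wise minimal graph that fails $\Gamma$-colored-Laman sparsity) and translate it directly using $h = f - 1$. The key observation is the equivalence $m' \le h(G') \Longleftrightarrow m' < f(G')$, which lets us convert between the two sparsity frameworks with no arithmetic.

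For the backward direction, I would assume $m = f(G)$ and $m' < f(G')$ for every proper subgraph $G'\subsetneq G$. Since $m = f(G) > f(G)-1 = h(G)$, the whole graph $G$ already violates $\Gamma$-colored-Laman sparsity. To verify minimality, I would remove an arbitrary edge $e\in E(G)$ and observe that every subgraph of $G-e$ is a proper subgraph of $G$; thus the hypothesis $m' < f(G')$ on proper subgraphs yields $m' \le f(G')-1 = h(G')$ throughout $G-e$, so $G-e$ is $\Gamma$-colored-Laman sparse. This gives minimality and hence the circuit property.

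For the forward direction, I would start from the fact that a circuit is not $\Gamma$-colored-Laman sparse, so some subgraph $G^*\subseteq G$ has $m^* \ge f(G^*)$. The first step is to argue that the only candidate for such a violating subgraph is $G$ itself: if $G^*\subsetneq G$, pick any edge $e\in E(G)\setminus E(G^*)$, so $G^*\subseteq G-e$; but $G-e$ must be $\Gamma$-colored-Laman sparse by the edge-wise minimality of the circuit, forcing $m^* \le h(G^*) = f(G^*)-1$, a contradiction. Hence $m \ge f(G)$. The reverse inequality $m \le f(G)$ comes from applying sparsity of $G-e$ to all of $G-e$: $m-1 \le h(G) = f(G)-1$. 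Together these give $m = f(G)$. The strict inequality on proper subgraphs then falls out because any proper subgraph lies inside some $G-e$, which is sparse, and sparsity there says $m' \le h(G') = f(G') - 1$.

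There is no real obstacle here; the whole argument is a routine translation of the circuit definition, and the most delicate point is just the $G^*=G$ step in the forward direction, which uses the standard trick of deleting an edge outside $G^*$ and invoking minimality. The proof does not require any of the heavier matroidal machinery from \propref{gamma11} or \propref{gamma22-decomp}, only the definitions of $f$, $h = f - 1$, $\Gamma$-colored-Laman sparsity, and the circuit.
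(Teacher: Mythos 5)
Your proposal is correct, and it coincides with what the paper does: the paper offers no argument at all, declaring the lemma ``immediate from the definition,'' and your write-up is exactly the routine unpacking (via $m'\le h(G')\Leftrightarrow m'<f(G')$ and the delete-an-edge-outside-$G^*$ trick) that the authors have in mind. One small caveat: in the forward direction the step ``$m-1\le h(G)$'' is really ``$m-1\le h(G-e)$,'' and passing from $h(G-e)$ to $h(G)$ uses that $f=2g$ is monotone under taking subgraphs; that monotonicity is part of the conclusion of \propref{gamma11}, so your closing remark that the proof needs none of that machinery is slightly overstated — you do need the (easy but not content-free) fact that deleting an edge cannot increase $f$.
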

Here $f$ is the colored-$(2,2)$ sparsity function defined in \secref{gamma22}.

\section{\GammaOO{} graphs: proof of \propref{gamma11}} \seclab{sparse}
With the definitions and main properties of $\Gamma$-$(2,2)$ and $\Gamma$-colored-Laman
graphs developed, we prove:
\gammaOOmatroid
With this, the proof of \propref{gamma22-decomp} is also complete.  The rest of this section is organized
as follows: first we prove that the $\Gamma$-$(1,1)$ graphs give the bases of a matroid and then
we argue that the rank function of this matroid is, in fact, the function $g$, defined in
\secref{gamma22}.

We recall from \secref{closed-paths}
that, for a marked colored graph $(G,\bgamma,B)$ with a fixed spanning forest
$F$, the the map $\rho$, defined in \secref{colored-graphs} induced a map from
$(G,\bgamma,B,F)$ to $E_{\Gamma_k,n}$, the ground set of the matroid $M_{\Gamma_k,n}$
from \secref{group-matroid}.  We adopt the notation of \secref{closed-paths}, and
denote the image of this map by $A(G,B,F)$.

We start by studying $A(G,B,F)$ in more detail.

\subsection{Rank of $A(G,B,F)$}
As defined, the set $A(G,B,F)$ depends on a choice of base vertices for each connected
component and a spanning forest $F$ of $G$.  Since we are interested in constructing a
matroid on colored graphs without additional data, the first structural lemma is that
the rank of $A(G,B,F)$ in $M_{\Gamma_k,n}$ is independent of the choices for $B$ and $F$.
\begin{lemma}\lemlab{rebase}
Let $(G,\bgamma,B)$ be a marked colored graph with connected components
$G_1,G_2,\ldots,G_c$ and fix a spanning forest $F$ of $G$.  Then the rank of $A(G,B,F)$ in the matroid
$M_{\Gamma_k,n}$ is is invariant under changing the base vertices and spanning forest.
\end{lemma}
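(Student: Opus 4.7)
First I would observe that $g_1(A(G,B,F))$ is computed from $n$, the translation subgroup $\Trans(A) = \langle \Trans(\Gamma_{A,1}),\ldots,\Trans(\Gamma_{A,c})\rangle$, and the values $T(\Gamma_{A,\ell})$, so it is entirely determined by the collection of subgroups $\Gamma_{A,\ell}$.  By \lemref{GBF-eq-GammaA}, $\Gamma_{A(G,B,F),\ell} = \rho(\pi_1(G_\ell,b_\ell))$, which does not depend on the spanning forest $F$ at all---even though the specific ground-set elements of $A(G,B,F)$ change with $F$, the subgroups they generate do not.  Thus $g_1(A(G,B,F))$ is $F$-independent for free, and to prove the lemma it suffices to handle a change of $B$.

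For independence from $B$, replacing a base vertex $b_\ell$ by $b'_\ell$ inside $G_\ell$ conjugates $\rho(\pi_1(G_\ell,b_\ell))$ inside $\Gamma_k$ by $\rho(p_\ell)$, where $p_\ell$ is any path from $b'_\ell$ to $b_\ell$ in $G_\ell$.  Each $T(\Gamma_{A,\ell})$ is immediately invariant under this conjugation, since it records only whether a rotation is present.  The main obstacle is controlling $\rep_{\Gamma_k}(\Trans(A))$, because $\Trans(A)$ is a join of subgroups conjugated by \emph{possibly distinct} elements $\rho(p_1),\ldots,\rho(p_c)$, and such non-uniform conjugation does not a priori preserve joins.

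By \propref{rad-rep-T-invariant}, it suffices to check that $\Rad(\Trans(A))$ is invariant, and I would split into cases on $k$.  For $k=3,4,6$, \propref{rad-struct} says the radical of any translation subgroup of $\Gamma_k$ is either trivial or all of $\Trans(\Gamma_k)$, and which case occurs depends only on whether the subgroup is nontrivial---a conjugation-invariant condition.  For $k=2$, a direct calculation using that the rotation generator of $\Gamma_2$ acts on $\Trans(\Gamma_2)=\Z^2$ by $v\mapsto -v$ shows that every subgroup of $\Trans(\Gamma_2)$ is normal in $\Gamma_2$ (any abelian subgroup is automatically closed under $v\mapsto -v$); hence each $\Trans(\Gamma_{A,\ell})$ is literally fixed by conjugation, and so is their join.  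In every case the radical, and therefore $\rep_{\Gamma_k}(\Trans(A))$, is unchanged, completing the plan.
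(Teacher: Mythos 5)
Your proposal is correct and follows essentially the same route as the paper: observe that the rank $g_1(A(G,B,F))$ depends only on the subgroups $\rho(\pi_1(G_\ell,b_\ell))$ (hence is $F$-independent), and that changing base vertices conjugates these subgroups componentwise. The only difference is that where the paper cites \lemref{Mgammaconj} (which rests on \lemref{rad-conj}), you re-derive the conjugation-invariance of the radical of $\Trans(A)$ inline --- correctly isolating the one delicate point, non-uniform conjugation across components --- via the same case split on $k$ that the paper uses to prove \lemref{rad-conj}.
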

\begin{proof}
For convenience, shorten the notation $A(G,B,F)$ to $A$.
By \lemref{GBF-eq-GammaA} $\rho(\pi_1(G_\ell, v_\ell)) = \Gamma_{A, \ell}$.
Changing the spanning forest
$F$ just picks out a different set of generators for $\pi_1(G_\ell, v_\ell)$, and so does not
change $\Gamma_{A, \ell}$, and thus the rank in $M_{\Gamma_k,n}$, which does not
depend on the generating set, is unchanged.

To complete the proof, we show that changing the base vertices corresponds, in $E_{\Gamma_k,n}$,
to applying the conjugation operation defined in \secref{group-matroid} to $A$.
Suppose that $G$ is connected and fix a spanning tree $F$ and a base vertex $b$.
If $P$ is a closed path starting and ending at $b$, for any other vertex $b'$ there is a path $P'$ that:
starts at $b'$, goes to $b$ along a path $P_{bb'}$, follows $P$, and then returns from $b$ to $b'$
along $P_{bb'}$ in the other direction.  We have $\rho(P') = \rho(P_{bb'})\rho(P)\rho(P_{bb'})^{-1}$, so
$P$ and $P'$ have conjugate images.  Thus changing base vertices corresponds to conjugation, and by
\lemref{Mgammaconj} we are done after considering connected components one at a time.
\end{proof}
In light of \lemref{rebase}, when we are interested only in the rank of $A(G,B,F)$, we can freely
change $B$ and $F$.  Thus, we define the notation $A(G)$ to suppress the dependence on $B$ and $F$.

\subsection{Adding or deleting a colored edge and $A(G)$}
In the proof of the basis exchange property, we will need to start with a
$\Gamma$-$(1,1)$ graph, and add a colored edge to it.  There are two possibilities:
the edge $ij$ is in the span of some connected component $G_i$ of $G$ or it is not.  Each
of these has an interpretation in terms of how $A(G+ij)$  is different from $A(G)$.
\begin{lemma}\lemlab{add-edge-AG}
Let $(G,\bgamma)$ be a colored graph and let $ij$ be a colored edge.  Then:
\begin{itemize}
\item[\textbf{(A)}] If the edge $ij$ is in the span of a connected component, $G_\ell$ of $G$, then
$A(G+ij)$ is $A(G)+(\gamma,\ell)$, where $\gamma$ is the image of the fundamental closed
path of $ij$ with respect to some spanning tree and base vertex of $G_\ell$.
\item[\textbf{(B)}] If the edge $ij$ connects two connected components $G_\ell$ and $G_r$ of $G$, then
$A(G+ij)$ is a fusing operation (defined in \secref{group-matroid}) on $A(G)$ after a conjugation.  In
particular, in the notation of \secref{group-matroid} $A(G)_\ell$ and $A(G)_r$ are fused.  Conversely,
$A(G)$ is a conjugation of a separation of $A(G+ij)$.
\end{itemize}
\end{lemma}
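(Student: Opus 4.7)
The plan is to exploit the freedom granted by \lemref{rebase} to choose base vertices and spanning forests that make the change from $A(G)$ to $A(G+ij)$ completely transparent, and then read off both conclusions directly from the definition of $A(\cdot)$ via fundamental closed paths in \secref{closed-paths}.

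For part \textbf{(A)}, I would fix a spanning forest $F$ of $G$ with base vertices $B$. Since $ij$ lies within a single connected component $G_\ell$, the forest $F$ is also a spanning forest of $G+ij$, the connected components are unchanged, and the same choice of base vertices $B$ applies. The non-tree edges of $G+ij$ with respect to $F$ are the non-tree edges of $G$ together with $ij$, so all fundamental closed paths coming from non-tree edges of $G$ contribute the same elements to $A(G+ij)$ as they did to $A(G)$, while $ij$ contributes exactly one new element $(\gamma,\ell)$, where $\gamma$ is the $\rho$-image of the fundamental closed path of $ij$ with respect to $F$ and $b_\ell$. Thus $A(G+ij) = A(G) + (\gamma,\ell)$, as claimed.

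For part \textbf{(B)}, I would fix a spanning forest $F$ of $G$ and base vertices $b_\ell \in G_\ell$ and $b_r \in G_r$. Then $F' = F + ij$ is a spanning forest of $G+ij$; the components $G_\ell$ and $G_r$ have merged, and I would take $b_\ell$ as the base vertex of the merged component. The non-tree edges of $G+ij$ with respect to $F'$ are exactly the non-tree edges of $G$ with respect to $F$, so no new fundamental closed paths appear and none disappear; what changes is the indexing and $\rho$-value of those coming from former $G_r$-edges. A former $G_\ell$-edge contributes the same element, still with index $\ell$, as before. For a former $G_r$-edge, its fundamental closed path with respect to $F'$ travels from $b_\ell$ along the tree through $ij$ to $b_r$, executes the old fundamental closed path in $G_r$, and returns by the reverse tree path. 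Writing $\alpha$ for the $\rho$-image of the tree path from $b_\ell$ to $b_r$ in $F'$, this replaces each old element $(\gamma, r)$ of $A(G)$ by $(\alpha \gamma \alpha^{-1}, \ell)$. This is precisely the effect of first conjugating the $r$-part of $A(G)$ by $\alpha$ in the sense of \secref{Mkn-conj} and then fusing $A_\ell$ and $A_r$ in the sense of \secref{Mkn-fuse}. The converse direction follows by running the identification backwards: undo the fusing by separating off the elements that came from former $G_r$-edges into index $r$, then conjugate those by $\alpha^{-1}$ to recover $A(G)$.

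The main obstacle will be keeping the bookkeeping of base points and the conjugating element $\alpha$ straight, and matching the resulting manipulation cleanly to the formal definitions of fusing, separation, and conjugation in \secref{group-matroid}. There are no genuine group-theoretic difficulties: this is covering-space bookkeeping of the sort already used in the proof of \lemref{rebase}, and once the spanning forest and base vertex choices are fixed as above, the translation into the combinatorial operations on $E_{\Gamma_k,n}$ is forced.
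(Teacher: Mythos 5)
Your proposal is correct and follows essentially the same route as the paper's proof: part (A) by observing that the spanning forest is unchanged and exactly one new fundamental closed path appears, and part (B) by putting $ij$ into the spanning forest and noting that the fundamental closed paths of former $G_r$-edges get conjugated (by the $\rho$-image of the tree path between the base vertices) and re-indexed, which is exactly a conjugation followed by fusing. Your write-up is in fact more explicit than the paper's, which leaves the identification of the conjugating element implicit.
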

\begin{proof}
Statement \textbf{(A)} follows from the fact that if we pick a base vertex and spanning tree of $G_\ell$, then adding the
colored edge $ij$ to $G_\ell$ induces exactly one new fundamental closed path.

For statement \textbf{(B)}, w.l.o.g., assume that $G$ has two connected components and $ij$ connects them.  Since
$ij$ is in any spanning tree of $G+ij$, it follows that every fundamental closed path in $G+ij$  has $\rho$-image
conjugate to a closed path in $G$, so $A(G+ij)$ consists of group elements conjugate to elements in $A(G)_\ell$ and
$A(G)_r$ as required.  The converse is clear since the inverse of a conjugation is a conjugation, and the inverse
of fusing is separating.
\end{proof}

\subsection{$\Gamma$-$(1,1)$ graphs and tight independent sets in $M_{\Gamma_k,n}$}
$\Gamma$-$(1,1)$ graphs $(G,\bgamma)$ have a simple characterization in terms of $A(G)$: they
correspond exactly to the situations in which $A(G)$ is tight and independent.

\begin{lemma}\lemlab{gamma11-tight}
Let $(G,\bgamma)$ be a colored graph.  Then $(G,\bgamma)$ is $\Gamma$-$(1,1)$ if and only if $A(G)$
is tight and independent in $M_{\Gamma_k,n}$.
\end{lemma}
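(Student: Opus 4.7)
The plan is to translate between the graph-theoretic data of $(G,\bgamma)$ and the matroid-theoretic data of $A(G)\subset E_{\Gamma_k,n}$ via a short dictionary, then verify each direction by direct calculation with the rank function $g_1$.

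First I would set up the dictionary. By \lemref{GBF-eq-GammaA}, $\Gamma_{A(G),\ell} = \rho(\pi_1(G_\ell, b_\ell))$; hence $T(\Gamma_{A(G),\ell}) = T(G_\ell)$, and assembling over components, $\rep_{\Gamma_k}(\Trans(A(G))) = \rep_{\Gamma_k}(G)$. A slot $A_\ell$ is nonempty iff the component $G_\ell$ has a non-tree edge with respect to the fixed spanning forest iff $G_\ell$ contains a cycle, so $c(A(G))$ equals the number of cyclic components of $G$. A standard edge count gives $|A(G)| = m - n + c$ for a graph with $m$ edges, $n$ vertices, and $c$ components.

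For the forward direction, assume $(G,\bgamma)$ is $\Gamma$-$(1,1)$ and set $d = \tfrac{1}{2}\rep_{\Gamma_k}(\Trans(\Gamma_k))$. The map-graph hypothesis forces every component to have a cycle, so $c(A(G)) = c$, and the edge count $m = n+d$ yields $|A(G)| = c + d = c(A(G)) + d$, which is the size condition for tightness. The rotation condition makes $T(G_\ell) = 0$ for every component, so $\sum_i T(\Gamma_{A(G),i}) = 2(n-c)$, and the third clause of $\Gamma$-$(1,1)$ is precisely $\rep_{\Gamma_k}(\Trans(A(G))) = \rep_{\Gamma_k}(\Trans(\Gamma_k))$. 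Plugging into the rank formula gives
\[
g_1(A(G)) = n + \tfrac{1}{2}\rep_{\Gamma_k}(\Trans(\Gamma_k)) - (n-c) = c + d = |A(G)|,
\]
so $A(G)$ is independent, and hence tight.

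For the reverse direction, assume $A(G)$ is tight and independent. I would apply the structural \lemref{Mgammatight} to extract that every nonempty $A_\ell$ contains a rotation and that $\rep_{\Gamma_k}(\Trans(A(G))) = \rep_{\Gamma_k}(\Trans(\Gamma_k))$. Via the dictionary, the first statement becomes the rotation condition on each cyclic component of $G$, and the second is the third clause of $\Gamma$-$(1,1)$. Combining the size condition $|A(G)| = c(A(G)) + d$ with $|A(G)| = m - n + c$ gives $m = n + d - (c - c(A(G)))$; once one establishes $c(A(G)) = c$, the common edge count $m = n+d$ together with every component having a cycle exhibits a spanning map-sub-multigraph (spanning tree plus one cycle-closing edge per component) whose complement has exactly $d$ edges, so $(G,\bgamma)$ is a map-graph plus $d$ extras and thus $\Gamma$-$(1,1)$.

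The main obstacle is the final bookkeeping step in the reverse direction: ruling out tree components, i.e., forcing $c(A(G)) = c$. A tree component inflates $c$ without contributing to $c(A(G))$ or $|A(G)|$, so it must be excluded before the map-graph structure can be recovered. I expect this either to be immediate under the implicit convention that the colored graphs of interest have no tree components, or to follow by combining the tight equation with the $(1,1)$ interpretation of $g_1$ — since tree components leave ``room'' in the rank that, after the structural dissection of \lemref{Mgammatight}, cannot be reconciled with independence.
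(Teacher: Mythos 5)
Your overall route is the same as the paper's: translate between $(G,\bgamma)$ and $A(G)$ using \lemref{GBF-eq-GammaA} together with the identities $\Gamma_{A(G),\ell}=\rho(\pi_1(G_\ell,b_\ell))$ and $|A(G)|=m-n+c$, and then invoke the structural \lemref{Mgammatight}. Your forward direction is complete and in fact more explicit than the paper's (which dismisses it as ``straightforward to check''); the computation $g_1(A(G)) = n + \tfrac12\rep_{\Gamma_k}(\Trans(\Gamma_k)) - (n-c) = c(A(G)) + \tfrac12\rep_{\Gamma_k}(\Trans(\Gamma_k)) = |A(G)|$, including the bookkeeping $\sum_i T(\Gamma_{A(G),i})=2(n-c)$ from the empty slots, is exactly right.

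The obstacle you flag in the reverse direction --- forcing $c(A(G))=c$, i.e.\ ruling out acyclic components --- is a genuine gap, and neither of your proposed escapes closes it. The second one fails for a concrete reason: an empty slot $A_i$ contributes $1-\tfrac12 T(\{\Id\}) = 1-1 = 0$ to $g_1$ and nothing to $|A|$ or to $c(A)$, so both independence and the tightness equation $|A|=c(A)+\tfrac12\rep_{\Gamma_k}(\Trans(\Gamma_k))$ are literally blind to tree components. Concretely, if $H$ is a $\Gamma$-$(1,1)$ graph on $n-1$ vertices and $G=H\sqcup\{v\}$ with $v$ isolated, then $A(G)$ is tight and independent, but $G$ fails both the map-graph and the rotation clauses of the $\Gamma$-$(1,1)$ definition. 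You should be aware that the paper's own proof elides the same point: it asserts that tightness gives every component an edge outside the spanning forest, which does not follow from the definition of tight (that definition only references the \emph{non-empty} parts). The honest repair is to add the standing hypothesis that every connected component of $G$ contains a cycle (equivalently $m_\ell\ge n_\ell$ on each component), or to verify this separately at each invocation of the lemma; with that hypothesis your dictionary gives $c(A(G))=c$ immediately and the rest of your reverse direction goes through.
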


\begin{proof}
We recall that \lemref{Mgammatight} gave a structural characterization of tight independent sets in $M_{\Gamma_k,n}$.
The proof follows by translating the definitions from \secref{group-matroid-sets}
into graph theoretic terms.  In this proof we adopt the notation of
\secref{group-matroid-sets}, and we remind the reader that a subset $A\subset E_{\Gamma_k,n}$ is tight if it is
independent in $M_{\Gamma_k,n}$ and has:
\[
|A| = c(A) + \frac{1}{2}\rep(\Trans(\Gamma_k))
\]
elements.

We first suppose that $A(G)$ is tight, and show that $(G,\bgamma)$ is a $\Gamma$-$(1,1)$ graph.
By construction $A(G)$ has an element $(\gamma,\ell)$ if and only if there is some edge $ij$ in the
connected component $G_\ell$ not in the spanning forest $F$ used to compute $A(G)$.  It then follows that,
if $A(G)$ is tight, each connected component of $G_i$ of $G$ has at least one more edge than
$G_i\cap F$.     %
This implies that $G$ contains a spanning map graph.  Because $|A(G)| = c(A) + \frac{1}{2}\rep(\Trans(\Gamma_k))$
it then follows that $G$ is a map-graph plus $\frac{1}{2}\rep(\Trans(\Gamma_k))$ additional edges,
which are the combinatorial hypotheses for being a $\Gamma$-$(1,1)$ graph.

Now we use the fact that $A(G)$ is independent in $M_{\Gamma_k,n}$.  Independence implies that,
if non-empty, $A(G_i)$ contains a rotation, from which it follows that, for each connected component $G_i$ of $G$,
$\rho(\pi_1(G_i,b_i))$ does as well.  Similarly, independence implies that
$\rep_{\Gamma_k}(\Trans(A(G))) = \rep_{\Gamma}(\Trans(\Gamma_k))$, so the same is true for $\rep_{\Gamma_k}(G)$.
We have now shown that $(G,\bgamma)$ is a $\Gamma$-$(1,1)$ graph.

The other direction is straightforward to check.
\end{proof}

\subsection{$\Gamma$-$(1,1)$ graphs form a matroid}
We now have the tools to prove that the $\Gamma$-$(1,1)$ graphs form the bases of a matroid.  We take as the ground
set the graph $K_{\Gamma_k,n}$ on $n$ vertices that has one copy of each possible directed edge $ij$
or self-loop $ij$ with color $\gamma\in \Gamma_k$.

\begin{lemma}\lemlab{gamma11-bases}
The set of $\Gamma$-$(1,1)$ graphs on $n$ vertices
form the bases of a matroid on $K_{\Gamma_k,n}$.
\end{lemma}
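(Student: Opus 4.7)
The plan is to verify the three basis axioms for the class of $\Gamma$-$(1,1)$ graphs, leveraging the correspondence $G \mapsto A(G)$ established in \lemref{gamma11-tight} between $\Gamma$-$(1,1)$ graphs and tight independent sets of the matroid $M_{\Gamma_k,n}$ from \secref{group-matroid}. Equal size is immediate from the definition, since every $\Gamma$-$(1,1)$ graph has exactly $n + \tfrac{1}{2}\rep_{\Gamma_k}(\Trans(\Gamma_k))$ edges. For non-triviality, I build an explicit example: place a self-loop colored by the order-$k$ rotation $r_k$ at each vertex (so that each vertex is its own connected component with a rotation in the $\rho$-image), and attach $\tfrac{1}{2}\rep_{\Gamma_k}(\Trans(\Gamma_k))$ extra self-loops at one distinguished vertex whose translation colors together with their $r_k$-conjugates generate $\Trans(\Gamma_k)$; this colored graph satisfies all three conditions in the definition.

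For base exchange, let $G_1, G_2$ be $\Gamma$-$(1,1)$ graphs and let $e \in E(G_1) \setminus E(G_2)$. The goal is to produce $f \in E(G_2) \setminus E(G_1)$ such that $G_1 - e + f$ is again $\Gamma$-$(1,1)$. The strategy is to transport the edge exchange into $M_{\Gamma_k,n}$ via \lemref{add-edge-AG}: the removal of $e$ from $G_1$ corresponds either to deleting one element of $A(G_1)$ from a fixed part (when $e$ is not a bridge of $G_1$) or to separating a part and conjugating (when $e$ is a bridge), and dually for the addition of $f$. Lemmas \lemrefX{Mgammaconj} and \lemrefX{Mgammasep} ensure that these operations preserve independence in $M_{\Gamma_k,n}$, so $A(G_1 - e)$ is an independent set that is missing exactly one element from tightness. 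Because $A(G_2)$ is a tight independent set, an exchange argument within $M_{\Gamma_k,n}$ (using the freedom to conjugate base vertices granted by \lemref{rebase}) produces an element of $A(G_2)$ whose incorporation into $A(G_1 - e)$ restores tightness while preserving independence. Translating back through \lemref{add-edge-AG} then yields the desired edge $f \in E(G_2) \setminus E(G_1)$ with $G_1 - e + f$ a $\Gamma$-$(1,1)$ graph.

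The main obstacle is the case where $e$ is a bridge: deletion then strictly increases the number of connected components, the candidate edges $f$ may themselves either re-bridge distinct components or not, and the number of non-empty parts of the underlying $A$-sets changes accordingly. The key additional tool here is \lemref{Mgammafuse}, which guarantees that fusing two non-empty parts of a tight independent set yields a spanning set, thereby certifying the existence of an exchange element even when component structure shifts. Combined with the structural description from \lemref{Mgammatight}, which strictly constrains how the extra $\tfrac{1}{2}\rep_{\Gamma_k}(\Trans(\Gamma_k))$ elements of a tight set are distributed among the parts, this pins down the existence of an exchange element from $A(G_2)$ and hence of the desired $f \in E(G_2) \setminus E(G_1)$, completing the verification of base exchange.
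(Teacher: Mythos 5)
Your overall framework---passing to the matroid $M_{\Gamma_k,n}$ via the correspondence $G\mapsto A(G)$ and \lemref{gamma11-tight}---is the same as the paper's, and your equal-size and non-triviality arguments are fine (your example with rotation self-loops at every vertex plus translation self-loops at one vertex works just as well as the paper's tree-based one). The gap is in base exchange. You delete $e$ from $G_1$ first and then try to import an element of $A(G_2)$ into $A(G_1-e)$ by ``an exchange argument within $M_{\Gamma_k,n}$.'' But $A(G_2)$ consists of $\rho$-images of fundamental closed paths taken relative to a spanning forest and base vertices of $G_2$, whereas the element that a candidate edge $f\in E(G_2)\setminus E(G_1)$ contributes to $A(G_1-e+f)$ is the $\rho$-image of its fundamental closed path relative to a spanning forest of $G_1-e+f$. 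These group elements are in general unrelated (the tree paths involved are completely different), so independence or augmentation facts about $A(G_1-e)$ and $A(G_2)$ as subsets of $E_{\Gamma_k,n}$ say nothing about which edges $f$ restore tightness. \lemref{rebase} and \lemref{Mgammaconj} only account for changing base vertices (conjugation), not for the change of forest and of component structure. Nor can you patch this by treating $g$ as the rank function of a matroid on colored edges: that is \lemref{gamma11-rank}, which is proved after, and depends on, the present lemma.

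The paper sidesteps the translation problem by running the exchange in the opposite order: it adjoins the foreign edge $ij$ first, forming $G+ij$, and chooses the spanning forest of $G+ij$ to \emph{contain} $ij$. Then every element of $A(G+ij)$ corresponds to a non-forest edge, hence to an edge of the original $G$; \lemref{add-edge-AG} together with \lemref{Mgammafuse} shows $A(G+ij)$ is spanning but dependent in $M_{\Gamma_k,n}$, so some element can be dropped to leave a tight independent set, and that element necessarily corresponds to an edge $i'j'\in E(G)$, which is exactly the required exchange. (Note also that the basis-exchange axiom as stated only requires existence of \emph{some} pair, so the bridge-case analysis your delete-first order forces on you is avoidable.) If you want to keep your order, you would need to prove a separate augmentation statement directly at the level of colored graphs---roughly, that if $A(G_1-e)$ is independent but not spanning then some edge of $G_2$ strictly increases its rank---which is essentially the content of the paper's later \lemref{gamma11-rank} and requires its own argument.
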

\begin{proof}
We check the basis axioms (defined in \secref{matroid-prelim}).

\noindent
\textbf{Non-triviality:}
There is some \GammaOO{} graph on $n$ vertices. An uncolored tree plus $\frac{1}{2}\rep(\Gamma) + 1$ edges,
each of which is colored by a standard generator for $\Gamma$ is clearly \GammaOO.  Thus the set of bases is not empty.

\noindent
\textbf{Equal size:}
By definition, all \GammaOO{} graphs have the same number of edges.

\noindent
\textbf{Base exchange:}
The more difficult step is checking basis exchange.  To do this we let $G$ be a \GammaOO{} graph and $ij$ a colored edge
of some other \GammaOO{} graph which is not in $G$.  It is sufficient to check that there is some colored edge $i'j'\in E(G)$
such that $G+ij-i'j'$ is also a \GammaOO{} graph.  Let $(G',\bgamma')$ be the colored graph $(G+ij,\bgamma)$.

Pick base vertices $B$ and a spanning forest $F$ of $G'$ that contains the new edge $ij$.
By \lemref{rebase} forcing $ij$ to be in $F$ does not change the rank of $A(G',B,F)$ in
$M_{\Gamma_k,n}$.  \lemref{add-edge-AG} then implies that $A(G',B,F)$ is spanning, but not
independent, in $M_{\Gamma_k,n}$.  Thus there is an element of
$A(G',B,F)$ that can be removed to leave a tight, independent set.  Since $ij$ is in $F$, this
element does not correspond to $ij$.  The basis exchange axiom then follows from the characterization
of $\Gamma$-$(1,1)$ graphs in \lemref{gamma11-tight}.
\end{proof}

\subsection{The rank function of the $\Gamma$-$(1,1)$ matroid}
Now we compute the rank function of the $\Gamma$-$(1,1)$ matroid.  The
following lemma is immediate from the definitions.
\begin{lemma}\lemlab{g-and-g1}
Let $(G,\bgamma)$ be a colored graph with $n$ vertices and $c$ connected components.
Then
\[
g(G) = n - c + g_1(A(G))
\]
where $g_1$ is the rank function of the matroid $M_{\Gamma_k,n}$.
\end{lemma}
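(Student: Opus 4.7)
The plan is to prove the identity by direct computation, unpacking both definitions and matching terms. Since $g_1$ is a function on subsets of $E_{\Gamma_k,n}$ and $g$ is a function on colored graphs, the content of the lemma is really a bookkeeping statement about how the data of $A(G)$ records the data of $G$, modulo the fact that $A(G)$ lives in an $n$-part ground set while $G$ has only $c$ connected components.

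First, I would note the basic shape of $A(G)$ inside $E_{\Gamma_k,n}$: by the construction in \secref{closed-paths}, the second coordinates of elements of $A(G)$ only index connected components of $G$, so after identifying the $c$ connected components with indices $1,\ldots,c$, exactly $n-c$ of the parts $A(G)_i$ are empty. For the non-empty parts, \lemref{GBF-eq-GammaA} gives $\Gamma_{A(G),\ell}=\rho(\pi_1(G_\ell,b_\ell))$, so by the definitions in \secref{translation-subgroup-colored-graph} we have $T(\Gamma_{A(G),\ell})=T(G_\ell)$; for the $n-c$ empty parts, $\Gamma_{A(G),i}$ is trivial and $T(\Gamma_{A(G),i})=2$. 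Similarly, $\Trans(A(G))=\langle\Trans(\Gamma_{A(G),\ell}):1\le\ell\le c\rangle=\Trans(G,B)$, so $\rep_{\Gamma_k}(\Trans(A(G)))=\rep_{\Gamma_k}(G)$.

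Plugging these identifications into the formula
\[
g_1(A(G))=n+\tfrac{1}{2}\rep_{\Gamma_k}(\Trans(A(G)))-\tfrac{1}{2}\sum_{i=1}^n T(\Gamma_{A(G),i})
\]
splits the $T$-sum as $\sum_{\ell=1}^c T(G_\ell)+2(n-c)$, which simplifies $g_1(A(G))$ to $c+\tfrac{1}{2}\rep_{\Gamma_k}(G)-\tfrac{1}{2}\sum_{\ell=1}^c T(G_\ell)$. Comparing with the definition $g(G)=n+\tfrac{1}{2}\rep_{\Gamma_k}(G)-\tfrac{1}{2}\sum_{i=1}^c T(G_i)$ then gives $g(G)-g_1(A(G))=n-c$, which is the claimed identity.

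There is no real obstacle: the only delicate point is correctly accounting for the empty parts of $A(G)$, which each contribute $T=2$ rather than $0$ to the sum in $g_1$, producing exactly the $n-c$ correction term on the right-hand side.
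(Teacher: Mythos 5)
Your computation is correct and is exactly the unpacking the paper has in mind when it declares the lemma ``immediate from the definitions'' (no proof is given there): the $n-c$ unused parts of $A(G)$ each contribute $T=2$, and the non-empty-indexed parts match up via \lemref{GBF-eq-GammaA}. One small imprecision: \emph{at least} $n-c$ (not exactly $n-c$) of the parts $A(G)_i$ are empty, since a tree component $G_\ell$ also yields an empty $A(G)_\ell$; this is harmless because for such $\ell$ both $T(G_\ell)$ and $T(\Gamma_{A(G),\ell})$ equal $2$, so your split of the $T$-sum over indices $1,\dots,c$ and $c+1,\dots,n$ still goes through verbatim.
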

We can use this to show:
\begin{lemma}\lemlab{gamma11-independent-g}
Let $(G,\bgamma)$ be a colored graph that is independent in the $\Gamma$-$(1,1)$ matroid with $m$ edges.
Then $m=g(G)$.
\end{lemma}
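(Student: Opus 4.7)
The plan is to convert the claim into a size-equals-rank statement in the matroid $M_{\Gamma_k,n}$ via the map $G \mapsto A(G)$ from \secref{closed-paths}. Suppose $(G,\bgamma)$ is independent in the $\Gamma$-$(1,1)$ matroid with $m$ edges. By \lemref{gamma11-bases} it extends to a basis $G^*$, i.e., a $\Gamma$-$(1,1)$ graph, and by \lemref{gamma11-tight}, $A(G^*)$ is tight and independent in $M_{\Gamma_k,n}$.

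Next, I would show that $A(G)$ inherits independence from $A(G^*)$ by deleting the edges of $E(G^*) \setminus E(G)$ one at a time and tracking $A(\cdot)$ via \lemref{add-edge-AG}. Deleting an edge that lies on a cycle of the current graph simply removes one element from $A$, so independence passes to the subset; deleting a bridge changes $A$ by a conjugation (\lemref{Mgammaconj}) followed by a separation (\lemref{Mgammasep}), each of which preserves independence in $M_{\Gamma_k,n}$. By induction $A(G)$ is independent, and \lemref{rebase} guarantees that this conclusion is intrinsic to $G$ and does not depend on the particular base-vertex/spanning-forest choices made along the way.

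To finish, note that if $c$ is the number of connected components of $G$, then any spanning forest of $G$ has $n-c$ edges, and the remaining $m-(n-c)$ edges are in bijection with the elements of $A(G)$; combined with the independence identity $|A(G)| = g_1(A(G))$ and with \lemref{g-and-g1}, we obtain $m = (n-c) + g_1(A(G)) = g(G)$. The main obstacle is the bridge deletion step: one must verify that the ``empty slot'' required by the separation operation of \secref{group-matroid} really does appear whenever a bridge is deleted (precisely because a new connected component is born at that moment), and that the conjugation-then-separation pair really is the reverse of the fusing operation on $A$ described in \lemref{add-edge-AG}\textbf{(B)}. Once this correspondence is pinned down, the argument becomes routine bookkeeping.
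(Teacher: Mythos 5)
Your proposal is correct and follows essentially the same route as the paper's proof: extend $(G,\bgamma)$ to a $\Gamma$-$(1,1)$ basis, invoke \lemref{gamma11-tight}, and peel off edges one at a time using \lemref{add-edge-AG} together with \lemref{Mgammaconj} and \lemref{Mgammasep} to handle the bridge case. The only (immaterial) difference is bookkeeping: the paper carries the equality $m = g(G)$ through each deletion, while you carry independence of $A(\cdot)$ and do the count $m = (n-c) + g_1(A(G)) = g(G)$ once at the end via \lemref{g-and-g1}.
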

\begin{proof}
By definition $(G, \bgamma)$ is a subgraph of some \GammaOO{} graph $(G', \bgamma')$.  By \lemref{gamma11-tight},
$m' = g(G')$, where $m'$ is the number of edges of $G'$.  It suffices to show that deleting an edge preserves this equality
and independence of $A(G')$.  By \lemref{add-edge-AG}, deleting an edge is equivalent to either removing an element from
$A(G')$ or separating and conjugating $A(G')$ and these both preserve independence
of $A(G')$.  In the first case, $g_1(A(\cdot))$ drops by $1$ while $n'$ and $c'$ remain constant, and in the second
case $n'$ and $g_1(A(\cdot))$ remain constant while $c'$ increases by $1$.
\end{proof}
We can now compute the rank function of the $\Gamma$-$(1,1)$ matroid.
\begin{lemma}\lemlab{gamma11-rank}
The function $g$ is the rank function of the $\Gamma$-$(1,1)$ matroid.
\end{lemma}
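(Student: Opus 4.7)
The plan is to verify that $g$ satisfies the matroid rank function axioms (from \secref{rank-axioms}), yielding a matroid $\mathcal{M}_g$, and then identify $\mathcal{M}_g$ with the $\Gamma$-$(1,1)$ matroid $\mathcal{M}$ (established in \lemref{gamma11-bases}) by showing they share the same bases. Non-negativity of $g$ is immediate. Monotonicity, submodularity, and normalization I verify edge-by-edge, using the decomposition $g(G) = n - c(G) + g_1(A(G))$ from \lemref{g-and-g1} and the corresponding properties of $g_1$ from \propref{Mgammarank}. When an edge $ij$ is added to $G$, \lemref{add-edge-AG} gives two cases. If $ij$ lies in the span of a single connected component, then $n$ and $c$ are unchanged and $A$ gains one element, so $g$ inherits monotonicity, submodularity, and normalization directly from $g_1$. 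If $ij$ connects two components $G_\ell, G_r$, then $c$ drops by $1$ (contributing $+1$ to $g$) while $A$ undergoes a conjugation followed by a fusing of parts $\ell$ and $r$; conjugation preserves $g_1$ (\lemref{Mgammaconj}), so one analyzes only the fuse.

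The heart of the argument is a case analysis on the fuse showing $\Delta g_1 \in \{-1, 0\}$, and hence $\Delta g \in \{0, 1\}$. The key structural observation is that new translations in $\Trans(A')$, which would raise $\rep_{\Gamma_k}(\Trans(A'))$, can only arise from products of rotations in both parts being fused, in which case $T(\Gamma_{A,\ell}) = T(\Gamma_{A,r}) = 0$ already and the $T$-contribution cannot change; conversely, if $T$-contributions change (one part goes from ``no rotation'' to ``has rotation'' after fusing), then one of the parts had only translations, so no new translation can arise as a rotation-rotation product. In both subcases the sum of increments is controlled, and using \propref{rad-struct} and \lemref{trrep} one sees that in $\Gamma_2$ the translation subgroup gains at most one new generator per fuse (the product of two rotations from opposite parts), bounding $\Delta \rep \le 2$ when both parts already have rotations. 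Submodularity follows from the local form \eqref{local-submodular} applied to this same case analysis, together with submodularity of $g_1$.

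With $g$ established as a matroid rank function, it defines a matroid $\mathcal{M}_g$ whose bases are the maximum subgraphs $G$ of $K_{\Gamma_k,n}$ with $|E(G)| = g(G)$, necessarily of size $g(K_{\Gamma_k,n}) = n + \tfrac{1}{2}\rep_{\Gamma_k}(\Trans(\Gamma_k))$. For any such basis $G$, expanding the equality $|E(G)| = n + \tfrac{1}{2}\rep_{\Gamma_k}(G) - \tfrac{1}{2}\sum_i T(G_i) = n + \tfrac{1}{2}\rep_{\Gamma_k}(\Trans(\Gamma_k))$ and using $\rep_{\Gamma_k}(G) \le \rep_{\Gamma_k}(\Trans(\Gamma_k))$ and $T(G_i) \ge 0$ forces $\rep_{\Gamma_k}(G) = \rep_{\Gamma_k}(\Trans(\Gamma_k))$ and $T(G_i) = 0$ for every component, i.e., every component contains a rotation. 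Counting $|E(G)| - (n - c) = c + \tfrac{1}{2}\rep_{\Gamma_k}(\Trans(\Gamma_k))$ non-tree edges with at least one per component then forces $G$ to be a spanning map-graph plus $\tfrac{1}{2}\rep_{\Gamma_k}(\Trans(\Gamma_k))$ additional edges. These are exactly the $\Gamma$-$(1,1)$ conditions, so $\mathcal{B}(\mathcal{M}_g)$ coincides with the set of $\Gamma$-$(1,1)$ graphs, hence $\mathcal{M}_g = \mathcal{M}$ and $g$ is the rank function of $\mathcal{M}$.

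The main obstacle will be the fusing case of step (a): without the structural observation that $\Delta \rep$ and $\Delta T$ are never both positive, one could have $\Delta g \ge 2$, which would preclude $g$ from being a matroid rank function at all. This dichotomy relies essentially on the classification of subgroups of $\Gamma_k$ in \propref{rad-struct} and on the explicit description of how rank-$1$ translation subgroups interact with rotations in $\Gamma_2$ versus $\Gamma_k$ for $k=3,4,6$.
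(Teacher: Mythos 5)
Your route is genuinely different from the paper's: you try to verify the rank-function axioms for $g$ directly and then match bases, whereas the paper never proves submodularity of $g$ head-on. Instead it first establishes base exchange for the $\Gamma$-$(1,1)$ graphs (\lemref{gamma11-bases}), so that the matroid already exists, and then computes its rank function by greedily growing a spanning forest into a maximal independent subgraph and counting $n-c+g_1(A(G))$ edges; submodularity of $g$ then comes for free as a property of any matroid rank function. Your normalization/monotonicity analysis of the two cases of \lemref{add-edge-AG} (within-component versus joining, with the fuse bounded via \propref{rad-struct} and \lemref{trrep}) is essentially sound, and your identification of the maximum-size sets with the $\Gamma$-$(1,1)$ graphs is correct.

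The genuine gap is the submodularity step, which you dispose of in one sentence: ``submodularity follows from the local form \eqref{local-submodular} applied to this same case analysis, together with submodularity of $g_1$.'' This does not follow, because the correspondence $E'\mapsto A(E')$ from edge sets to subsets of $E_{\Gamma_k,n}$ is \emph{not} inclusion-preserving: for nested edge sets $E''\subset E'$ the sets $A(E'')$ and $A(E')$ are built from different spanning forests and base vertices and are related by chains of separations and conjugations (\lemref{add-edge-AG}\textbf{(B)}), not by containment, so submodularity of $g_1$ on $E_{\Gamma_k,n}$ cannot be invoked directly. Concretely, in the only nontrivial case (the new edge $e$ lies within a component of both $E''$ and $E'$), the group elements contributed to $A(E''+e)$ and $A(E'+e)$ are the $\rho$-images of \emph{different} fundamental closed paths, conjugate by the $\rho$-image of a connecting path; to conclude that ``$e$ dependent over $E''$ implies $e$ dependent over $E'$'' you need that the relevant radicals $\Rad(\langle\Gamma_{A(E''),\ell},\Trans(A(E''))\rangle)$ embed, after conjugation, into $\Rad(\langle\Gamma_{A(E'),\ell'},\Trans(A(E'))\rangle)$. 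That requires \lemref{rad-monotone} together with conjugation-equivariance of the radical for arbitrary subgroups, whereas \lemref{rad-conj} is only stated for translation subgroups. This can all be patched, but as written the crux of your argument is asserted rather than proved; the paper's ordering of the lemmas exists precisely to avoid having to prove it.
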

\begin{proof}
Let $(G,\bgamma)$ be an arbitrary colored graph with $n$ vertices and $c$ connected components.  As discussed
in \secref{matroid-prelim}, the rank of $(G,\bgamma)$ in the $\Gamma$-$(1,1)$ matroid is the maximum size of the
intersection of $G$ with a $\Gamma$-$(1,1)$ graph.  \lemref{gamma11-independent-g} implies that what
we need to show is that a maximal independent subgraph $(G',\bgamma)$ of $(G,\bgamma)$ has $g(G)$ edges.

We construct $G'$ as follows.  First pick a base vertex for every connected component of $G$ and a
spanning forest $F$ of $G$.  Initially set $G'$ to be $F$.  Then add edges one at a time
to $G'$ from $G-F$ so that $A(G')$ remains independent in $M_{\Gamma_k,n}$
until the rank of $A(G')$ is equal to that of $A(G)$.  This is possible by the
matroidal property of $M_{\Gamma_k,n}$ and \lemref{rebase}, which says the rank of $A(G')$ is
invariant under the choices of spanning forest and base vertices.

When the process stops, $A(G')$ is independent in $M_{\Gamma_k,n}$, so $G'$ is
in the $\Gamma$-$(1,1)$ matroid by \lemref{gamma11-tight}.  By construction $G'$ has
\[
m' = n - c + g_1(A(G))
\]
edges, which is $g(G)$ by \lemref{g-and-g1}.
\end{proof}

\subsection{Proof of \propref{gamma11}}
The proposition is immediate from Lemmas \ref{lemma:gamma11-bases} and \ref{lemma:gamma11-rank}.
\eop

\section{Cone-$(2,2)$ and cone-Laman graphs}\seclab{cone-sparse}
We now develop the combinatorial language for cone frameworks and direction networks.
Since it runs parallel to that for crystallographic direction networks, but is simpler,
we will be somewhat brief. \figref{cone-graph-examples} shows some examples of colored
graphs defined in this section.
\begin{figure}[htbp]
\centering
\subfigure[]{\includegraphics[width=.3\textwidth]{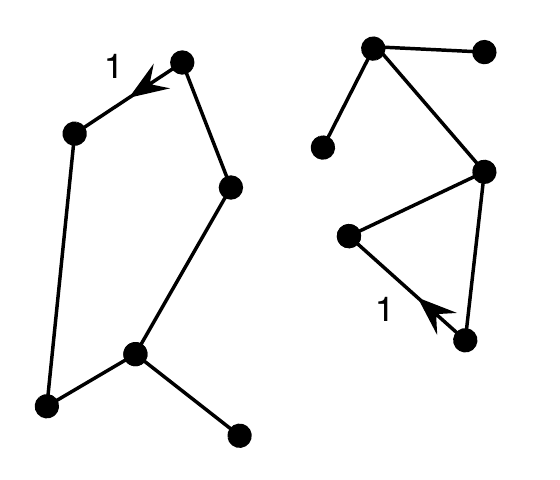}}
\subfigure[]{\includegraphics[width=.3\textwidth]{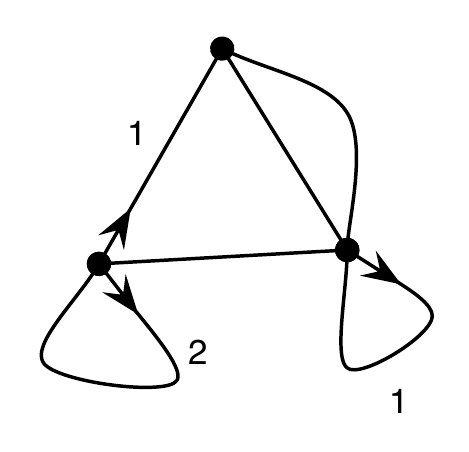}}
\subfigure[]{\includegraphics[width=.25\textwidth]{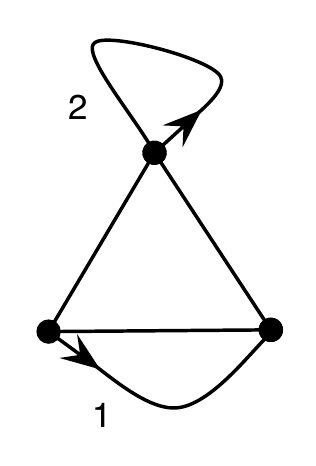}}
\caption{Examples of $\Z/3\Z$ colored graphs:
(a) a cone-$(1,1)$ graph;
(b) a cone-$(2,2)$ graph;
(c) a cone-Laman graph.  Edges without directions and colors have the
identity group element coloring them.
}
\label{fig:cone-graph-examples}
\end{figure}

\subsection{Cone-$(2,2)$ graphs}
Let $(G,\bgamma)$ be a $\Z/k\Z$ colored graph with $n$ vertices.  We define $(G,\bgamma)$
to be a cone-$(2,2)$ graph if:
\begin{itemize}
\item $G$ has $m = 2n$ edges.
\item For all subgraphs with $m'$ edges, $n'$ vertices, and connected components $G_1,G_2,\ldots, G_c$,
\[
m' \le 2n' - \sum_{i=1}^c T(G_i)
\]
\end{itemize}
The quantity $T$ is the same one defined in \secref{gamma22}, since all elements of $\Z/k\Z$ are represented
by rotations. If only the second condition holds, then $(G,\bgamma)$ is defined to be
\emph{cone-$(2,2)$ sparse}.

\subsection{Cone-$(1,1)$ graphs}
We define $(G,\bgamma)$ to be a cone-$(1,1)$ graph if $G$ is a map-graph and the cycle in each connected component
has non-trivial $\rho$-image.

The sparsity characterization of cone-$(1,1)$ graphs is:
\begin{lemma}\lemlab{cone-11-sparse}
The cone-$(1,1)$ graphs on $n$ vertices are the bases of a matroid that has as its rank function
\[
r(G') = n' - \frac{1}{2}\sum_{i=1}^c T(G_i)
\]
where $n'$ and $c'$ are the number of vertices and connected components in $G'$.
\end{lemma}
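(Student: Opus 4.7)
The approach parallels the development of Proposition \ref{prop:gamma11} in Section \ref{sec:sparse}, but the arguments simplify substantially because $\Z/k\Z$ is cyclic and every non-identity element is a rotation. In particular, for any subgroup $\Gamma' \leq \Z/k\Z$ we have $T(\Gamma') = 2$ if $\Gamma'$ is trivial and $T(\Gamma') = 0$ otherwise, and there is no translation subgroup or radical to manage.

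First I would construct the $\Z/k\Z$-version of the matroid $M_{\Gamma_k,n}$ from Section \ref{sec:group-matroid}. On the ground set $E_{\Z/k\Z, n} = \{(\gamma, i) : \gamma \in \Z/k\Z, 1 \leq i \leq n\}$, define
\[
g_1(A) \;=\; n - \tfrac{1}{2}\sum_{i=1}^n T(\Gamma_{A, i}),
\]
which simply counts the parts $A_i$ that contain some non-identity element. This is a partition matroid of rank $n$ in which each part has local rank at most $1$ and every identity element is a loop, so non-negativity, monotonicity, submodularity, and normalization are immediate, and fusing and separation correspond to the obvious part-merging operations.

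Next I would imitate the bridge of Section \ref{sec:closed-paths}: for a marked $\Z/k\Z$-colored graph $(G, \bgamma, B)$ with spanning forest $F$, assign to each non-forest edge $ij$ in component $G_\ell$ the element $(\rho(P_{ij}), \ell)$ and collect these into $A(G, B, F) \subset E_{\Z/k\Z, n}$. The analogs of \lemref{rebase} and \lemref{add-edge-AG} are essentially automatic because $\Z/k\Z$ is abelian: changing base vertex or spanning forest does not alter the $\rho$-image on cycles, and adding an edge either contributes a new element to the appropriate part (an internal edge) or fuses two parts (a bridging edge).

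The central identification, an analog of \lemref{gamma11-tight}, is that $(G, \bgamma)$ is cone-$(1,1)$ if and only if $A(G)$ is a basis of $M_{\Z/k\Z, n}$: the map-graph condition records that each non-empty $A(G)_\ell$ is a singleton, and non-triviality of $\rho$ on the cycle in $G_\ell$ records that this singleton is a non-identity element. Basis exchange for cone-$(1,1)$ graphs then lifts from basis exchange in $M_{\Z/k\Z, n}$ exactly as in \lemref{gamma11-bases}. The rank function identity $r(G) = n - c + g_1(A(G))$ is the direct analog of \lemref{g-and-g1} (the $n-c$ coming from empty parts contributing $-1$ apiece), and the formula for $r$ follows as in the proof of \lemref{gamma11-rank} by enlarging a spanning forest one edge at a time while maintaining $M_{\Z/k\Z, n}$-independence. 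The only mild obstacle is the bookkeeping between the graph-theoretic $n$ and the number of parts actually used by $A(G)$; substantively the entire argument is a streamlined specialization of the $\Gamma$-$(1,1)$ proof.
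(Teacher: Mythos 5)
Your proposal is correct and is essentially the paper's intended argument: the paper dispatches this lemma with the one-line remark that it ``follows from a simplification of the arguments in \secref{sparse}'' (or alternatively from Zaslavsky's voltage-graphic matroid theorem), and your write-up is precisely that simplification carried out, with the group-side matroid correctly collapsing to a partition matroid because every non-identity element of $\Z/k\Z$ is a rotation and conjugation is trivial. The details you supply (the analog of $A(G,B,F)$, the identification of cone-$(1,1)$ graphs with bases, and the rank computation $r(G)=n-c+g_1(A(G))$) all check out.
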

\lemref{cone-11-sparse} follows from a simplification of the arguments in
\secref{sparse}, but can also be obtained via \cite[``Matroid Theorem'']{Z82}.

\subsection{Characterization of cone-$(2,2)$ graphs}
From \theoref{edmonds2} and the matroidal \lemref{cone-11-sparse} we get a decomposition
characterization of cone-$(2,2)$ graphs.
\begin{lemma}\lemlab{cone-22-decomp}
Let $(G,\bgamma)$ be a colored graph.  The $(G,\bgamma)$ is cone-$(2,2)$ if and only if it
is the edge-disjoint union of two cone-$(1,1)$ graphs.
\end{lemma}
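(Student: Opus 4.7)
The plan is to mimic exactly the proof of \propref{gamma22-decomp}, using the matroid-union machinery of Edmonds--Rota (Theorems \theoref{edmonds1} and \theoref{edmonds2}) together with the matroid structure of cone-$(1,1)$ graphs already provided by \lemref{cone-11-sparse}.

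First I would observe that the cone-$(2,2)$ sparsity function
\[
f_{\text{cone}}(G') = 2n' - \sum_{i=1}^{c'} T(G'_i)
\]
is exactly twice the cone-$(1,1)$ rank function $r$ from \lemref{cone-11-sparse}: $f_{\text{cone}} = r + r$. Since $r$ is a matroid rank function, it is non-negative, integer-valued, monotone and submodular. These properties are closed under sums, so $f_{\text{cone}}$ inherits them and therefore satisfies the hypotheses of \theoref{edmonds1}. Consequently, the edge sets $E' \subseteq E(G)$ satisfying $|E'| = f_{\text{cone}}(E')$ and $|E''| \le f_{\text{cone}}(E'')$ for all subsets $E'' \subseteq E'$ are the bases of a matroid on the ground set of all possible colored edges on $n$ vertices. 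Specializing to subgraphs of a fixed $G$ with $2n$ edges, these bases are precisely the cone-$(2,2)$ graphs by definition.

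Next I would apply \theoref{edmonds2} to the pair $(r, r)$: its bases are exactly the edge-disjoint unions $A_1 \sqcup A_2$ where each $A_i$ is a basis of the cone-$(1,1)$ matroid, i.e., a spanning cone-$(1,1)$ graph. By the uniqueness of the matroid determined by a submodular function via \theoref{edmonds1}, this matroid coincides with the cone-$(2,2)$ matroid identified in the previous step. Therefore $(G,\bgamma)$ is cone-$(2,2)$ if and only if its edges decompose as the disjoint union of two spanning cone-$(1,1)$ subgraphs, which is the desired statement.

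There is no real obstacle here: every technical ingredient has already been set up. The only thing to double-check is that the ``spanning'' requirement in the statement aligns correctly with the notion of a basis of the cone-$(1,1)$ matroid; this is immediate because a cone-$(1,1)$ graph has $n$ edges on $n$ vertices (being a map-graph) and any basis of the cone-$(1,1)$ matroid on vertex set $V(G)$ automatically spans all of $V(G)$. So the proof reduces to a two-line application of the Edmonds--Rota theorems, exactly paralleling \propref{gamma22-decomp}.
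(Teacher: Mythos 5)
Your proof is correct and follows exactly the route the paper takes: the paper derives this lemma by observing that the cone-$(2,2)$ count is twice the cone-$(1,1)$ rank function of \lemref{cone-11-sparse} and then invoking Theorems \ref{theo:edmonds1} and \ref{theo:edmonds2}, precisely as in the proof of \propref{gamma22-decomp}. Your additional remark reconciling ``spanning'' with the basis property is a sensible sanity check and consistent with the paper's (unstated) reasoning.
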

A corollary, by the Matroid Union \theoref{edmonds2} is:
\begin{lemma}\lemlab{cone-22-matroid}
The family of cone-$(2,2)$ graphs on $n$ vertices gives the bases of a matroid.
\end{lemma}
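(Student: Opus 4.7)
The plan is to deduce the lemma as an immediate application of the Matroid Union Theorem (\theoref{edmonds2}), taking both summand matroids to be the cone-$(1,1)$ matroid. First I would invoke \lemref{cone-11-sparse}, which supplies the rank function $r(G') = n' - \frac{1}{2}\sum_{i} T(G_i)$ of the cone-$(1,1)$ matroid; because $r$ is a matroid rank function, it is automatically non-negative, monotone, and submodular. Thus the cone-$(1,1)$ matroid coincides with $M_r$ in the sense of \theoref{edmonds1}, which is exactly the input format required by the matroid union construction.

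Next, setting $f_1 = f_2 = r$ in \theoref{edmonds2} produces the matroid $M_{2r}$ on the ground set of $\Z/k\Z$-colored graphs on $n$ vertices, whose bases are precisely those subsets that admit a decomposition into two disjoint bases of the cone-$(1,1)$ matroid. Unpacking the definitions, this says that the bases of $M_{2r}$ are exactly the edge-disjoint unions of two cone-$(1,1)$ graphs.

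Finally, \lemref{cone-22-decomp} identifies this class of edge-disjoint-decomposable colored graphs with the family of cone-$(2,2)$ graphs themselves. Chaining these three inputs together, cone-$(2,2)$ graphs are precisely the bases of the matroid $M_{2r}$, establishing the claim.

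The main (and essentially only) obstacle is verifying the submodularity hypothesis needed to feed $r$ into matroid union; but here it comes for free from \lemref{cone-11-sparse}, since the rank function of any matroid is automatically non-negative, monotone, and submodular. Consequently, the argument reduces entirely to assembling previously established pieces, with no new combinatorial computation required.
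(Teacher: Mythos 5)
Your proposal is correct and follows the paper's own route: the paper also obtains this lemma as an immediate corollary of the Matroid Union Theorem (\theoref{edmonds2}) applied to two copies of the cone-$(1,1)$ matroid from \lemref{cone-11-sparse}, combined with the decomposition characterization in \lemref{cone-22-decomp}. You have simply spelled out the assembly in more detail than the paper, which states it in one line.
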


\subsection{Cone-Laman graphs}
Let $(G,\bgamma)$ be a $\Z/k\Z$ colored graph with $n$ vertices.  We define $(G,\bgamma)$
to be a cone-Laman graph if:
\begin{itemize}
\item $G$ has $m = 2n-1$ edges.
\item For all subgraphs with $m'$ edges, $n'$ vertices, and connected components $G_1,G_2,\ldots, G_c$,
\[
m' \le 2n' - 1 - \sum_{i=1}^c T(G_i)
\]
\end{itemize}
If only the second condition holds, we define $(G,\bgamma)$ to
be \emph{cone-Laman-sparse}.

The relationship between cone-Laman and cone-$(2,2)$ graphs is similar to that from the crystallographic case,
and has the same proof.
\begin{lemma}\lemlab{cone-laman-doubling}
Let $(G, \bgamma)$ be a $\Z/k\Z$-colored graph.  Then, $(G,\bgamma)$ is a cone-Laman graph
if and only if $G$ becomes a cone-$(2,2)$ graph after doubling any edge.
\end{lemma}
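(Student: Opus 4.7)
The plan is to mirror the proof of Proposition~\ref{prop:doubleedge}, using the cone sparsity function $f(G') = 2n' - \sum_i T(G'_i)$ so that the cone-Laman condition is exactly $m' \le f(G') - 1$ on every subgraph together with edge count $m = f(G) - 1$. The central observation that makes everything go through is that doubling a directed edge with the same color does not change the $\rho$-image of $\pi_1$ on any connected subgraph containing the edge: the new fundamental cycle has $\rho$-image $\gamma \gamma^{-1} = e$, so it adds nothing to the subgroup generated. Consequently $T(\cdot)$ is unaffected by doubling. This invariance of $T$ is what allows the ``$-1$'' to migrate between the two sparsity counts.

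For the forward direction, I would assume $(G,\bgamma)$ is cone-Laman, pick any edge $ij$, double it to obtain $G^*$, and verify that $G^*$ is cone-$(2,2)$. The edge count is immediate: $m^* = (2n-1) + 1 = 2n$. For a subgraph $H$ of $G^*$, split on whether $H$ contains both copies of $ij$ or at most one. In the ``at most one'' case, $H$ is (isomorphic to) a subgraph of $G$, so $|E(H)| \le 2|V(H)| - 1 - \sum_i T(H_i) \le 2|V(H)| - \sum_i T(H_i)$. In the ``both copies'' case, let $H'$ be $H$ with one copy of $ij$ removed; then $H'$ is a subgraph of $G$ with the same vertex set, same components, and same $T$ values by the invariance above, so $|E(H)| = |E(H')| + 1 \le 2|V(H')| - \sum_i T(H'_i) = 2|V(H)| - \sum_i T(H_i)$.

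For the reverse direction, I would assume that doubling any edge of $G$ yields a cone-$(2,2)$ graph. The edge count $m = 2n-1$ drops out by subtracting one from the edge count of any doubled version. Now let $H$ be a subgraph of $G$ with $m' \ge 1$ edges; pick any edge $e$ of $H$, and let $H^*$ and $G^*$ be $H$ and $G$ with $e$ doubled (using the same direction and color). Then $H^*$ is a subgraph of $G^*$, which is cone-$(2,2)$ by hypothesis, so $|E(H^*)| \le 2|V(H^*)| - \sum_i T(H^*_i)$. Using $|E(H^*)| = m'+1$, $|V(H^*)| = n'$, and $T(H^*_i) = T(H_i)$ (again by the invariance), this rearranges to $m' \le 2n' - 1 - \sum_i T(H_i)$, which is cone-Laman sparsity on $H$. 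For subgraphs with no edges the inequality is vacuous provided $n' \ge 1$, and the empty subgraph is trivial.

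The only subtle point, and the place where cleanliness is required rather than difficulty, is verifying the invariance $T(H_i) = T(H^*_i)$ when the doubled edge lies in component $H_i$: this uses that $\rho(\pi_1(H_i,b))$ is the same subgroup before and after adding a parallel same-color edge, so whether or not this subgroup contains a (necessarily rotational) nonidentity element is unchanged. Once that is in hand, the two directions are symmetric accountings with the ``$-1$'' shuttled between the edge count and the subgraph bound, and the lemma follows.
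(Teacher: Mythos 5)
Your proof is correct and takes essentially the same route as the paper, which handles this lemma by the same bookkeeping as \propref{doubleedge}: the key point in both is that doubling an edge with the same color leaves the $\rho$-image of every subgraph's fundamental group, and hence every $T(\cdot)$ term, unchanged, so the ``$-1$'' in the cone-Laman count trades exactly against the added edge. The paper leaves this as a one-line verification; your write-up simply makes the accounting explicit.
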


\subsection{Cone-Laman graphs are connected}
Although cone-$(2,2)$ graphs need not be connected, cone-Laman graphs are.
\begin{lemma}\lemlab{cone-laman-connected}
Let $(G,\bgamma)$ be a cone-Laman graph.  Then $G$ is connected.
\end{lemma}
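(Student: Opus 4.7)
The plan is a one-shot sparsity count. I would assume for contradiction that $G$ has $c \geq 2$ connected components $G_1, G_2, \ldots, G_c$, where $G_i$ has $n_i$ vertices and $m_i$ edges, so that $\sum_i n_i = n$ and $\sum_i m_i = m = 2n-1$. The idea is not to apply the cone-Laman sparsity count to $G$ itself (where the ``$-1$'' absorbs only one unit of slack regardless of $c$), but rather to apply it separately to each connected component, and then sum.

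First I would observe that each $G_i$ is, on its own, a connected subgraph of $G$, so the cone-Laman sparsity inequality applied to $G_i$ (viewed as a subgraph with a single connected component) gives
\[
m_i \;\leq\; 2 n_i - 1 - T(G_i).
\]
Summing these $c$ inequalities yields
\[
2n - 1 \;=\; \sum_{i=1}^c m_i \;\leq\; 2n - c - \sum_{i=1}^c T(G_i),
\]
which rearranges to $c - 1 + \sum_{i=1}^c T(G_i) \leq 0$. Since $T(\cdot) \geq 0$ and $c \geq 2$ by assumption, the left side is at least $1$, a contradiction. Hence $c = 1$, and $G$ is connected.

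There is essentially no obstacle here: once one notices that the ``$-1$'' in the cone-Laman count is global rather than per-component, the bound is much tighter when summed over components than when applied to the whole graph, and the contradiction falls out immediately. The only subtlety is making sure the sparsity inequality is legitimately being applied to a subgraph (each connected component is a subgraph), and that the count of components inside that subgraph inequality equals one, which is automatic.
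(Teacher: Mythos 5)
Your proof is correct and is essentially the same argument as the paper's: the paper also notes that $m = 2n-1$ while every subgraph satisfies $m' \le 2n'-1$, and then cites \cite[Lemma 4]{LS08}, which is precisely the per-component summation you carry out explicitly (your inclusion of the nonnegative $T(G_i)$ terms only sharpens an inequality that already suffices). No gap; you have simply inlined the cited counting lemma.
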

\begin{proof}
Let $G$ have $n$ vertices.  By hypothesis, $G$ has $2n-1$ edges, and any subgraph on
$n'$ vertices and $m'$ edges satisfies $m'\le 2n' - 1$.  The lemma then follows from
\cite[Lemma 4]{LS08}.
\end{proof}

\subsection{Cone-Laman circuits}
Let $(G,\bgamma)$ be a $\Z/k\Z$ colored graph with $n$ vertices.  We define $(G,\bgamma)$
to be a \emph{cone-Laman circuit} if:
\begin{itemize}
\item $(G,\bgamma)$ is not a cone-Laman-sparse.
\item $(G-ij,\bgamma)$ is cone-Laman-sparse for any colored edge $ij\in E(G)$.
\end{itemize}
A fact we need is that cone-Laman circuits are always connected.
\begin{lemma}\lemlab{cone-laman-circuits-struct}
Let $(G,\bgamma)$ be a cone-Laman circuit.  Then $G$ is connected, and is either:
\begin{itemize}
\item A connected cone-$(2,2)$ graph, if $T(G)=0$.
\item A graph on $n$ vertices with $m'\le 2n'-2$ for all subgraphs, on $n'$ vertices and $m'$ edges,
if $T(G) = 2$.
\end{itemize}
\end{lemma}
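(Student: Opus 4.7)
The plan is to first prove connectedness via the standard edge-minimality argument, then refine the bound on the edge count and check the subgraph conditions by a case split on $T(G)$.

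First I would show $G$ is connected. Suppose for contradiction that $G$ has components $G_1,\ldots,G_c$ with $c\ge 2$. Any proper subgraph $H$ must be cone-Laman-sparse, since removing any edge outside $H$ yields a supergraph of $H$ that is still cone-Laman-sparse (and hence so is $H$). If every $G_i$ were cone-Laman-sparse, summing the component inequalities $m(G_i)\le 2n(G_i)-1-T(G_i)$ would give $m\le 2n-c-\sum T(G_i) \le 2n-1-\sum T(G_i)$, making $G$ itself sparse, a contradiction. So some component $G_j$ is not cone-Laman-sparse. But then for any edge $e$ in a different component, $G-e$ still contains $G_j$ as a subgraph and so fails to be cone-Laman-sparse, contradicting the circuit hypothesis. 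Hence $c=1$.

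Next I would establish the universal upper bound $m\le 2n-\sum T(H_i)$ where $H_i$ are the components of $G-e$ for any edge $e$: since $G-e$ is cone-Laman-sparse, $m-1\le 2n-1-\sum T(H_i)$. Combined with the fact that $G$ itself violates cone-Laman-sparsity (by the argument above, the witness must be $G$ itself), we get $m\ge 2n-T(G)$.

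Now the case split. If $T(G)=0$: taking any edge $e$ gives $m\le 2n-\sum T(H_i)\le 2n$, and combined with $m\ge 2n$ yields $m=2n$. For the cone-(2,2) subgraph condition, any proper subgraph $H$ with components $H_1,\ldots,H_d$ is cone-Laman-sparse, so $m(H)\le 2n(H)-1-\sum T(H_i) \le 2n(H)-\sum T(H_i)$, while for $H=G$ we have equality $m=2n=2n-T(G)$. Hence $G$ is cone-(2,2). If $T(G)=2$: here $\rho(\pi_1(G,b))$ contains no rotations, so the $\rho$-image of every subgroup (i.e., $\rho(\pi_1(H_i,b_i))$ for any subgraph component $H_i$) is a subgroup containing no rotations, giving $T(H_i)=2$ uniformly. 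In particular, removing any edge $e$ yields components all with $T=2$, so with $d$ components of $G-e$ we get $m-1\le 2n-1-2d$, hence $m\le 2n-2$; combined with $m\ge 2n-2$ we conclude $m=2n-2$. For an arbitrary subgraph $H$ with $d'$ components, if $H=G$ we are done by the previous sentence; otherwise $H$ is cone-Laman-sparse, and since all $T(H_i)=2$ we obtain $m(H)\le 2n(H)-1-2d'\le 2n(H)-3 \le 2n(H)-2$.

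The main obstacle will be the $T(G)=2$ case: one must carefully verify that the absence of rotations in $\rho(\pi_1(G,b))$ forces $T(H_i)=2$ for every component $H_i$ of every subgraph — this is really the statement that $\rho(\pi_1(H_i,b_i))$ injects into (a conjugate of) $\rho(\pi_1(G,b))$ in the relevant sense, so it inherits the ``no rotation'' property. Once this uniform $T$ behavior is established, the arithmetic of the sparsity counts falls out immediately.
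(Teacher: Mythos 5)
Your proposal is correct and follows essentially the same route as the paper's (much terser) proof: proper subgraphs of a circuit are cone-Laman-sparse, so the sparsity violation must be witnessed by $G$ itself, forcing $m = 2n - T(G)$ and connectedness, after which the subgraph bounds follow from sparsity of $G-e$ together with the monotonicity of the $\rho$-image (hence of $T$) under passing to subgraphs. The point you flag as the main obstacle is indeed the step the paper leaves implicit, but it is immediate here since $\Z/k\Z$ is abelian and $T(G)=2$ means the $\rho$-image is trivial, so every subgraph component also has trivial $\rho$-image.
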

\begin{proof}
Let $G$ have $n$ vertices, $m$ edges, and $c$ connected components $G_i$ with $n_i$
vertices and $m_i$ edges.  Because $(G,\bgamma)$ becomes
cone-Laman sparse after the removal of any edge, we must have
\[
m_i = 2n_i - T(G_i)
\]
for every connected component, since otherwise one of the $G_i$ would not be cone-Laman sparse
after removing one edge.  This then implies that none of the $G_i$ is cone-Laman sparse, so there
must be only one of them.

The structural statement then comes from noting that the cone-$(2,2)$ sparsity function bounds the
number of edges in any subgraph.
\end{proof}

\section{Generalized cone-$(2,2)$ graphs}\seclab{gen-cone-sparse}
As a technical tool in the proof of \theoref{direction}, we will use
\emph{generalized cone-$(2,2)$ graphs}.  These are $\Gamma_k$-colored
graphs, which we will define in terms of a decomposition property.

\subsection{Generalized cone-$(1,1)$ graphs}
Let $(G,\bgamma)$ be a $\Gamma_k$-colored graph.  We define $(G,\bgamma)$ to be a
\emph{generalized cone-$(1,1)$ graph} if, after considering the $\rho$-image modulo the translation subgroup,
the result is a cone-$(1,1)$ graph.  Equivalently, $(G,\bgamma)$ is a generalized cone-$(1,1)$ graph if:
\begin{itemize}
\item $G$ is a map graph
\item The $\rho$-image of the cycle in each connected component of $G$ is a rotation
\end{itemize}
The difference between cone-$(1,1)$ graphs and generalized cone-$(1,1)$ graphs is that
the rotations need not be around the same center.  Nonetheless, the proof of the following
lemma is nearly the same as that of \lemref{cone-11-sparse}.
\begin{lemma}\lemlab{gen-cone-11-sparse}
The generalized cone-$(1,1)$ graphs on $n$ vertices are the bases of a matroid that has as its rank function
\[
r(G') = n' - \frac{1}{2}\sum_{i=1}^c T(G_i)
\]
where $n'$ and $c'$ are the number of vertices and connected components in $G'$.
\end{lemma}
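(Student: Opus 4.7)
The plan is to reduce the statement to the already-established \lemref{cone-11-sparse} via the quotient homomorphism $\pi: \Gamma_k \to \Gamma_k/\Trans(\Gamma_k) \cong \Z/k\Z$. Composing each edge color with $\pi$ turns a $\Gamma_k$-colored graph $(G,\bgamma)$ into a $\Z/k\Z$-colored graph $(G,\pi\circ\bgamma)$, and by functoriality the two associated $\rho$-maps on each component $G_i$ satisfy $\rho_{\Z/k\Z} = \pi\circ\rho_{\Gamma_k}$.

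First I would check that the two conditions defining generalized cone-$(1,1)$ graphs translate cleanly under projection. The map-graph condition is purely combinatorial and is preserved. For the cycle condition, recall from \secref{repspace-proof} that a non-identity element of $\Gamma_k$ is a rotation if and only if it does not lie in $\Trans(\Gamma_k)$, equivalently its image under $\pi$ is non-trivial in $\Z/k\Z$. Hence the $\rho$-image of the cycle in a component is a rotation in $\Gamma_k$ if and only if the projected $\rho$-image is a non-identity element of $\Z/k\Z$. Thus $(G,\bgamma)$ is a generalized cone-$(1,1)$ graph if and only if $(G,\pi\circ\bgamma)$ is a cone-$(1,1)$ graph, and the same equivalence holds verbatim on every subgraph.

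Next I would verify that $T(G_i)$ is projection-invariant. By the definition of $T$ and the same reasoning, $\rho(\pi_1(G_i))$ contains a rotation in $\Gamma_k$ if and only if its $\pi$-image contains a non-identity element of $\Z/k\Z$, so the value $T(G_i) \in \{0,2\}$ is unchanged under projection. In particular, the rank formula displayed in the lemma evaluates to exactly the value produced by \lemref{cone-11-sparse} applied to $(G,\pi\circ\bgamma)$.

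Combining these observations, the bases, independent sets, and rank function of the putative matroid of generalized cone-$(1,1)$ subgraphs of a fixed colored graph coincide with those of the cone-$(1,1)$ matroid on the projected colored graph. The matroid axioms and rank formula therefore transport directly. The one step to execute with some care is the bookkeeping across the two ground sets of colored edges, since distinct $\Gamma_k$-colors on an edge $ij$ may project to the same $\Z/k\Z$-color; this is routine because every claim concerns subgraphs of a fixed colored graph and the projection is defined edge-wise. I do not expect any real obstacle beyond this formal translation, which is why the paper rightly notes that the proof is nearly identical to that of \lemref{cone-11-sparse}.
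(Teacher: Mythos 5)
Your proposal is correct, but it takes a genuinely different route from the one the paper indicates. The paper does not write out a proof of \lemref{gen-cone-11-sparse}: it says the argument is ``nearly the same'' as that of \lemref{cone-11-sparse}, which is itself obtained by re-running a simplified version of the Edmonds--Rota submodularity machinery of \secref{sparse} on the relevant ground set. You instead invoke \lemref{cone-11-sparse} as a black box and transport its matroid along the quotient $\pi:\Gamma_k\to\Gamma_k/\Trans(\Gamma_k)\cong\Z/k\Z$, using (implicitly) the elementary fact that the pullback $f\circ\pi$ of a matroid rank function along any map of ground sets is again a matroid rank function. This is cleaner, matches the paper's own primary definition of generalized cone-$(1,1)$ graphs (``the $\rho$-image modulo the translation subgroup is a cone-$(1,1)$ graph''), and spares you from re-verifying submodularity; the cost is that you must state the pullback fact and handle the many-to-one projection on the ground set. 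Your individual reductions are all sound: an element of $\Gamma_k$ is a rotation precisely when its $\pi$-image is non-trivial, $\rho_{\Z/k\Z}(\pi_1(\cdot))=\pi\circ\rho_{\Gamma_k}(\pi_1(\cdot))$, and hence $T(\cdot)$ and the displayed rank formula are projection-invariant.

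The bookkeeping point you flag is real but resolves in one line, and you should say so explicitly rather than calling it routine. The pullback matroid has as its independent sets exactly those $A$ on which $\pi$ is edge-injective and whose projection is cone-$(1,1)$ independent; so the only danger is a generalized cone-$(1,1)$ graph containing parallel edges $(ij,\gamma)$ and $(ij,\gamma')$ with $\pi(\gamma)=\pi(\gamma')$. But two such edges form a cycle, which in a map-graph is the unique cycle of its component, and its $\rho$-image $\gamma(\gamma')^{-1}$ lies in $\ker\pi=\Trans(\Gamma_k)$, contradicting the requirement that this cycle carry a rotation. Hence $\pi$ is edge-injective on every independent set, bases correspond bijectively to cone-$(1,1)$ bases of the projected graph, and the rank function is literally the pullback of the one in \lemref{cone-11-sparse}. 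With that sentence added, your argument is complete.
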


\subsection{Relation to $\Gamma$-$(1,1)$ graphs}
Generalized cone-$(1,1)$ graphs are related to $\Gamma$-$(1,1)$ graphs by this next sequence of
lemmas.
\begin{lemma}\lemlab{gamma11-is-gc11-spanning}
Let $(G,\bgamma)$ be a $\Gamma$-$(1,1)$ graph.  Then $(G,\bgamma)$ contains a
generalized cone-$(1,1)$ graph as a spanning subgraph.
\end{lemma}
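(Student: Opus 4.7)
I would work one connected component at a time. Let $G_1,\ldots,G_c$ be the connected components of $G$, with $n_\ell := |V(G_\ell)|$ and base vertices $b_\ell$. The goal is to construct, inside each $G_\ell$, a connected spanning map-subgraph $H_\ell \subseteq G_\ell$ whose unique cycle has $\rho$-image a rotation. Setting $H := \bigcup_\ell H_\ell$ then gives a spanning subgraph of $G$ whose every connected component is a connected map-graph whose cycle is carried by $\rho$ to a rotation, which is exactly the definition of a generalized cone-$(1,1)$ graph.

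To build $H_\ell$, I would fix a spanning tree $T_\ell$ of $G_\ell$ and consider the fundamental cycles $\{F_e\}$ at $b_\ell$ indexed by the non-tree edges $e \in E(G_\ell) \setminus E(T_\ell)$; these freely generate $\pi_1(G_\ell, b_\ell)$. The main claim is that some non-tree edge $e_\ell$ satisfies $\rho(F_{e_\ell}) \notin \Trans(\Gamma_k)$, i.e.\ $\rho(F_{e_\ell})$ is a rotation. Granting this, I set $H_\ell := T_\ell + e_\ell$, a connected spanning subgraph of $G_\ell$ with $n_\ell$ edges, hence a connected map-graph whose unique cycle is the simple cycle underlying $F_{e_\ell}$.

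The main claim is proved by a normality argument. By the definition of a $\Gamma$-$(1,1)$ graph, $\rho(\pi_1(G_\ell, b_\ell))$ contains a rotation. If every fundamental cycle had $\rho(F_e) \in \Trans(\Gamma_k)$, then, because the $\rho(F_e)^{\pm 1}$ generate $\rho(\pi_1(G_\ell, b_\ell))$ and $\Trans(\Gamma_k)$ is itself a subgroup of $\Gamma_k$, every element of $\rho(\pi_1(G_\ell, b_\ell))$ would also lie in $\Trans(\Gamma_k)$, contradicting the presence of a rotation. Rechoosing the base vertex or orientation for the cycle of $H_\ell$ only conjugates or inverts $\rho(F_{e_\ell})$; since $\Trans(\Gamma_k)$ is a normal subgroup of $\Gamma_k$, this preserves the property of lying outside $\Trans(\Gamma_k)$, so the cycle of $H_\ell$ is carried to a rotation independently of any marking choices. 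The only nontrivial step is the main claim, which is essentially the observation that a product of translations cannot be a rotation.
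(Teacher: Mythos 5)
Your proposal is correct and follows the same route as the paper, whose proof is a one-line assertion that since each component $G_i$ has $T(G_i)=0$, it contains a spanning connected map-graph whose cycle has a rotation as its $\rho$-image. You have simply filled in the details the paper leaves implicit: choosing a spanning tree plus a non-tree edge whose fundamental cycle maps outside $\Trans(\Gamma_k)$, which must exist because the fundamental cycles generate $\rho(\pi_1(G_\ell,b_\ell))$ and a subgroup generated by translations contains no rotation, with normality of $\Trans(\Gamma_k)$ handling the change of base point.
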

\begin{proof}
This follows from the definition, since each connected component $G_i$ of $G$ has $T(G_i)=0$.  It follows
that $G_i$ has a spanning subgraph that is a connected map-graph with its cycle having a rotation as its
$\rho$-image.
\end{proof}

Let $(G,\bgamma)$ be a $\Gamma$-$(1,1)$ graph, and let $(G',\bgamma)$ be a spanning generalized cone-$(1,1)$
subgraph.  One exists by \lemref{gamma11-is-gc11-spanning}. We define $(G',\bgamma)$ to be a \emph{g.c.-basis}
of $(G,\bgamma)$.
\begin{lemma}\lemlab{gc11-circuits}
Let $(G,\bgamma)$ be a $\Gamma_k$-colored $\Gamma$-$(1,1)$ graph for $k=3,4,6$.  Let $(G',\bgamma)$
be a g.c.-basis of $(G,\bgamma)$, and let $ij$ be the (unique) edge in $E(G)-E(G')$.  Then either:
\begin{itemize}
\item The colored edge $ij$ is a self-loop and the color $\gamma_{ij}$ is a translation.
\item There is a unique minimal subgraph $G''$ of $G$, such that the $\rho$-image of $(G'',\bgamma)$
includes a translation, $ij$ is an edge of $G''$, and if $vw\in E(G'')$, then $(G'+ij-vw,\bgamma)$
is also a g.c.-basis of $(G,\bgamma)$.
\end{itemize}
\end{lemma}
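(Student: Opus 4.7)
The plan is to realize $G''$ as the fundamental circuit of the non-basis element $ij$ with respect to the basis $G'$ in the matroid of generalized cone-$(1,1)$ graphs given by \lemref{gen-cone-11-sparse}. Since $k \in \{3,4,6\}$, \lemref{trrep} gives $\frac{1}{2}\rep_{\Gamma_k}(\Trans(\Gamma_k)) = 1$, so by the definition of a $\Gamma$-$(1,1)$ graph, $G$ has exactly one more edge than the spanning map-graph $G'$. Hence $G'$ is a basis of the generalized cone-$(1,1)$ matroid restricted to $E(G)$, and $ij$ is the unique non-basis element.

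First I would invoke the standard fundamental circuit construction: there is a unique circuit $G'' \subseteq E(G') \cup \{ij\}$ containing $ij$, and by matroid basis exchange, for every $vw \in E(G'')$ the set $G' + ij - vw$ is again a basis, i.e., a g.c.-basis of $(G,\bgamma)$. This gives the exchange property claimed in the second case for free.

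Next I would verify that $\rho(G'')$ contains a translation and that $G''$ is the unique minimal subgraph with the listed properties. Using the rank formula $r(K) = n_K - \frac{1}{2}\sum T(K_i)$ from \lemref{gen-cone-11-sparse}, a connected subgraph is independent iff it is a tree or a map-graph whose cycle has rotational $\rho$-image, so any connected dependent set has $\rho$-image containing a translation; circuits are connected (a disconnected dependent set would contain a smaller dependent subset inside one component), so $\rho(G'')$ is as required. Conversely, since each component of $G'$ is a map-graph with rotational cycle, $\rho(G') = \rho(G - ij)$ contains no translations; hence any subgraph $K \subseteq G$ with $ij \in K$ and $\rho(K)$ containing a translation has $ij$ essential for the dependence, and must therefore contain the unique fundamental circuit $G''$ through $ij$. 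The case split is then immediate: Case 1 is the degenerate situation in which $\{ij\}$ is itself already a circuit, namely when $ij$ is a self-loop whose color is a translation (the only way for a single edge to be dependent in this matroid); Case 2 covers all remaining situations, where $G''$ is strictly larger.

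The main obstacle is the detailed classification of circuits of the generalized cone-$(1,1)$ matroid used in the third step, especially the ``dumbbell'' configurations where $G''$ consists of two rotational cycles joined by a path through $ij$. For $k \in \{3, 4, 6\}$ this reduces to \lemref{rotscomm} (two rotations generate a finite cyclic subgroup iff some nontrivial powers commute, equivalently iff they share a rotation center) together with \lemref{rad-conj} (conjugation-invariance of the radical of a translation subgroup, which absorbs the change-of-base-point involved in combining cycles from different parts of $G$). These tools should suffice to verify that in every non-degenerate case the $\rho$-image of the fundamental circuit genuinely contains a translation, completing the identification of $G''$ in Case 2.
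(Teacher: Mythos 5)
Your overall strategy---realize $G''$ as the fundamental circuit of $ij$ with respect to the basis $G'$ in the generalized cone-$(1,1)$ matroid of \lemref{gen-cone-11-sparse}, and obtain the exchange property from standard basis exchange---is exactly the paper's proof, which consists of precisely this observation. Your supporting details (that $G'$ is a basis of the restriction to $E(G)$ because for $k=3,4,6$ there is exactly one extra edge, that circuits are connected by additivity of the rank function over components, and that any dependent subgraph of $G'+ij$ must contain the unique circuit through $ij$, which gives minimality and uniqueness of $G''$) are all correct and supply detail the paper omits.

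There is, however, one genuine gap. The inference ``a connected subgraph is independent iff it is a tree or a map-graph whose cycle has rotational $\rho$-image, \emph{so} any connected dependent set has $\rho$-image containing a translation'' is a non sequitur, and its conclusion is false in general: a connected map-graph whose cycle has trivial $\rho$-image is dependent with trivial $\rho$-image, and a theta/dumbbell whose two independent cycles map to rotations about a \emph{common} center is dependent with finite cyclic $\rho$-image by \lemref{rotscomm}. Neither contains a nontrivial translation, so dependence of $G''$ alone does not give the ``includes a translation'' clause of the second bullet. You flag the dumbbell configuration in your last paragraph as the main obstacle, but the resolution is not merely a local computation with \lemref{rotscomm}: one must invoke the hypothesis that $(G,\bgamma)$ is a $\Gamma$-$(1,1)$ graph, i.e.\ that $\Rad(\Trans(G,B))=\Trans(\Gamma_k)$, so $\Trans(G,B)$ is nontrivial. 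Since every component of the g.c.-basis $G'$ has finite cyclic $\rho$-image and hence contributes no translations, the component of $G$ containing $ij$ must carry a nontrivial translation in its $\rho$-image; if $G''$ were a trivially-colored cycle or a co-centered dumbbell, that component's $\rho$-image would be finite cyclic and $\Trans(G,B)$ would be trivial, a contradiction. With that argument inserted your proof is complete; to be fair, the paper's own two-sentence proof elides this verification entirely.
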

\begin{proof}
If $ij$ is a self-loop colored by a translation, then it is a circuit in the matroid of
generalized cone-$(1,1)$ graphs on the ground set $(G,\bgamma)$.  Otherwise, the subgraph $G''$
the lemma requires is just the fundamental generalized-cone-$(1,1)$ circuit of
$ij$ in $(G',\bgamma)$.
\end{proof}

\subsection{Generalized cone-$(2,2)$ graphs}
Let $(G,\bgamma)$ be a $\Gamma_k$-colored graph.  We define $(G,\bgamma)$ to be a
\emph{generalized cone-$(2,2)$ graph} if it is the union of two generalized cone-$(1,1)$
graphs.  \theoref{edmonds2} implies that:
\begin{lemma}\lemlab{gen-cone22-matroid}
The generalized cone-$(2,2)$ graphs on $n$ vertices give the bases of a matroid.
\end{lemma}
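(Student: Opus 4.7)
The plan is to apply the Matroid Union Theorem (\theoref{edmonds2}) with both input matroids taken to be the generalized cone-$(1,1)$ matroid from \lemref{gen-cone-11-sparse}, on the common ground set of all possible $\Gamma_k$-colored directed edges (with self-loops) on $n$ labeled vertices. Since the bases appearing in the conclusion of \theoref{edmonds2} are exactly edge-disjoint unions of a basis of $M_1$ with a basis of $M_2$, and since the generalized cone-$(2,2)$ graphs are defined precisely as edge-disjoint unions of two generalized cone-$(1,1)$ graphs, the output matroid from \theoref{edmonds2} will have the generalized cone-$(2,2)$ graphs as its bases.

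The first step is to check that the hypotheses of \theoref{edmonds2} are satisfied. By \lemref{gen-cone-11-sparse}, the generalized cone-$(1,1)$ matroid has rank function $r$, and since rank functions of matroids are automatically non-negative, monotone, integer-valued, and submodular, we may view this matroid as arising from \theoref{edmonds1} applied to $f_1 = r$. Taking $f_2 = r$ as well, \theoref{edmonds2} produces the matroid $M_{2r}$ whose bases are exactly the sets of the form $A_1 \sqcup A_2$ with $A_i \in \mathcal{B}(M_{r})$.

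The second step is to translate the conclusion of \theoref{edmonds2} back into graph-theoretic language. A basis of $M_r$ is by definition a generalized cone-$(1,1)$ graph, so a basis of $M_{2r}$ is an edge-disjoint union of two such graphs, which matches the definition of a generalized cone-$(2,2)$ graph from the previous subsection. Thus generalized cone-$(2,2)$ graphs form the bases of a matroid, completing the proof.

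There is essentially no obstacle here: the lemma is exactly a repackaging of matroid union applied to the previously established matroid of generalized cone-$(1,1)$ graphs, analogous to how \lemref{cone-22-matroid} was obtained from \lemref{cone-11-sparse}. The only subtlety worth noting explicitly is that one must use the same ground set (all $\Gamma_k$-colored edges on $n$ vertices) for both copies of the generalized cone-$(1,1)$ matroid, so that \theoref{edmonds2} applies verbatim.
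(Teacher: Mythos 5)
Your proof is correct and is exactly the paper's argument: the paper derives this lemma in one line by citing Theorem \ref{theo:edmonds2} applied to two copies of the generalized cone-$(1,1)$ matroid of \lemref{gen-cone-11-sparse}, which is precisely what you spell out. No further comment is needed.
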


The other fact about generalized cone-$(2,2)$ graphs is their relationship to
$\Gamma$-$(2,2)$ graphs.
\begin{lemma}\lemlab{gamma22-is-cone22-spanning}
Let $(G,\bgamma)$ be a $\Gamma$-$(2,2)$ graph.  Then $(G,\bgamma)$ contains a
generalized cone-$(2,2)$ graph as a spanning subgraph.
\end{lemma}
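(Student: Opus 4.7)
The plan is to assemble the statement directly from two results already in hand: the decomposition theorem \propref{gamma22-decomp}, which writes any $\Gamma$-$(2,2)$ graph as an edge-disjoint union of two spanning $\Gamma$-$(1,1)$ graphs, and \lemref{gamma11-is-gc11-spanning}, which says that every $\Gamma$-$(1,1)$ graph contains a spanning generalized cone-$(1,1)$ subgraph. No further combinatorial machinery should be required.

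More concretely, I would proceed as follows. First, apply \propref{gamma22-decomp} to $(G,\bgamma)$ to obtain spanning $\Gamma$-$(1,1)$ subgraphs $(X,\bgamma|_X)$ and $(Y,\bgamma|_Y)$ whose edge sets partition $E(G)$. Second, apply \lemref{gamma11-is-gc11-spanning} twice: once to $(X,\bgamma|_X)$ to extract a spanning generalized cone-$(1,1)$ subgraph $(X',\bgamma|_{X'})$, and once to $(Y,\bgamma|_Y)$ to obtain $(Y',\bgamma|_{Y'})$. Third, observe that since $E(X)$ and $E(Y)$ are disjoint, so are the subsets $E(X')$ and $E(Y')$. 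The graph $(X'\cup Y', \bgamma|_{X'\cup Y'})$ is then an edge-disjoint union of two spanning generalized cone-$(1,1)$ graphs, so by definition it is a generalized cone-$(2,2)$ graph, and it spans $G$ because $X'$ already does.

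There is no real obstacle here; the only point worth double-checking is that \lemref{gamma11-is-gc11-spanning} really produces a \emph{spanning} generalized cone-$(1,1)$ subgraph (it does, since $T(G_i)=0$ forces each connected component of a $\Gamma$-$(1,1)$ graph to contain a spanning connected map-graph whose unique cycle has rotational $\rho$-image), and that the definition of generalized cone-$(2,2)$ is indeed the edge-disjoint spanning union, consistent with the matroidal framework of \lemref{gen-cone22-matroid} obtained via \theoref{edmonds2}.
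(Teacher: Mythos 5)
Your proof is correct and is essentially the paper's own argument: decompose via \propref{gamma22-decomp} into two edge-disjoint spanning $\Gamma$-$(1,1)$ graphs, apply \lemref{gamma11-is-gc11-spanning} to each, and take the union. The paper states this in one line (citing, apparently by typo, \lemref{cone-22-decomp} where \propref{gamma22-decomp} is meant); you have simply filled in the same details.
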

\begin{proof}
This is immediate from \lemref{cone-22-decomp} and \lemref{gamma11-is-gc11-spanning}.
\end{proof}

\chapter{Direction networks}\chaplab{direction-networks}
\section{Cone direction networks} \seclab{cones}
As a warm up for crystallographic direction networks, we will study \emph{cone direction networks}.  Our main
result on cone direction networks is the natural adaptation of \theoref{direction}.
Full definitions are given in \secref{cone-def}
below.  Genericity means that \theoref{cone-direction} is true for all but a proper algebraic subset of the
space of edge-direction assignments, and will be made precise in \secref{cone-proof}.
\begin{theorem}[\conedirection]\theolab{cone-direction}
A generic realization of a  cone direction network $\Gad$ has a
faithful realization if and only if its associated colored graph is cone-Laman.  This
realization is unique up to scaling.
\end{theorem}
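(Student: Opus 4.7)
The plan is to follow the same overall strategy as the crystallographic Direction Network \theoref{direction}, specialized to the finite cone setting where there is no representation to flex. Everything reduces to a single linear system over the points $(\vec p_i)_{i \in V(G)}$, and the two combinatorial inputs are the collapsing lemma for cone-$(2,2)$ graphs and the edge-doubling characterization of cone-Laman graphs (\lemref{cone-laman-doubling}).

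First I would assemble the direction-network matrix $M(\vec d)$. Using the $\Z/k\Z$-equivariance to descend from $\Gtilde$ to the quotient, each edge $ij \in E(G)$ with color $\gamma_{ij}$ and assigned direction $\vec d_{ij}$ contributes one linear equation
\[
\iprod{\vec p_i - \gamma_{ij}\cdot\vec p_j}{\vec d_{ij}^\perp} = 0
\]
on the $2n$ unknowns. The realization space is $\ker M(\vec d)$, on which scaling acts by homothety. I would then isolate the subspace $\mathcal{K}(G) \subset \ker M(\vec d)$ of collapsed solutions: these assign a constant $\vec c_\ell$ to each vertex of a component $G_\ell$, and rotational symmetry forces $\vec c_\ell = 0$ whenever $\rho(\pi_1(G_\ell))$ contains a nontrivial element, giving $\dim \mathcal{K}(G) = \sum_\ell T(G_\ell)$. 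This subspace sits inside $\ker M(\vec d)$ for every $\vec d$.

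The technical heart of the argument is the cone analog of \propref{crystal-collapse}: for generic $\vec d$, $\ker M(\vec d) = \mathcal{K}(G)$ if and only if $(G, \bgamma)$ is cone-$(2,2)$. The ``if'' direction is where the work is; using the decomposition from \lemref{cone-22-decomp} I would split $G$ into two cone-$(1,1)$ spanning subgraphs $X$ and $Y$. On an individual cone-$(1,1)$ component, the cycle has a nontrivial $\rho$-image (a rotation around some center), and a generic direction network on such a rotationally constrained cycle has only its rotation center as a solution; tree branches then collapse inward by the same generic direction argument. The rank formula of \lemref{cone-11-sparse} yields $n' - \tfrac{1}{2}\sum T(G_i')$ independent constraints per cone-$(1,1)$ factor, and the overlap-graph machinery of \secref{sparse-prelim} is used to show that generic $\vec d$ makes the two cone-$(1,1)$ systems jointly independent on their shared vertices, for a total rank of $2n - \sum T(G_i) = 2n - \dim \mathcal{K}(G)$. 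The ``only if'' direction is immediate by a sparsity count: a violated subgraph inequality produces too many equations on too few free unknowns modulo collapse, forcing extra non-collapsed kernel elements.

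With this in hand, \theoref{cone-direction} is a short deduction. If $(G, \bgamma)$ is cone-Laman, then $G$ is connected (\lemref{cone-laman-connected}) with $T(G) = 0$, so $\mathcal{K}(G) = 0$; by \lemref{cone-laman-doubling}, doubling any edge gives a cone-$(2,2)$ graph whose generic kernel is trivial by the collapsing lemma, and since undoubling removes exactly one independent equation, $\ker M(\vec d)$ is one-dimensional, giving uniqueness up to scaling. Faithfulness on an edge $ij$ follows from the doubled case: a coincidence $\vec p_i = \gamma_{ij}\cdot\vec p_j$ would make the solution satisfy the extra generic equation coming from doubling $ij$ and therefore force it into the trivial kernel. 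Conversely, if $(G, \bgamma)$ fails cone-Laman, then either $\ker M(\vec d)$ has dimension greater than one when the edge count is too low, or a cone-Laman circuit inside $G$ (\lemref{cone-laman-circuits-struct}) generically forces all of its vertices into a collapsed configuration, destroying faithfulness on its edges. The main obstacle in carrying this plan out is the cone-$(2,2)$ collapsing lemma, specifically the step of propagating collapse across the vertices shared by the two cone-$(1,1)$ factors with a genericity guarantee; this is precisely the step that the paper singles out, via \propref{crystal-collapse}, as the main technical content of \chapref{direction-networks}.
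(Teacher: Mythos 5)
Your reduction of \theoref{cone-direction} to the collapsing statement for cone-$(2,2)$ graphs is essentially the paper's argument: the edge-doubling step via \lemref{cone-laman-doubling}, the use of connectivity and $T(G)=0$ to extract a non-collapsed element of the one-dimensional kernel, and the treatment of the converse through cone-Laman circuits and their two structural types all match \secref{cone-proof}. The gap sits in the step you yourself flag as the technical heart. Your proposed mechanism for \propref{cone-collapse} --- that ``a generic direction network on such a rotationally constrained cycle has only its rotation center as a solution; tree branches then collapse inward'' --- fails by a dimension count: a connected cone-$(1,1)$ graph on $n'$ vertices has exactly $n'$ edges, so its direction network imposes at most $n'$ conditions on $2n'$ unknowns and its solution space is at least $n'$-dimensional; a single cone-$(1,1)$ factor never collapses on its own. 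What is actually true (\lemref{cone11-directions}) is that a carefully chosen, highly \emph{non-generic} assignment (the same direction $\vec v^*$ on every edge of a component) pins each vertex to one of $k$ lines and the base-vertex fiber to the rays through $R_k^j\cdot \vec v$; the collapse is produced only by the interaction of the two cone-$(1,1)$ factors, propagated around a directed cycle of the overlap graph by composing the oblique projections $T(\vec v_i,\vec v_j,S_i)$ and invoking \propref{cone-genericity} to make the total scale factor differ from $1$. This is a ``construct one collapsing assignment, then conclude genericity from the rank condition'' argument, not a direct claim that generic directions make the two subsystems jointly independent --- the paper notes explicitly that the direct route used in the finite and periodic cases (separating variables and Laplace expansion) is unavailable here.

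A second, smaller omission: the case $k=2$ must be split off, since $R_2=-I$ makes the scale factor of these projections identically zero (\lemref{scalefactor-blowup}); there the paper instead gives all of $X$ one direction and all of $Y$ a second, so every vertex lies on two distinct lines through the origin (\secref{cone-collapse-2}). If you replace your per-component collapse claim with the line-constraint lemma plus the overlap-graph scale-factor argument (and the separate $k=2$ case), the rest of your deduction goes through as written.
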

In the rest of this section we give the required definitions and indicate the proof strategy.  The
proof is then carried out in Sections \ref{sec:geom}--\ref{sec:cone-proof}.

\subsection{Cone and colored direction networks}\seclab{cone-def}
A \emph{cone direction network} $(\Gtilde, \varphi, \tilde{\vec d})$, is given by a finite graph $\Gtilde$,
a free $\Z/k\Z$-action $\varphi$ on $\tilde G$, and an assignment $\tilde{\vec d}=(\tilde{\vec d}_{ij})_{ij\in E(G)}$
of a \emph{direction} to each edge, such that:
\begin{eqnarray*}
\tilde{\vec d}_{\gamma\cdot ij} = R_{k}^{\gamma}\cdot\tilde{\vec d}_{ ij} & \text{for all $\gamma\in \Z/k\Z$}
\end{eqnarray*}
Recall from \secref{crystal-prelim} that we let $\Z/k\Z$ act on $\R^2$ by mapping the generator to $R_k$,
the counter-clockwise rotation through angle $2\pi/k$ around the origin;
when the context is clear, we will simply write $\gamma\cdot\vec p_i$ for this action.
Note that $\tilde{\vec d}$ is completely defined by assigning a direction to one edge in each
$\Z/k\Z$-orbit of edges in $\tilde{G}$.

By \lemref{cone-lifts}, the combinatorial data of a cone direction network is contained in its colored
quotient graph.  We define a \emph{colored direction network} $(G,\bgamma,\vec d)$ to be a $\Z/k\Z$-colored
graph $(G,\bgamma)$ along with an assignment of a direction to every edge.

\subsection{The realization problem}
The \emph{realization problem} for a cone direction network is to find a point set
$\vec p_i$ for each $i \in V(\tilde G)$ so that each edge $ij\in E(\tilde G)$ is in the direction
$\tilde{\vec d}_{ij}$.  The \emph{realization space} of a cone direction network is defined to be:
\begin{eqnarray*}
\iprod{\vec p_j - \vec p_i}{\tilde{\vec d}^\perp_{ij}} =0 & \text{for all edges $ij\in E(\tilde{G})$} \\
\gamma\cdot \vec p_i = \vec p_{\gamma\cdot i} & \text{for all vertices $i\in V(\tilde{G})$}
\end{eqnarray*}
The unknowns are the points $\vec p_i$ and the given data are the directions $\tilde{\vec d}_{ij}$.  We
denote points in the realization space by $\tilde{G}(\vec p)$

Because the directions $\tilde{\vec d}_{ij}$ respect the $\Z/k\Z$-action $\varphi$ on $\tilde{G}$,
the realization space is identified with the following system (denoted $(G, \bgamma, \vec d)$) defined on the quotient graph
$(G,\bgamma)$:
\begin{equation}
\iprod{\gamma_{ij}^{-1}\vec d^\perp_{ij}}{\vec p_j} + \iprod{\vec d^\perp_{ij}}{-\vec p_i} = 0 \label{colored-cone-system}
\end{equation}
The points $\vec p_i$ for each $i \in V(G)$ are the unknowns and the directions $\vec d_{ij}$ are the given data.
We denote points in the realization space by $G(\vec p)$

The following is immediate from the definitions and \lemref{cone-lifts}.
\begin{lemma}\lemlab{cone-direction-network-lifts}
Given a colored direction network $(G,\bgamma,\vec d)$, its lift to a cone direction network
$(\Gtilde, \varphi, \tilde{\vec d})$ is well-defined, and the realization spaces of $(G,\bgamma,\vec d)$ and
$(\Gtilde, \varphi, \tilde{\vec d})$ are canonically isomorphic and hence of the same dimension.
\end{lemma}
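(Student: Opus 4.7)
The plan is to reduce everything to the covering-space dictionary already established by \lemref{cone-lifts} and then check that the direction data and realization equations transport correctly under this dictionary. I would proceed in three short steps.

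First, I would invoke \lemref{cone-lifts} to see that the underlying graph $(G,\bgamma)$ has a well-defined lift $(\Gtilde,\varphi)$ as a finite graph with a free $\Z/k\Z$-action, and that the projection $\pi : \Gtilde \to G$ is a covering map. Next, I would extend the direction assignment $\vec d$ on $G$ to an assignment $\tilde{\vec d}$ on $\Gtilde$ by the rule $\tilde{\vec d}_{\gamma \cdot ij} := R_k^{\gamma} \cdot \vec d_{\pi(ij)}$, where $\pi(ij)$ is the downstairs edge. Because $\varphi$ acts freely on edges, each edge of $\Gtilde$ lies in a unique $\Z/k\Z$-orbit with a unique representative over each edge of $G$, so $\tilde{\vec d}$ is well defined; and by construction it satisfies the equivariance condition $\tilde{\vec d}_{\gamma \cdot ij} = R_k^{\gamma} \cdot \tilde{\vec d}_{ij}$ required of a cone direction network.

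For the isomorphism of realization spaces, I would write down the map in both directions using the fiber description of $\Gtilde$. Given a realization $\Gtilde(\vec p)$ of the lift, restrict to the vertex representatives chosen in the colored quotient construction to obtain a point assignment $\vec p^{\,G}$ on $V(G)$; I would verify that the system \eqref{colored-cone-system} holds by unwinding the definition of the color $\gamma_{ij}$, which records exactly how the head of a lifted edge differs from the representative by a group element, together with the equivariance $\gamma \cdot \vec p_i = \vec p_{\gamma \cdot i}$. Conversely, given $G(\vec p^{\,G})$, I would define $\vec p_{i_{\gamma}} := \gamma \cdot \vec p^{\,G}_i$ on each fiber; equivariance on vertices is automatic from this formula, and the direction equations upstairs reduce, edge orbit by edge orbit, to the single downstairs equation \eqref{colored-cone-system} after multiplying through by the appropriate power of $R_k$. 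These two assignments are inverse to each other and manifestly linear, so they give a canonical linear isomorphism of realization spaces, which in particular preserves dimension.

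The construction is essentially bookkeeping, so I do not expect any real obstacle; the only point requiring a moment of care is that the colored quotient involves an \emph{arbitrary} choice of orientation and vertex representatives (as in \secref{colored-graphs}), and one must check that the map between realization spaces does not depend on the direction in which an edge is traversed. This follows because the defining equation $\iprod{\vec p_j - \vec p_i}{\tilde{\vec d}_{ij}^\perp} = 0$ is symmetric in $i$ and $j$ (up to sign of $\tilde{\vec d}_{ij}^\perp$, which does not affect the zero set), and the colored equation transforms into the analogous one for the reverse edge under the substitution $\gamma_{ji} = \gamma_{ij}^{-1}$. Once this symmetry is noted, the rest is formal.
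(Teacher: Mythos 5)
Your argument is correct and is exactly the intended one: the paper gives no proof of this lemma, declaring it ``immediate from the definitions and \lemref{cone-lifts}'', and your write-up simply fills in that routine verification (equivariant extension of $\vec d$, restriction to orbit representatives, and the orbit-by-orbit correspondence of the linear equations). No gaps; the observation about independence from the choice of orientation and representatives is a worthwhile extra check.
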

In light of \lemref{cone-direction-network-lifts}, we can move back and forth between the two settings freely.
In our proofs, we will start with a colored direction network and study the dimension of its realization space
via geometric arguments involving the lift.  This next lemma is a corollary of \lemref{cone-direction-network-lifts},
but an explicit proof is instructive.
\begin{lemma}\lemlab{cone-direction-network-lifts2}
Let $(G,\bgamma)$ be a colored graph and $(\Gtilde, \varphi)$ its lift.  Assigning a direction
to one representative of each edge orbit under $\varphi$ in $\Gtilde$ gives a well defined
colored direction network $(G,\bgamma,\vec d)$.
\end{lemma}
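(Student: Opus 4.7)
The plan is to leverage the covering-space description of the lift from \lemref{cone-lifts}, which identifies edges of $G$ with $\Z/k\Z$-orbits of edges in $\Gtilde$ and asserts that $\varphi$ is a free action. Because the action is free on edges, each orbit has exactly $k$ elements with trivial stabilizer, so there is no obstruction to spreading a chosen direction around the orbit.

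First I would define $\tilde{\vec d}$ on all of $E(\Gtilde)$ from the given data: for each orbit, let $\tilde{ij}$ be the chosen representative with chosen direction $\tilde{\vec d}_{\tilde{ij}}$, and for every other edge $\gamma\cdot\tilde{ij}$ in the orbit set
\[
\tilde{\vec d}_{\gamma\cdot\tilde{ij}} := R_k^{\gamma}\cdot\tilde{\vec d}_{\tilde{ij}}.
\]
Because the stabilizer of $\tilde{ij}$ in $\Z/k\Z$ is trivial, the element $\gamma$ such that a given edge equals $\gamma\cdot\tilde{ij}$ is unique, so the formula is unambiguous. The required equivariance $\tilde{\vec d}_{\gamma'\cdot\tilde{ij}'} = R_k^{\gamma'}\cdot\tilde{\vec d}_{\tilde{ij}'}$ then holds tautologically for every edge $\tilde{ij}'$ and every $\gamma'\in\Z/k\Z$, which is exactly what is needed for $(\Gtilde,\varphi,\tilde{\vec d})$ to be a cone direction network.

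Next I would pass to the colored quotient. By \lemref{cone-lifts} the edges of $G$ are in bijection with edge orbits of $\Gtilde$, and the colored quotient construction of \secref{colored-graphs} chooses, for each orbit, the unique edge whose tail lies at the fixed vertex representative; call this edge $\tilde{ij}$. Define $\vec d_{ij} := \tilde{\vec d}_{\tilde{ij}}$. This associates a direction to each edge of $G$, yielding a colored direction network $(G,\bgamma,\vec d)$.

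For well-definedness, I would check that the colored network produced does not depend on which representative in each orbit was chosen for the input data: any two choices differ by some $\gamma\in\Z/k\Z$, and the equivariance rule forces the two resulting assignments on $\Gtilde$ to agree on the whole orbit, hence give the same $\vec d_{ij}$ after quotienting. Consistency across different orbits is automatic since orbits are disjoint. There is no real obstacle here; the only point that deserves care is the freeness of $\varphi$ on edges, which rules out the degenerate case in which an edge would be forced to equal a nontrivial rotation of itself and thereby constrain $\tilde{\vec d}_{\tilde{ij}}$ to lie in an eigenspace of some $R_k^{\gamma}$.
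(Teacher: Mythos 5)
Your proposal is correct and follows essentially the same route as the paper: use freeness of the $\Z/k\Z$-action on edges to propagate the chosen direction equivariantly over each orbit without ambiguity, and then read off the direction on the quotient edge from the canonical representative (the paper phrases this as directly rotating the given direction back to the colored edge). The extra observations you make — uniqueness of the $\gamma$ relating two edges in an orbit, and that non-freeness would force eigenvector constraints — are correct elaborations of the same argument.
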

\begin{proof}
Adopt the indexing scheme for the vertices and edges of $\tilde{G}$ from \secref{colored-graphs}.
If the direction $\vec d$ is assigned to an edge $i_{\gamma}j_{\gamma\cdot\gamma_{ij}}\in E(\tilde{G})$,
we assign the direction $\gamma_{ij}^{-1}\cdot \vec d$ to $ij\in E(G)$.  Since we assign a direction to
only on edge in the fiber over $ij$, this procedure gives a well-defined assignment of directions to
the edges of $G$, and it is easy to check that lifting these directions agrees with the assignments made to $\tilde{G}$.
\end{proof}

\subsection{Collapsed and faithful realizations}
The realization space of a cone direction network is never empty: it is always possible to
put all the points $\vec p_i$ at the origin, in which case the realization equations are
trivially satisfied.  We define such realizations to be \emph{collapsed}.  Similarly,
if $ij\in E(\tilde{G})$ is an edge and a realization sets $\vec p_i=\vec p_j$, we
define the edge $ij$ to be \emph{collapsed} in that realization.

For the purposes of rigidity theory, collapsed realizations are degenerate.  We define
a realization to be \emph{faithful} if it has no collapsed edges.  Thus, the content
of \theoref{cone-direction} is that cone-Laman graphs are, generically,  the maximal
colored graphs underlying direction networks with faithful realizations.

\subsection{Proof strategy for \theoref{cone-direction}}\seclab{cone-direction-sketch}
We deduce \theoref{cone-direction} from the following ``collapsing lemma''.
\begin{prop}[\conecollapseprop]\proplab{cone-collapse}
A generic cone direction network that has as its colored quotient graph a cone-$(2,2)$ graph
has only collapsed realizations.
\end{prop}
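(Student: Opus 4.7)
The realization space of the quotient system $(G,\bgamma,\vec d)$ is the kernel of a linear system $M(\vec d)\vec p = 0$ with $2n$ unknowns and $m = 2n$ equations, and the zero vector always lies in this kernel. The proposition is thus equivalent to showing that the square $2n \times 2n$ matrix $M(\vec d)$ is generically of full rank. Since the entries of $M$ depend linearly on the direction coordinates, $\det M(\vec d)$ is a polynomial in those coordinates, and it is enough to exhibit a single direction assignment at which $M$ is nonsingular; the general statement then follows from openness of the full-rank locus in direction space.

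To build such an assignment, I would invoke \lemref{cone-22-decomp} and fix an edge-disjoint splitting $E(G) = E(X) \sqcup E(Y)$ with $X$ and $Y$ both cone-$(1,1)$. Each cone-$(1,1)$ subgraph contributes $n$ equations, so for generic directions the partial kernels $V_X, V_Y \subset \R^{2n}$ each have dimension $n$. Moreover $V_X$ has a very concrete structure: for each component $X_i$ with base vertex $x_i$ on its cycle, the cycle equation takes the form $\iprod{(g_i - \Id)\vec p_{x_i}}{\vec d_{e}^\perp} = 0$, where $g_i$ is the nontrivial rotation that is the $\rho$-image of the cycle. Since $g_i - \Id$ is invertible for a nontrivial rotation, this pins $\vec p_{x_i}$ to a $1$-dimensional line $L_{x_i}$, and the tree edges of $X_i$ then propagate this $1$-parameter freedom by introducing one further scalar parameter per tree edge. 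The same analysis applies to $V_Y$.

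The heart of the proof is to show that $V_X \cap V_Y = \{0\}$ for a generic choice of directions, equivalently that $V_X + V_Y = \R^{2n}$. I would fix base vertices on the cycles of all components of both $X$ and $Y$ and form the overlap graph from \secref{sparse-prelim}; by \propref{overlap-cycle} every connected component contains a directed cycle. A hypothetical nonzero element of $V_X \cap V_Y$ can then be chased around such a cycle $x_{i_1} \to y_{j_1} \to x_{i_2} \to \cdots \to x_{i_1}$: each $X$-arrow propagates $\vec p_{x_{i_r}}$ along the tree of $X_{i_r}$ to $\vec p_{y_{j_r}}$, which must then lie on $L_{y_{j_r}}$, and symmetrically for the $Y$-arrows. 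Composing these linear maps around the overlap cycle yields a single scalar condition on the initial parameter of $L_{x_{i_1}}$; for generic direction data this condition is nondegenerate and forces the parameter to vanish, and iterating over the components of the overlap graph then kills every other scaling and tree-edge parameter.

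The main obstacle will be to formalize this ``propagation'' along the overlap cycle so that the composite scalar condition is visibly nonzero for some explicit direction assignment, given that the rotational cycles and the free tree-edge parameters interact in a nontrivial way. A cleaner substitute would be induction on $n$: one peels off a low-degree vertex or performs a matroidal edge swap (the matroid structure from \lemref{cone-22-matroid} supplies such reductions) to obtain a strictly smaller cone-$(2,2)$ instance, with base case the single vertex carrying two self-loops colored by nontrivial rotations, for which full rank of $M$ reduces to a direct $2\times 2$ calculation. Either route produces the required nonsingular direction assignment and thus proves the proposition.
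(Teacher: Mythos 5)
Your proposal follows essentially the same route as the paper's proof: reduce by squareness and homogeneity to exhibiting one nonsingular direction assignment, decompose via \lemref{cone-22-decomp} into two cone-$(1,1)$ graphs whose rotational cycle constraints pin each base vertex to a line through the origin, and chase a putative nonzero solution around a directed cycle of the overlap graph, where the composed projections generically scale lengths by a factor $\neq 1$ (the paper's \propref{cone-genericity}), forcing every base vertex and then every vertex to the origin. The only points you elide that the paper treats explicitly are the separate, easier argument for order-$2$ rotations (where the projections degenerate) and the final observation that each vertex lies in one component of each color and hence on two distinct lines through the origin.
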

Given \propref{cone-collapse}, the proof of \theoref{cone-direction} uses an ``edge doubling trick''
employed to prove the analogous \cite[Theorem B]{MT10}:
\begin{itemize}
\item We start with a generic cone direction network with an underlying cone-Laman graph.  This has a one-dimensional
realization space.
\item We then observe that if there is a collapsed edge, the realization space is equivalent to that coming from a
generic direction network on the same graph with a doubled edge, which is cone-$(2,2)$.
\item \propref{cone-collapse} then says the realization space is, in fact, zero dimensional, which is a contradiction.
\end{itemize}
Although these steps, which are carried out in \secref{cone-proof},
require some technical care, they are straightforward. Most of the work is involved in proving
\propref{cone-collapse}.  Since the variables in the realization system for a
colored direction network with $\Z/k\Z$ symmetry, do not separate for $k=3,4,6$,
as in the finite \cite{ST10} or periodic \cite{MT10} cases, we make a geometric argument as opposed to using
the Laplace expansion as is done in \cite{ST10,MT10}.
The approach is as follows:
\begin{itemize}
\item We start with a cone-$(2,2)$ graph, and decompose it into two edge-disjoint cone-$(1,1)$
graphs $X$ and $Y$, which is allowed by the combinatorial \lemref{cone-22-decomp} and select base vertices.
\item We then assign a direction to each connected component of $X$ and $Y$ that forces any realization to
have a specific structure that is only possible in collapsed realizations.
\end{itemize}
These steps are carried out in Sections \ref{sec:geom} and \ref{sec:cone-collapse}.

\section{Generic linear projections} \seclab{geom}
For the proof of \propref{cone-collapse} in the next section, we will need several geometric lemmas.

\subsection{Affine lines}
Given a unit vector $\vec v\in \R^2$
and a scalar $s\in \R$, we denote by $\ell(\vec v,s)$ the affine line
\[
\iprod{\vec p}{\vec v^\perp} = s
\]

\subsection{An important linear equation}
The following is a key lemma which will determine where certain points must lie when solving a cone direction network.
\begin{lemma}\lemlab{rotation}  Suppose $R$ is a non-trivial rotation about the origin, $\vec v^*$ is a unit vector and $\vec p$ satisfies
$$(R-I) \vec p = \lambda \vec v^*$$
for some $\lambda \in \R$.  Then, for some $C \in \R$, we have $\vec p = C \vec v$
where $\vec v = R_{\pi/2} R^{-1/2} \vec v^*$, $R^{-1/2}$ is some square root of $R^{-1}$, and $R_{\pi/2}$ is the counter-clockwise rotation through angle $\pi/2$.
\end{lemma}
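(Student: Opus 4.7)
The plan is to observe first that because $R$ is a non-trivial rotation about the origin in $\R^2$, the eigenvalue $1$ is not in its spectrum: writing $R = R_\theta$ with $\theta \in (0, 2\pi)$, one has $\det(R - I) = 2 - 2\cos\theta \neq 0$. Thus $R - I$ is invertible, and the equation $(R - I)\vec p = \lambda \vec v^*$ determines $\vec p$ uniquely as $\lambda (R - I)^{-1}\vec v^*$. The whole task is therefore to exhibit $(R - I)^{-1}\vec v^*$ as a scalar multiple of $R_{\pi/2} R^{-1/2}\vec v^*$.

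Next I would introduce the factorization
\[
R - I \;=\; R^{1/2}\bigl(R^{1/2} - R^{-1/2}\bigr),
\]
valid for either choice of square root (the two choices differ by a sign that cancels in the product). The heart of the computation is then to simplify $R^{1/2} - R^{-1/2}$. Taking $R^{1/2} = R_{\theta/2}$ and computing in matrix form,
\[
R_{\theta/2} - R_{-\theta/2} \;=\; \JMat{0 & -2\sin(\theta/2) \\ 2\sin(\theta/2) & 0} \;=\; 2\sin(\theta/2)\, R_{\pi/2}.
\]
Combining these identities gives $R - I = 2\sin(\theta/2)\, R^{1/2} R_{\pi/2}$, so
\[
(R - I)^{-1} \;=\; \frac{1}{2\sin(\theta/2)}\, R_{-\pi/2}\, R^{-1/2}.
\]

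Finally, since $R_{-\pi/2} = -R_{\pi/2}$ as $2\times 2$ matrices, $(R - I)^{-1}\vec v^*$ is a scalar multiple of $R_{\pi/2} R^{-1/2}\vec v^* = \vec v$, and the lemma follows with $C = -\lambda/(2\sin(\theta/2))$. I don't expect a real obstacle in this argument: the one point meriting a sentence of care is the ambiguity in the choice of $R^{-1/2}$, but replacing $R^{-1/2}$ by $-R^{-1/2}$ merely flips the sign of $\vec v$, which is absorbed into $C$. The one substantive computation is the trigonometric identity for $R_{\theta/2} - R_{-\theta/2}$, which geometrically just says that $R - I$ acts as rotation by $\pi/2 + \theta/2$ composed with scaling by $2\sin(\theta/2)$; once this is in hand, everything else is formal.
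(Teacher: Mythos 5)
Your proposal is correct and rests on the same key identity as the paper's proof, namely that $(R-I)R^{-1/2} = R^{1/2}-R^{-1/2}$ is a scalar multiple of $R_{\pi/2}$; you have merely made the scalar $2\sin(\theta/2)$ and the invertibility of $R-I$ explicit. No issues.
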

\begin{proof}
A computation shows that $(R - I) R^{-1/2} = R^{1/2}-R^{-1/2}$ is a multiple of $R_{\pi/2}$, from which the
Lemma follows.
\end{proof}

\subsection{The linear projection $T(\vec v,\vec w,R)$}
Let $k\in N$ be at least three,
$\vec v$ and $\vec w$ be unit vectors in $\R^2$, and $R$ some nontrivial rotation.  Denote
by $\vec v^*$ the vector $(R^{1/2} \cdot\vec v)^\perp$ for some choice of square root of $R$.

We define $T(\vec v,\vec w,R)$ to be the linear projection from $\ell(\vec v,0)$ to $\ell(\vec w,0)$
in the direction $\vec v^*$.  The following properties of $T(\vec v,\vec w,R)$ are straightforward.
\begin{lemma}\lemlab{Tvw-not-zero}
Let $\vec v$ and $\vec w$ be unit vectors, and $R$ a nontrivial rotation.  Then, the linear
map $T(\vec v,\vec w,R)$:
\begin{itemize}
\item Is defined if $\vec v^*$ is not in the same direction as $\vec w$.
\item Is identically zero if $\vec v^*$ and $\vec v$ are collinear.
\item Is otherwise never zero.
\end{itemize}
\end{lemma}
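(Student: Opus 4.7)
The plan is to work in coordinates on the two lines and reduce everything to a $2\times 2$ linear system whose solvability and non-vanishing are easy to read off. Concretely, I would parametrize $\ell(\vec v,0) = \{t\vec v : t\in\R\}$ and $\ell(\vec w,0) = \{s\vec w : s\in\R\}$, both of which pass through the origin since the right-hand sides are zero. The projection $T(\vec v,\vec w,R)$ assigns to $t\vec v$ the value $s\vec w$ where $(u,s)$ is the unique solution (when it exists) of
\[
t\vec v + u\vec v^* = s\vec w, \qquad \text{equivalently} \qquad u\vec v^* - s\vec w = -t\vec v.
\]

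First I would observe that the $2\times 2$ coefficient matrix $[\vec v^* \mid -\vec w]$ has nonzero determinant precisely when $\vec v^*$ and $\vec w$ are linearly independent, i.e., not in the same direction; this yields the first bullet. Assuming this, Cramer's rule gives
\[
s = t\cdot\frac{\det[\vec v^*\mid \vec v]}{\det[\vec v^*\mid \vec w]},
\]
so under these parametrizations $T$ is realized as multiplication by a fixed scalar. The second bullet is then immediate: the numerator vanishes exactly when $\vec v^*$ and $\vec v$ are collinear, and in that case $s=0$ for every $t$. For the third bullet, when $\vec v^*$ is parallel neither to $\vec w$ nor to $\vec v$, the proportionality constant is a nonzero real number, so $T$ is a linear bijection between the two one-dimensional lines and sends no nonzero point to the origin.

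The only subtle point is that $\vec v^* = (R^{1/2}\cdot\vec v)^\perp$ depends on the choice of square root of $R$; however, replacing $R^{1/2}$ by $-R^{1/2}$ merely flips the sign of $\vec v^*$, and each of the three conditions in the statement (parallel to $\vec w$, parallel to $\vec v$, or neither) is invariant under that sign change, so the trichotomy is well-posed. I do not anticipate any genuine obstacle in this argument: the lemma reduces to a short exercise in planar linear algebra, and its purpose downstream (together with \lemref{rotation}) is simply to record precisely when the key projection along $\vec v^*$ is defined and nondegenerate.
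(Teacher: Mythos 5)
Your argument is correct. Note that the paper itself gives no proof of this lemma --- it is simply declared ``straightforward'' --- so there is no official argument to compare against; your Cramer's-rule computation is a clean way to fill the gap, and the scalar you obtain, $s/t = \det[\vec v^*\mid\vec v]/\det[\vec v^*\mid\vec w]$, agrees (up to the sign conventions of the cross-product versus $\langle\cdot,(\vec v^*)^\perp\rangle$) with the scale-factor formula the paper records later in Lemma \ref{lemma:scalefactor-poly}. Your closing remark about the two choices of $R^{1/2}$ only flipping the sign of $\vec v^*$ is a worthwhile observation that the paper leaves implicit.
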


\subsection{The scale factor of $T(\vec v,\vec w,R)$}
The image $T(\vec v,\vec w,R)\cdot \vec v$ is equal to $\lambda\vec w$, for some scalar $\lambda$.
We define the \emph{scale factor} $\lambda(\vec v,\vec w,R)$ to be this $\lambda$.

We then need two elementary facts about the scaling factor of $T(\vec v,\vec w,R)$.  First, it is either
identically zero or depends rationally on $\vec v$ and $\vec w$.
\begin{lemma} \lemlab{scalefactor-poly}
Let $\vec v$ and $\vec w$ be unit vectors such that $\vec v^*$ and $\vec w$ are linearly independent.  Then
the scaling factor $\lambda(\vec v,\vec w,R)$ of the linear map $T(\vec v,\vec w, R)$ is
given by
\[
\frac{ \langle \vec v, (\vec v*)^\perp \rangle}{\langle \vec w, (\vec v*)^\perp\rangle}
\]
\end{lemma}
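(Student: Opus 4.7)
The plan is a direct computation from the definition of $T(\vec v,\vec w,R)$. Recall that $\ell(\vec v,0)$ and $\ell(\vec w,0)$ are the lines through the origin in directions $\vec v$ and $\vec w$ respectively, and $T(\vec v,\vec w,R)$ sends a point of $\ell(\vec v,0)$ to the unique point of $\ell(\vec w,0)$ differing from it by a scalar multiple of $\vec v^*$. Applied to the point $\vec v \in \ell(\vec v,0)$, this image is $\lambda(\vec v,\vec w,R)\,\vec w$ by the definition of the scale factor, so we need only identify $\lambda$.

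By the defining property of the projection, $\vec v - \lambda\,\vec w$ is a scalar multiple of $\vec v^*$. Taking the inner product of both sides with $(\vec v^*)^\perp$ annihilates the right-hand side and yields
\[
\langle \vec v, (\vec v^*)^\perp \rangle \;-\; \lambda\,\langle \vec w, (\vec v^*)^\perp \rangle \;=\; 0.
\]
The hypothesis that $\vec v^*$ and $\vec w$ are linearly independent is precisely the statement that $\langle \vec w,(\vec v^*)^\perp\rangle\neq 0$ (since $\vec w$ has a nonzero component orthogonal to $\vec v^*$), so we may solve for $\lambda$ and obtain the claimed formula. There is no genuine obstacle here; the lemma is a one-line linear-algebra identity, and the only subtlety is to notice that the linear-independence hypothesis is exactly what makes the denominator nonzero, so that $T(\vec v,\vec w,R)$ is indeed defined (cf.\ \lemref{Tvw-not-zero}) and the rational expression is well-posed.
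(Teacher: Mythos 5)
Your proof is correct and is essentially the computation the paper performs: the paper factors $T(\vec v,\vec w,R)$ through perpendicular projection onto $\ell((\vec v^*)^\perp,0)$ and reads off the two scale factors $\langle \vec v,(\vec v^*)^\perp\rangle$ and $\langle \vec w,(\vec v^*)^\perp\rangle$, which is the same pair of inner products you extract by pairing the defining relation $\vec v-\lambda\vec w\in\R\vec v^*$ with $(\vec v^*)^\perp$. Your observation that the linear-independence hypothesis is exactly the nonvanishing of the denominator is also the right justification for well-posedness.
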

\begin{proof}
The map $T(\vec v, \vec w, R)$ is equivalent to the composition of:
\begin{itemize}
\item perpendicular projection from $\ell(\vec v, 0)$ to $\ell((\vec v*)^\perp, 0)$, followed by
\item the inverse of perpendicular projection $\ell(\vec w, 0) \to \ell((\vec v^*)^\perp, 0)$.
\end{itemize}
The first map scales the length of vectors by $\langle \vec v, (\vec v*)^\perp \rangle$ and the second by $\langle \vec w, (\vec v*)^\perp\rangle$.
\end{proof}

From \lemref{scalefactor-poly} it is immediate that
\begin{lemma}\lemlab{scalefactor-blowup}
The scaling factor $\lambda(v, w, R)$ is identically $0$ precisely when $R$ is an order two rotation.
If $R$ is not an order $2$ rotation, then $\lambda(\vec v, \vec w, R)$ approaches infinity as $\vec v^*$
approaches $\pm \vec w$.
\end{lemma}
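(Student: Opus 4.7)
The plan is to reduce both claims to the explicit rational formula furnished by \lemref{scalefactor-poly} and then read off the behaviour of numerator and denominator directly. Using $(\vec x^\perp)^\perp = -\vec x$, the vector $(\vec v^*)^\perp$ simplifies to $-R^{1/2}\vec v$, so \lemref{scalefactor-poly} can be rewritten as
\[
\lambda(\vec v,\vec w,R) \;=\; \frac{\langle \vec v,\, R^{1/2}\vec v\rangle}{\langle \vec w,\, R^{1/2}\vec v\rangle}.
\]
The advantage of this form is that the numerator depends only on $R$ (not on $\vec v$), while the $\vec w$-dependence is isolated in the denominator.

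For the first claim, I would observe that if $R$ is the rotation through angle $\theta$, then either choice of square root $R^{1/2}$ (rotation through $\theta/2$ or through $\theta/2+\pi$) gives $\langle \vec v, R^{1/2}\vec v\rangle = \pm\cos(\theta/2)$ for every unit vector $\vec v$. Thus the numerator is a constant depending only on $R$, and vanishes precisely when $\cos(\theta/2)=0$, i.e.\ when $\theta \equiv \pi \pmod{2\pi}$, which is exactly the order-two case. This establishes both directions of the first assertion.

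For the blowup statement, suppose $R$ is not of order two, so by the previous paragraph the numerator is a nonzero constant $c$. The denominator $\langle \vec w, R^{1/2}\vec v\rangle$ vanishes exactly when $R^{1/2}\vec v \perp \vec w$, equivalently when $(R^{1/2}\vec v)^\perp = \vec v^*$ is parallel to $\vec w$, i.e.\ in the limit $\vec v^* \to \pm \vec w$. In this limit the denominator tends to $0$ while the numerator remains equal to $c\neq 0$, so $|\lambda(\vec v,\vec w,R)| \to \infty$, as required. The only technical point requiring attention is the sign ambiguity in $R^{1/2}$, but since the two choices differ by multiplication by $-1$ they scale numerator and denominator by the same factor and so leave $\lambda$ unchanged; consequently I do not expect any genuine obstacle in the argument.
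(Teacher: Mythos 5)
Your proof is correct and follows exactly the route the paper intends: the paper offers no explicit argument, simply declaring the lemma ``immediate'' from \lemref{scalefactor-poly}, and your computation (numerator $\pm\cos(\theta/2)$ constant in $\vec v$, denominator vanishing precisely as $\vec v^*\to\pm\vec w$) is the verification being left to the reader. The remark about the sign ambiguity in $R^{1/2}$ cancelling between numerator and denominator is a sensible extra check.
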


\subsection{Generic sequences of the map $T(\vec v,\vec w,R)$}
Let $\vec v_1,\vec v_2,\ldots, \vec v_n$ be unit vectors, and $S_1,S_2,\ldots, S_n$ be rotations of the form $R^i_k$ where
$R_k$ is a rotation of order $k$.
We define the linear
map $T(\vec v_1,S_1,\vec v_2,S_2,\ldots, \vec v_n,S_n)$ to be
\[
T(\vec v_1,S_1,\vec v_2,S_2,\ldots, \vec v_n,S_n) =
T(\vec v_n,\vec v_1,S_n) \circ T(\vec v_{n-1},\vec v_n,S_{n-1}) \circ\cdots \circ T(\vec v_1,\vec v_2,S_1)
\]
This next proposition plays a key role in the next section, where it is interpreted
as providing a genericity condition for cone direction networks.
\begin{prop}\proplab{cone-genericity}
Let $\vec v_1,\vec v_2,\ldots, \vec v_n$ be pairwise linearly independent unit vectors, and
$S_1,S_2,\ldots, S_n$ be rotations of the form $R^i_k$.
Then if the $\vec v_i$ avoid a proper algebraic subset
of $\left(\mathbb{S}^1\right)^n$ (that depends on the $S_i$),
the map $T(\vec v_1,S_1,\vec v_2,S_2,\ldots, \vec v_n,S_n)$ scales the length of
vectors by a factor of $\lambda\neq 1$.
\end{prop}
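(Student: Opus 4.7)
The plan is to express the scaling factor of the composition as the product of the per-arrow scaling factors and then exhibit an approach direction in $(\mathbb{S}^1)^n$ along which the product blows up. Since $T(\vec v,\vec w,R)$ by definition sends $\vec v$ to $\lambda(\vec v,\vec w,R)\vec w$, composing around the cycle gives
\[
\lambda(\vec v_1,S_1,\ldots,\vec v_n,S_n) = \prod_{i=1}^n \lambda(\vec v_i,\vec v_{i+1},S_i),
\]
with indices cyclic modulo $n$. By \lemref{scalefactor-poly}, each factor is rational in $(\vec v_i,\vec v_{i+1})\in(\mathbb{S}^1)^2$, so the product is a rational function on $(\mathbb{S}^1)^n$. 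The conclusion reduces to showing this rational function is not identically equal to $1$, since its $\{=1\}$ locus will then be a proper algebraic subset.

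First I would dispose of the case that some $S_i$ has order two: by \lemref{scalefactor-blowup} the corresponding factor vanishes identically, so the whole product is $\equiv 0$, which is certainly not $1$. From now on assume no $S_i$ is order two. I would then fix any index $j$ and fix the other $\vec v_i$ at a generic point of $(\mathbb{S}^1)^{n-1}$. The only factors involving $\vec v_j$ are $\lambda(\vec v_{j-1},\vec v_j,S_{j-1})$ and $\lambda(\vec v_j,\vec v_{j+1},S_j)$. Letting $\vec v_j$ approach $\pm S_j^{-1/2}\vec v_{j+1}^\perp$ makes $\vec v_j^*\to\pm\vec v_{j+1}$; by \lemref{scalefactor-blowup} the second factor blows up. The formula in \lemref{scalefactor-poly} shows that the first factor has numerator independent of $\vec v_j$ and denominator generically nonzero at this limit point, so it tends to a finite, nonzero value. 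The remaining factors are constant in $\vec v_j$, so $\lambda\to\infty$ along this path and hence $\lambda\not\equiv 1$.

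The main obstacle in executing this plan is ensuring that when one factor of the product blows up, no other factor silently vanishes in a way that exactly cancels it. The argument is arranged to avoid this: the location of the blow-up depends only on $\vec v_{j+1}$ and $S_j$, while the one other $\vec v_j$-dependent factor involves the unrelated variable $\vec v_{j-1}$, and a generic choice of $\vec v_{j-1}$ precludes any coincidence. A secondary consistency check is that pairwise linear independence of the $\vec v_i$ is preserved in a neighborhood of the limit point; this uses precisely that $S_j$ is not order two, since then $\pm S_j^{-1/2}\vec v_{j+1}^\perp$ is not parallel to $\vec v_{j+1}$, and generic positioning of the rest handles the other pairs.
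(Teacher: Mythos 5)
Your proof is correct and follows essentially the same route as the paper's: write the cyclic scale factor as the product of the $\lambda(\vec v_i,\vec v_{i+1},S_i)$, observe that $\lambda\equiv 1$ is an algebraic condition so it suffices to exhibit one configuration where it fails, and defeat it by driving one factor to infinity while the adjacent factor stays bounded away from $0$ and $\infty$. Your explicit check that the neighboring factor does not silently vanish at the limit (numerator independent of $\vec v_j$, denominator generically nonzero) is a slightly more careful rendering of the paper's ``uniformly bounded'' step, but the argument is the same.
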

\begin{proof}
If any of the $T(\vec v_i,\vec v_{i+1},S_i)$ are identically zero, we are done, so we assume none of them
are.  The map $T(\vec v_1,S_1,\vec v_2,S_2,\ldots, \vec v_n,S_n)$ then scales vectors by a factor of:
\[
\lambda(\vec v_1,\vec v_2,S_1)\cdot\lambda(\vec v_2,\vec v_3,S_2)\cdots\lambda(\vec v_{n-1},\vec v_n,S_{n-1})\cdot\lambda(\vec v_n,\vec v_1,S_n)
\]
which we denote $\lambda$.  That $\lambda$ is constantly one is a polynomial statement in the $\vec v_i$ by
\lemref{scalefactor-poly}, and so it is either always true or holds only on a proper algebraic subset of
$\left(\mathbb{S}^1\right)^n$.  This means it suffices to prove that there is one selection for the
$\vec v_i$ where $\lambda\neq 1$.  We will show that $|\lambda|$ can be made arbitrarily large, which implies that,
in particular, it is not a constant.

Select the $\vec v_i$ so that the projection $T(\vec v_i,\vec v_j,S_i)$ is defined for all $i$ and $j$.
We hold $\vec v_2,\ldots,\vec v_n$ fixed and consider what happens as we change $\vec v_1$.
As we change $\vec v_1$, the contributions to $\lambda$ from all the terms except
$\lambda(\vec v_1,\vec v_2,S_1)$ and $\lambda(\vec v_n,\vec v_1,S_n)$ are fixed,
so their contribution to $\lambda$ is a constant as $\vec v_1$ changes.

Here is the key observation:
in a small neighborhood of the direction that makes $\vec v^*_1=\vec v_2$, $\lambda(\vec v_n,\vec v_1)$
is uniformly bounded, since $\vec v_n^*$ is bounded away from $\vec v_1$ by our initial choice of the $\vec v_i$.
On the other hand, by \lemref{scalefactor-blowup}, $\lambda(\vec v_1,\vec v_2)$
is unbounded on the same neighborhood, and thus, $|\lambda|$ can be made arbitrarily large.
\end{proof}

\section{Direction networks on cone-$(2,2)$ graphs collapse}\seclab{cone-collapse}
In this section, we prove:
\conecollapseprop
The organization follows the outline given in \secref{cone-direction-sketch}.  We separate the proof
into two major cases: order $k\ge 3$ rotations (\secref{cone-collapse-ge3}) and order two rotations
(\secref{cone-collapse-2}).  Both make use of the results from \secref{cone-collapse-directions}
which relate the combinatorics of cone-$(1,1)$ graphs to the geometric lemmas of \secref{geom}.

\subsection{Genericity}\seclab{cone-collapse-genericity}
The meaning of \emph{generic} in the statement of \propref{cone-collapse} is the standard
one from algebraic geometry: the set of direction assignments for which the proposition
fails to hold is a proper algebraic subset of the space of direction assignments.

Because the system \eqref{colored-cone-system} is square and homogenous, the only
solutions are all zero if and only if \eqref{colored-cone-system} has full rank,
which is a polynomial statement in the given directions $\vec d_{ij}$.  It then
follows that if we can construct \emph{one} set of directions for which all
realizations are collapsed, the same statement is true generically.  Moreover,
in this case, it is easy to describe the non-generic set: it is the set of directions
for which the formal determinant of \eqref{colored-cone-system} vanishes.

The rest of this section, then, is occupied with showing such directions exist.

\subsection{Assigning directions for map-graphs}\seclab{cone-collapse-directions}
Let $(G,\bgamma)$ be a $\Z/k\Z$-colored graph that is a connected cone-$(1,1)$
graph.  Recall from \secref{cone-sparse} that this means that $G$ is a tree plus
one edge and that the unique cycle in $G$ has non-trivial image under the map $\rho$.
We also select and fix a base vertex $b\in V(G)$ that is on the cycle.

The next lemma shows that we can assign directions to the edges of $G$ so that
the realization of the resulting direction network all have the structure similar to that
shown in \figref{cone11-directions}.  This will be the main ``gadget'' that we
use in the proof of \propref{cone-collapse} below.
\begin{figure}[htbp]
\centering
\includegraphics[width=.35\textwidth]{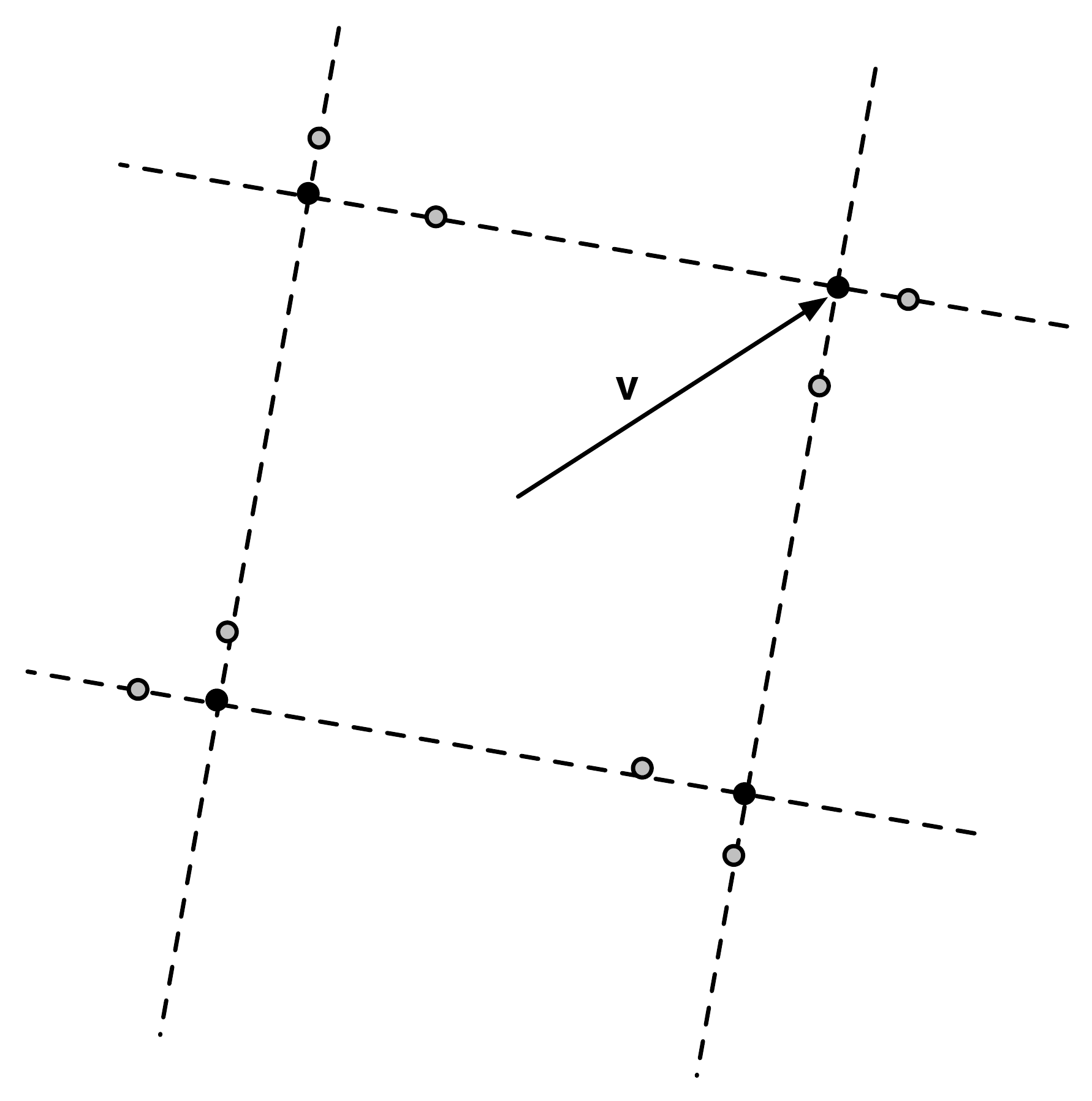}
\caption{
The structure of the realization of a cone-$(1,1)$ graph provided by \lemref{cone11-directions} when $k=4$
and the order of the rotation carried by the cycle is $4$ ($\gamma=1$ in the notation of \lemref{cone11-directions}).
Every vertex lies on one of the dashed lines, which are determined by the order $4$
rotational symmetry and the vector $\vec v$. The fiber over the base vertex (black) lies at the
crossings.
}
\label{fig:cone11-directions}
\end{figure}
\begin{lemma}\lemlab{cone11-directions}
Let $k=3,4,6$, let $(G,\bgamma)$ be a $\Z/k\Z$-colored graph that is a connected cone-$(1,1)$ graph with a base vertex $b$
on the unique cycle in $G$.  Let $\gamma\in \Z/k\Z$ be the $\rho$-image of the cycle in $G$, and let
$\vec v$ be a unit vector.  We can assign directions $\vec d$ to the edges of
$G$ so that, in all realizations of the resulting cone direction network that is the lift of $(G,\bgamma,\vec d)$:
\begin{itemize}
\item The directions from the origin to the points
realizing the fiber over the base vertex $b$ are in directions $R^j_k\cdot \vec v$
\item The rest of the points all lie on the lines between $\vec p_{i \cdot b}$ and
$\vec p_{(i+\gamma)\cdot b}$ as $i$ ranges over $\Z/k\Z$.
\end{itemize}
\end{lemma}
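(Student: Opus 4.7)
The plan is to exhibit an explicit direction assignment $\vec d$ on $G$ that, via \lemref{rotation}, forces every realization of the lifted network to have the claimed structure. Fix a spanning tree $T$ of $G$ obtained by removing a single edge $e_*$ from the unique cycle, root $T$ at $b$, and for each $v \in V(G)$ define $e_v \in \Z/k\Z$ to be the (signed) product of the colors along the tree path from $b$ to $v$. In the lift $\tilde G$, the connected component of $b_{\mathrm{id}}$ in the lift of $T$ is a tree carrying each $v \in V(G)$ at its copy $v_{e_v}$, and the remaining $k-1$ components are its $\Z/k\Z$-translates rooted at $b_1,\ldots,b_{k-1}$.

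Since $\gamma \neq 0$ in $\Z/k\Z$, the rotation $R = R_k^\gamma$ is non-trivial, so I pick a square root $R^{1/2}$ and set $\vec v^* = R^{1/2} R_{\pi/2}^{-1} \vec v$, with $R_{\pi/2}$ as in \lemref{rotation}. This is precisely the $\vec v^*$ for which \lemref{rotation} produces the given $\vec v$. Then for each edge $e = uv \in E(G)$ oriented from $u$ to $v$ in $G$, I assign the direction $\vec d_e = R_k^{-e_u}\vec v^*$. Checking against the equivariance rule $\tilde{\vec d}_{\gamma \cdot ij} = R_k^{\gamma} \tilde{\vec d}_{ij}$ shows that inside the tree component of $\tilde G$ containing $b_{\mathrm{id}}$, every lifted edge has direction $\vec v^*$; the same holds for the lift of the removed edge $e_*$ at the unique level at which one of its endpoints lies in that component.

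Now consider any realization of the resulting cone direction network. The unique cycle of $G$ lifts to a path in $\tilde G$ from $b_{\mathrm{id}}$ to $b_\gamma$ whose edges all have direction $\vec v^*$, so $\vec p_{b_\gamma} - \vec p_b = (R - I)\vec p_b$ is a scalar multiple of $\vec v^*$. By \lemref{rotation} and the choice of $\vec v^*$, this forces $\vec p_b = C \vec v$ for some $C \in \R$; $\Z/k\Z$-equivariance then yields $\vec p_{b_j} = R_k^j \vec p_b = C R_k^j \vec v$, proving the first bullet. For the second bullet, fix $v \in V(G)$: the tree path in $T$ from $b$ to $v$ lifts to a path in $\tilde G$ from $b_{\mathrm{id}}$ to $v_{e_v}$ with every edge in direction $\vec v^*$, so $\vec p_{v_{e_v}} - \vec p_b$ is a scalar multiple of $\vec v^*$. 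Since $\vec p_{b_\gamma} - \vec p_b$ is also a scalar multiple of $\vec v^*$, the point $\vec p_{v_{e_v}}$ lies on the line through $\vec p_b$ and $\vec p_{b_\gamma}$. Applying $R_k^{j - e_v}$ shows that $\vec p_{v_j} = R_k^{j - e_v} \vec p_{v_{e_v}}$ lies on the line through $\vec p_{b_{j - e_v}}$ and $\vec p_{b_{j - e_v + \gamma}}$, which is one of the chords in the statement.

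The main obstacle is pinning down the direction of $\vec p_b$ from the direction constraints along the cycle alone: this is exactly the role of \lemref{rotation}, and it is available precisely because the cone-$(1,1)$ hypothesis guarantees that $R = R_k^\gamma$ is a non-trivial rotation. Everything else is careful but routine bookkeeping with the tree lift group elements $e_v$ so that the assignment on $G$ translates into the geometrically uniform statement that the lift-tree at $b_{\mathrm{id}}$ lies on a single line in direction $\vec v^*$.
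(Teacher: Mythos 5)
Your overall strategy is the paper's: assign the direction $\vec v^*$ to a lifted spanning tree so that everything lies on the $R_k$-orbit of one line, and then use \lemref{rotation} applied to $(R_k^\gamma - I)\vec p_b$ to pin $\vec p_b$ to the direction $\vec v$. But there is a genuine gap in the step where you claim that ``the unique cycle of $G$ lifts to a path in $\tilde G$ from $b_{\mathrm{id}}$ to $b_\gamma$ whose edges all have direction $\vec v^*$.'' You allow $e_*$ to be \emph{any} edge of the cycle, say $e_*=uv$ with neither endpoint equal to $b$. Traversing the lifted cycle from $b_{\mathrm{id}}$, the portion before $e_*$ lies in the tree component $\tilde T_{\mathrm{id}}$ and does have direction $\vec v^*$, and so does the chosen lift of $e_*$; but after crossing $e_*$ you land at $v_{e_v+\gamma}$, which lies in the translate $\gamma\cdot\tilde T_{\mathrm{id}}$, and by the equivariance rule $\tilde{\vec d}_{\gamma\cdot ij}=R_k^{\gamma}\tilde{\vec d}_{ij}$ every remaining edge of the lifted cycle has direction $R_k^\gamma\vec v^*$, not $\vec v^*$. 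Consequently $(R_k^\gamma-I)\vec p_b$ is only forced into $\mathrm{span}\{\vec v^*,\,R_k^\gamma\vec v^*\}$, which is all of $\R^2$ whenever $R_k^\gamma\neq\pm I$ (e.g.\ $k=4$, $\gamma=1$), so \lemref{rotation} gives no information and both bullets fail. Concretely, take $k=4$ and the triangle $b\,{-}\,u\,{-}\,v\,{-}\,b$ with colors $0,1,0$ and $e_*=uv$: for \emph{any} point $p$ one can solve the three direction constraints with $\vec p_{b_{\mathrm{id}}}=p$, so $\vec p_b$ is completely unconstrained.

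The fix is exactly the extra care the paper takes: choose the removed edge to be incident to the base vertex $b$ (and on the cycle), so that the lifted cycle is a tree path inside $\tilde T_{\mathrm{id}}$ from $b_{\mathrm{id}}$ to $i_{e_i}$ followed by a single lift of the removed edge landing at $b_{\pm\gamma}$, and then explicitly assign \emph{that particular lift} the direction $\vec v^*$ (your tail-based formula $\vec d_e=R_k^{-e_u}\vec v^*$ normalizes the wrong lift if the removed edge happens to be oriented out of $b$). With that modification your bookkeeping with the elements $e_v$ and the second bullet go through, and the argument coincides with the paper's proof.
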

\begin{proof}
By \lemref{cone-direction-network-lifts} and \lemref{cone-direction-network-lifts2}, we can just assign
directions in the lift $\tilde{G}$ of $G$.  We start by selecting an edge $bi\in E(G)$ that is:
\begin{itemize}
\item Incident on the base vertex $b$
\item On the cycle in $G$
\end{itemize}
Such an edge exists because $G$ is a map-graph and $b$ is on the cycle.  $G-bi$ is a spanning tree $T$ of
$G$.

Since $T$ is contractible, it lifts to $k$ disjoint copies of itself in $\tilde{G}$.  Select
one of these copies and denote it $\tilde{T}$.  Note that $\tilde{T}$ hits the fiber
over every edge in $G$ except for $bi$ exactly one time and the fiber
over every vertex exactly one time.

Assign every edge in $\tilde{T}$ the direction $\vec v^*=(R^{\gamma/2}_k\cdot \vec v)^\perp$.  By
\lemref{cone-direction-network-lifts2} this assigns a direction to every edge in
$G$ except for $bi$. From the observations above, it now follows by the connectivity of $T$
that in any realization of the cone direction network induced on $\tilde{G}-\pi^{-1}(ij)$ any
point lies on the $R_k$-orbit of a single affine line in the direction of $\vec v^*$.

Now select the edge in the fiber over $bi$ incident on the copy of $i$ in $\tilde{T}$.  Assign this
edge the direction $\vec v^*$ as well.  Let the set of directions induced on $G$ be $\vec d$.

Denote by $\vec p_b$ the realization of the copy of $b$ in $\tilde{T}$ in a realization of
$(\tilde{G},\varphi,\tilde{\vec d})$.  As we have noted above, the realization of every vertex of
$\tilde{G}$ is on one of the lines $\ell(R^i_k\cdot \vec v^*, s)$ where $s$ is determined by $p_b$.
The selection of direction for the edge $bi$ further forces that if $\vec p_b$ is in the fiber over $b$,
that $R\cdot \vec p_b - \vec p_b$  is in the direction $\vec v$.

It now follows from \lemref{rotation}, applied to a rotation of the same order as $R^\gamma$,
that $\vec p_b$ is in the direction $\vec v$, which is what we wanted.
\end{proof}

\subsection{Proof of \propref{cone-collapse} for order $2$ rotations}\seclab{cone-collapse-2}
Decompose the cone-$(2,2)$ graph $(G,\bgamma)$ into two edge-disjoint cone-$(1,1)$ graphs $X$ and $Y$.
The order of the $\rho$-image of any cycle in either $X$ or $Y$ is always $2$, so the construction
of \lemref{cone11-directions} implies that by assigning the same direction $\vec v$ to every
edge in $X$ every vertex in any realization lies on a single line through the origin in the direction of $\vec v$.
Similarly for edges in $Y$ in a direction $\vec w$ different than $\vec v$.

Since every vertex is at the intersection of two skew lines through the origin, the proposition is proved.
\eop

\subsection{Proof of \propref{cone-collapse} for rotations of order $k\ge 3$}\seclab{cone-collapse-ge3}
In what follows we
let $(G,\bgamma)$ be a cone-$(2,2)$ graph on $n$ vertices.  We fix a decomposition of
$(G,\bgamma)$ into two cone-$(1,1)$ graphs.  This is possible by \lemref{cone-22-decomp}.

Let $G_i$ be the connected components of the two cone-$(1,1)$ graphs.  Which part of the
decomposition $G_i$ comes from is not important in what follows, so we suppress it in the notation.
All the information in the decomposition we need comes from the overlap graph, defined in  \secref{sparse-prelim}.
Select a base vertex $b_i$ on the cycle of each of the $G_i$.  Let $D$ be the overlap graph of the decomposition,
and index the vertex set of $D$ by $B$, the collection of $b_i$.

\paragraph{Assigning directions}
For each $G_i$, select a unit vector $\vec v_i$ such that:
\begin{itemize}
\item The $\vec v_i$ are all different.
\item Any subset of the $\vec v_i$ are generic in the sense of \propref{cone-genericity}.
\end{itemize}
This is possible, since \propref{cone-genericity} rules out only a measure zero
subset of directions for each sub-collection.

Now, for each $G_i$ we assign, in the notation of \lemref{cone11-directions}, the
direction $\vec v^*_i$ to the edges in $G_i$ as prescribed by \lemref{cone11-directions}. This is
well-defined, since the $G_i$ partition the edges of $G$.  (They clearly overlap on the vertices--we will exploit
this fact below---but it does not prevent us from assigning edge directions independently.)

We define the resulting colored direction network to be $(G,\bgamma,\vec d)$ and the lifted
cone direction network $(\tilde{G},\varphi,\tilde{\vec d})$.  We
also define, as a convenience, the rotation $S_i$ as the rotation such that $\vec v_i = (S_i^{1/2} \vec v^*_i)^\perp$ to be $S_i$.

\paragraph{Local structure of realizations}
Let $G_i$ and $G_j$ be distinct connected cone-$(1,1)$ components and suppose
that there is a directed edge $b_ib_j$ in the overlap graph $D$.  We have the following
relationship between $\vec p_{b_i}$ and $\vec p_{b_j}$ in realizations of
$(\tilde{G},\varphi,\tilde{\vec d})$.
\begin{lemma}\lemlab{cone-local-structure}
Let $\tilde{G}(\vec p)$ be a realization of the cone direction network $(\tilde{G},\varphi,\tilde{\vec d})$
defined above.  Let vertices $b_i$ and $b_j$ in $V(G)$ be the base vertices of $G_i$ and $G_j$, and
suppose that $b_ib_j$ is a directed edge in the overlap graph $D$.  Let $\vec p_{\gamma\cdot \tilde b_i}$
be the realization of any vertex in the fiber over $b_i$ in $V(\tilde{G})$.  Then there is a
vertex $\gamma'\cdot \tilde{b_j}$ in the fiber over $b_j$ such that
$\vec p_{\gamma'\cdot \tilde b_j} = T(\vec v_i,\vec v_j, S_i)\cdot \vec p_{\gamma \cdot \tilde b_i}$
\end{lemma}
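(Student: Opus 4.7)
The plan is to combine the local structure imposed on realizations of $G_i$ and $G_j$ by \lemref{cone11-directions} with the combinatorial fact, encoded by the overlap graph edge $b_i b_j$, that the base vertex $b_j$ of $G_j$ lies in $V(G_i)$.

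First I would apply \lemref{cone11-directions} to the cone-$(1,1)$ component $G_i$ with base $b_i$ and vector $\vec v_i$. The conclusion is that $\vec p_{\tilde b_i}$ lies on the line $\ell(\vec v_i,0)$ through the origin, and that every other vertex of $G_i$ in the canonical sheet of the spanning-tree cover constructed in that proof lies on the single affine line through $\vec p_{\tilde b_i}$ in direction $\vec v_i^*$. By $\Z/k\Z$-equivariance, the vertices of $G_i$ in the $\gamma$-th sheet lie on the line through $\vec p_{\gamma \cdot \tilde b_i}$ in direction $R_k^\gamma \vec v_i^*$.

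Since $b_j \in V(G_i)$, the fiber over $b_j$ contains a lift $\gamma' \cdot \tilde b_j$ in the same sheet as $\gamma \cdot \tilde b_i$; by the previous paragraph, $\vec p_{\gamma' \cdot \tilde b_j}$ lies on the affine line through $\vec p_{\gamma \cdot \tilde b_i}$ in direction $R_k^\gamma \vec v_i^*$. Applying \lemref{cone11-directions} a second time to $G_j$ with base $b_j$ and vector $\vec v_j$ forces $\vec p_{\gamma' \cdot \tilde b_j}$ also to lie on a line through the origin of the form $\ell(R_k^{\gamma'} \vec v_j,0)$. The intersection of these two lines is a single point, which I would then identify with $T(\vec v_i, \vec v_j, S_i) \cdot \vec p_{\gamma \cdot \tilde b_i}$ using the definition of $T$ as the linear projection from $\ell(\vec v_i,0)$ to $\ell(\vec v_j,0)$ in direction $\vec v_i^*$; the defining relation $\vec v_i = (S_i^{1/2} \vec v_i^*)^\perp$ is precisely what ties the rotational structure of the cycle in $G_i$ to the correct projection direction, and the case of general $\gamma$ follows from the case $\gamma = 0$ by $\Z/k\Z$-equivariance.

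The main obstacle will be keeping the indexing of lifts consistent between the two applications of \lemref{cone11-directions}. Concretely, I would need to verify that the $\gamma'$ arising from the sheet correspondence in $G_i$ is compatible with the implicit choice of canonical lift when \lemref{cone11-directions} is applied to $G_j$, so that the formal image $T(\vec v_i,\vec v_j,S_i)\cdot\vec p_{\gamma\cdot\tilde b_i}$ matches the geometric intersection point from the argument; this reduces to a careful bookkeeping exercise, which in case of mismatch can be absorbed by an equivariant reindexing of the lifts of $b_j$.
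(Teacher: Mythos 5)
Your proposal is correct and follows essentially the same route as the paper: apply \lemref{cone11-directions} to $G_i$ to place the fiber over $b_j$ on the affine lines in direction $\vec v_i^*$ determined by $\vec p_{\gamma\cdot\tilde b_i}$, apply it again to $G_j$ to place that fiber on the lines through the origin in the directions $R_k^j\vec v_j$, and recognize the resulting intersection as the projection $T(\vec v_i,\vec v_j,S_i)$. The sheet/indexing bookkeeping you flag is real but harmless (the paper elides it entirely), since the statement only asserts the existence of \emph{some} lift $\gamma'\cdot\tilde b_j$ with the stated image.
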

The proof is illustrated in \figref{cone-local-structure-proof}.
\begin{figure}[htbp]
\centering
\includegraphics[width=.35\textwidth]{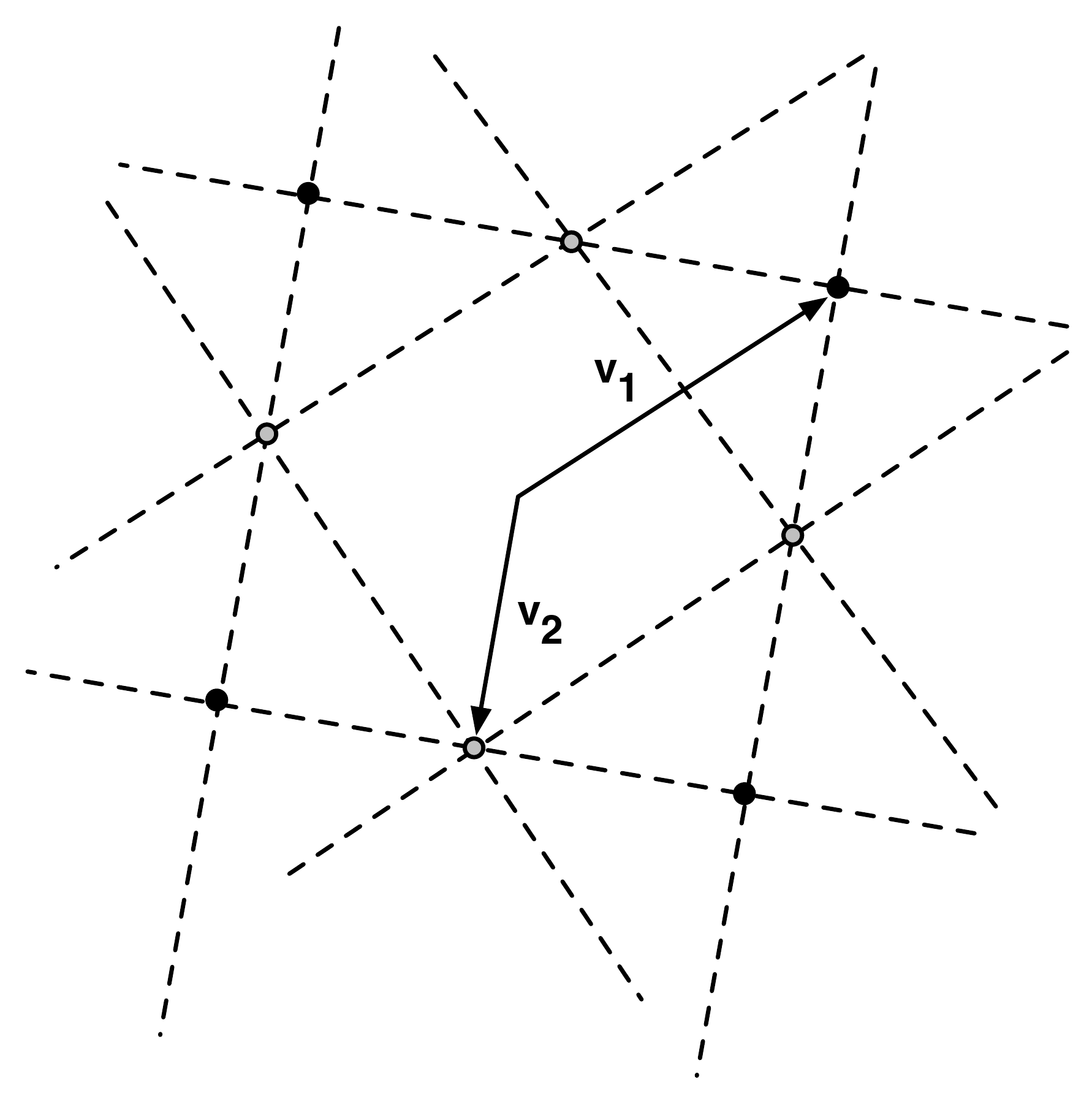}
\caption{
Example of the local structure of the proof of \propref{cone-collapse}: the directions we assign
imply that if $b_1b_2$ is an edge in the overlap graph, then the base vertex of $G_1$
can be obtained from the base vertex of $G_2$ via the linear projection $T(\vec v_1,\vec v_2, S_1)$.
}
\label{fig:cone-local-structure-proof}
\end{figure}
\begin{proof}
By \lemref{cone11-directions}, the fiber over every vertex in $G_i$ lies on a line $\ell(R^{t}_{k}\cdot \vec v_i^*,s)$
for some scalar $s_i$.  The scalar $s_i$ is determined by location of any $\vec p_{b_i}$ representing a
vertex in the fiber over $b_i$, since the $\vec p_{- \cdot b_i}$ are all equal to some multiple of $R_k^\gamma\cdot \vec v_i$.

In particular, the vertices in the fiber over $b_j$ are on these lines. Additionally, \lemref{cone11-directions},
applied to $G_j$, says that the vertices in the fiber over $b_j$ are all equal to some scalar multiple of
$R^\gamma_k\cdot \vec v_j$.  This is exactly the situation captured by the map $T(\vec v_i,\vec v_j, S_i)$.
\end{proof}

\paragraph{Base vertices on cycles in $D$ must be at the origin}
Let $b_i$ be the base vertex in $G_i$ that is also on a directed cycle in $D$.  The next step in the
proof is to show that all representatives in
$b_i$ must be mapped to the origin in any realization of $(\tilde{G},\varphi,\tilde{\vec d})$.
\begin{lemma}\lemlab{cone-cycle-vertices}
Let $\tilde{G}(\vec p)$ be a realization of the cone direction network $(\tilde{G},\varphi,\tilde{\vec d})$
defined above, and let $b_i\in V(G)$ be a base vertex that is also on a directed cycle in $D$ (one exists
by \propref{overlap-cycle}).  Then all vertices in the fiber over $b_i$ must be mapped to the origin.
\end{lemma}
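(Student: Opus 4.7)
My plan is to use the directed cycle through $b_i$ in the overlap graph $D$, together with the local identification in \lemref{cone-local-structure}, to obtain a linear self-map on a distinguished point of the fiber over $b_i$ whose scaling factor is not $\pm 1$, thereby forcing that fiber to the origin. To begin, choose a directed cycle $b_{i_1} \to b_{i_2} \to \cdots \to b_{i_t} \to b_{i_1}$ in $D$ with $b_{i_1} = b_i$; such a cycle exists by \propref{overlap-cycle}.

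By \lemref{cone11-directions}, the fiber over each $b_{i_\ell}$ equals $\{C_\ell R_k^j \vec v_{i_\ell} : 0 \le j < k\}$ for some scalar $C_\ell$, and the reference line $\ell(\vec v_{i_\ell},0)$ contains the fiber point $\vec q_\ell := C_\ell \vec v_{i_\ell}$ (as well as its antipode $-\vec q_\ell$, when $k \in \{4,6\}$). Now \lemref{cone-local-structure} says $T(\vec v_{i_\ell}, \vec v_{i_{\ell+1}}, S_{i_\ell}) \cdot \vec q_\ell$ is a fiber point over $b_{i_{\ell+1}}$; since the image lies on $\ell(\vec v_{i_{\ell+1}},0)$, it must equal $\pm \vec q_{\ell+1}$. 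Iterating around the cycle and composing these linear maps yields
\[
T(\vec v_{i_1}, S_{i_1}, \vec v_{i_2}, S_{i_2}, \ldots, \vec v_{i_t}, S_{i_t}) \cdot \vec q_1 = \pm \vec q_1,
\]
so the composed linear map, which scales vectors on $\ell(\vec v_{i_1},0)$ by some factor $\Lambda$, satisfies $\Lambda \vec q_1 = \pm \vec q_1$. Each $S_{i_\ell}$ is a power of $R_k$ by construction, and the proof of \propref{cone-genericity} shows that $|\Lambda|$ can be made arbitrarily large as one varies the $\vec v_{i_\ell}$; hence both $\{\Lambda = 1\}$ and $\{\Lambda = -1\}$ are proper algebraic subsets of the parameter space, and choosing the $\vec v_i$ to avoid their union---a mild strengthening of the genericity already required in the setup---gives $\Lambda \neq \pm 1$. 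Then $\vec q_1 = \vec 0$, so $C_1=0$, and by cone-equivariance every representative in the fiber over $b_i$ is the origin.

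The main subtle point is the $\pm$ sign ambiguity that appears for $k \in \{4, 6\}$: the reference line $\ell(\vec v_{i_\ell}, 0)$ meets the fiber in two points rather than one, so the identifications around the cycle may land at either $\vec q_{\ell+1}$ or $-\vec q_{\ell+1}$, ultimately at $\pm \vec q_1$ rather than exactly $\vec q_1$. This is what forces us to use \propref{cone-genericity} in the slightly stronger form $\Lambda \neq \pm 1$, and the strengthening is harmless precisely because the proof of that proposition already establishes non-constancy by exhibiting $|\Lambda|$ arbitrarily large. Once this is dealt with, the remainder of the argument is just bookkeeping along the cycle.
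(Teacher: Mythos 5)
Your proof is correct and follows essentially the same route as the paper: iterate \lemref{cone-local-structure} around the directed cycle in $D$ to obtain a linear self-map of the fiber over $b_i$, then invoke the genericity of \propref{cone-genericity} to rule out a nonzero fixed point. Your explicit handling of the $\pm$ ambiguity for $k\in\{4,6\}$ is a genuine refinement of a point the paper's proof passes over silently (it needs $|\lambda|\neq 1$ while \propref{cone-genericity} literally asserts $\lambda\neq 1$), and you correctly observe that the unboundedness argument in that proposition's proof justifies the strengthened conclusion $\lambda\neq\pm 1$ at no extra cost.
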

\begin{proof}
Iterated application of \lemref{cone-local-structure} along the cycle in $D$ that $b_i$ is on tells
us that any vertex in the fiber over $b_i$ is related to another vertex in the same fiber by a linear
map meeting the hypothesis of \propref{cone-genericity}.  This implies that if any vertex in the
fiber over $b_i$ is mapped to a point not the origin, some other vertex in the same fiber is mapped to a
point at a different distance to the origin.  This is a contradiction, since all realizations $\tilde{G}(\vec p)$
are symmetric with respect to $R_k$, so, in fact the fiber over $b_i$ was mapped to the origin.
\end{proof}

\paragraph{All base vertices must be at the origin}
So far we have shown that every base vertex $b_i$ that is on a directed cycle in the overlap graph $D$
is mapped to the origin in any realization $\tilde{G}(\vec p)$ of $(\tilde{G},\varphi,\tilde{\vec d})$.
However, since every base vertex is connected to the cycle in its connected component by a directed path in $D$,
we can show all the base vertices are at the origin.

\begin{lemma}\lemlab{cone-non-cycle-vertices}
Let $\tilde{G}(\vec p)$ be a realization of the cone direction network $(\tilde{G},\varphi,\tilde{\vec d})$
defined above.  Then all vertices in the fiber over $b_i$ must be mapped to the origin.
\end{lemma}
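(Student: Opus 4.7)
The plan is to extend \lemref{cone-cycle-vertices}, which handled base vertices on directed cycles of the overlap graph $D$, to all base vertices by propagating the ``fiber is at the origin'' condition along directed paths in $D$. The key structural fact I need first is that from each base vertex $b$ of $D$, there is a directed path starting at some vertex on the unique cycle of its connected component and ending at $b$. This follows from the proof of \propref{overlap-cycle}: every vertex of $D$ has indegree exactly one, so the map sending each vertex to its unique predecessor is a well-defined function on the vertex set of the connected component containing $b$. Iterating this function starting at $b$ must eventually revisit a vertex, producing a directed cycle, and reversing the walk yields a directed path from a cycle vertex into $b$.

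With that reachability fact, I would induct on the length $\ell$ of the shortest directed path in $D$ from a cycle vertex to $b_i$. The base case $\ell=0$ is exactly \lemref{cone-cycle-vertices}. For the inductive step, suppose $\ell \geq 1$ and pick a predecessor $b_j$ of $b_i$ in $D$ (so $b_j \to b_i$ in $D$) whose distance to the cycle is $\ell-1$. By the inductive hypothesis, every vertex in the fiber over $b_j$ is realized at the origin. Applying \lemref{cone-local-structure} to the edge $b_j \to b_i$ of $D$, any chosen vertex $\gamma \cdot \tilde{b}_j$ in the fiber over $b_j$ maps under $T(\vec v_j,\vec v_i,S_j)$ to some vertex $\gamma' \cdot \tilde{b}_i$ in the fiber over $b_i$; since $T(\vec v_j,\vec v_i,S_j)$ is a linear map and the source is at the origin, so is the image.

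To conclude that \emph{every} vertex of the fiber over $b_i$ lies at the origin, I would invoke the $R_k$-equivariance of the realization: the fiber over $b_i$ is the $R_k$-orbit of $\vec p_{\gamma' \cdot \tilde b_i}$, and $R_k$ fixes the origin, so if one vertex of the fiber is at the origin then all of them are. This completes the inductive step and hence the proof.

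The argument is almost entirely formal once the reachability claim for $D$ is established; no further genericity argument is needed here because \lemref{cone-cycle-vertices} already absorbed the genericity from \propref{cone-genericity}, and propagating along a single edge of $D$ only uses linearity of $T$ together with fiber symmetry. The only mild subtlety I foresee is bookkeeping the identification between a vertex $\gamma \cdot \tilde b_j$ and the matching vertex $\gamma' \cdot \tilde b_i$ in \lemref{cone-local-structure}, but the $R_k$-equivariance collapses this entirely because we only need one pre-image at the origin to conclude the whole fiber is.
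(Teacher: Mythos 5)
Your proof is correct and follows essentially the same route as the paper's: the paper likewise observes that every base vertex lies at the end of a directed path in $D$ starting on a cycle and concludes that its realization is the image of $0$ under a composition of linear maps. Your version merely makes explicit the reachability argument (via the indegree-one property from \propref{overlap-cycle}), the induction along the path, and the use of $R_k$-equivariance to carry one origin point to the whole fiber.
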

\begin{proof}
The statement is already proved for base vertices on a directed cycle in \lemref{cone-cycle-vertices}.
Any base vertex not on a directed cycle, say $b_i$, is at the end of a directed path which starts
at a vertex on the directed cycle.  Thus $\vec p_{\gamma \cdot b_i}$ is the image of $0$ under
some linear map and hence is at the origin.
\end{proof}

\paragraph{All vertices must be at the origin}
The proof of \propref{cone-collapse} then follows from the observation that, if all the base vertices $b_i$
must be mapped to the origin in $\tilde{G}(\vec p)$, then \lemref{cone11-directions} implies that \emph{every}
vertex in the lift of $G_i$ lies on a family of $k$ lines intersecting at the origin.  (This is the
degenerate form of \figref{cone11-directions} where the base vertex is at the origin.)

Since every vertex is in the span of two of the $G_i$, and these families of lines
intersect only at the origin, we are done: $\tilde{G}(\vec p)$ must put all the points at the origin.
\eop

\section{Proof of \theoref{cone-direction}}\seclab{cone-proof}
This section gives the proof of:
\conedirection

\subsection{Generic rank of the colored realization system} \propref{cone-collapse} is a
geometric statement, but it has an algebraic interpretation.  In matroidal language,
this next lemma says that, in matrix form, the system \eqref{colored-cone-system} is a
generic linear representation for the cone-$(2,2)$ matroid.
\begin{lemma}\lemlab{cone22-rep}
Let $(G,\bgamma)$ be a $\Z/k\Z$-colored graph with $n$ vertices and $m$ edges.  Then,
if $(G,\bgamma)$ is cone-$(2,2)$ sparse,
the generic rank of the system \eqref{colored-cone-system} is $m$.
\end{lemma}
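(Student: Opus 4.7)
The plan is to combine the geometric conclusion of \propref{cone-collapse} with the matroidal structure provided by \lemref{cone-22-matroid}. Write $M(\vec d)$ for the $m\times 2n$ coefficient matrix of the homogeneous system \eqref{colored-cone-system}. Its entries are linear in the edge directions $\vec d$, so the locus $\{\vec d : \rk M(\vec d) < m\}$ is cut out by the vanishing of all $m\times m$ minors and is therefore an algebraic subset of the direction-assignment space. Hence, to conclude that the generic rank is $m$, it suffices to exhibit a single $\vec d$ for which $\rk M(\vec d)=m$.

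To produce such a $\vec d$, I would use that $(G,\bgamma)$, being cone-$(2,2)$ sparse, is independent in the cone-$(2,2)$ matroid of \lemref{cone-22-matroid}, and so can be extended by adding colored edges to a cone-$(2,2)$ graph $(G',\bgamma')$ on the same vertex set; note $m'=2n$. Apply \propref{cone-collapse} to $(G',\bgamma')$: for a generic choice of directions $\vec d'$ on $E(G')$, the only realization of the associated cone direction network is the collapsed one. Since the system \eqref{colored-cone-system} for $G'$ is square ($2n$ equations in $2n$ unknowns) and homogeneous, having only the trivial solution is equivalent to its coefficient matrix $M'(\vec d')$ having full rank $2n$. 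In particular, all of its rows are linearly independent.

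To finish, restrict attention to the rows of $M'(\vec d')$ indexed by $E(G)\subset E(G')$: as a subset of a linearly independent family they remain linearly independent, and they are precisely the rows of $M(\vec d)$, where $\vec d$ is the restriction of $\vec d'$ to $E(G)$. Therefore $\rk M(\vec d)=m$. By the semicontinuity observation above, the full-rank locus is then a nonempty Zariski-open subset of direction assignments on $E(G)$, which is exactly the generic-rank claim.

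The argument has no real obstacle once \propref{cone-collapse} is in hand: the only delicate point is producing the cone-$(2,2)$ extension of $(G,\bgamma)$, and this is handed to us for free by the matroid property in \lemref{cone-22-matroid}. In matroidal language, the lemma is the statement that \eqref{colored-cone-system}, in matrix form, is a generic linear representation of the cone-$(2,2)$ matroid, with \propref{cone-collapse} supplying the key fact that the bases are represented by bases.
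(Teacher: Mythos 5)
Your proof is correct and follows essentially the same route as the paper's: extend $(G,\bgamma)$ to a cone-$(2,2)$ graph on the same vertex set via the matroid property of \lemref{cone-22-matroid}, apply \propref{cone-collapse} to get full rank $2n$ of the square homogeneous system, and observe that the rows indexed by $E(G)$ remain independent. The semicontinuity remark and the closing matroidal gloss just make explicit what the paper leaves implicit.
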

\begin{proof}
If $(G,\bgamma)$ is cone-$(2,2)$ sparse, the matroid property of cone-$(2,2)$
graphs (\lemref{cone-22-matroid}) implies that it can be extended to a cone-$(2,2)$ graph
$(G',\bgamma')$.
Form a generic direction network on $(G',\bgamma')$.  By \propref{cone-collapse}, the colored
realization system for this extended direction network has rank $2n$, so it follows that all $m$
of the equations in its restriction to $(G,\bgamma)$ are independent.
\end{proof}
In particular, since cone-Laman graphs are cone-$(2,2)$ sparse, we get:
\begin{lemma}\lemlab{cone22-rep-laman}
Let $(G,\bgamma)$ be a cone-Laman graph with $\Z/k\Z$ colors.  Then the
generic rank of the system \eqref{colored-cone-system} is $2n-1$.
\end{lemma}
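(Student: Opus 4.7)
The plan is to deduce this lemma as a direct corollary of \lemref{cone22-rep}. The hypothesis of \lemref{cone22-rep} is that the underlying colored graph is cone-$(2,2)$ sparse, and in that case the generic rank of the realization system \eqref{colored-cone-system} equals the number of edges $m$. So the whole argument reduces to verifying that a cone-Laman graph is automatically cone-$(2,2)$ sparse and then reading off the numerics.

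For the sparsity inclusion, I would compare the two inequalities side by side. A cone-Laman graph satisfies, for every subgraph $G'$ with $m'$ edges, $n'$ vertices and connected components $G'_1,\dots,G'_c$,
\[
m' \le 2n' - 1 - \sum_{i=1}^c T(G'_i),
\]
while cone-$(2,2)$ sparsity only requires $m' \le 2n' - \sum_{i=1}^c T(G'_i)$. Since $-1 \le 0$, the cone-Laman bound is strictly stronger, and so any cone-Laman graph is cone-$(2,2)$ sparse.

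Having established this, \lemref{cone22-rep} applies directly: the generic rank of \eqref{colored-cone-system} is $m$, and for a cone-Laman graph the edge count is $m = 2n - 1$ by definition. Combining the two gives generic rank $2n - 1$, which is the conclusion of the lemma.

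There is no real obstacle here, since \lemref{cone22-rep} already did the geometric work (through \propref{cone-collapse} and the matroidal \lemref{cone-22-matroid}) of producing a cone-$(2,2)$ extension and showing the restricted equations stay independent. The only thing to check is the purely combinatorial inclusion of the two sparsity families, which is immediate from the definitions.
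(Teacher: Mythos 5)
Your proposal is correct and is exactly the paper's argument: the paper states this lemma as an immediate corollary of \lemref{cone22-rep} with the remark that cone-Laman graphs are cone-$(2,2)$ sparse, which is the inclusion you verify by comparing the two sparsity inequalities. Your write-up just makes that one-line observation explicit.
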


\subsection{Proof of \theoref{cone-direction}}
We prove each direction of the statement in turn.  Since it is technically
easier, we prove the equivalent statement on colored direction networks.
The theorem then follows by \lemref{cone-direction-network-lifts}.

\paragraph{Cone-laman graphs generically have faithful realizations}
Let $(G,\bgamma)$ be a cone-Laman graph with $n$ vertices.  \lemref{cone-laman-doubling} implies that doubling any edge $ij$ of
$(G,\bgamma)$ results in a cone-$(2,2)$ graph $(G+ij,\bgamma)$.
Select edge directions for $G$ such that, for every $(G+ij,\bgamma)$ obtained from $G$ by doubling an edge,
these directions on the edges of $G$, plus some direction on the added copy of the edge $ij$ yield
directions on $(G+ij,\bgamma)$ that are generic in the sense of \propref{cone-collapse}.  This is possible,
since the desired directions lie in the intersection of a finite number of full measure subsets of the space of direction
assignments.

Define $(G,\bgamma,\vec d)$ to be the colored direction network with these directions.  By \lemref{cone22-rep}, the
realization space is $1$-dimensional, since the system \eqref{colored-cone-system} has rank $2n-1$.  There
must be some realization of $G$ that is not entirely collapsed: $G$ is connected by \lemref{cone-laman-connected}
and, since the realization space is $1$-dimensional, it allows non-trivial scalings.  Together these facts imply that
some edge is not realized with coincident endpoints.

Now we suppose, for a contradiction, that some edge $ij$ is collapsed in a non-collapsed realization $G(\vec p)$ of $(G,\bgamma,\vec d)$.
Because the realization space is one-dimensional, all other realizations are scalings of $G(\vec p)$, which implies that
$ij$ is collapsed in \emph{all} realizations of $(G,\bgamma,\vec d)$.  Adding a second copy of the colored edge $ij$ and
giving it a different direction forces $ij$ to be collapsed in all realizations, and so we see that the realization space
of $(G,\bgamma,\vec d)$ is exactly the same as that of $(G+ij,\bgamma,\vec d)$.

We are now at a contradiction. The directions $\vec d$ are chosen such that $(G+ij,\bgamma,\vec d)$ is generic in the sense
of \propref{cone-collapse}, and this implies that $(G+ij,\bgamma,\vec d)$, and so $(G,\bgamma,\vec d)$ has a zero-dimensional
realization space.  Since $(G,\bgamma,\vec d)$ always has at least a one-dimensional realization space, we are done.

\paragraph{Cone-laman circuits have collapsed edges}
For the other direction, we suppose that $(G,\bgamma)$ has $n$ vertices and is not cone-Laman.  If the number of edges $m$
is less than $2n-1$, then the realization space of any direction network is at least $2$-dimensional, so
it contains more than just rescalings.  Thus, we assume that $m\ge 2n - 1$.  In this case, $G$ is not
cone-Laman sparse, so it contains a cone-Laman circuit $(G',\bgamma)$.  Thus, we are reduced to showing that
any cone-Laman circuit has, generically, only realizations with collapsed edges, since these then force collapsed
edges in any realization of a  generic colored direction network on $(G,\bgamma)$.

We recall that \lemref{cone-laman-circuits-struct} says there are two types of cone-Laman circuits $(G',\bgamma)$:
\begin{itemize}
\item $(G',\bgamma)$ is a cone-$(2,2)$ graph.
\item $(G',\bgamma)$ has $T(G') = 2$, $n'$ vertices, $m' = 2n'-2$ edges, and is cone-$(2,2)$ sparse.
\end{itemize}
If $(G',\bgamma)$ is a cone-$(2,2)$ graph, then \propref{cone-collapse} applies to it, and we are done.  For the
other type of cone-$(2,2)$ circuit, \lemref{cone22-rep} implies that,
for generic directions, a direction network has rank $2n' - 2$, so the realization
space is two-dimensional.  Because $T(G') = 2$, the $\rho$-image of $(G',\bgamma)$ is trivial, so $G'$
lifts to $k$ disjoint copies of itself.  We can construct realizations of the lifted direction network
by picking one of these copies of $G'$ in the lift as representatives and putting the vertices
on top of each other at an arbitrary point in the plane.  Since this is a $2$-dimension family of realizations with
all edges collapsed, this family is, in fact the entire realization space, completing the proof.
\eop

\section{Crystallographic direction networks} \seclab{cdns}
Let $(\tilde{G},\varphi)$ be a graph with a $\Gamma_k$-action $\varphi$.  A \emph{crystallographic direction network}
$(\tilde{G},\varphi,\tilde{\vec d})$ is given by, $(\tilde{G},\varphi)$ and an assignment of
a direction $\tilde{\vec d}_{ij}$ to each edge $ij\in E(\tilde{G})$.

\subsection{The realization problem}
A \emph{realization} of a crystallographic direction network is given by a point set $\vec p=(\vec p_i)_{i\in V(\tilde{G})}$
and a representation $\Phi$ of $\Gamma_k$ such that:
\begin{eqnarray}
\iprod{\vec p_j - \vec p_i}{\tilde{\vec d}^\perp_{ij}} = 0 & \text{for all edges $ij\in E(\tilde{G})$} \\
\vec p_{\gamma\cdot i} = \Phi(\gamma)\cdot \vec p_i & \text{for all vertices $i\in V(\tilde{G})$}
\end{eqnarray}
We observe that for a crystallographic direction networks to be realizable, the $\Gamma_k$-orbit of any
edge needs to be given the $\phi$-equivariant directions; i.e. if $i'j' = \gamma \cdot ij$, then
$\vec d_{i'j'}$ is $\vec d_{ij}$ rotated by the rotational part of $\gamma$.  From now on we require $\phi$-equivariance
of directions.  We denote realizations by
$\tilde{G}(\vec p,\Phi)$, to indicate the dependence on $\Phi$.

The definition of \emph{collapsed} and \emph{faithful} realizations is similar to that for cone direction
networks.  An edge $ij$ is collapsed in a realization $\tilde{G}(\vec p,\Phi)$ if $\vec p_i = \vec p_j$.
A \emph{realization is collapsed} when all the edges are collapsed \emph{and} $\Phi$ is trivial.  A representation
is \emph{trivial} if it maps the translation generators of $\Gamma_k$ to the zero vector.  A realization
is \emph{faithful} if no edge is collapsed and $\Phi$ is not trivial.

Although our techniques don't require it in this section, for convenience, we will consider only realizations
that map the rotational generator $r_k$ of $\Gamma_k$ to the rotation around the origin $R_k$ through angle $2\pi/k$.
The dimension of the resulting realization space is two less than the dimension of the realization
space where the rotation center of $\Phi(r_k)$ is not pinned down.

\subsection{Direction Network Theorem}
As in the case of cone direction networks, all the information about a crystallographic direction
network is captured by its colored quotient graph.  We make this precise in \secref{colored-crystal-networks}
below.  Our main theorem on crystallographic direction networks is
\directionthm

\subsection{Proof of \theoref{direction}}  The key proposition,
which is proved in \secref{crystal-collapse} is:
\begin{prop}[\crystalcollapseprop]\proplab{crystal-collapse}
A generic crystallographic direction network that has as its colored quotient graph a $\Gamma$-$(2,2)$ graph
has only collapsed realizations.
\end{prop}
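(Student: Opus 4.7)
The plan is to establish Proposition~\ref{prop:crystal-collapse} by a rank argument parallel to the cone case of Proposition~\ref{prop:cone-collapse}. Once $\Phi(r_k)$ is pinned to rotate about the origin, the realization system is a square, homogeneous linear system: the unknowns, numbering $2n + \rep_{\Gamma_k}(\Trans(\Gamma_k))$ (point coordinates plus the translational parameters of $\Phi$), exactly match the edge count $m = 2n + \rep_{\Gamma_k}(\Trans(\Gamma_k))$ of a $\Gamma$-$(2,2)$ graph (where $\sum T(G_i) = 0$ because every component of a $\Gamma$-$(1,1)$ graph contains a rotation). Consequently it suffices to exhibit one direction assignment for which this system has full rank; the collapse conclusion then holds on the complement of the formal-determinant variety, which is a proper algebraic subset.

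First I would decompose $(G,\bgamma)$ via Proposition~\ref{prop:gamma22-decomp} as an edge-disjoint union of two spanning $\Gamma$-$(1,1)$ subgraphs $X$ and $Y$, and within each extract a spanning generalized cone-$(1,1)$ subgraph $X' \subset X$, $Y' \subset Y$ by Lemma~\ref{lemma:gamma11-is-gc11-spanning}. The leftover edges---$\frac{1}{2}\rep_{\Gamma_k}(\Trans(\Gamma_k))$ per half, so one per half for $k=3,4,6$ and two per half for $k=2$---are ``translation edges'' whose $\rho$-images together generate $\Trans(\Gamma_k)$ up to radical, by the definition of $\Gamma$-$(1,1)$.

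Next I would run an adaptation of the cone-collapse argument on the generalized cone-$(2,2)$ spanning subgraph $X' \cup Y'$. Assigning directions to each component of $X'$ and $Y'$ via the Lemma~\ref{lemma:cone11-directions} gadget (with directions chosen generically in the sense of Proposition~\ref{prop:cone-genericity}) forces the lift of each map-graph component to sit on a family of rotation-symmetric lines through the rotation center of its cycle's $\Phi$-image. Propagating along the directed cycles of the overlap graph (Proposition~\ref{prop:overlap-cycle}) then forces each base vertex to coincide with that component's rotation center, and hence every point in the lift to sit at a $\Phi$-dependent rotation center. The translation edges in $E(X)\setminus E(X')$ and $E(Y)\setminus E(Y')$, after substituting in these forced positions, yield additional linear equations involving only the translational parameters of $\Phi$; choosing their directions generically makes these equations independent, forcing $\Phi$ to be trivial and completing the collapse.

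The principal obstacle is the propagation step once rotation centers become $\Phi$-dependent. In the cone case the invariant ``line through the origin'' played a decisive role, whereas here every such line moves with $\Phi$, and Proposition~\ref{prop:cone-genericity} must be reinterpreted as a rank statement inside the larger configuration space that includes the translational part of $\Phi$. Verifying that the combined linear system---cone-collapse constraints on $X' \cup Y'$ together with the $\Trans(\Gamma_k)$-spanning translation-edge constraints---has non-vanishing formal determinant for some explicit direction choice requires careful bookkeeping, especially for $k=2$, where each half of the decomposition contributes two translation edges and the rank-four translation subgroup must be pinned down in its entirety.
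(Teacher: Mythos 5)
Your high-level architecture matches the paper's: reduce to exhibiting one direction assignment with full rank for the square homogeneous system, decompose via \propref{gamma22-decomp} into two $\Gamma$-$(1,1)$ graphs, peel off a spanning generalized cone-$(2,2)$ subgraph plus $\rep_{\Gamma_k}(\Trans(\Gamma_k))$ leftover ``translation'' edges, collapse the cone part first, and then use the leftover edges to kill $\Phi$. However, the step you yourself flag as ``the principal obstacle'' is not an obstacle the paper confronts at all, because it does \emph{not} re-run the geometric overlap-graph propagation with $\Phi$-dependent rotation centers. Instead it sidesteps this algebraically (\propref{gen-cone-rank}): for \emph{fixed} $\Phi$ the realization system \eqref{colored-crystal-directions} in the point variables is an inhomogeneous system whose coefficient matrix is identical to the cone system \eqref{colored-cone-system}, so the rank-$2n$ conclusion of \propref{cone-collapse} transfers verbatim and gives a unique, $\Phi$-dependent solution (\propref{gen-cone-unique}). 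No reinterpretation of \propref{cone-genericity} in a larger configuration space is needed. If you insist on the geometric route, the projections $T(\vec v,\vec w,R)$ become affine maps whose fixed points move with $\Phi$, and for $k=2$ the scale factor is identically zero (\lemref{scalefactor-blowup}), so the machinery degenerates; the paper in fact handles $\Gamma_2$ by a completely separate two-skew-lines argument (\lemref{collapse-proof-G2-gam11}).

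The second genuine gap is your final step: the claim that the translation edges, ``after substituting in these forced positions, yield additional linear equations involving only the translational parameters of $\Phi$'' which generic directions make independent. A direction constraint on an edge whose endpoints coincide in every realization of the sub-network is vacuous, so such an edge imposes \emph{no} condition on $\Phi$ regardless of how its direction is chosen; likewise, if the edge vector's \emph{direction} happens to be constant as $\Phi$ varies, one of the candidate constraints is again lost. Ruling these degeneracies out is where most of the paper's work goes: \lemref{collapse-proof-good-basis} performs a basis exchange in the generalized cone-$(1,1)$ matroid, using the fundamental g.c.-$(1,1)$ circuit of each leftover edge, to find a g.c.-basis whose two complementary edges are non-collapsed in some realization, and \lemref{collapse-proof-ij} then shows by contradiction that the realized direction of such an edge genuinely varies with $\Phi$, so that fixing it cuts the dimension by one. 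Without an argument of this kind, your assertion that the translation-edge equations are independent and force $\Phi$ trivial is exactly the statement to be proved.
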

It then follows that:
\begin{prop}[\gammalamancircuitcollapse]\proplab{gamma-laman-circuit-collapse}
A generic crystallographic direction network that has, as its colored quotient graph, a
$\Gamma$-colored-Laman circuit has only realizations with collapsed edges.
\end{prop}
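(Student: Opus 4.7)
The plan is to adapt the final step of the proof of \theoref{cone-direction}---the argument showing that cone-Laman circuits force collapsed edges---with \propref{crystal-collapse} taking the role played there by \propref{cone-collapse}.  First I would prove the crystallographic analog of \lemref{cone22-rep}: if $(G,\bgamma)$ is $\Gamma$-$(2,2)$-sparse, then a generic direction network on $(G,\bgamma)$ has linear-system rank equal to $|E(G)|$.  Since the family of $\Gamma$-$(2,2)$ graphs is the basis family of a matroid (\propref{gamma22-decomp} combined with \theoref{edmonds2}), any $\Gamma$-$(2,2)$-sparse $(G,\bgamma)$ extends to a $\Gamma$-$(2,2)$ graph $(G^+,\bgamma^+)$ by adding colored edges.  \propref{crystal-collapse} applied to $(G^+,\bgamma^+)$ says that generic directions yield a direction system with only collapsed solutions, i.e.\ with full row rank $2n + \rep_{\Gamma_k}(\Trans(\Gamma_k))$; restricting to the rows coming from $(G,\bgamma)$ gives the claimed rank.

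Now let $(G,\bgamma)$ be a $\Gamma$-colored-Laman circuit with $m = f(G)$ edges (using \lemref{gamma-laman-circuit-sparsity}).  Every proper subgraph $G'$ satisfies $m' < f(G') \le 2n' + \rep_{\Gamma_k}(\Trans(\Gamma_k)) - \sum T(G_i')$, and $G$ itself satisfies $m = f(G) \le 2n + \rep_{\Gamma_k}(\Trans(\Gamma_k))$; so $(G,\bgamma)$ is $\Gamma$-$(2,2)$-sparse.  After pinning $\Phi(r_k)$ to rotate about the origin, the ambient variable space has dimension $2n + \rep_{\Gamma_k}(\Trans(\Gamma_k))$, and the realization space is a linear subspace of dimension $\rep_{\Gamma_k}(\Trans(\Gamma_k)) - \rep_{\Gamma_k}(G) + \sum_i T(G_i)$.

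Next I would exhibit an explicit linear subspace of realizations having every lifted edge collapsed, and show it has exactly this dimension.  A collapsed-edge realization $(\vec p,\Phi)$ is one in which $\vec p_i = \Phi(\gamma_{ij})\cdot\vec p_j$ for every edge $ij$; traversing a fundamental closed path in each connected component $G_i$ shows this forces $\vec p_{b_i}$ to be a simultaneous fixed point of $\Phi(\rho(\pi_1(G_i,b_i)))$.  The existence of such a fixed point requires $\Phi$ to send every element of $\Trans(\rho(\pi_1(G_i,b_i)))$ to the zero translation, which aggregates (using \lemref{subgrpgen} and the results of \secref{radical}) into the single constraint $\Phi|_{\Trans(G,B)} = 0$, of codimension $\rep_{\Gamma_k}(G)$.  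Under this constraint, for $T(G_i) = 0$ components the base-vertex position is uniquely determined by the rotational part of $\Phi$, while for $T(G_i) = 2$ components the base vertex is free in $\R^2$, contributing $\sum T(G_i)$ additional dimensions.  Since the collapsed-edge subspace is linear, contained in the linear realization space, and of matching dimension, the two coincide, so every realization has all edges collapsed.

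The hard part is verifying the dimension count for the collapsed-edge subspace---that the combined fixed-point conditions from different connected components really do collapse into the single requirement $\Phi|_{\Trans(G,B)}=0$, with no hidden rotation-center compatibility conditions left over---which is a careful exercise in the subgroup and representation-space bookkeeping of Sections~\secrefX{subgroups}--\secrefX{radical}.  Everything else, including the passage from ``only collapsed edges'' to the statement of \propref{gamma-laman-circuit-collapse}, is essentially routine once \propref{crystal-collapse} is in hand.
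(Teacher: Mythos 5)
Your proposal is correct and follows essentially the same route as the paper: the paper likewise observes that a \GammaCL{} circuit is \GammaTT-sparse so that \propref{crystal-collapse} forces the generic realization space to have dimension $\rep_{\Gamma_k}(\Trans(\Gamma_k)) - \rep_{\Gamma_k}(G) + \sum_i T(G_i)$, and then matches this against a collapsed-edge subspace of at least that dimension (your inline construction --- representations trivial on $\Trans(G,B)$ plus a $T(G_i)$-dimensional placement of each component's base vertex at the appropriate rotation center --- is exactly the paper's \lemref{collapsed-dimensions}, with the multi-component bookkeeping handled one component at a time). Note that only the ``at least'' direction of the dimension count is needed, since a linear subspace of the realization space whose dimension is at least that of the whole space must equal it.
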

\propref{gamma-laman-circuit-collapse} is proved in \secref{gamma-laman-circuit-collapse}.
The proof of \theoref{cone-direction} then goes through with appropriate modifications. \eop

\subsection{Colored direction networks}\seclab{colored-crystal-networks}  As we did with cone direction networks, we will make use of
\emph{colored crystallographic direction networks} to study crystallographic direction networks.  Since
there is no chance of confusion, we simply call these ``colored direction networks''
in the next several sections.

A \emph{colored direction network} $(G,\bgamma,\vec d)$ is given by a $\Gamma_k$-colored graph $(G,\bgamma)$
and an assignment of a direction $\vec d_{ij}$ to every edge $ij$.  The realization system for
$(G,\bgamma,\vec d)$ is given by
\begin{eqnarray}
\iprod{\Phi(\gamma_{ij})\cdot\vec p_j - \vec p_i}{\vec d_{ij}^\perp} = 0
\label{colored-crystal-directions}
\end{eqnarray}
The unknowns are the representation $\Phi$ of $\Gamma_k$ and the points $\vec p_i$.  We denote points in the
realization space by $G(\vec p,\Phi)$.

The following two lemmas linking crystallographic direction networks and colored direction networks
have the same proofs as Lemmas \ref{lemma:cone-direction-network-lifts} and  \ref{lemma:cone-direction-network-lifts2}
\begin{lemma}\lemlab{crystal-direction-network-lifts}
Given a colored direction network $(G,\bgamma,\vec d)$, its lift to a crystallographic direction network
$(\Gtilde, \varphi, \tilde{\vec d})$ is well-defined and the realization spaces of $(G,\bgamma,\vec d)$ and
$(\Gtilde, \varphi, \tilde{\vec d})$ are isomorphic.  In particular, they have the same dimension.
\end{lemma}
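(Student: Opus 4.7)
The plan is to mirror the argument given for \lemref{cone-direction-network-lifts2}. First I would build the infinite graph $(\tilde G, \varphi)$ by lifting the colored graph $(G, \bgamma)$ via \lemref{lifts}, which provides a well-defined free $\Gamma_k$-action with finite quotient and a canonical edge representative per orbit (coming from the vertex representatives fixed in \secref{colored-graphs}). I would then set $\tilde{\vec d}_{\tilde{ij}} := \vec d_{ij}$ on the representative and propagate to the rest of the orbit by the $\varphi$-equivariance rule forced by the definition of a crystallographic direction network in \secref{cdns}, namely $\tilde{\vec d}_{\gamma \cdot e} = R_k^{j(\gamma)} \cdot \tilde{\vec d}_{e}$, where $j(\gamma) \in \Z/k\Z$ denotes the rotational part of $\gamma$. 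This is well-defined because $\varphi$ is free on edges and because taking the rotational part is a homomorphism $\Gamma_k \to \Z/k\Z$.

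Next I would produce explicit mutually-inverse algebraic maps between the two realization spaces. In one direction, given a realization $\tilde G(\vec p, \Phi)$ of $(\tilde G, \varphi, \tilde{\vec d})$, I restrict $\vec p$ to the chosen vertex representatives and keep $\Phi$ unchanged. For an edge $ij \in E(G)$ with color $\gamma_{ij}$, the covering construction has the representative edge in $\tilde G$ running from $\tilde i$ to $\gamma_{ij} \cdot \tilde j$; substituting the equivariance relation $\vec p_{\gamma_{ij}\cdot\tilde j} = \Phi(\gamma_{ij}) \cdot \vec p_j$ into the lift equation converts it into exactly \eqref{colored-crystal-directions}. Conversely, starting from a solution $(\vec p, \Phi)$ of \eqref{colored-crystal-directions}, I would extend by $\vec p_{\gamma \cdot \tilde i} := \Phi(\gamma) \cdot \vec p_i$ on all of $V(\tilde G)$; freeness of $\varphi$ on vertices makes this unambiguous, and \eqref{equivariantD} then holds by construction.

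The main technical point to verify is that this lifted point assignment satisfies every direction equation in $\tilde G$, not only the one on each representative edge. For an arbitrary edge $\gamma \cdot \tilde{ij}$ in the orbit of $\tilde{ij}$, both endpoints transform by $\Phi(\gamma)$ while the direction transforms by $R_k^{j(\gamma)}$, which is the orthogonal part of $\Phi(\gamma)$. Since Euclidean isometries preserve inner products and send perpendicular vectors to perpendicular vectors, the representative equation pushes forward to the orbit equation and no new constraint appears. Consequently the two maps are algebraic inverses of each other, yielding an isomorphism of realization spaces and in particular equality of dimensions. The only genuine obstacle is bookkeeping: keeping straight that points transform under the full Euclidean action via $\Phi$ while directions transform only under the rotational part, and verifying that $\varphi$-equivariance is precisely the device that collapses each infinite orbit of equations in $\tilde G$ down to the single colored equation on $ij$ in $G$.
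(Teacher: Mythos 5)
Your argument is correct and follows essentially the same route as the paper, which gives no separate proof here but simply asserts that the argument of Lemmas \ref{lemma:cone-direction-network-lifts} and \ref{lemma:cone-direction-network-lifts2} carries over; your writeup is a faithful expansion of that, with the restriction/equivariant-extension maps and the observation that the rotational part of $\Phi(\gamma)$ preserves inner products doing the work. The only convention to keep in mind is that the identification of the rotational part of $\Phi(\gamma)$ with $R_k^{j(\gamma)}$ relies on the paper's pinning of $\Phi(r_k)$ to the standard rotation $R_k$ about the origin, which you implicitly and correctly use.
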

\begin{lemma}\lemlab{crystal-direction-network-lifts2}
Let $(G,\bgamma)$ be a $\Gamma_k$-colored graph and $(\Gtilde, \varphi)$ its lift.  Assigning a direction
to one representative of each edge orbit under $\varphi$ in $\Gtilde$ gives a well defined
colored direction network $(G,\bgamma,\vec d)$.
\end{lemma}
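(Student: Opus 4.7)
The plan is to follow the template of \lemref{cone-direction-network-lifts2}, with the single new twist that in the crystallographic setting only the rotational part of a group element acts on directions. Using the indexing scheme from \secref{colored-graphs}, every edge of $\tilde{G}$ has the form $i_\delta j_{\delta\cdot\gamma_{ij}}$ for a unique pair consisting of an edge $ij \in E(G)$ (with color $\gamma_{ij}$) and a group element $\delta \in \Gamma_k$; moreover, the $\Gamma_k$-action permutes the edges in the fiber over $ij$ freely and transitively.

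First I would decompose each $\delta \in \Gamma_k$ using the semidirect-product structure as $\delta = (t,r)$ with $t \in \Z^2$ and $r \in \Z/k\Z$. Under any representation $\Phi \in \Rep(\Gamma_k)$, the translation part $t$ acts on $\R^2$ by a pure translation (which does not affect directions) and the rotational part $r$ acts via $R_k^r$ (which does). Consequently the natural action on unit directions is $\delta \cdot \vec d := R_k^r \cdot \vec d$, and this depends only on the abstract group element, not on $\Phi$.

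Next, given an assignment of a direction $\tilde{\vec d}$ to the representative edge $i_\delta j_{\delta \cdot \gamma_{ij}}$ in the fiber over $ij$, I would push it down to $ij \in E(G)$ by setting $\vec d_{ij} := R_k^{-r} \cdot \tilde{\vec d}$. Since by hypothesis exactly one representative per orbit is given a direction, this procedure assigns a single, unambiguous direction to every edge of $G$, producing a colored direction network $(G,\bgamma,\vec d)$. Finally, I would verify that lifting $(G,\bgamma,\vec d)$ back to $\tilde{G}$ using the $\varphi$-equivariance rule $\tilde{\vec d}_{\delta \cdot ij} = R_k^r \cdot \vec d_{ij}$ recovers the original direction on the chosen representative and extends consistently to the rest of the orbit; this is immediate from the construction and the fact that the $\Gamma_k$-action on the fiber is free.

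There is no real obstacle beyond the bookkeeping described above; the only substantive difference from the cone proof of \lemref{cone-direction-network-lifts2} is that $\Gamma_k$ now contains translations as well as rotations, so one must isolate the rotational part of $\delta$ before acting on directions. Everything else is a direct transcription of the cone argument.
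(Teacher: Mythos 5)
Your proposal is correct and follows essentially the same route as the paper, which simply declares that the proof is identical to that of \lemref{cone-direction-network-lifts2}: push the direction on the chosen representative $i_\delta j_{\delta\cdot\gamma_{ij}}$ down to $ij\in E(G)$ by undoing the rotational part of $\delta$, note that one representative per orbit makes this unambiguous, and check that re-lifting via equivariance recovers the original assignment. Your explicit isolation of the rotational part of $\delta=(t,r)$ (legitimate here because the paper pins $\Phi(r_k)$ to rotate about the origin, so the rotational part of $\Phi(\delta)$ is $R_k^{r}$ independent of $\Phi$) is exactly the ``appropriate modification'' the paper leaves implicit.
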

This next lemma, which is also immediate from the definitions, describes collapsed edges in
terms of colored direction networks.
\begin{lemma}\lemlab{colored-collapsed-edge}
Let $(G,\bgamma,\vec d)$ be a colored direction network and let $G(\vec p,\Phi)$ be a realization of $(G,\bgamma,\vec d)$.
Let $(\tilde{G},\varphi,\tilde{\vec d})$ be the lift of $(G,\bgamma,\vec d)$ and $\tilde{G}(\vec p,\Phi)$ be the associated
lift of $G(\vec p,\Phi)$.  Then a colored edge $ij\in E(G)$ lifts to an orbit of collapsed edges
in $\tilde{G}(\vec p,\Phi)$ if and only if
\[
\vec p_i = \Phi(\gamma_{ij})\cdot\vec p_j
\]
in $G(\vec p,\Phi)$.
\end{lemma}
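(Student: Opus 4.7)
The plan is to unpack, step by step, the definitions of the lift of a colored graph (from \secref{colored-graphs}) and the associated lift of a realization, and observe that the collapsing condition on an orbit reduces to the single equation in the statement by $\Gamma_k$-equivariance.

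First, I would recall the construction of $(\tilde{G},\varphi)$ from $(G,\bgamma)$: the fiber over a vertex $i\in V(G)$ is $\{i_\gamma\}_{\gamma\in \Gamma_k}$, and the fiber over a directed edge $ij\in E(G)$ with color $\gamma_{ij}$ consists of the edges $i_\gamma j_{\gamma\gamma_{ij}}$ for $\gamma\in \Gamma_k$; the $\Gamma_k$-action on vertices is $\gamma'\cdot i_\gamma = i_{\gamma'\gamma}$. Next, I would spell out what the ``associated lift'' $\tilde{G}(\vec p,\Phi)$ of $G(\vec p,\Phi)$ is: by the equivariance required of a realization, the vertex $i_\gamma$ must be mapped to $\Phi(\gamma)\cdot \vec p_i$, and this rule is consistent since $\varphi$ is free.

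With these formulas in hand, the edge $i_\gamma j_{\gamma\gamma_{ij}}$ in $\tilde G$ is collapsed in $\tilde G(\vec p,\Phi)$ precisely when
\[
\Phi(\gamma)\cdot \vec p_i = \Phi(\gamma\gamma_{ij})\cdot \vec p_j.
\]
Since $\Phi$ is a homomorphism, $\Phi(\gamma)$ is invertible, so applying $\Phi(\gamma)^{-1}$ to both sides yields the equivalent condition $\vec p_i = \Phi(\gamma_{ij})\cdot \vec p_j$, which is independent of $\gamma$. This shows simultaneously the ``if and only if'' for a single orbit representative and the fact that one element of the fiber over $ij$ is collapsed precisely when every element is, so the whole edge orbit is collapsed.

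There is no real obstacle here; the only care needed is to keep the indexing conventions from \secref{colored-graphs} straight and to use that $\Phi$ takes values in $\Euc(2)$, hence in a group, so that the cancellation of $\Phi(\gamma)$ is legitimate. The lemma then follows directly.
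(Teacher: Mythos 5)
Your proposal is correct and matches the paper's treatment: the paper simply declares this lemma immediate from the definitions, and your argument is exactly the definition-unpacking (lift construction, equivariant placement $\vec p_{i_\gamma}=\Phi(\gamma)\cdot\vec p_i$, and cancellation of the invertible $\Phi(\gamma)$) that justifies that claim. Nothing further is needed.
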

In light of Lemmas \ref{lemma:crystal-direction-network-lifts}--\ref{lemma:colored-collapsed-edge}, we
may switch freely between the formalisms, and we do so in subsequent sections.

\subsection{Proof strategy for \propref{crystal-collapse}}
The main difference between \propref{crystal-collapse} and \propref{cone-collapse} is that
we need to account for the flexibility of $\Phi$.  To do this, we bootstrap the proof using
generalized cone-$(2,2)$ graphs (from \secref{gen-cone-sparse}). The steps are:
\begin{itemize}
\item We show that, for fixed $\Phi$,
a generic direction network on a generalized cone-$(2,2)$ graph
has a unique solution (\propref{gen-cone-unique}).
\item Then we allow $\Phi$ to flex. We show that by adding $\rep(\Trans(\Gamma_k))$ edges
that extend a generalized cone-Laman graph to a $\Gamma$-$(2,2)$ graph, realizations of a generic
direction network are forced to collapse.
\end{itemize}
This is done in Sections \ref{sec:gen-cone22-direction-networks} and \ref{sec:crystal-collapse}.

\section{Direction networks on generalized cone-$(2,2)$ graphs} \seclab{gen-cone22-direction-networks}
Let $(G,\bgamma)$ be a generalized cone-$(2,2)$ graph.  In light of \propref{cone-collapse}, it is intuitive
that the realization system \eqref{colored-crystal-directions} should have generic rank $2n$ for a colored
direction network on $(G,\bgamma)$, since cone direction networks are a ``special case''.  In this section
we verify that intuition by giving the reduction to.
\begin{prop}\proplab{gen-cone-unique}
Fix a faithful representation $\Phi$ of $\Gamma_k$.  Holding $\Phi$ fixed, a generic crystallographic
direction network that has a generalized cone-$(2,2)$ graph as its colored quotient has a unique
solution.
\end{prop}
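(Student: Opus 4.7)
The plan is to reduce \propref{gen-cone-unique} to the cone collapsing result \propref{cone-collapse} by reading off the associated homogeneous system once $\Phi$ is held fixed.

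First I would observe that, since $G$ is a generalized cone-$(2,2)$ graph on $n$ vertices, it decomposes as the edge-disjoint union of two generalized cone-$(1,1)$ graphs, each of which is a map-graph with $n$ edges. Hence $G$ has exactly $m=2n$ edges, and for fixed $\Phi$ the system \eqref{colored-crystal-directions} is a square linear system in the $2n$ unknowns $(\vec p_i)_{i \in V(G)}$. Writing $\Phi(\gamma_{ij}) = (\tau_{ij}, R_{ij})$, the edge equation reads
\[
\iprod{R_{ij}\vec p_j - \vec p_i}{\vec d_{ij}^\perp} \;=\; -\iprod{\tau_{ij}}{\vec d_{ij}^\perp},
\]
so unique solvability is equivalent to showing the associated homogeneous system has only the trivial solution for generic $\vec d$.

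Next I would identify that homogeneous system as a colored cone direction network. Let $(G,\tilde{\bgamma})$ be the $\Z/k\Z$-colored graph obtained from $(G,\bgamma)$ by post-composing each color with the quotient map $\Gamma_k \to \Z/k\Z$. Since the rotational part $R_{ij}$ of $\Phi(\gamma_{ij})$ depends only on the image of $\gamma_{ij}$ in $\Z/k\Z$, the homogeneous system coincides with the realization system \eqref{colored-cone-system} for the colored cone direction network $(G,\tilde{\bgamma},\vec d)$ with the same directions. Equivalently, if $\vec p,\vec p'$ are two solutions of \eqref{colored-crystal-directions} for the same $\Phi$, then $\vec q_i := \vec p_i - \vec p_i'$ solves this cone system.

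Finally, I would verify that $(G,\tilde{\bgamma})$ is a cone-$(2,2)$ graph and invoke \propref{cone-collapse}. The edge-disjoint decomposition of $G$ into two generalized cone-$(1,1)$ graphs carries over under the quotient, so it is enough to check that the quotient of any generalized cone-$(1,1)$ graph is a cone-$(1,1)$ graph. This is immediate from the definition in \secref{gen-cone-sparse}: the underlying graph is a map-graph, and the $\rho$-image of the cycle in each connected component is a rotation in $\Gamma_k$, which projects to a non-trivial element of $\Z/k\Z$. Consequently, for $\vec d$ in a non-empty Zariski-open subset of the direction-assignment space, \propref{cone-collapse} forces every realization of $(G,\tilde{\bgamma},\vec d)$ to be collapsed, so $\vec q = 0$ and \eqref{colored-crystal-directions} has a unique solution; existence then comes for free from the fact that the square system has trivial kernel. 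The only point that requires care is that the genericity in \propref{cone-collapse} is imposed on exactly the same directions $\vec d$ that appear in the crystallographic system, so no transfer between genericity conditions is needed.
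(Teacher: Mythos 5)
Your proposal is correct and takes essentially the same route as the paper: the paper likewise rewrites \eqref{colored-crystal-directions} as an inhomogeneous square system whose homogeneous part coincides with the cone realization system \eqref{colored-cone-system} (since the rotational part of $\Phi(\gamma_{ij})$ factors through $\Z/k\Z$), and then invokes \propref{cone-collapse}. Your explicit check that the quotient $\Z/k\Z$-colored graph is cone-$(2,2)$ is a detail the paper leaves implicit, but the argument is the same.
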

\propref{gen-cone-unique} is immediate from the following statement and \lemref{crystal-direction-network-lifts}.
\begin{prop}\proplab{gen-cone-rank}
Let $(G,\bgamma)$ be a generalized cone-$(2,2)$ graph with $n$ vertices.
Then the generic rank of the realization system \eqref{colored-crystal-directions} is $2n$.
\end{prop}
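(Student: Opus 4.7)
The plan is to reduce \propref{gen-cone-rank} to the cone-direction-network result \propref{cone-collapse} via a ``linear part'' trick. Since $(G,\bgamma)$ is a generalized cone-$(2,2)$ graph, it decomposes edge-disjointly into two generalized cone-$(1,1)$ graphs, each of which is a map-graph with $n$ edges, so $|E(G)| = 2n$ and the system \eqref{colored-crystal-directions} is square in the $2n$ unknowns $\vec p = (\vec p_i)_{i \in V(G)}$.

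First I would split $\Phi(\gamma_{ij}) = (v_{ij}, R_{ij})$ into translational and rotational parts and rewrite the realization equation as
\[
\iprod{R_{ij}\vec p_j - \vec p_i}{\vec d_{ij}^\perp} = -\iprod{v_{ij}}{\vec d_{ij}^\perp}.
\]
The left-hand side is linear in $\vec p$, and its coefficient matrix $A(\vec d)$ depends only on the $R_{ij}$ and $\vec d$. In particular, $A(\vec d)$ coincides with the coefficient matrix of the cone-direction-network system \eqref{colored-cone-system} on the $\Z/k\Z$-colored graph $(G,\bgamma')$ obtained by reducing $\bgamma$ modulo $\Trans(\Gamma_k)$, because translational elements of $\Gamma_k$ map to the identity in $\Z/k\Z$ while rotational elements map to their rotational part.

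Next I would verify that $(G,\bgamma')$ is a cone-$(2,2)$ graph. By the definition in \secref{gen-cone-sparse}, the quotient of a generalized cone-$(1,1)$ graph by $\Trans(\Gamma_k)$ is a cone-$(1,1)$ graph: each cycle's $\rho$-image is a rotation in $\Gamma_k$, which projects to a nontrivial element of $\Z/k\Z$. Hence $(G,\bgamma')$ is the edge-disjoint union of two cone-$(1,1)$ graphs, which by \lemref{cone-22-decomp} is equivalent to being cone-$(2,2)$. Now \propref{cone-collapse} asserts that for directions $\vec d$ in a Zariski-open subset of the space of direction assignments, the only realization of this cone system is $\vec p = 0$; equivalently, $A(\vec d)$ has trivial kernel on that open set, so the rank of $A(\vec d)$ equals $2n$ generically. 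This is exactly the conclusion of \propref{gen-cone-rank}.

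The only delicate point is matching the two notions of ``generic,'' but this is automatic: the rank of $A(\vec d)$ is governed by the vanishing of the single polynomial $\det A(\vec d)$ in the entries of $\vec d$, and this is precisely the polynomial whose generic nonvanishing is furnished by \propref{cone-collapse} (see the discussion in \secref{cone-collapse-genericity}). I expect no significant obstacle beyond carefully spelling out this identification of coefficient matrices; all the heavy geometric lifting has already been carried out in the cone case.
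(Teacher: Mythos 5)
Your proposal is correct and follows essentially the same route as the paper: split $\Phi(\gamma_{ij})$ into rotational and translational parts, move the translational contribution to the right-hand side, identify the resulting homogeneous coefficient matrix with the cone realization system \eqref{colored-cone-system} on the quotient $\Z/k\Z$-colored graph, and invoke \propref{cone-collapse}. The only difference is that you spell out two points the paper leaves implicit --- that reducing the colors modulo $\Trans(\Gamma_k)$ yields a cone-$(2,2)$ graph, and that the two genericity notions match via the determinant of the square system --- which is a welcome bit of extra care rather than a divergence.
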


\subsection{Proof of \propref{gen-cone-rank}}%
Expanding \eqref{colored-crystal-directions} we get
\begin{equation}
\label{gen-cone-rank-1}
\iprod{\Phi(\gamma_{ij})\cdot\vec p_j - \vec p_i}{\vec d_{ij}^\perp}  =
\iprod{\Phi(\gamma_{ij})\cdot\vec p_j}{\vec d_{ij}^\perp} - \iprod{\vec p_i}{\vec d_{ij}^\perp}
\end{equation}
We define $\Phi(\gamma_{ij})_{r}\in SO(2)$ to be the rotational part of $\Phi(\gamma_{ij})$ and $\Phi(\gamma_{ij})_t\in \R^2$
to be the translational part, so that $\Phi(\gamma_{ij})\cdot \vec p = \Phi(\gamma_{ij})_{r}\cdot \vec p + \Phi(\gamma_{ij})_t$.
In this notation, \eqref{gen-cone-rank-1} becomes
\begin{equation}
\label{gen-cone-rank-2}
\iprod{\Phi(\gamma_{ij})_r\cdot\vec p_j}{\vec d_{ij}^\perp} + 	\iprod{\Phi(\gamma_{ij})_t}{\vec d_{ij}^\perp} -
\iprod{\vec p_i}{\vec d_{ij}^\perp}
\end{equation}
Finally, since the rotational part $\Phi(\gamma_{ij})_r$ preserves the inner product, we see that \eqref{colored-crystal-directions}
is equivalent to the inhomogeneous system
\begin{equation}
\label{gen-cone-rank-3}
\iprod{\vec p_j}{\Phi(\gamma_{ij}^{-1})_r\cdot\vec d_{ij}^\perp}  -
\iprod{\vec p_i}{\vec d_{ij}^\perp} = - 	\iprod{\Phi(\gamma_{ij})_t}{\vec d_{ij}^\perp}
\end{equation}
The l.h.s. of \eqref{gen-cone-rank-3} is the same as \eqref{colored-cone-system}, and thus the generic
rank of \eqref{gen-cone-rank-3} is at least as large as that of  \eqref{colored-cone-system}.  The proposition
then follows from \propref{cone-collapse}.
\eop

\section{Proof of \propref{crystal-collapse}}\seclab{crystal-collapse}
We now have the tools in place to prove:
\crystalcollapseprop
The proof is split into two cases, $\Gamma_2$ and $\Gamma_k$ for $k=3,4,6$.

\subsection{Proof for rotations of order $3$, $4$, and $6$}
Let $(G,\bgamma)$ be a $\Gamma$-$(2,2)$ graph.  We construct a direction network
on $(G,\bgamma)$ that has only collapsed solutions, from which the desired
generic statement follows.

\paragraph{Assigning directions}
We select directions $\vec d$ for each edge in $G$ such that they are generic in the sense of
\propref{gen-cone-unique} for every g.c.-basis of $(G,\bgamma)$.  Define the resulting
colored direction network to be $(G,\bgamma,\vec d)$.

\paragraph{The realization space of any spanning g.c.-basis}
With these direction assignments, we can compute the dimensions
of the realization space for the direction network
induced on any spanning g.c.-basis of $(G,\bgamma)$.  One
exists by \lemref{gamma22-is-cone22-spanning}.

\begin{lemma}\lemlab{collapse-proof-dimension}
Let $(G',\bgamma)$ be a spanning g.c-basis of $(G,\bgamma)$.  Then the
realization space of the induced direction network $(G',\bgamma,\vec d)$
is $2$-dimensional, and linearly depends on the representation $\Phi$.
\end{lemma}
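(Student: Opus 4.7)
The plan is to exploit the reformulation \eqref{gen-cone-rank-3} of the realization system together with the rank statement \propref{gen-cone-rank}. In \eqref{gen-cone-rank-3} the left-hand side is linear in $\vec p$ with coefficients built only from $\vec d$ and the rotational parts $\Phi(\gamma_{ij})_r$, while the right-hand side is linear in the translational parts $\Phi(\gamma_{ij})_t$. Because we have pinned $\Phi(r_k)=R_k$ at the origin, each rotational part $\Phi(\gamma_{ij})_r$ is determined by the image of $\gamma_{ij}$ under the projection $\Gamma_k\to\Z/k\Z$; in particular it is independent of the remaining representation parameters, so the left-hand side defines a single linear operator $L$ on the space of vertex coordinates that is fixed once $\vec d$ is chosen.

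The directions $\vec d$ are, by construction, generic for the spanning g.c.-basis $(G',\bgamma)$, which is a generalized cone-$(2,2)$ graph. Hence \propref{gen-cone-rank} applied to $(G',\bgamma,\vec d)$ shows that $L$ has full rank $2n$ and is therefore invertible. Consequently, for every admissible $\Phi$ the realization system has the unique solution $\vec p = L^{-1}(\mathrm{RHS}(\Phi))$, so the realization space is cut out as the graph of a function of $\Phi$.

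For $k=3,4,6$ the proof of \lemref{repspace} shows that, after pinning $\Phi(r_k)=R_k$ at the origin, the representation is determined by a single translation vector $v_1\in\R^2$ (the sign $\varepsilon\in\{\pm 1\}$ merely selects a discrete component of $\Rep(\Gamma_k)$). Since every element of $\Gamma_k$ is a word in $t_1,t_2,r_k$ and only the $t_i$ carry nonzero translational parts, each $\Phi(\gamma_{ij})_t$ is a linear function of $v_1$; hence $\mathrm{RHS}(\Phi)$, and therefore $\vec p=L^{-1}(\mathrm{RHS}(\Phi))$, depend linearly on $v_1$. This exhibits the realization space as $\{(\vec p(v_1),\Phi(v_1)):v_1\in\R^2\}$, which is $2$-dimensional with both $\vec p$ and $\Phi$ depending linearly on the translational data, as required.

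No serious obstacle arises in this argument: the invertibility of $L$ is already supplied by \propref{gen-cone-rank}, and the remaining work is the bookkeeping that identifies the affine structure of $\Rep(\Gamma_k)$ after the rotational pinning. The one subtlety worth flagging is that the realization space records the pair $(\vec p,\Phi)$, so distinct choices of $v_1$ automatically give distinct realizations; this guarantees that the linear parameterization by $v_1$ has full $2$-dimensional image rather than collapsing.
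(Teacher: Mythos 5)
Your proposal is correct and follows essentially the same route as the paper's (very terse) proof: fix $\Phi$, invoke the full-rank statement for generalized cone-$(2,2)$ graphs to get a unique $\vec p$, and then observe via the reformulation \eqref{gen-cone-rank-3} that the solution depends linearly on the translational parameters of $\Phi$, which form a $2$-dimensional space once the rotation is pinned. You have simply spelled out in detail what the paper compresses into ``moving the variables associated with $\Phi$ to the right.''
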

\begin{proof}
The dimension comes from \propref{gen-cone-unique} and comparing the number of
variables to the number of equations in the realization system
\eqref{colored-crystal-directions}.  Moving the variables associated with $\Phi$
to the right completes the proof.
\end{proof}

\paragraph{A g.c.-basis with non-collapsed complement}
Let $(G',\bgamma)$ be a g.c.-basis of $(G,\bgamma)$.  By edge
counts, there are exactly two edges $ij$ and $vw$ in the
complement of $(G',\bgamma)$.

\begin{lemma}\lemlab{collapse-proof-good-basis}
There is a g.c.-basis $(G',\bgamma)$ of $(G,\bgamma)$ such that the edges $ij$ and $vw$
in its complement are non-collapsed in some realization of $(G',\bgamma,\vec d)$.
\end{lemma}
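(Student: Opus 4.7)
The plan is to argue by contradiction using matroid exchange. Suppose no such g.c.-basis exists: for every g.c.-basis $G'$ of $(G,\bgamma)$ with complement $\{e_1, e_2\}$, there is no realization of $(G',\bgamma,\vec d)$ in which both $e_1$ and $e_2$ are non-collapsed. The realization space of $(G',\bgamma,\vec d)$ is a $2$-dimensional linear subspace by \lemref{collapse-proof-dimension}, hence Zariski-irreducible, while ``$e_i$ collapsed'' is a Zariski-closed condition. Thus the hypothesis forces that for every g.c.-basis $G'$ at least one complement edge is collapsed in \emph{every} realization of $(G',\bgamma,\vec d)$; call such an edge the \emph{bad edge} of $G'$.

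Fix the decomposition $G = H_1 \cup H_2$ into two $\Gamma$-$(1,1)$ subgraphs from \propref{gamma22-decomp}. Every g.c.-basis of $G$ has the form $G_0 = H_1' \cup H_2'$ with $H_i'$ a spanning generalized cone-$(1,1)$ subgraph of $H_i$, and the complement edges are $e_i := E(H_i) \setminus E(H_i')$. The first step I would carry out is a \emph{propagation lemma}: if $e_1$ is the bad edge of $G_0$ and $f$ lies in the fundamental generalized cone-$(1,1)$ circuit of $e_1$ with respect to $H_1'$ in $H_1$ (which exists by \lemref{gc11-circuits}), then $f$ is the bad edge of the swapped g.c.-basis $G_f := (H_1' - f + e_1) \cup H_2'$. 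Indeed, $G_0 + e_1$ and $G_f + f$ coincide as colored graphs and hence define the same direction network; since $e_1$ being always collapsed in $(G_0,\bgamma,\vec d)$ means adjoining $e_1$ to $G_0$ does not drop the realization space below dimension $2$, the same must hold for adjoining $f$ to $G_f$, and for our generic $\vec d$ this forces $f$ to be always collapsed in $(G_f,\bgamma,\vec d)$.

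Iterating the propagation along the matroid-exchange graph of generalized cone-$(1,1)$ bases of $H_1$ (connected by standard matroid theory), and arguing symmetrically on $H_2$, I would show that the bad-edge structure spreads to a large family of edges across many g.c.-bases. The final step is to translate this combinatorial pattern into a polynomial identity constraining $\vec d$: ``$e$ is always collapsed in $(G',\bgamma,\vec d)$'' is the vanishing of a specific polynomial in $\vec d|_{G'}$ arising from the parametric $\Phi$-description of the $2$-dimensional realization space, and the accumulated collapse conditions coming from propagation should impose too many such vanishings to be jointly consistent with the hypothesis that $\vec d$ is generic in the sense of \propref{gen-cone-unique} for every g.c.-basis.

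The hardest step will be this final algebraic extraction. The matroid propagation produces an abundant combinatorial pattern of bad edges, but turning it into an explicit polynomial obstruction likely requires identifying a minimal combinatorial ``gadget'' inside $G$ (analogous to the overlap-graph cycle of \propref{overlap-cycle} exploited in the proof of \propref{cone-collapse}) whose forced collapse under the propagation yields a concrete algebraic relation among otherwise generically independent directions. I expect such a gadget to mix edges from both $H_1$ and $H_2$, so that the flexibility of $\Phi$, which distinguishes the crystallographic setting from the cone setting, enters decisively in forcing the contradiction.
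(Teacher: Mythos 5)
Your setup is sound as far as it goes: by \lemref{collapse-proof-dimension} the realization space of each g.c.-basis is a $2$-dimensional linear (hence irreducible) space, so the negation of the lemma does force, for every g.c.-basis, one complement edge to be collapsed in \emph{every} realization; and your exchange identity $G_0+e_1=G_f+f$ is exactly the basis-exchange move the paper uses. But there are two genuine gaps. The smaller one is in your propagation lemma: the fact that adjoining $e_1$ to $G_0$ does not drop the dimension only tells you that the linear constraint coming from $f$ vanishes identically on the realization space of $(G_f,\bgamma,\vec d)$, i.e.\ that $f$ is always realized either collapsed \emph{or parallel to} $\vec d_f$. Upgrading ``always parallel to a prescribed direction'' to ``always collapsed'' needs a separate argument that the direction of $f$ genuinely varies over the $2$-dimensional space (this is precisely what \lemref{collapse-proof-ij} has to prove later, and it is not free).

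The more serious gap is that the decisive mechanism is absent: you defer the contradiction to an unspecified polynomial obstruction accumulated over the exchange graph, and you acknowledge you have not found the gadget. No such global accumulation is needed. The key fact is \lemref{gc11-circuits}: the fundamental g.c.-$(1,1)$ circuit $X''$ of the removed edge $ij$ in $X'$ has $\rho$-image containing a \emph{translation}. If every edge of $X''$ were collapsed in every realization of $(X'\cup Y',\bgamma,\vec d)$, then $\Phi$ would kill that translation; for $k=3,4,6$ a nontrivial translation determines the whole translation representation, so $\Phi$ would be trivial in every realization, and \propref{gen-cone-unique} would make the realization space $0$-dimensional, contradicting \lemref{collapse-proof-dimension}. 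Hence some edge $i'j'$ of $X''$ is uncollapsed in some realization, and a single exchange puts it in the complement; repeating the argument on $Y'$ and $vw$ (which does not disturb $i'j'$) finishes the proof. The argument is direct and local to one circuit --- the flexibility of $\Phi$ enters exactly through the translation carried by that circuit, not through a consistency count over many bases.
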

\begin{proof}
By \propref{gamma22-decomp}, we can decompose $(G,\bgamma)$ into two spanning
$\Gamma$-$(1,1)$ graphs $X$ and $Y$. Since $\Gamma$-$(1,1)$ graphs are all
generalized cone-$(1,1)$ graphs plus an edge for $k=3,4,6$, we can assume, w.l.o.g.,
that $X-ij$ and $Y-vw$ are generalized cone-$(1,1)$.  Define $X'$ to be $X-ij$ and
$Y'$ to be $Y-vw$. It follows that $X'\cup Y'$ is a g.c.-basis of $(G,\bgamma)$.

Let $X''$ be the fundamental g.c.-$(1,1)$ circuit of $ij$ in
$X'$.  By \lemref{gc11-circuits}, the $\rho$-image of $X''$
contains a translation.  If every edge in $X''$ is collapsed
in every realization of the direction network $(X'\cup Y',\bgamma, \vec d)$,
this implies that $\Phi$ must always be trivial in any realization.
\propref{gen-cone-unique} would then imply that the realization space
is $0$-dimensional, which is a contradiction to \lemref{collapse-proof-dimension}.

Here is the key step of the argument: since some edge $i'j'$ in $X''$
is not collapsed, we can do a basis exchange (on the g.c.-$(1,1)$ matroid)
to find a g.c.-basis with $i'j'$ and $vw$ in its complement, and $i'j'$
not collapsed in some realization.

We then repeat the argument on $Y'$ and $vw$.  Since this will not affect
$i'j'$, we are done.
\end{proof}
Now we select a g.c.-basis $(G',\bgamma)$ with the property of
\lemref{collapse-proof-good-basis}, and let $G(\vec p,\Phi)$
be a realization in which the edges $ij$ and $vw$ are both
non-collapsed.  The rest of the proof will be to show that, adding back $ij$ and
$vw$ forces all realizations of $(G,\bgamma,\vec d)$ to collapse.

\paragraph{The realization space of $(G'+ij,\bgamma, \vec d)$}
We first consider adding back $ij$.
\begin{lemma}\lemlab{collapse-proof-ij}
The realization space of $(G'+ij,\bgamma, \vec d)$ is $1$-dimensional.
\end{lemma}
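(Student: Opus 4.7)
The plan is a dimension count combined with an independence argument. By \lemref{collapse-proof-dimension} the realization space of $(G',\bgamma,\vec d)$ is a $2$-dimensional linear subspace of $(\vec p, \Phi)$-space, linearly parametrized by the translational part of $\Phi$. Adding the edge $ij$ imposes the single additional equation from \eqref{colored-crystal-directions}, so the realization space of $(G'+ij,\bgamma,\vec d)$ has dimension at least $1$; the task is to show this new equation is not redundant.

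Define the linear map $L : \mathcal{R}_{G'} \to \R^2$ by $L(\vec p,\Phi) = \vec p_i - \Phi(\gamma_{ij})\vec p_j$, where I write $\mathcal{R}_{G'}$ for the realization space of $(G',\bgamma,\vec d)$. Splitting $\Phi(\gamma_{ij})\vec p_j$ into its rotational and translational parts as in the proof of \propref{gen-cone-rank}, both $\vec p_i, \vec p_j$ and the translational part of $\Phi(\gamma_{ij})$ are linear in the parameters of $\Phi$, so $L$ is genuinely linear, and the realization space of $(G'+ij,\bgamma,\vec d)$ is exactly $L^{-1}(\operatorname{span}(\vec d_{ij}))$. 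By \lemref{collapse-proof-good-basis}, some realization in $\mathcal{R}_{G'}$ has $\vec p_i \neq \Phi(\gamma_{ij})\vec p_j$, so $L$ is not identically zero and its image has dimension $1$ or $2$. If $L$ is surjective, then $L^{-1}(\operatorname{span}(\vec d_{ij}))$ is automatically $1$-dimensional; if the image of $L$ is a line spanned by some $\vec u \neq 0$, then the preimage equals the $1$-dimensional $\ker L$ precisely when $\vec u \not\parallel \vec d_{ij}$.

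Because $\vec u$ depends only on the combinatorics of $G'$ and on the restriction $\vec d|_{E(G')}$, while $\vec d_{ij}$ is an independent coordinate on direction-assignment space, the condition $\vec u \not\parallel \vec d_{ij}$ defines the complement of a proper algebraic subset. I would absorb this codimension-one condition into the genericity hypothesis on $\vec d$, alongside the g.c.-basis genericity already imposed at the start of the collapse argument. The main obstacle is precisely this $1$-dimensional image case; once one confirms that generic $\vec d$ forces $\vec u \not\parallel \vec d_{ij}$, the upper bound $\dim \mathcal{R}_{G'+ij} \leq 1$ follows, and combined with the lower bound yields equality.
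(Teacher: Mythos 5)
Your setup is sound and matches the skeleton of the paper's argument: $\mathcal{R}_{G'}$ is a $2$-dimensional linear space, the added edge contributes the single linear condition $L(\vec p,\Phi)\in\operatorname{span}(\vec d_{ij})$, and the only way the dimension can fail to drop to $1$ is if $L$ has rank $1$ with image $\operatorname{span}(\vec u)$ and $\vec u\parallel\vec d_{ij}$. The gap is in how you dispose of that last case. In the construction surrounding this lemma, $\vec d_{ij}$ is \emph{not} a free coordinate: it is set equal to $\vec v=\Phi(\gamma_{ij})\cdot\vec p_j-\vec p_i$ for the specific realization $G(\vec p,\Phi)$ produced by \lemref{collapse-proof-good-basis} (this is exactly what keeps that realization inside $\mathcal{R}_{G'+ij}$, which the following lemma depends on). Since $\vec v=-L(\vec p,\Phi)$ lies in the image of $L$, in the rank-$1$ case the parallelism $\vec d_{ij}\parallel\vec u$ holds automatically; the ``bad'' configuration is forced rather than avoidable by perturbation, so your genericity escape is unavailable. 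And if you instead insisted on a freely chosen generic $\vec d_{ij}$, then in the rank-$1$ case your $1$-dimensional space would be $\ker L$, consisting of realizations in which $ij$ is collapsed and excluding $G(\vec p,\Phi)$ --- which breaks the next step of the proof, where $\mathcal{R}_{G'+ij}$ must consist of rescalings of that non-collapsed realization.

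What is actually needed, and what the paper proves, is that the rank-$1$ case cannot occur, i.e.\ that $L$ is surjective (equivalently, that the direction of $\vec v$ is non-constant over $\mathcal{R}_{G'}$). The paper argues by contradiction: if the image of $L$ were a line, assign $ij$ a direction off that line; the resulting realization space is still cut out of $\mathcal{R}_{G'}$ by one linear equation, hence at least $1$-dimensional, yet $ij$ is collapsed in every one of its realizations. By the basis-exchange argument from \lemref{collapse-proof-good-basis}, the entire g.c.-$(1,1)$ circuit of $ij$ in $X'$ then collapses, forcing $\Phi$ to kill a nontrivial translation and hence (via \propref{gen-cone-unique}) making the space $0$-dimensional --- a contradiction. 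Some argument of this kind, genuinely using the combinatorics of the circuit containing $ij$, is required; a dimension count plus genericity of $\vec d_{ij}$ does not suffice.
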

\begin{proof}
We know that $ij$ is not collapsed, so the Lemma will follow provided that
the direction of $\vec v = \vec p_j - \vec p_i$ is non-constant in realizations of $(G',\bgamma, \vec d)$
as a function of $\Phi$.  In this case, simply setting $\vec d_{ij} = \vec v$ would impose a new linear
constraint, decreasing the dimension of the realization space by one.

To see that the direction of $\vec v$ is not constant as $\Phi$
varies, observe that assigning a direction $\vec d_{ij}$ other than $\vec v$ to
$ij$ would then force $ij$ to collapse in any realization of $(G'+ij,\bgamma, \vec d)$.
In turn, using the edge swapping argument from \lemref{collapse-proof-good-basis},
the entire g.c.-$(1,1)$ circuit of $ij$ in $X'$ collapses, resulting in
a zero-dimensional realization space.  This contradicts \lemref{collapse-proof-dimension} in
that it implies the realization space of $(G',\bgamma,\vec d)$ was $1$-dimensional.
\end{proof}
In light of \lemref{collapse-proof-ij}, we set the direction $\vec d_{ij}$ to be $\vec v$.
This is allowed, since it preserve the realization $G(\vec p,\Phi)$ we obtained from
\lemref{collapse-proof-good-basis} and $ij$ is, by definition, outside of the g.c-basis
$(G',\bgamma)$.

\paragraph{The representation $\Phi$ must be trivial}
Now we consider adding the edge $vw$ to $(G'+ij,\bgamma,\vec d)$.
\begin{lemma}
The representation $\Phi$ is trivial in any realization of $(G,\bgamma,\vec d)$.
\end{lemma}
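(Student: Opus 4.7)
Because the realization system \eqref{colored-crystal-directions} is linear and homogeneous in the coordinates $(\vec p, \Phi)$ once $\Phi(r_k)$ is pinned to rotate about the origin, every realization space is a linear subspace containing the \emph{collapsed realization} in which all points coincide at the origin and $\Phi$ is trivial on translations. In particular, by \lemref{collapse-proof-ij} the realization space of $(G'+ij, \bgamma, \vec d)$ is a line through this collapsed realization. The plan is to show that adding the remaining direction equation for $vw$ cuts this line down to the origin alone, which then forces $\Phi$ to be trivial.

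First I would parameterize the line by $t\in\R$ so that $t=0$ corresponds to the collapsed realization and $t=1$ to the realization $G(\vec p, \Phi)$ produced by \lemref{collapse-proof-good-basis}. Scaling commutes with the linear realization equations, so the point on the line at parameter $t$ is $(t\vec p, \Phi_t)$, where $\Phi_t$ agrees with $\Phi$ on $r_k$ and its translational part equals $t$ times that of $\Phi$. A short computation using the explicit coordinates of \lemref{repspace} then yields
\[
\Phi_t(\gamma_{vw})\cdot \vec p_w(t) - \vec p_v(t) = t\,\vec z,
\]
where $\vec z := \Phi(\gamma_{vw})\cdot \vec p_w - \vec p_v$. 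Combining \lemref{collapse-proof-good-basis} with \lemref{colored-collapsed-edge}, the edge $vw$ is non-collapsed in $G(\vec p, \Phi)$, so $\vec z \neq 0$.

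Next the direction equation for $vw$ reduces to the scalar equation $t\iprod{\vec z}{\vec d_{vw}^\perp}=0$. Since the initial choice of $\vec d$ at the start of the proof of \propref{crystal-collapse} is generic and was not subsequently modified on the edge $vw$, we may assume $\vec d_{vw}\not\parallel\vec z$; this is a codimension-one condition compatible with the earlier generic conditions on $\vec d$. The equation then forces $t=0$, so the realization space of $(G, \bgamma, \vec d)=(G'+ij+vw, \bgamma, \vec d)$ collapses to the single realization at $t=0$, in which $\Phi$ is trivial on translations as desired.

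The main subtlety I anticipate is making the compatibility of the auxiliary codimension-one condition $\vec d_{vw}\not\parallel\vec z$ with the initial generic choice of $\vec d$ precise: one needs to observe that $\vec z$ depends only on $\vec d$ restricted to $(G'+ij, \bgamma)$ (together with the modification $\vec d_{ij}=\vec v$) and is independent of $\vec d_{vw}$, so removing a codimension-one subset of the $\vec d_{vw}$-directions still leaves the open dense set of admissible $\vec d$ intact.
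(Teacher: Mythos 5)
Your proof is correct and follows essentially the same route as the paper: both arguments rest on \lemref{collapse-proof-ij} making the realization space of $(G'+ij,\bgamma,\vec d)$ a one-dimensional line of rescalings of $G(\vec p,\Phi)$, and on choosing $\vec d_{vw}$ away from the direction of $\Phi(\gamma_{vw})\cdot\vec p_w-\vec p_v$ so that the new constraint kills that line. The only difference is in the last step: the paper concludes by noting that the collapsed edge $vw$ drags its g.c.-$(1,1)$ circuit in $Y'$ (whose $\rho$-image contains a translation) down with it, forcing $\Phi$ trivial, whereas you observe directly that the scaling parameter $t$ also scales the translational part of $\Phi$, so $t=0$ gives triviality immediately --- a slightly more streamlined finish that avoids re-invoking the circuit argument.
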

\begin{proof}
The realization space of $(G'+ij,\bgamma, \vec d)$ is $1$-dimensional by
\lemref{collapse-proof-ij}, and so it consists only of rescalings of the realization
$G(\vec p,\Phi)$ in which $\vec p_v$ and $\vec p_w$ are distinct.  Setting the
direction $\vec d_{vw}$ to any direction other than that of $\vec p_w - \vec p_v$
then gives the Lemma: the new constraint then forces the edge $vw$ to collapse, and
with it, using the argument used to show \lemref{collapse-proof-good-basis} its
g.c.-$(1,1)$ circuit in $Y'$, and consequently $\Phi$.
\end{proof}

\paragraph{All realizations are collapsed}
The existence of a g.c.-basis of $(G,\bgamma)$ and \propref{gen-cone-unique} guarantee a unique realization
of  $(G,\bgamma, \vec d)$ depending on $\Phi$. When $\Phi$ is trivial, this is the completely
collapsed solution.
\eop

\subsection{Proof for rotations of order $2$}
Let $(G,\bgamma)$ be a $\Gamma$-$(2,2)$ graph.  Again, we will assign directions so that the
resulting direction network $(G,\bgamma, \vec d)$ has only collapsed solutions.  The
proof has a slightly different structure from the $k=3,4,6$ case.  The main
geometric lemma is the following.

\begin{lemma}\lemlab{collapse-proof-G2-gam11}
Let $(X,\bgamma)$ be a $\Gamma$-$(1,1)$ graph with $\Gamma_2$ colors.  Then any
realization $X(\vec p,\Phi)$ of a colored direction network $(X,\bgamma,\vec d)$ that assigns the
same direction $\vec v$ to every edge lifts to a realization $\tilde{X}(\vec p, \Phi)$
such that every vertex lies on a single line in the direction of $\vec v$.
\end{lemma}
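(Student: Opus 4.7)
The strategy is to exploit the defining property of $\Gamma_2$-$(1,1)$ graphs---namely $\Rad(\Trans(X,B))=\Trans(\Gamma_2)\cong\Z^2$---to force $\Phi$ to map every translation in $\Gamma_2$ to a translation along the direction $\vec v$, and then to show that this in turn aligns all rotation centers of $\Phi$ onto one line parallel to $\vec v$.

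First, since every element of $\Gamma_2$ has rotational part $\pm\Id$, the $\varphi$-equivariance of the directions implies that every edge of the lift $\tilde X$ points along $\pm\vec v$. Consequently each connected component of $\tilde X$ is realized on some affine line parallel to $\vec v$; in particular, after fixing base vertices $b_i$ in each connected component $X_i$ of $X$ and their lifts $\tilde b_i$, every vertex in the component of $\tilde b_i$ in $\tilde X$ lies on the line $L_i$ through $\vec p_{b_i}$ in direction $\vec v$. Next, lift any closed walk $\sigma\in\pi_1(X_i,b_i)$ with $\rho(\sigma)=\gamma$: this is a walk in $\tilde X$ from $\tilde b_i$ to $\Phi(\gamma)\cdot\vec p_{b_i}$ whose every edge is parallel to $\vec v$, so the displacement $\Phi(\gamma)\cdot\vec p_{b_i}-\vec p_{b_i}$ is parallel to $\vec v$. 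When $\gamma$ is a translation this says $\Phi(\gamma)$ is a translation in direction $\vec v$, and when $\gamma$ is an order-$2$ rotation its center $c_\gamma$ lies on $L_i$. Ranging over all components yields $\Phi(\Trans(X,B))\subset\operatorname{span}(\vec v)$, and by \propref{rad-rep-T-invariant} together with $\Rad(\Trans(X,B))=\Trans(\Gamma_2)$ the entire image $\Phi(\Trans(\Gamma_2))$ is contained in the translations parallel to $\vec v$.

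To conclude, adopt the convention that $\Phi(r_2)$ is the order-$2$ rotation about the origin. Then any rotation $(t,1)\in\Gamma_2$ is represented by the order-$2$ rotation about $\frac{1}{2}\Phi(t)$, which lies on the line $L_0$ through the origin in direction $\vec v$ by the previous step. For each component $X_i$, the distinguished rotation $r_i\in\rho(\pi_1(X_i,b_i))$ (which exists by the definition of $\Gamma_2$-$(1,1)$) has its $\Phi$-center on both $L_0$ and $L_i$, so since two lines parallel to $\vec v$ sharing a point must coincide, $L_i=L_0$ for every $i$. Therefore every $\vec p_v$ lies on $L_0$, and because $\Phi$ preserves $L_0$---translations act along $L_0$ and all rotation centers lie on $L_0$---every lifted vertex $\Phi(\gamma)\cdot\vec p_v$ also lies on $L_0$.

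The main obstacle is the last alignment step: the components $X_i$ are edge-disjoint in $X$, so no direction-network equation directly ties their lines $L_i$ together. Everything hinges on the rank condition $\Rad(\Trans(X,B))=\Trans(\Gamma_2)$ built into the $\Gamma_2$-$(1,1)$ definition, which rigidifies $\Phi$ enough to pin all rotation centers---and hence all the $L_i$---onto a single line.
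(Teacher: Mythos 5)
Your proof is correct and follows essentially the same route as the paper's: connected pieces of the lift lie on lines parallel to $\vec v$, the rotations in each component's $\rho$-image pin those lines to rotation centers, and the rank-two translation subgroup (via $\Rad(\Trans(X,B))=\Trans(\Gamma_2)$) forces $\Phi(\Trans(\Gamma_2))$ to translate along $\vec v$, collapsing all the lines onto one. The only differences are cosmetic: you bypass the paper's detour through a g.c.-basis, you spell out the final alignment step that the paper leaves implicit, and the citation for the radical step should be \propref{rad-struct} (or \lemref{rad-roots}) rather than \propref{rad-rep-T-invariant}.
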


\paragraph{\propref{crystal-collapse} for $\Gamma_2$ from \lemref{collapse-proof-G2-gam11}}
With \lemref{collapse-proof-G2-gam11}, the Proposition follows readily:
the combinatorial \propref{gamma22-decomp} says we may decompose $(G,\bgamma)$ into two spanning
$\Gamma$-$(1,1)$ graphs, which we define to be $X$ and $Y$.  We assign
the edges of $X$ a direction $\vec v_X$ and the edges of $Y$ a linearly independent
direction $\vec v_Y$.  Applying \lemref{collapse-proof-G2-gam11}, to $X$ and $Y$ separately
shows that every vertex of a lifted realization $\tilde{G}(\vec p,\Phi)$ must lie in
two skew lines.  This is possible only when they are all at the intersection of these
lines, implying only collapsed realizations. \eop

\paragraph{Proof of \lemref{collapse-proof-G2-gam11}}
Let $(X,\bgamma)$ be a $\Gamma$-$(1,1)$ graph with $\Gamma_2$ colors, and let $(X,\bgamma, \vec d)$
be a direction network that assigns all the edges the same direction. Let $(X',\bgamma)$  be a g.c.-$(1,1)$
basis of $(X,\bgamma)$; one exists by \lemref{gamma11-is-gc11-spanning}.

First we consider one connected component $X''$ of $X'$.
\begin{lemma}\lemlab{collapse-proof-G2-gc11-conn}
Let $(X'',\bgamma,\vec d)$ be a connected g.c.-$(1,1)$ graph, and let $\vec d$
assign the same direction $\vec v$ to every edge.  Then, in any realization of the lifted
crystallographic direction network $(\tilde{X},\varphi,\vec d)$, every vertex
and every edge lies on a line in the direction $\vec v$ through a rotation center.
\end{lemma}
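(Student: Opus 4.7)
The plan is to exploit the special feature of $\Gamma_2$ that every nontrivial rotation has rotational part $-I$. Consequently, in any realization of the lifted direction network $(\tilde{X},\varphi,\tilde{\vec d})$, each edge in $\tilde{X}$ has direction $\pm \vec v$ and is therefore parallel to $\vec v$. It follows immediately that each connected component of $\tilde{X}$ is confined to a single affine line in the direction $\vec v$.

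To locate a rotation center on such a line, I fix a base vertex $b \in V(X'')$ and consider the connected component $C$ of $\tilde{X}$ containing the representative $b_e$. Let $\ell$ be the affine line (in direction $\vec v$) containing $C$. Since $(X'', \bgamma)$ is a generalized cone-$(1,1)$ graph, $\rho(\pi_1(X'', b))$ contains a nontrivial rotation $r'$, which is necessarily of order $2$ in $\Gamma_2$. The closed walk in $X''$ whose $\rho$-image is $r'$ lifts to a path in $\tilde{X}$ from $b_e$ to $b_{r'} = \Phi(r') \cdot b_e$, so both points lie on $\ell$, forcing $\Phi(r') \cdot \ell = \ell$. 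The key geometric observation is that a nontrivial Euclidean rotation stabilizes an affine line if and only if its rotation center lies on that line, which puts the rotation center of $\Phi(r')$ on $\ell$, as required.

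For the remaining connected components of $\tilde{X}$, each is obtained from $C$ by the $\Gamma_2$-action on the lift, and the argument above, applied to the appropriate $\Gamma_2$-conjugate of $r'$, shows each lies on a line in direction $\vec v$ through a rotation center. The only step needing a small check is verifying that $\Phi(r')$ preserves $\ell$ rather than sending it to a parallel line; but this is immediate once we observe that $\ell$ is the unique line in direction $\vec v$ through $b_{r'}$ and that $\Phi(r') \cdot b_e = b_{r'}$ also lies on $\ell$.
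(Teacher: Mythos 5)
Your proof is correct and follows essentially the same route as the paper's: both arguments use that every lifted edge is parallel to $\vec v$ (so each connected component of the lift sits on a single line in direction $\vec v$), then use the order-$2$ rotation in $\rho(\pi_1(X'',b))$ to relate two points of the same component and force the rotation center onto that line, and finally transport the conclusion to the other components by the group action. The only cosmetic difference is that you phrase the key step as ``$\Phi(r')$ stabilizes $\ell$, hence its center lies on $\ell$,'' while the paper computes directly that $\vec p_{\tilde i}-\Phi(r)\cdot\vec p_{\tilde i}$ is parallel to $\vec v$.
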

\begin{proof}
We  reason similarly
to the way we did in \secref{cone-collapse-2}.  Because the $\rho$-image of $X''$ contains
an order $2$ rotation $r$, for some vertex $i\in V(X'')$, there is a vertex $\tilde{i}$ in the
fiber over $i$ such that
$\vec p_{\tilde{i}} - \vec p_{r\cdot \tilde{i}} = \vec p_{\tilde{i}} - \Phi(r)\cdot\vec p_{\tilde{i}}$
is in the direction $\vec v$.  Because $\Phi(r)$ is a rotation through angle $\pi$, this means that
$\vec p_{\tilde{i}}$ and $\vec p_{r\cdot \tilde{i}}$ lie on a line through the rotation center of $r$
in the direction $\vec v$.  Because $X''$ is connected, and edge directions are fixed under an
order $2$ rotation, the same is then true for every vertex in the
connected component $\tilde{X}_0''$ of the lifted realization $\tilde{X}(\vec p,\Phi)$ that contains $\vec p_{\tilde{i}}$.

The lemma then follows by considering translates of $\tilde{X}_0''$.
\end{proof}
Considering the connected components one at a time, \lemref{collapse-proof-G2-gc11-conn} readily
implies
\begin{lemma}\lemlab{collapse-proof-G2-gc11}
Let $(X',\bgamma,\vec d)$ be a g.c.-$(1,1)$ graph, and let $\vec d$
assign the same direction $\vec v$ to every edge.  Then, in any realization of the lifted
crystallographic direction network $(\tilde{X},\varphi,\vec d)$, every vertex
and every edge lies on a line in the direction $\vec v$ through a rotation center.
\end{lemma}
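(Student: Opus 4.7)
The plan is to deduce \lemref{collapse-proof-G2-gc11} directly from the connected-component version \lemref{collapse-proof-G2-gc11-conn} by decomposing $X'$ into its connected components and applying that lemma to each component in turn. The statement of \lemref{collapse-proof-G2-gc11} differs from \lemref{collapse-proof-G2-gc11-conn} only in dropping the connectedness hypothesis, so the parenthetical remark in the paper (``Considering the connected components one at a time'') is really the whole strategy.

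First I would enumerate the connected components $X'_1, X'_2, \ldots, X'_c$ of $X'$. Since $X'$ is a g.c.-$(1,1)$ graph, by the definition in \secref{gen-cone-sparse} each $X'_i$ is itself a connected map-graph whose unique cycle has a rotation as its $\rho$-image; in other words, each $X'_i$ satisfies the hypothesis of \lemref{collapse-proof-G2-gc11-conn} on its own.

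Next I would observe that the realization constraints of \eqref{colored-crystal-directions} are local to each edge: they involve only the two endpoints of that edge together with the representation $\Phi$, which is global. Consequently, if $X'(\vec p, \Phi)$ is any realization of the colored direction network $(X',\bgamma,\vec d)$, then restricting $\vec p$ to $V(X'_i)$ while keeping the same $\Phi$ yields a realization of the subsystem on $(X'_i,\bgamma,\vec d|_{E(X'_i)})$. Lifting via \lemref{crystal-direction-network-lifts}, this gives a realization of the lifted crystallographic direction network on the preimage $\tilde{X}'_i$, again with the same $\Phi$. Applying \lemref{collapse-proof-G2-gc11-conn} to each $X'_i$ then shows that every vertex and edge in the fiber over $X'_i$ lies on a line in the direction $\vec v$ through some rotation center of $\Phi$. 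Since $V(\tilde{X}') = \bigsqcup_i V(\tilde{X}'_i)$ and $E(\tilde{X}') = \bigsqcup_i E(\tilde{X}'_i)$, taking the union of these conclusions over $i$ yields the desired statement.

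There is essentially no obstacle here, since all the geometric content was concentrated in \lemref{collapse-proof-G2-gc11-conn}. The only subtle point worth flagging is that different connected components may land on lines through \emph{different} rotation centers (corresponding to different $\Phi$-images of order-two rotations in $\Gamma_2$), but the statement of the lemma is robust to this: it only asks that each vertex and edge lie on \emph{some} line in the direction $\vec v$ through \emph{some} rotation center.
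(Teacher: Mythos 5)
Your proposal is correct and matches the paper's argument exactly: the paper likewise deduces \lemref{collapse-proof-G2-gc11} from \lemref{collapse-proof-G2-gc11-conn} simply by ``considering the connected components one at a time,'' and your write-up just makes explicit the routine details (each component of a g.c.-$(1,1)$ graph satisfies the connected hypothesis, and the realization constraints restrict componentwise for a fixed $\Phi$). Nothing further is needed.
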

To complete the proof, we recall that the $\rho$-image of $(X,\bgamma)$ contains
two linearly independent translations $t$ and $t'$.  If $\Phi(t)$ or $\Phi(t')$ is not in the
direction $\vec v$, by \lemref{collapse-proof-G2-gc11} there is some edge in the lifted
realization $\tilde{X}(\vec p,\Phi)$ that has one endpoint on one line in the direction
$\vec v$ and the other endpoint on a translation of this line.  This is incompatible with all
edge edges of $X$ being assigned the direction $\vec v$, so we conclude that $\Phi(t)$
and $\Phi(t')$ are both in the direction $\vec v$, from which the Lemma follows.
\eop

\section{Proof of \propref{gamma-laman-circuit-collapse}}\seclab{gamma-laman-circuit-collapse}
We now prove the ``Maxwell direction'' of \theoref{direction}:
\gammalamancircuitcollapse
In the proof, we will use the following statement (cf. \cite[Lemma 14.2]{MT10} for the case when
the $\rho$-image is a translation subgroup)
\begin{lemma}\lemlab{collapsed-dimensions}
Let $(G,\bgamma,\vec d)$ be a colored direction network on a colored graph $(G,\bgamma)$ with connected components
$G_1,G_2,\ldots, G_c$.  Then $(G,\bgamma,\vec d)$ has at least
\[
\rep_{\Gamma_k}(\Trans(\Gamma_k)) - \rep_{\Gamma_k}(G) + \sum_{i=1}^c T(G_i)
\]
dimensions of solutions with all edges collapsed and the origin as a rotation center.
\end{lemma}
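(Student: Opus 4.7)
The plan is to construct a parameterized family of pairs $(\vec p, \Phi)$ satisfying the collapse conditions whose dimension matches the claimed lower bound. Because $\Phi(r_k)$ is required to have the origin as rotation center, it is determined up to the discrete choice $\varepsilon\in\{\pm 1\}$, and all continuous freedom in $\Phi$ lies in $\Phi|_{\Trans(\Gamma_k)}$, starting with dimension $\rep_{\Gamma_k}(\Trans(\Gamma_k))$. I fix base vertices $b_i$ in each connected component $G_i$ and write $\Gamma_i = \rho(\pi_1(G_i, b_i))$.

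First I would analyze the constraints on $\Phi$: for any closed loop $\ell$ based at $b_i$, the collapse condition $\vec p_{b_i} = \Phi(\rho(\ell))\cdot\vec p_{b_i}$ forces every element of $\Trans(\Gamma_i)$ to be sent by $\Phi$ to a translation with a fixed point, hence to the identity. Collecting these constraints over all components, $\Phi$ must be trivial on $\Trans(G, B) = \langle \Trans(\Gamma_1), \dots, \Trans(\Gamma_c)\rangle$, which cuts the $\Phi$-freedom down by exactly $\rep_{\Gamma_k}(\Trans(G, B)) = \rep_{\Gamma_k}(G)$ dimensions, leaving $\rep_{\Gamma_k}(\Trans(\Gamma_k)) - \rep_{\Gamma_k}(G)$ parameters for $\Phi$.

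With $\Phi$ so constrained, I would then choose each base vertex. If $T(G_i) = 2$ then $\Gamma_i$ consists only of translations which $\Phi$ kills, and $\vec p_{b_i}$ is arbitrary in $\R^2$, contributing two parameters. If $T(G_i) = 0$ then Lemma \lemref{subgrpgen} says $\Gamma_i = \langle r, \Trans(\Gamma_i)\rangle$ for some rotation $r$, so $\Phi(\Gamma_i) = \langle \Phi(r)\rangle$ is cyclic, and I set $\vec p_{b_i}$ to the unique fixed point of $\Phi(r)$---which is then automatically fixed by every element of $\Phi(\Gamma_i)$. Propagating along a spanning tree yields $\vec p_j = \Phi(\rho(P_{b_i j}))\cdot\vec p_{b_i}$ for every vertex $j \in V(G_i)$, and this is well-defined because $\vec p_{b_i}$ is fixed by $\Phi(\rho(\ell))$ for every loop $\ell$. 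Summing yields $\rep_{\Gamma_k}(\Trans(\Gamma_k)) - \rep_{\Gamma_k}(G) + \sum_i T(G_i)$ parameters worth of honest collapsed solutions, establishing the claimed lower bound. The only real obstacle is verifying path-independence of the tree extension, which amounts to showing that once $\Phi$ is trivialized on $\Trans(\Gamma_i)$, the image $\Phi(\Gamma_i)$ is genuinely cyclic; this is a direct consequence of the semidirect-product structure exposed by Lemma \lemref{subgrpgen}, and the rest is bookkeeping.
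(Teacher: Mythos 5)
Your proposal is correct and takes essentially the same route as the paper's own proof: both parameterize the collapsed solutions by representations $\Phi$ that are trivial on $\Trans(G,B)$ (a linear space of dimension $\rep_{\Gamma_k}(\Trans(\Gamma_k))-\rep_{\Gamma_k}(G)$), place the base vertex freely when $T(G_i)=2$ and at the rotation center of $\Phi(r)$ when $T(G_i)=0$, and propagate along a spanning tree, with the collapse of non-tree edges following exactly from your observation that $\Phi(\rho(\pi_1(G_i,b_i)))$ fixes $\vec p_{b_i}$. The only cosmetic differences are the necessity argument you open with (not needed for a lower bound) and an inverse in the propagation formula, which under the paper's conventions reads $\vec p_j=\Phi(\rho(P_{b_i j})^{-1})\cdot\vec p_{b_i}$.
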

We defer the proof of \lemref{collapsed-dimensions} to \secref{proof-of-collapsed-dimensions} and first show how
\lemref{collapsed-dimensions} implies \propref{gamma-laman-circuit-collapse}.
\subsection{Proof of \propref{gamma-laman-circuit-collapse}}
Let $(G,\bgamma)$ be a $\Gamma$-colored-Laman circuit with $n$ vertices, $m$ edges,and $c$
connected components $G_1,G_2,\ldots G_c$.
By \lemref{gamma-laman-circuit-sparsity}, we have
\[
m = 2n + \rep_{\Gamma_k}(G) - \sum_{i=1}^c T(G_i)
\]
It follows from \propref{crystal-collapse} that for generic directions,
a colored direction network $(G,\bgamma,\vec d)$ has a
\[
2n + \rep_{\Gamma_k}(\Trans(\Gamma_k)) - m = \rep_{\Gamma_k}(\Trans(\Gamma_k)) - \rep_{\Gamma_k}(G) + \sum_{i=1}^c T(G_i)
\]
dimensional space of realizations with the origin as a rotation center.
Applying \lemref{collapsed-dimensions} shows that in all of them
every edge is collapsed.
\eop

\subsection{Proof of \lemref{collapsed-dimensions}}\seclab{proof-of-collapsed-dimensions}
For now, assume that the colored graph $(G,\bgamma)$ is connected. Select a
base vertex $b$.

\paragraph{Representations that are trivial on $\Trans(G,b)$}
Let $\Phi\in \overline{\Rep_{\Gamma_k}}(\Trans(\Gamma_k))$ be such that
\[
\Phi(t) = ((0,0),\Id)
\]
for all translations $t\in \Trans(G,b)$.
These representations form a  $(\rep_{\Gamma_k}(\Trans(\Gamma_k)) - \rep_{\Gamma_k}(G))$-dimensional
space.

\paragraph{Collapsed realizations for a fixed representation}
Now we show that there are $T(G)$ dimensions of realizations with all edges collapsed.  We
do this with an explicit construction.  There are two cases.

\noindent
\textbf{Case 1: $T(G) = 2$.}
In this case, we know that the subgroup generated by $\rho(\pi_1(G,b))$ is a
translation subgroup.  Fix a spanning tree $T$ of $G$ and a point $\vec p_b\in \R^2$.
We will construct a realization with vertex $b$ mapped to $\vec p_b$ and all edges
collapsed.

For any pair of vertices $i$ and $j$, define $Q_{ij}$ to be the path in $T$ from $i$ to $j$
and define $\eta_{ij}$ to be $\rho(Q_{ij})$.  We then set $\vec p_i = \Phi(\eta_{bi}^{-1})\cdot \vec p_b$
for all vertices $i\in V(G)$ other than $b$.  Thus all vertex locations are determined by $\vec p_b$,
giving a $2$-dimensional space of realizations for this $\Phi$. We need to
check that all edges are collapsed.

If $ij$ is an edge of $T$ with color $\gamma_{ij}$, then we have
\[
\gamma_{ij}^{-1} = \eta_{bj}^{-1}\cdot\eta_{bi}
\]
Using this relation, we see that
\[
\vec p_j = \Phi(\eta_{bj}^{-1})\cdot \vec p_b = \Phi(\gamma_{ij}^{-1}\cdot\eta_{bi}^{-1})\cdot \vec p_b =
\Phi(\gamma_{ij}^{-1})\cdot \vec p_i
\]
so the edge $ij$ is collapsed.  If $ij$ is not an edge in $T$, then the fundamental closed path $P_{ij}$
of $ij$ relative to $T$ and $b$ follows $Q_{bi}$, crosses $ij$, and returns to $b$ along $Q_{jb}$.  This
gives us the relation
\[
\gamma_{ij} = \eta_{bi}^{-1}\cdot \rho(P_{ij})\cdot \eta_{bj}
\]
We then compute
\[
\Phi(\gamma_{ij})\cdot\vec p_j = (\Phi(\eta_{bi}^{-1})\cdot \Phi(\rho(P_{ij}))\cdot \Phi(\eta_{bj}))\cdot \vec p_j
\]
Since $\Phi$ is trivial on the $\rho$-images of fundamental closed paths, the r.h.s. simplifies to
\[
\Phi(\eta_{bi}^{-1})\cdot\Phi(\eta_{bj})\cdot\vec p_j = 	\Phi(\eta_{bi}^{-1})\cdot\vec p_b = \vec p_i
\]
and we have shown that all edges are collapsed.
\vspace{1ex}

\noindent
\textbf{Case 2: $T(G) = 0$.} We adopt the notation from Case 1.  As before, we fix a spanning tree $T$
and a representation $\Phi$ that is trivial on the translation subgroup $\Trans(G,b)$.  By \lemref{subgrpgen},
$\rho(\pi_1(G,b))$ is generated by a translation subgroup $\Gamma'<\Trans(G,b)$ and a rotation $r\in \Gamma_k$.
We set $\vec p_b$ to be on the rotation center of $\Phi(r)$ and define the rest of the $\vec p_i$
as before: $\vec p_i = \Phi(\eta_{bi}^{-1})\cdot \vec p_b$.  Observe that $\Phi(r)$ then fixes $\vec p_b$.

For edges $ij$ in the tree $T$, the argument that $ij$ is collapsed from Case 1 applies verbatim.  For non-tree
edges $ij$, a similar argument relating the fundamental closed path $P_{ij}$ to $Q_{bi}$ and $Q_{bj}$ yields
the relation
\[
\gamma_{ij} = \eta_{bi}^{-1}\cdot \rho(P_{ij})\cdot \eta_{bj}
\]
Since $\Phi$ is trivial on translations $t\in \Gamma'$, we see that
\[
\Phi(\gamma_{ij}) = \Phi(\eta_{bi}^{-1})\cdot\Phi(r)\cdot\Phi(\eta_{bj})
\]
We then compute
\[
\Phi(\gamma_{ij})\vec p_j = \Phi(\eta_{bi}^{-1})\cdot\Phi(r)\cdot\Phi(\eta_{bj})\cdot \vec p_j =
\Phi(\eta_{bi}^{-1})\cdot\Phi(r)\cdot\vec p_b
\]
Because $\Phi(r)\cdot\vec p_b=\vec p_b$, the r.h.s. simplifies to $\vec p_i$, and so the edge $ij$ is
collapsed.

\paragraph{Multiple connected components}
The proof of the lemma is completed by considering connected components one at a time to remove the assumption that
$G$ is connected.
\eop

\chapter{Rigidity}\chaplab{rigidity}
\section{Crystallographic and colored frameworks}\seclab{continuous}
We now return to the setting of crystallographic frameworks, leading to the proof of \theoref{main}
in \secref{main-proof}.  The overall structure is very similar to \cite[Sections 16--18]{MT10},
but we give sufficient detail for completeness.  Here is the roadmap to the rest of the paper:
\begin{itemize}
\item In this section we give the \emph{continuous rigidity} theory for crystallographic frameworks and the
related \emph{colored crystallographic frameworks}.
\item \secref{infinitesimal} introduces \emph{infinitesimal rigidity} and defines \emph{genericity}
for crystallographic frameworks.
\item The proof of \theoref{main} is then in \secref{main-proof}.
\item We conclude with a discussion of cone frameworks and the proof of \theoref{cone} in
\secref{cone-rigidity}.
\end{itemize}

\subsection{Crystallographic frameworks}
We recall the following definition from the introduction: a \emph{crystallographic framework}
$\Gal$ is given by:
\begin{itemize}
\item An infinite graph $\tilde{G}$
\item A free action $\varphi$ on $\tilde{G}$ by a crystallographic group $\Gamma$ with finite quotient
\item An assignment of a \emph{length} $\ell_{ij}$ to each edge $ij\in E(\tilde{G})$
\end{itemize}
In what follows, $\Gamma$ will always be one of the groups $\Gamma_2$, $\Gamma_3$, $\Gamma_4$,
or $\Gamma_6$.

\subsection{The realization space}
A \emph{realization} $\tilde{G}(\vec p,\Phi)$ of a crystallographic framework $\Gal$ is given by
an assignment
$\vec p=\left(\vec p_{i}\right)_{i\in\Vtilde}$ of points to the vertices of $\Gtilde$ and
a representation $\Phi$ of $\Gamma \into \Euc(2)$ by Euclidean isometries
acting discretely and co-compactly, such that
\begin{align}
||\vec p_i - \vec p_j||  =  \elltilde_{ij} & \text{\qquad for all edges $ij\in \Etilde$} \label{lengthsX} \\
\Phi(\gamma)\cdot \vec p_i  =  \vec p_{\gamma(i)} &
\text{\qquad for all group elements $\gamma\in \Gamma$ and vertices $i\in\Vtilde$} \label{equivariantX}
\end{align}
We see that \eqref{equivariantX} implies that, to be realizable at all, the framework $\Gal$ must assign the same
length to each edge in every $\Gamma$-orbit of the action $\varphi$.  The condition \eqref{lengthsX} is the standard one
from rigidity theory that says the distances between endpoints of each edge realize the specified lengths.

We define the \emph{realization space} $\mathcal{R}\Gal$ (shortly $\mathcal{R}$)
of a crystallographic framework to be the set of all realizations
\[
\mathcal{R}\Gal = \left\{ (\vec p,\Phi) : \text{$\tilde{G}(\vec p,\Phi)$ is a realization of $\Gal$}
\right\}
\]

\subsection{The configuration space}
The group $\Euc(2)$ of Euclidean isometries acts naturally on the realization space.  Let $\psi\in \Euc(2)$ be an
isometry. For any point $(\vec p,\Phi)\in \mathcal{R}$,
\[
(\psi\circ \vec p, \Phi^\psi)
\]
is a point in $\mathcal{R}$ as well where $\Phi^\psi$ is the representation defined by
$$\Phi^\psi(\gamma) = \psi \Phi(\gamma) \psi^{-1}.$$
We define the \emph{configuration space} $\mathcal{C}\Gal$
(shortly $\mathcal{C}$) of a crystallographic framework to be the quotient $\mathcal{R}/\Euc(2)$ of
the realization space by Euclidean isometries.

Since the spaces $\mathcal{R}$ and $\mathcal{C}$ are subsets of an infinite-dimensional space, there are
some technical details to check that we omit in the interested of brevity.
Interested readers can find a development for the periodic setting
in \cite[Appendix A]{MT10a}\footnote{The reference \cite{MT10a}
is an earlier version of \cite{MT10}.}.  The present crystallographic case proceeds along the
same lines.

\subsection{Rigidity and flexibility}
A realization $\tilde{G}(\vec p,\Phi)$ is defined to be (continuously) \emph{rigid} if it is isolated in the configuration
space $\mathcal{C}$.  Otherwise it is \emph{flexible}.  As the definition makes clear, rigidity is a \emph{local} property
that depends on a realization.

A framework that is rigid, but ceases to be so if any orbit of bars is removes is defined to be \emph{minimally rigid}.

\subsection{Colored crystallographic frameworks}
In principle, the realization and configuration spaces $\mathcal{R}\Gal$ and $\mathcal{C}\Gal$ of crystallographic
frameworks could be complicated infinite dimensional objects.  In this section, we will show that they are, in fact,
equivalent to the finite-dimensional configuration spaces of \emph{colored crystallographic frameworks}, which will
be technically simpler to work with.

A \emph{colored crystallographic framework} (shortly a \emph{colored framework}) is a triple $(G,\bgamma,\bm{\ell})$,
where $(G,\bgamma)$ is a $\Gamma_k$-colored graph and $\bm{\ell}=(\ell_{ij})_{ij\in E(G)}$ is an assignment of a length
to each edge.

The relationship between crystallographic and colored frameworks is similar to that between their direction network
counterparts: using the arguments for  Lemmas \ref{lemma:cone-direction-network-lifts} and
\ref{lemma:cone-direction-network-lifts2} we see that each colored framework has a well-defined lift to
a crystallographic framework and each crystallographic framework has, as its quotient, a colored framework.

\subsection{The colored realization and configuration spaces}
A \emph{realization} $G(\vec p,\Phi)$ of a colored framework is an assignment of points
$\vec p = (\vec p_i)_{i\in V(G)}$ and a representation $\Phi$ of $\Gamma_k$ by Euclidean isometries acting discretely and
cocompactly such that
\[
||\Phi(\gamma_{ij})\cdot\vec p_j - \vec p_i||^2 = \ell_{ij}^2
\]
for all edges $ij\in E(G)$.  The \emph{realization space} $\mathcal{R}(G,\bgamma,\ell)$ is then defined to be
\[
\mathcal{R}(G,\bgamma,\ell) = \left\{ (\vec p,\Phi) :
\text{$G(\vec p,\Phi)$ is a realization of $(G,\bgamma,\bm{\ell})$}
\right\}
\]
The Euclidean group $\Euc(2)$ acts naturally on $\mathcal{R}(G,\bgamma,\ell)$ by
\[
\psi\cdot(\vec p,\Phi) = (\psi\cdot\vec p,\Phi^\psi)
\]
where $\psi$ is a Euclidean isometry.  Thus we define the
\emph{configuration space}  $\mathcal{C}(G,\bgamma,\ell)$ to be the quotient
$\mathcal{R}(G,\bgamma,\ell)/\Euc(2)$ of the realization space by the Euclidean group.

\subsection{The modified configuration space}
Because it is technically simpler, we will consider the modified realization space
$\mathcal{R'}(G,\bgamma,\ell)$, which we define to be:
\[
\mathcal{R'}(G,\bgamma,\ell) = \left\{ (\vec p,\Phi) :
\text{$G(\vec p,\Phi)$ is a realization of $(G,\bgamma,\bm{\ell})$ with $\Phi(r_k)$ fixing the origin}
\right\}
\]
Recall that $r_k$ is the rotation of order $k$ that is one of the generators of $\Gamma_k$.  The modified
configuration space $\mathcal{C'}(G,\bgamma,\ell)$ is then defined to be the quotient $\mathcal{R'}(G,\bgamma,\ell)/O(2)$
of the modified realization space by the orthogonal group $O(2)$.  Since every representation $\Phi\in \Rep(\Gamma_k)$
is conjugate to a representation $\Phi'$ that has the origin as a rotation center by a Euclidean translation,
this next lemma follows immediately.
\begin{lemma}\lemlab{modified-config-space}
Let $(G,\bgamma,\bm{\ell})$ be a colored framework.  Then the configuration space $\mathcal{C}(G,\bgamma,\ell)$
is homeomorphic to the modified configuration space $\mathcal{C'}(G,\bgamma,\ell)$.
\end{lemma}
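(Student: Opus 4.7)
The plan is to exhibit a natural bijection between $\mathcal{C}(G,\bgamma,\ell)$ and $\mathcal{C'}(G,\bgamma,\ell)$ induced by the inclusion $\mathcal{R'}(G,\bgamma,\ell) \hookrightarrow \mathcal{R}(G,\bgamma,\ell)$ and then verify that it is a homeomorphism. The key geometric fact that makes everything work is that since $r_k$ has finite order at least $2$, the isometry $\Phi(r_k)$ must be a nontrivial rotation by \lemref{orderk}, and therefore has a \emph{unique} fixed point.

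First I would check that the inclusion descends to a well-defined continuous map $\iota: \mathcal{C'}(G,\bgamma,\ell) \to \mathcal{C}(G,\bgamma,\ell)$. Since $O(2) < \Euc(2)$, any two $O(2)$-equivalent points in $\mathcal{R'}$ are also $\Euc(2)$-equivalent in $\mathcal{R}$, so $\iota$ is well-defined; continuity follows because we are composing the (continuous) inclusion with the quotient map. Next I would prove surjectivity: given $(\vec p, \Phi) \in \mathcal{R}$, let $q$ be the unique fixed point of the rotation $\Phi(r_k)$ and let $\psi$ be the translation by $-q$. A direct computation shows $\Phi^\psi(r_k) = \psi \Phi(r_k) \psi^{-1}$ fixes the origin, so $(\psi \cdot \vec p, \Phi^\psi) \in \mathcal{R'}$ and maps to $[(\vec p, \Phi)]$ in $\mathcal{C}$.

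The main (and really only nontrivial) point is injectivity. Suppose $(\vec p, \Phi), (\vec p', \Phi') \in \mathcal{R'}$ become equal in $\mathcal{C}$; then there is $\psi \in \Euc(2)$ with $\vec p' = \psi \cdot \vec p$ and $\Phi' = \Phi^\psi$. Since both $\Phi(r_k)$ and $\Phi'(r_k) = \psi \Phi(r_k) \psi^{-1}$ are rotations around the origin through the same angle $\pm 2\pi/k$, and such a rotation has the origin as its unique fixed point, $\psi$ must fix the origin, hence $\psi \in O(2)$. Thus $(\vec p, \Phi)$ and $(\vec p', \Phi')$ already represent the same class in $\mathcal{C'}$, so $\iota$ is injective.

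Finally, to upgrade the continuous bijection to a homeomorphism, I would describe the inverse explicitly: the surjectivity construction above gives a continuous ``centering'' map $\mathcal{R} \to \mathcal{R'}$ sending $(\vec p, \Phi)$ to $(\psi_{\Phi} \cdot \vec p, \Phi^{\psi_\Phi})$, where $\psi_\Phi$ is the translation by minus the rotation center of $\Phi(r_k)$. This depends continuously on $(\vec p, \Phi)$ because the rotation center depends continuously on a non-identity rotation in $\Euc(2)$. Composing with the quotient $\mathcal{R'} \to \mathcal{C'}$ and observing that the result is constant on $\Euc(2)$-orbits (two choices of $\psi$ differ by an element of $O(2)$, as in the injectivity argument), we obtain a continuous inverse $\mathcal{C} \to \mathcal{C'}$ to $\iota$. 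I expect the only subtlety to be bookkeeping around the infinite-dimensional ambient spaces for $\mathcal{R}$ and $\mathcal{R'}$, but since all arguments take place at the level of the finitely many parameters in $(\vec p_i)$ and $\Phi$, this reduces to the finite-dimensional picture via the colored-framework formalism, as indicated in the reference to \cite[Appendix A]{MT10a}.
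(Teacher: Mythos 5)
Your proposal is correct and follows the same route as the paper, which simply observes that every $\Phi\in\Rep(\Gamma_k)$ is conjugate by a translation to one whose rotation center is the origin and declares the lemma immediate; you have filled in the details (unique fixed point of the nontrivial rotation $\Phi(r_k)$ for surjectivity and injectivity, continuity of the rotation center for the inverse) that the paper leaves implicit.
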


From the definition and \lemref{repspace} we see that the modified configuration space is an
algebraic subset of $\R^{2n}\times \R^4$, for $\Gamma_2$ and of $\R^{2n}\times \R^2$ for
$\Gamma_k$ with $k=3,4,6$.

\subsection{Colored rigidity and flexibility}
We now can define rigidity and flexibility in terms of colored frameworks.  A realization $G(\vec p,\Phi)$ of a colored
framework is \emph{rigid} if it is isolated in the configuration space and otherwise \emph{flexible}.  \lemref{modified-config-space}
implies that a realization is rigid if and only if it is isolated in the modified configuration space.

\subsection{Equivalence of crystallographic and colored rigidity}
The connection between the rigidity of crystallographic and colored frameworks
is captured in the following proposition, which says that we can switch between the two models.
\begin{prop}\proplab{colored-and-crystallographic-frameworks}
Let $\Gal$ be a crystallographic framework and let $(G,\bgamma,\bm{\ell})$ be an associated
colored framework quotient.  Then the configuration spaces $\mathcal{C}\Gal$ and
$\mathcal{C'}(G,\bgamma,\ell)$ are homeomorphic.
\end{prop}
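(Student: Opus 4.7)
The plan is to construct explicit inverse maps between the two configuration spaces via the covering-space dictionary between $(G,\bgamma)$ and $(\tilde G,\varphi)$ of \lemref{lifts}, then descend to the isometry-group quotients using \lemref{modified-config-space}.

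First, I would define the lifting map $L : \mathcal{R}'(G,\bgamma,\ell) \to \mathcal{R}\Gal$ as follows. Fix once and for all a choice of orbit representative $\tilde i \in V(\tilde G)$ over each $i \in V(G)$, as in the colored-quotient construction of \secref{colored-graphs}. Given $(\vec p, \Phi) \in \mathcal{R}'(G,\bgamma,\ell)$, define $\tilde p_{\gamma \cdot \tilde i} := \Phi(\gamma) \cdot \vec p_i$ for every $i \in V(G)$ and every $\gamma \in \Gamma_k$. Since $\varphi$ is free with finite quotient, every vertex of $\tilde G$ is uniquely of the form $\gamma \cdot \tilde i$, so $\tilde p$ is well defined. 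Equivariance \eqref{equivariantX} is immediate from the definition, and the length condition \eqref{lengthsX} on each edge orbit reduces, using $\Phi$ acting by isometries, to the colored length condition for $(G,\bgamma,\bm{\ell})$; thus $L(\vec p,\Phi) = (\tilde p, \Phi) \in \mathcal{R}\Gal$.

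Second, I would construct the inverse. Given $(\tilde p, \Phi) \in \mathcal{R}\Gal$, the element $\Phi(r_k) \in \Euc(2)$ is a rotation (by \lemref{TorR} and \lemref{orderk}), so there is a Euclidean translation $\tau$ for which $\tau \Phi(r_k) \tau^{-1}$ fixes the origin. Replacing $(\tilde p, \Phi)$ by $(\tau \circ \tilde p, \Phi^\tau)$, the restriction $\vec p_i := \tilde p_{\tilde i}$ together with $\Phi$ is an element of $\mathcal{R}'(G,\bgamma,\ell)$, and the two procedures are mutually inverse on the slice of realizations where $\Phi(r_k)$ already fixes the origin. Moreover the residual ambiguity in $\tau$, and hence in the slice, is exactly the stabilizer of the origin in $\Euc(2)$, namely $O(2)$; so the map $L$ intertwines the $O(2)$-action on $\mathcal{R}'(G,\bgamma,\ell)$ with the subgroup of $\Euc(2)$ fixing the origin acting on the slice inside $\mathcal{R}\Gal$.

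Third, I would descend to configuration spaces. Every $\Euc(2)$-orbit in $\mathcal{R}\Gal$ meets the slice $\{(\tilde p, \Phi) : \Phi(r_k) \text{ fixes the origin}\}$, and meets it in exactly one $O(2)$-orbit by the previous paragraph. Combined with \lemref{modified-config-space}, this gives a set-theoretic bijection $\mathcal{C}(G,\bgamma,\ell) \cong \mathcal{C}'(G,\bgamma,\ell) \to \mathcal{C}\Gal$ induced by $L$.

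The main technical obstacle is verifying that this bijection is actually a homeomorphism in an appropriate topology on the infinite-dimensional space $\mathcal{R}\Gal$. Following the convention indicated in the paper, I would equip $\mathcal{R}\Gal$ with the product topology on $(\R^2)^{V(\tilde G)} \times \Rep(\Gamma_k)$, i.e. pointwise convergence of $\tilde p$ together with convergence of $\Phi$. Under this topology $L$ is continuous because each coordinate $\tilde p_{\gamma \cdot \tilde i} = \Phi(\gamma) \cdot \vec p_i$ depends continuously on $(\vec p, \Phi)$; the inverse is continuous because $(\vec p, \Phi)$ is recovered by evaluating $\tilde p$ at the finitely many orbit representatives together with reading off $\Phi$. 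The induced maps between $\mathcal{C}'(G,\bgamma,\ell)$ and $\mathcal{C}\Gal$ are then continuous with continuous inverse, since both quotients carry the quotient topology from their (group-equivariant) realization spaces. As the authors remark, the formal details run parallel to \cite[Appendix A]{MT10a} and are routine once the dictionary above is in place.
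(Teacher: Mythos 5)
Your construction is correct and is precisely the ``straightforward computation'' that the paper's one-line proof alludes to: lift and restrict via the covering dictionary of \lemref{lifts}, observe that the slice where $\Phi(r_k)$ fixes the origin meets every $\Euc(2)$-orbit in exactly one $O(2)$-orbit (because a nontrivial rotation has a unique fixed point), and check continuity coordinatewise in the pointwise topology. No gaps; this matches the paper's intended argument.
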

\begin{proof}
This follows from the definitions and a straightforward computation. %
\end{proof}

\section{Infinitesimal and generic rigidity}\seclab{infinitesimal}
As discussed above, the modified realization space $\mathcal{R}'(G,\bgamma,\bm{\ell})$ of a colored
framework is an algebraic subset of $\mathbb{R}^{2n+2r}$, where $r$ is the rank of the translation
subgroup $\Trans(\Gamma_k)$.  The coordinates are given as follows:
\begin{itemize}
\item The first $2n$ coordinates are the coordinates of the points $\vec p_1, \vec p_2,\ldots, \vec p_n$
\item The final $2r$ coordinates are the vectors $v_i$ specifying the representation of the translation
subgroup $\Trans(\Gamma_k)$.  (Since we have ``pinned'' a rotation center to the origin, the vector
$w$ from \lemref{repspace} is also fixed.)
\end{itemize}

\subsection{Infinitesimal rigidity}
As is typical in the derivation of Laman-type theorems, we relax the the condition of rigidity, we
linearize the problem by considering the tangent space of $\mathcal{R}'(G,\bgamma,\bm{\ell})$
near a realization $G(\vec p,\Phi)$.

The vectors in the tangent space are infinitesimal motions of the framework, and
they can be characterized as follows.  Let
$(\vec q, u_1, u_2) \in \R^{2n+4}$ for $k =2$ or $(\vec q, u_1) \in \R^{2n+2}$ for $k =3,4,6$.
To this vector there is an associated representation $\Phi'$ defined by
$\Phi'(r_k) = (0, R_k)$ and $\Phi'(t_i) = (u_i, \Id)$.
Then differentiation of the length equations yield this linear system ranging over all
edges $ij \in E(G)$:
\begin{equation}
\iprod{\Phi(\gamma_{ij}) \cdot \vec p_j - \vec p_i}{\Phi'(\gamma_{ij}) \cdot \vec q_j  -  \vec q_i}  \label{eq:infinitesimal}
\end{equation}
The given data are the $\vec p_i$ and $\Phi$, and then unknowns are the $\vec q_i$ and $\Phi'$.
A realization $G(\vec p, \Phi)$ of a colored framework
is defined to be  \emph{infinitesimally rigid} if the  system \eqref{eq:infinitesimal}
has a $1$-dimensional solution space.  A realization that is infinitesimally rigid but ceases to be
so when any colored edge is removed is minimally infinitesimally rigid.

\subsection{Infinitesimal rigidity implies rigidity}
A standard kind of result relating infinitesimal rigidity and rigidity for generic frameworks holds in our setting.
Since our realization space
is finite, adapting standard arguments (see e.g. \cite{AR78}) to our situation is not hard, so we omit a proof.
\begin{lemma}\lemlab{infinitesimal-rigidity-implies-rigidity}
If a realization $G(\vec p,\Phi)$ of a colored framework is infinitesimally rigid, then it is rigid.
\end{lemma}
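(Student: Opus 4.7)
The plan is to follow the classical Asimow--Roth style argument \cite{AR78}, adapted to the modified realization space $\mathcal{R}'(G,\bgamma,\bm{\ell})$, which by \propref{colored-and-crystallographic-frameworks} and \lemref{modified-config-space} is the right space in which to test rigidity. I will set up the \emph{rigidity map}
\[
f : \R^{2n+2r} \to \R^{|E(G)|}, \qquad (\vec p, v_1,\dots,v_r) \mapsto \bigl( \| \Phi(\gamma_{ij}) \cdot \vec p_j - \vec p_i\|^2 \bigr)_{ij \in E(G)},
\]
where the $v_i$ encode $\Phi$ restricted to the translation subgroup (the rotation $\Phi(r_k)$ is pinned to fix the origin). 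Then $\mathcal{R}'(G,\bgamma,\bm{\ell}) = f^{-1}(\bm{\ell}^2)$ is an algebraic subset and the system \eqref{eq:infinitesimal} is exactly $df_{(\vec p,\Phi)}$, so $\ker df_{(\vec p,\Phi)}$ is the space of infinitesimal motions at $(\vec p,\Phi)$.

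Next I would identify the ``trivial'' motions. The group $O(2)$ acts on $\mathcal{R}'$ by rotation around the origin; differentiating this action at the identity produces a one-parameter family of infinitesimal motions spanning a subspace $\mathcal{T} \subset \ker df_{(\vec p,\Phi)}$ of dimension exactly $1$. By the hypothesis of infinitesimal rigidity, $\ker df_{(\vec p,\Phi)}$ itself has dimension $1$, so $\mathcal{T} = \ker df_{(\vec p,\Phi)}$. In particular, $f$ has locally constant rank equal to $2n + 2r - 1$ in an open neighborhood $U$ of $(\vec p,\Phi)$: the rank cannot drop below this on $U$ by lower semicontinuity of kernels only after a standard argument, so I would instead appeal to the fact that the rank of $df$ is locally constant on an open dense set, and shrink $U$ to one on which the rank is maximal.

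Applying the constant rank theorem, $f^{-1}(\bm{\ell}^2) \cap U$ is a smooth submanifold of dimension $1$ through $(\vec p,\Phi)$, tangent to $\mathcal{T}$. On the other hand, the $O(2)$-orbit of $(\vec p,\Phi)$ is a smooth $1$-dimensional submanifold of $f^{-1}(\bm{\ell}^2)$ (rotations preserve lengths and commute with the discrete symmetry condition), also tangent to $\mathcal{T}$ at $(\vec p,\Phi)$. Since two smooth manifolds of the same dimension sharing a tangent space at a common point coincide locally, $f^{-1}(\bm{\ell}^2) \cap U$ is contained in the $O(2)$-orbit of $(\vec p,\Phi)$. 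Consequently, its image in the modified configuration space $\mathcal{C}' = \mathcal{R}'/O(2)$ is the single point $[(\vec p,\Phi)]$, which is therefore isolated. By \lemref{modified-config-space} and \propref{colored-and-crystallographic-frameworks}, this is exactly the statement that $G(\vec p,\Phi)$ is rigid.

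The main subtlety I expect is the constant rank step: a priori $\mathcal{R}'$ is only an algebraic set and the rank of $df$ could be non-maximal at $(\vec p,\Phi)$ itself. Handling this cleanly requires passing to the smooth locus or, alternatively, using the inverse function theorem on a chart transverse to the $O(2)$-orbit where the restriction of $f$ has full rank in the remaining $2n+2r-1$ directions; verifying transversality uses precisely that $\mathcal{T} = \ker df_{(\vec p,\Phi)}$ from infinitesimal rigidity. Everything else is a routine dimension count.
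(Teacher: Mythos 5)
The paper itself omits this proof, remarking only that the standard Asimow--Roth argument adapts to the finite-dimensional modified realization space; your proposal is exactly that adaptation, so the overall route is the intended one. Two steps, however, need repair.

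First, the constant-rank step is cleaner than you make it, and you should not appeal to an open dense set where the rank is locally constant (a priori that set need not contain $(\vec p,\Phi)$), nor do you need to pass to a smooth locus. At every realization $q$ in a small neighborhood of $(\vec p,\Phi)$ the edge lengths are still positive, so $q$ is not fixed by $SO(2)$, and the tangent to its $SO(2)$-orbit is a $1$-dimensional subspace of $\ker df_q$; hence $\rk df_q \le 2n+2r-1$ throughout that neighborhood. Infinitesimal rigidity gives $\rk df_{(\vec p,\Phi)} = 2n+2r-1$, and lower semicontinuity of rank then forces equality on a possibly smaller neighborhood. So the rank really is locally constant at $(\vec p,\Phi)$ and the constant rank theorem applies directly; the "subtlety" you flag at the end is resolved by the hypothesis itself.

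Second, the assertion that two smooth manifolds of the same dimension sharing a tangent space at a common point coincide locally is false as stated: the curves $y=0$ and $y=x^2$ in $\R^2$ share a tangent line at the origin but do not coincide near it. What saves the argument is containment, not tangency. The $SO(2)$-orbit of $(\vec p,\Phi)$ is a $1$-dimensional submanifold that is \emph{contained in} the $1$-dimensional manifold $f^{-1}(\bm{\ell}^2)\cap U$ (rotations about the origin preserve the edge lengths and conjugate $\Phi$ appropriately), hence is open in it by invariance of domain, hence equals it near $(\vec p,\Phi)$. With these two repairs the dimension count goes through and the image of $(\vec p,\Phi)$ in $\mathcal{C}'$ is isolated, as required.
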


\subsection{Generic rigidity}
The converse of \lemref{infinitesimal-rigidity-implies-rigidity} does not hold in general, but it does for
nearly all realizations.  Let $(G,\bgamma,\bm{\ell})$ be a colored framework.  A realization
$G(\vec p,\Phi)$ is defined to be \emph{regular} for $(G,\bgamma,\bm{\ell})$
if the rank of the system \eqref{eq:infinitesimal} is maximal over all realizations.

Whether a realization is regular depends on both the colored graph $(G,\bgamma)$ and the given
lengths $\bm{\ell}$.  Let $G(\vec p,\Phi)$ be a regular realization of a colored framework.
If, in addition, the rank of \eqref{eq:infinitesimal} at $G(\vec p,\Phi)$ is maximal over all
realizations of colored frameworks with the same colored graph $(G,\bgamma)$, we define
$G(\vec p,\Phi)$ to be \emph{generic}.

We define the rank of \eqref{eq:infinitesimal} at a generic realization to be its \emph{generic rank}.
Since it depends on formal minors of the matrix underlying \eqref{eq:infinitesimal} only, it is a
property of the colored graph $(G,\bgamma)$.

If $(G,\bgamma,\bm{\ell})$ is a framework with generic realizations, it is immediate that the set
of non-generic realizations is a proper algebraic subset of the realization space.  Alternatively,
if we consider frameworks as being induced by realizations, the set of non-generic realizations
is a proper algebraic subset of $\R^{2n+2r}$, where $r=1$ for $\Gamma_{3}$, $\Gamma_{4}$, and
$\Gamma_{6}$, and $r=2$ for $\Gamma_2$.

For generic realizations, a standard argument (again, along the lines of \cite{AR78}) shows that
rigidity and infinitesimal rigidity coincide.
\begin{prop}\proplab{generic-rigidity}
A generic realization of a colored framework $(G,\bgamma,\bm{\ell})$ is rigid if and only if
it is infinitesimally rigid.
\end{prop}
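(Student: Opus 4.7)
The plan is to follow the classical Asimow--Roth approach from \cite{AR78}, adapted to the crystallographic setting. One direction is already in hand: \lemref{infinitesimal-rigidity-implies-rigidity} gives that infinitesimal rigidity implies rigidity without any genericity hypothesis. So what remains is the converse, which I would establish by an implicit function theorem argument at a regular point of the modified realization space $\mathcal{R}'(G,\bgamma,\bm{\ell})$.

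First I would view $\mathcal{R}'(G,\bgamma,\bm{\ell})$ as the real algebraic subset of $\R^{2n+2r}$ cut out by the squared-length equations, where $r=1$ for $\Gamma_3,\Gamma_4,\Gamma_6$ and $r=2$ for $\Gamma_2$. The Jacobian of this system at $G(\vec p,\Phi)$ is precisely the rigidity matrix underlying \eqref{eq:infinitesimal}. By definition, at a generic realization the rigidity matrix attains its maximum rank $\rho^*$, and this maximal rank is preserved in a Zariski-open neighborhood. By the implicit function theorem, $\mathcal{R}'$ is then locally a smooth manifold of dimension $2n+2r-\rho^*$ near $G(\vec p,\Phi)$, with tangent space equal to the kernel of the rigidity matrix, i.e.\ the space of infinitesimal motions.

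Next I would account for trivial motions. Because we have already pinned the rotation center of $\Phi(r_k)$ at the origin, the remaining Euclidean symmetries acting on $\mathcal{R}'$ form the $1$-dimensional group $O(2)$ (of which the identity component $SO(2)$ contributes all tangent directions). The $O(2)$-orbit of $G(\vec p,\Phi)$ sits inside $\mathcal{R}'$, so its tangent space at $G(\vec p,\Phi)$ is a $1$-dimensional subspace of the kernel of the rigidity matrix, and it exhausts the trivial infinitesimal motions. Consequently, near $G(\vec p,\Phi)$ we have $\dim \mathcal{C}'(G,\bgamma,\bm{\ell}) = \dim \mathcal{R}'(G,\bgamma,\bm{\ell}) - 1$. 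If the realization is rigid, then \lemref{modified-config-space} and the definition give that $G(\vec p,\Phi)$ is isolated in $\mathcal{C}'$, so the local dimension is $0$; this forces $2n+2r-\rho^*=1$, which is precisely the definition of infinitesimal rigidity.

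The main obstacle is verifying that the $O(2)$-orbit through a generic $G(\vec p,\Phi)$ is genuinely $1$-dimensional, so the naive dimension count $\dim \mathcal{C}' = \dim \mathcal{R}' - 1$ is valid. This amounts to ruling out realizations with a nontrivial isotropy subgroup of $O(2)$, i.e.\ realizations in which every point $\vec p_i$ lies at the origin and $\Phi$ restricts trivially on $\Trans(\Gamma_k)$. Such completely collapsed realizations exist in every $\mathcal{R}'$, but they form a proper algebraic subset and can be excluded from the generic locus; once this is observed, the rest of the argument is the standard one and only requires a minor adaptation to absorb the $2r$ representation coordinates.
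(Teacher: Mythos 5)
Your proposal is correct and is precisely the standard Asimow--Roth argument that the paper invokes, without writing out, via the citation to \cite{AR78}: locally constant maximal rank gives a smooth $\mathcal{R}'$ with tangent space the kernel of the rigidity matrix, and rigidity forces that manifold to coincide locally with the $1$-dimensional $O(2)$-orbit, yielding $2n+2r-\rho^*=1$. One small correction to your last paragraph: the completely collapsed realizations you propose to exclude by genericity do not lie in $\mathcal{R}'(G,\bgamma,\bm{\ell})$ at all, since every $\Phi$ there is required to be discrete and faithful (so the translational images are nonzero and no nontrivial rotation in $SO(2)$ can centralize $\Phi$), hence the isotropy subgroup is automatically finite and the orbit is genuinely $1$-dimensional at every point of $\mathcal{R}'$, with no genericity needed for that step.
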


\section{Proof of \theoref{main}}\seclab{main-proof}
All the tools are in place to prove our main theorem:
\mainthm
The proof occupies the rest of this section.

\subsection{Reduction to colored frameworks}
By \propref{colored-and-crystallographic-frameworks}, it is sufficient to prove the statement
of \theoref{main} for colored frameworks.  \propref{generic-rigidity} then implies that the
Theorem will follow from a characterization of generic infinitesimal rigidity for colored
frameworks.

Thus, to prove the theorem, we show that, for a colored graph $(G,\bgamma)$ with
$n$ vertices and $m=2n + \rep_{\Gamma_k}(\Trans(\Gamma_k)) - 1$ edges,
the generic rank of the system \ref{eq:infinitesimal} is $m$ if and only if
$(G,\bgamma)$ is a $\Gamma$-colored-Laman graph.

\subsection{Necessity: the ``Maxwell direction''}
We recall the definition of the sparsity function $h(G)$ from \secref{gamma-laman},
which defines $\Gamma$-colored-Laman graphs.  We have, for a colored graph $(G,\bgamma)$
with $n$ vertices and $c$ connected components $G_1,G_2,\ldots,G_c$,
\[
h(G) = 2n + \rep_{\Gamma_k}(G) - 1 - \sum_{i=1}^c T(G_i)
\]
That colored-Laman-sparsity is necessary for the system \ref{eq:infinitesimal} to have independent
equations is captured in the following proposition.
\begin{prop}\proplab{maxwell-direction}
Let $(G,\bgamma)$ be a colored graph.  Then the generic rank of the system \eqref{eq:infinitesimal} is at most
$h(G)$.
\end{prop}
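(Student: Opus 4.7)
The plan is to show the kernel of \eqref{eq:infinitesimal} has dimension at least $(2n+2r) - h(G)$, where $2r = \rep_{\Gamma_k}(\Trans(\Gamma_k))$ is the size of the $\Phi$-block of the variables; this forces rank at most $h(G)$. Writing this lower bound as $1 + (2r - \rep_{\Gamma_k}(G)) + \sum_i T(G_i)$ matches the degree-of-freedom heuristic in \secref{gamma-laman} term-by-term, so the strategy is to realize each term as a concrete family of trivial infinitesimal motions at a generic realization $G(\vec p, \Phi)$.

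First, the global infinitesimal rotation about the origin---differentiating conjugation by $e^{tJ} \in O(2)$, with $J$ the generator of $\mathfrak{so}(2)$, to get $\vec q_i = J\vec p_i$ and $u_k = J v_k$---contributes the single residual dimension left after modding out by $O(2)$ in the modified configuration space; it lies in the kernel because conjugating by a Euclidean isometry preserves every edge length. Second, for each component $G_i$ with $T(G_i) = 2$, translating every $\vec p_j$ with $j \in V(G_i)$ by a common vector $\vec c \in \R^2$ (with $u = 0$ and all other vertices fixed) preserves every edge equation within $G_i$, since each $\Phi(\gamma_{ij})$ is a pure translation on $V(G_i)$, and affects no edge of another component; this contributes $\sum_i T(G_i)$ dimensions in total. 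Third, variations of the translational part of $\Phi$ that annihilate $\Phi|_{\Trans(G, B)}$ at the tangent level (paired with $\vec q = 0$) should give $2r - \rep_{\Gamma_k}(G)$ further kernel elements via \lemref{rep-of-trans-G}, since by definition $\rep_{\Gamma_k}(G)$ is the image dimension of the restriction map from $\Rep(\Gamma_k)$ to $\Rep_{\Gamma_k}(\Trans(G, B))$.

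Linear independence across the three families at a generic realization is straightforward: (i) has $\vec q = J\vec p$ which is generically neither zero nor constant on any component, (ii) has $u = 0$ with $\vec q$ piecewise constant, (iii) has $\vec q = 0$ and nonzero $u$. Summing the three contributions yields the required lower bound on the kernel dimension, and the bound on the generic rank follows immediately.

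The main obstacle will be making (iii) fully rigorous. The naive claim is that a perturbation of $\Phi$ trivial on $\Trans(G, B)$ annihilates every edge equation with $\vec q = 0$, but some care is needed because an edge color $\gamma_{ij}$ may have translation part outside $\Trans(G, B)$ even after conjugating to a gauge in which spanning-tree edges carry trivial color. The cleanest workaround is to instead target the sharper bound $\mathrm{rank} \le h'(G)$, where $h'$ is the Teichm\"uller--centralizer sparsity of \lemref{gamma-cl-equiv}: the analogous trivial motions then come from component centralizers (packaging (i) and (ii) together as $\sum_i \cent_{\Gamma_k}(G_i)$ dimensions from isometries commuting with $\Phi(\rho(\pi_1(G_i, b_i)))$, applied only on $V(G_i)$) and from $2r - \teich_{\Gamma_k}(G)$ Teichm\"uller fiber directions invisible to $G$; a short comparison via \propref{reptoteich} then gives $h'(G) \le h(G)$ and delivers the stated bound.
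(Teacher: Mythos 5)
Your overall strategy --- exhibit $1 + \bigl(\rep_{\Gamma_k}(\Trans(\Gamma_k)) - \rep_{\Gamma_k}(G)\bigr) + \sum_i T(G_i)$ independent trivial solutions of \eqref{eq:infinitesimal} and read off the rank bound by subtraction --- is exactly the paper's, which realizes the last two families as the collapsed solutions of the direction network with $\vec d_{ij} = (\Phi(\gamma_{ij})\cdot\vec p_j - \vec p_i)^{\perp}$ counted by \lemref{collapsed-dimensions}, and the first as the realization rotated by $\pi/2$. The gap is in your explicit description of families (ii) and (iii). For (ii), the claim that each $\Phi(\gamma_{ij})$ acts as a pure translation on $V(G_i)$ when $T(G_i)=2$ is false: only the $\rho$-images of \emph{closed paths} are constrained, and an individual edge of $G_i$ may carry a rotation color (e.g.\ a single tree edge colored by $r_k$ in a component with trivial $\rho$-image), in which case translating every vertex of $G_i$ by $\vec c$ changes $\Phi(\gamma_{ij})\cdot\vec p_j - \vec p_i$ by $(R-\Id)\vec c \neq 0$. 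For (iii), the same phenomenon kills the pairing with $\vec q = 0$, as you yourself note. The correct kernel vectors are neither constant nor zero on a component: they are obtained by propagating a base value along a spanning tree, $\vec q_i$ determined by $\Phi(\eta_{bi}^{-1})$ applied to $\vec q_b$, with the base point anchored at a rotation center when $T(G_i)=0$. This spanning-tree propagation is precisely what the proof of \lemref{collapsed-dimensions} supplies, and it is the ingredient your proposal is missing.

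Your proposed workaround --- retargeting $h'$ via \lemref{gamma-cl-equiv} and \propref{reptoteich}, with centralizer motions ``applied only on $V(G_i)$'' --- does not repair this, because the same difficulty reappears verbatim: an isometry commuting with $\Phi(\rho(\pi_1(G_i,b_i)))$ cannot be applied vertex-wise to $V(G_i)$, since individual edge colors need not lie in $\rho(\pi_1(G_i,b_i))$; it must again be conjugated along tree paths before it preserves the edge equations. (Moreover the paper explicitly omits the proofs of \lemref{centtable}, \propref{reptoteich}, and \lemref{gamma-cl-equiv}, so this route imports unproved statements.) The fix closest to your plan is either to work in the lift $\tilde{G}$, where the trivial motions genuinely are uniform translations and rotations of connected components, and then descend to the quotient, or to quote \lemref{collapsed-dimensions} directly as the paper does. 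Your linear-independence argument must then also be redone, since it relied on the incorrect normal forms ($\vec q$ piecewise constant, $\vec q=0$); the paper's version is cleaner: at a realization with no collapsed edges, the rotation motion is the unique one of the three families whose associated direction-network solution has a non-collapsed edge, so it cannot lie in the span of the others.
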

\begin{proof}
Let $G(\vec p,\Phi)$ be any realization of a colored framework on a colored graph $(G,\bgamma)$ with
no collapsed edges.  That is select a representation $\Phi$ of $\Gamma_k$ and points $\vec p_i$, such
that, $\Phi(\gamma_{ij})\cdot\vec p_j\neq \vec p_i$ for all edges $ij\in E(G)$.

We now define the direction $\vec d_{ij}$ to be $(\Phi(\gamma_{ij})\cdot\vec p_j - \vec p_i)^{\perp}$ for
each edge $ij\in E(G)$.  These directions define a colored direction network $(G,\bgamma,\vec d)$ with the
property that any solution to this direction network corresponds to an infinitesimal motion of the
colored framework realization $G(\vec p,\Phi)$.

\lemref{collapsed-dimensions} implies that there are
\[
\rep_{\Gamma_k}(\Trans(\Gamma_k)) - \rep_{\Gamma_k}(G) + \sum_{i=1}^c T(G_i)
\]
dimensions of  realizations with every edge collapsed.  By construction, there is a non-collapsed realization of this
direction network as well: it is simply $(\vec p,\Phi)$ rotated by $\pi/2$.  Since this is not obtained by taking linear combinations of
realizations where every edge is collapsed, the dimension of the space of infinitesimal motions is always at
least
\[
\rep_{\Gamma_k}(\Trans(\Gamma_k)) - \rep_{\Gamma_k}(G) + \sum_{i=1}^c T(G_i) + 1
\]
The proposition follows by subtracting from $2n + \rep_{\Gamma_k}(\Trans(\Gamma_k))$ and comparing to $h(G)$.
\end{proof}

\subsection{Sufficiency: the ``Laman direction''}
The other direction of the proof of \theoref{main} is this next proposition
\begin{prop}\proplab{laman-direction}
Let $(G,\bgamma)$ be a $\Gamma$-colored-Laman graph.  Then the generic rank of
the system \ref{eq:infinitesimal} is $h(G)$.
\end{prop}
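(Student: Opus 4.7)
By \propref{maxwell-direction} the generic rank of \eqref{eq:infinitesimal} is at most $h(G)$, so it suffices to exhibit a single framework realization of $(G,\bgamma)$ at which the rigidity matrix attains rank $h(G)$. My plan is to produce such a realization via the Direction Network \theoref{direction}, using the same $\pi/2$-rotation dictionary between frameworks and direction networks that already appears in the proof of \propref{maxwell-direction}.

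The starting observation I would make is that at any realization $G(\vec p, \Phi)$ with no collapsed edges, the infinitesimal rigidity system \eqref{eq:infinitesimal} is \emph{identical} to the colored direction-network system \eqref{colored-crystal-directions} for the direction network $(G,\bgamma,\vec d)$ defined by $\vec d_{ij} = (\Phi(\gamma_{ij})\cdot \vec p_j - \vec p_i)^\perp$: the unknowns $(\vec q, u_1, u_2)$ of the former play the role of the direction-network unknowns $(\vec p, \Phi|_{\Trans(\Gamma_k)})$ of the latter, and the two equations coincide term by term. Hence infinitesimal motions of the framework are in bijection with realizations of the induced direction network.

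I would then fix a generic colored direction network $(G, \bgamma, \vec d_0)$ on our $\Gamma$-colored-Laman graph. \theoref{direction} produces a faithful realization $(\vec p^*, \Phi^*)$, unique up to translation and scaling; after pinning the rotation center of $\Phi^*(r_k)$ to the origin (as in the modified configuration space), translations are excluded, so the solution space at $\vec d_0$ is exactly one-dimensional, consisting of global rescalings. Now view $(\vec p^*, \Phi^*)$ as a framework realization. Its induced direction network $(G,\bgamma,\vec d_1)$ satisfies $\vec d_{1,ij} \parallel \vec d_{0,ij}^\perp$, so $\vec d_1$ is (up to per-edge normalization, which does not affect \eqref{colored-crystal-directions}) the global $\pi/2$-rotate of $\vec d_0$. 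Because the direction-network realization system is equivariant under diagonal $O(2)$-rotation of the inputs---rotating the directions simply rotates the solution set---the solution space at $\vec d_1$ also has dimension one. By the bijection of the previous paragraph, the rigidity system at $G(\vec p^*, \Phi^*)$ has corank one, and thus rank $2n + \rep_{\Gamma_k}(\Trans(\Gamma_k)) - 1 = h(G)$, using $T(\Gamma_k) = 0$.

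The main obstacle I expect is the implicit claim that $\vec d_1$ is itself generic, even though it is determined by the specific solution $(\vec p^*, \Phi^*)$ and is not a free parameter of the construction. This is justified by observing that the non-generic locus in direction-tuple space is a proper algebraic subvariety that is stable under the diagonal $O(2)$-action on directions; therefore every $O(2)$-translate of the generic $\vec d_0$---and in particular $\vec d_1$---lies in its open complement, so the one-dimensional solution space conclusion of \theoref{direction} applies to $\vec d_1$ as well.
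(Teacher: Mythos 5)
Your proposal is correct and follows essentially the same route as the paper's proof: both exploit \theoref{direction} to obtain a faithful realization of a generic colored direction network $(G,\bgamma,\vec d)$ and then identify the infinitesimal rigidity system \eqref{eq:infinitesimal} at that realization with (a nonzero row-rescaling of) the direction-network system \eqref{colored-crystal-directions} for the perpendicular assignment $\vec d^\perp$. The only, immaterial, difference is in how full rank at $\vec d^\perp$ is justified --- the paper simply picks $\vec d$ so that both $\vec d$ and $\vec d^\perp$ are generic for \theoref{direction} (an intersection of two dense open conditions), whereas you deduce it from the $SO(2)$-equivariance of the realization system under simultaneous rotation of directions, points, and conjugation of $\Phi$; both arguments are valid.
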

\begin{proof}
It is sufficient to construct a single example at which this rank is attained, since the generic rank is
always at least the rank for any specific realization.  We will do this using direction networks.

Let $(G,\bgamma)$ be a $\Gamma$-colored-Laman graph, and select a direction $\vec d_{ij}$ for each edge
$ij\in E(G)$, such that both $\vec d$ and $\vec d^{\perp}=(\vec d_{ij}^\perp)$ are
generic in the sense of \theoref{direction}.
By \theoref{direction}, the colored direction network $(G,\bgamma,\vec d)$ has a unique,
faithful solution $(\vec p,\Phi)$, which implies that, for all edges $ij\in E(G)$
\[
\Phi(\gamma_{ij})\cdot \vec p_j - \vec p_i = \alpha_{ij}\vec d_{ij}
\]
for some non-zero scalar $\alpha_{ij}\in \R$.  It follows that, by replacing $\vec d_{ij}$ with
$\Phi(\gamma_{ij})\cdot \vec p_j - \vec p_i$ in the direction realization system
\eqref{colored-crystal-directions} we obtain \eqref{eq:infinitesimal}.  Since $\vec d^\perp$ is
also generic for \theoref{direction}, we conclude that \eqref{eq:infinitesimal} has full rank
as desired.
\end{proof}

\section{Cone frameworks}\seclab{cone-rigidity}
For the group $\Z/k\Z$, the counterpart of \theoref{main} is
\conethm
The theory for cone frameworks follows the same lines as that for $\Gamma_k$-crystallographic frameworks.
Since all the steps from Sections \ref{sec:continuous}--\ref{sec:main-proof} go through with appropriate
modifications (which are simplifications) we omit the details in the interest of space.

\bibliographystyle{plainnat}

\end{document}